\def\negthickspace{\!\!\!}
\newcommand{\nicefrac}[2]
{\leavevmode \kern.1em\raise.5ex\hbox{\the\scriptfont0 #1}
             \kern-.1em/\kern-.15em\lower.25ex
             \hbox{\the\scriptfont0 #2}}
\newtheorem{theorem}{Theorem}
\newtheorem{proposition}{Proposition}
\newtheorem{corollary}{Corollary}
\newtheorem{definition}{Definition}
\newtheorem{lemma}{Lemma}
\theoremstyle{definition}
\newtheorem{remark}{Remark}
\theoremstyle{definition}
\begin{document} 

\pagestyle{empty}

\thispagestyle{empty}
\vspace*{5ex}
\begin{center}
{\Large{\sc Lectures on normal Coulomb frames}}\\[2ex]
{\Large{\sc in the normal bundle}}\\[2ex]
{\Large{\sc of twodimensional immersionen}}\\[2ex]
{\Large{\sc of higher codimension}}\\[10ex]
{\large{\sc Harmonic Mappings and Geometric Analysis}}\\[20ex]
{\large Steffen Fr\"ohlich}\\[70ex]
{\small\bf Abstract}\\[0.4cm]
\begin{minipage}[c][2.5cm][l]{12cm}
{\small We establish existence and regularity results for normal Coulomb frames in the normal bundle of two-dimensional surfaces of disc-type embedded in Euclidean spaces of higher dimensions.}\\[2ex]
{\small MCS 2000: 53A07, 35J47, 43A37}\\
{\small Keywords: Twodimensional immersions, elliptic systems, harmonic mappings}
\end{minipage}
\end{center}
\cleardoublepage
\pagestyle{empty}
\vspace*{25ex}
\begin{center}
{\Large\it To Mihaela}
\end{center}
\cleardoublepage
\pagestyle{myheadings}
\vspace*{15ex}
\noindent
{\Large\bf Introduction}\\[4ex]
A two-dimensional smooth and orientable surface ${\mathcal M}$ in three-dimensional Euclidean space $\mathbb R^3$ possesses a normal vector field uniquely determined up to orientation.\\[1ex]
In these lectures we consider surfaces in higher-dimensional Euclidean spaces, and we approach the following fundamental problems:
\begin{itemize}
\item[1.]
What are the geometric and analytical characteristics of {\it normal frames for surfaces with arbitrary codimension immersed in Euclidean spaces ${\mathbb R}^{n+2}?$}
\vspace*{-1.6ex}
\item[2.]
Are there {\it special normal frames for surfaces} useful for particular analytical and geometric purposes, and how can we construct such special systems?
\end{itemize}
Our three lectures presented on the following pages are organized as follows:
\begin{itemize}
\item
{\it Lecture I}\\[1ex]
We give a detailed treatment of the foundations of the theory of surfaces immersed in Euclidean spaces. This includes the definition of normal frames as well as orthogonal transitions betweeen such frames, furthermore the differential equations of Gau\ss{} and Weingarten as well as corresponding integrability conditions along with curvature quantities of the surfaces' tangential and normal bundles.
\item
{\it Lecture II}\\[1ex]
We consider surfaces immersed in Euclidean space $\mathbb R^4$ and construct so-called normal Coulomb frames critical for a functional of total torsion. This construction depends on the solvability of a Neumann boundary value problem. Using methods from potential theory and complex analysis we establish various analytical tools to control the so-called torsion coefficients of Coulomb frames, i.e. the connection coefficients of the normal bundle.
\item
{\it Lecture III}\\[1ex]
We consider surfaces immersed in Euclidean spaces $\mathbb R^{n+2}$ of arbitrary dimension. The construction of normal Coulomb frames turns out to be more intricate and requires a profound analysis of nonlinear elliptic systems in two variables. We benefit from the work of many authors from the theory of harmonic analysis: E. Heinz, F. Helein, F. M\"uller, T. Riviere, F. Sauvigny, A. Schikorra, E. Stein, F. Tomi, H. Wente, and many others.
\end{itemize}
Frames parallel in the normal bundle are special Coulomb frames, namely those with vanishing total curvature. We consider {\it non-parallel normal frames} extensively.\\[1ex]
Parallel normal frames for one-dimensional curves in space are already widely used in the research literature, see e.g. da Costa \cite{dacosta_1982} for a geometric presentation of physical problems in quantum mechanics, or Burchard and Thomas \cite{burchard_thomas_2002} for a sophisticated analytical description of the dynamics of Euler's elastic curves.\\[1ex]
Nevertheless, elementary treatments of higher dimensional and higher codimensional differential geometry is underrepresented in ordinary textbooks. An elaboration of problems and methods, considered from the perspectives of geometric analysis, is rather scarce.\\[1ex]
That is finally the issue the paper at hand focuses on. It continues the elementary differential geometry from B\"ar \cite{baer_2009}, Blaschke and Leichtwei\ss{} \cite{blaschke_leichtweiss_1973}, Brauner \cite{brauner_1981}, or Eschenburg and Jost \cite{eschenburg_jost_2007} for surfaces in $\mathbb R^3$ and furnishes this theory with classical and recent results from the harmonic analysis and the theory of nonlinear elliptic systems of partial differential equations.
\goodbreak\noindent
Most of the results presented here are obtained in a fruitful collaboration with Frank M\"uller from the University of Duisburg-Essen. The reader can find our original approaches in the following arXiv resources:
\begin{itemize}
\item[1.]
{\it On critical normal sections for two-dimensional immersions in $R^n$ and a Riemann-Hilbert problem.}\\
arXiv:0705.334.7, May 2007.
\vspace*{-1ex}
\item[2.]
{\it On critical normal sections for two-dimensional immersions in $\mathbb R^{n+2}.$}\\
arXiv:0709.0867, September 2007.
\vspace*{-1ex}
\item[3.]
{\it On the existence of normal Coulomb frames for two-dimensional immersions with higher codimension.}\\
arXiv:0906.1865, June 2009.
\end{itemize}
\vspace*{6ex}
Berlin, Oktober 2009\hfill{Steffen Fr\"ohlich}
\cleardoublepage
\noindent
{\Large\bf Contents Lecture I}\\[4ex]
{\Large\bf Basics}
\vspace*{3ex}
\begin{itemize}
\item[{\bf 1.}]
{\bf Regular surfaces\ \dotfill\ \pageref{section_regularsurfaces}}
\begin{itemize}
\item[1.1]
First definitions\ \dotfill\ \pageref{par_firstdefinitions}
\item[1.2]
Tangent space and normal space\ \dotfill\ \pageref{par_tangentspace}
\item[1.3]
Normal frames\ \dotfill\ \pageref{par_normalframes}
\end{itemize}
\item[{\bf 2.}]
{\bf Examples\ \dotfill\ \pageref{section_examples}}
\begin{itemize}
\item[2.1]
Surface graphs\ \dotfill\ \pageref{par_surfacegraphs}
\item[2.2]
Holomorphic surface graphs\ \dotfill\ \pageref{par_holomorpheGraphen}
\item[2.3]
The Veronese surface\ \dotfill\ \pageref{par_veronese}
\end{itemize}
\item[{\bf 3.}]
{\bf Fundamental forms\ \dotfill\ \pageref{section_fundamentalforms}}
\begin{itemize}
\item[3.1]
The first fundamental form\ \dotfill\ \pageref{par_firstfundamentalform}
\item[3.2]
The tensor of the second fundamental forms\ \dotfill\ \pageref{par_secondfundamentalform}
\item[3.3]
Conformal parameters\ \dotfill\ \pageref{par_konformeParameter}
\item[3.4]
Application to holomorphic surfaces\ \dotfill\ \pageref{par_application}
\item[3.5]
Outlook. Open problem\ \dotfill\ \pageref{par_outlook}
\end{itemize}
\item[{\bf 4.}]
{\bf Differential equations\ \dotfill\ \pageref{section_differentialequations}}
\begin{itemize}
\item[4.1]
The Christoffel symbols\ \dotfill\ \pageref{par_christoffel}
\item[4.2]
The Gau\ss{} equations\ \dotfill\ \pageref{par_gaussequations}
\item[4.3]
The torsion coefficients\ \dotfill\ \pageref{par_torsionen}
\item[4.4]
The Weingarten equations\ \dotfill\ \pageref{par_weingarten}
\end{itemize}
\item[{\bf 5.}]
{\bf Integrability conditions\ \dotfill\ \pageref{section_integrabilityconditions}}
\begin{itemize}
\item[5.1]
Problem statement\ \dotfill\ \pageref{par_problem_integrierbarkeit}
\item[5.2]
The integrability conditions of Codazzi and Mainardi\ \dotfill\ \pageref{par_codazzi_mainardi}
\item[5.3]
The integrability conditions of Gau\ss{}\ \dotfill\ \pageref{par_gaussintegrability}
\item[5.4]
The curvature tensor of the tangent bundle\ \dotfill\ \pageref{par_curvaturetangent}
\item[5.5]
The Gau\ss{} curvature. theorema egregium\ \dotfill\ \pageref{par_theoremaegregium}
\item[5.6]
The integrability conditions of Ricci\ \dotfill\ \pageref{par_ricci}
\end{itemize}
\item[{\bf 6.}]
{\bf The curvature of the normal bundle\ \dotfill\ \pageref{section_curvaturenormalbundle}}
\begin{itemize}
\item[6.1]
Problem statement\ \dotfill\ \pageref{par_problemnormal}
\item[6.2]
The curvature tensor of the normal bundle\ \dotfill\ \pageref{par_curvnormbundle}
\item[6.3]
The case $n=2$\ \dotfill\ \pageref{par_lecture2n2}
\item[6.4]
The normal sectional curvature\ \dotfill\ \pageref{par_normalenschnittkruemmung}
\item[6.5]
Preparations for the normal curvature vector I: Curvature matrices\ \dotfill\ \pageref{par_curvaturematrices}
\item[6.6]
Preparations for the normal curvature vector II: The exterior product\ \dotfill\ \pageref{par_preparations2}
\item[6.7]
The curvature vector of the normal bundle\ \dotfill\ \pageref{par_normalcurvaturevector}
\end{itemize}
\goodbreak
\item[{\bf 7.}]
{\bf Elliptic systems\ \dotfill\ \pageref{section_ellipticsystems}}
\begin{itemize}
\item[7.1]
The mean curvature vector\ \dotfill\ \pageref{par_meancurvaturevector}
\item[7.2]
The mean curvature system\ \dotfill\ \pageref{par_meancurvaturesystem}
\item[7.3]
Quadratic growth in the gradient. A maximum principle\ \dotfill\ \pageref{par_maximumprinciple}
\item[7.4]
A curvature estimate\ \dotfill\ \pageref{par_curvatureestimate}
\end{itemize}
\end{itemize}
\vspace*{10ex}
{\Large\bf Contents Lecture II}\\[4ex]
{\Large\bf Normal Coulomb frames in $\mathbb R^4$}
\vspace*{3ex}
\noindent
\begin{itemize}
\item[{\bf 8.}]
{\bf Problem statement. Curves in $\mathbb R^3$\ \dotfill\ \pageref{section_curves}}
\item[{\bf 9.}]
{\bf Torsion free normal frames\ \dotfill\ \pageref{section_torsionfree}}
\item[{\bf 10.}]
{\bf Examples\ \dotfill\ \pageref{section_examplesn4}}
\begin{itemize}
\item[10.1]
Spherical surfaces\ \dotfill\ \pageref{par_spherical}
\item[10.2]
The flat Clifford torus\ \dotfill\ \pageref{par_clifford}
\item[10.3]
Parallel type surface\ \dotfill\ \pageref{par_parallel}
\end{itemize}
\item[{\bf 11}]
{\bf Normal Coulomb frames\ \dotfill\ \pageref{section_normalcoulombframes}}
\begin{itemize}
\item[11.1]
The total torsion\ \dotfill\ \pageref{par_totaltorsion}
\item[11.2]
Definition of normal Coulomb frames\ \dotfill\ \pageref{par_definitioncoulomb}
\item[11.3]
The Euler-Lagrange equation\ \dotfill\ \pageref{par_eulerlagrangen4}
\item[11.4]
Construction of normal Coulomb frames via a Neumann problem\ \dotfill\ \pageref{par_neumannproblemn4}
\item[11.5]
Minimality property of normal Coulomb frames\ \dotfill\ \pageref{par_minimalityn4}
\end{itemize}
\item[{\bf 12.}]
{\bf Estimating the torsion coefficients\ \dotfill\ \pageref{section_estimatingtorsion}}
\begin{itemize}
\item[12.1]
Reduction for flat normal bundles\ \dotfill\ \pageref{par_reduction}
\item[12.2]
Estimates via the maximum principle\ \dotfill\ \pageref{par_maxprinciple}
\item[12.3]
Estimates via a Cauchy-Riemann boundary value problem\ \dotfill\ \pageref{par_cauchyriemann}
\end{itemize}
\item[{\bf 13.}]
{\bf Estimates for the total torsion\ \dotfill\ \pageref{section_estimatestotaltorsion}}
\item[{\bf 14.}]
{\bf An example: Holomorphic graphs\ \dotfill\ \pageref{section_holomorphicgraphs}}
\end{itemize}
\vspace*{10ex}
{\Large\bf Contents Lecture III}\\[4ex]
{\Large\bf Normal Coulomb frames in $\mathbb R^{n+2}$}
\vspace*{3ex}
\noindent
\begin{itemize}
\item[{\bf 15.}]
{\bf Problem statemtent\ \dotfill\ \pageref{section_problembeliebig}}
\item[{\bf 16.}]
{\bf The Euler-Lagrange equations\ \dotfill\ \pageref{section_eulerlagrangeallgemein}}
\begin{itemize}
\item[16.1]
Definition of normal Coulomb frames\ \dotfill\ \pageref{par_definitionallgemein}
\item[16.2]
The first variation\ \dotfill\ \pageref{par_firstvariation}
\item[16.3]
The integral functions\ \dotfill\ \pageref{par_integralfunctions}
\item[16.4]
A nonlinear elliptic system\ \dotfill\ \pageref{par_nonlinearellipticsystem}
\end{itemize}
\goodbreak
\item[{\bf 17.}]
{\bf Examples\ \dotfill\ \pageref{section_examplesallgemein}}
\begin{itemize}
\item[17.1]
The case $n=2$\ \dotfill\ \pageref{par_n2allgemein}
\item[17.3]
The case $n=3$\ \dotfill\ \pageref{par_n3allgemein}
\end{itemize}
\item[{\bf 18.}]
{\bf Quadratic growth in the gradient\ \dotfill\ \pageref{section_quadraticgrowth}}
\begin{itemize}
\item[18.1]
A Grassmann-type vector\ \dotfill\ \pageref{par_grassmanngrowth}
\item[18.2]
Nonlinear systems with quadratic growth\ \dotfill\ \pageref{par_quadraticgrowth}
\end{itemize}
\item[{\bf 19.}]
{\bf Torsion free normal frames\ \dotfill\ \pageref{section_torsionfree}}
\begin{itemize}
\item[19.1]
The case $n=3$\ \dotfill\ \pageref{par_torsionfreen3}
\item[19.2]
An auxiliary function\ \dotfill\ \pageref{par_auxiliaryfunction}
\item[19.3]
The case $n>3$\ \dotfill\ \pageref{par_zerotorsion}
\end{itemize}
\item[{\bf 20.}]
{\bf Non-flat normal bundles\ \dotfill\ \pageref{section_nonflat}}
\begin{itemize}
\item[20.1]
An upper bound via Wente's $L^\infty$-estimate\ \dotfill\ \pageref{par_wentebound}
\item[20.2]
An upper bound via Poincare's inequality\ \dotfill\ \pageref{par_poincarebound}
\item[20.3]
An estimate for the torsion coefficients\ \dotfill\ \pageref{par_torsionestimate}
\end{itemize}
\item[{\bf 21.}]
{\bf Bounds for the total torsion\ \dotfill\ \pageref{section_boundstotaltorsion}}
\begin{itemize}
\item[21.1]
Upper bounds\ \dotfill\ \pageref{par_upperbounds}
\item[21.2]
A lower bound\ \dotfill\ \pageref{par_lowerbounds}
\end{itemize}
\item[{\bf 22.}]
{\bf Existence and regularity of weak normal Coulomb frames\ \dotfill\ \pageref{section_existence}}
\begin{itemize}
\item[22.1]
Regularity results for the homogeneous Poisson problem\ \dotfill\ \pageref{par_poisson}
\item[22.2]
Existence of weak normal Coulomb frames\ \dotfill\ \pageref{par_weakexistence}
\item[22.3]
$H_{loc}^{2,1}$-regularity of weak normal Coulomb frames\ \dotfill\ \pageref{par_Hframes}
\end{itemize}
\item[{\bf 23.}]
{\bf Classical regularity of normal Coulomb frames\ \dotfill\ \pageref{section_classical}}
\item[{\bf 24.}]
{\bf Estimates for the area element $W$\ \dotfill\ \pageref{section_Westimates}}
\begin{itemize}
\item[24.1]
Inner estimates\ \dotfill\ \pageref{par_inner}
\item[24.2]
Global estimates\ \dotfill\ \pageref{par_global}
\item[24.3]
Straightening the boundary\ \dotfill\ \pageref{par_straight}
\end{itemize}
\end{itemize}
\cleardoublepage
\noindent
  $$ $$\\[15ex]
\pagestyle{empty}
{\bf{\sc{\Large Lecture I}}}\\[4ex]
{\bf{\sc{\huge Basics}}}\\[6ex]
\rule{108ex}{0.2ex}
\vspace*{20ex}
\begin{itemize}
\item[1.]
Regular surfaces
\item[2.]
Examples
\item[3.]
Fundamental forms
\item[4.]
Differential equations
\item[5.]
Integrability conditions
\item[6.]
The curvature of the normal bundle
\item[7.]
Elliptic systems
\end{itemize}
\cleardoublepage\noindent
\pagestyle{myheadings}
\markboth{Normal Coulomb frames}{Normal Coulomb frames}
\thispagestyle{empty}
\vspace*{10ex}
\section{Regular surfaces}
\label{section_regularsurfaces}
\subsection{First definitions}
\label{par_firstdefinitions}
Let $n\ge 1$ be an integer. The main objects of our considerations are vector-valued mappings
  $$X=X(u,v)=\big(x^1(u,v),\ldots x^{n+2}(u,v)\big),\quad(u,v)\in\overline B,$$
defined on the topological closure $\overline B\subset\mathbb R^2$ of the open unit disc
  $$B:=\big\{(u,v)\in\mathbb R^2\,:\,u^2+v^2<1\big\}.$$
From the point of view of differential geometry we always want to assume
\begin{itemize}
\item[$\longrightarrow$]
$\mbox{rank}\,DX(u,v)=2$ for all $(u,v)\in\overline B$
\end{itemize}
for the Jacobian $DX\in\mathbb R^{2\times(n+2)}$ of $X,$ i.e. at each point $w\in\overline B$ there is a non-degenerate, two-dimensional tangent plane, see the next paragraph for more details; but also
\begin{itemize}
\item[$\longrightarrow$]
$X\in C^{k,\alpha}(\overline B,\mathbb R^{n+2})$ with an integer $k\ge 3$ and a H\"older exponent $\alpha\in(0,1).$ This latter regularity assumption is particularly needed for our analytical considerations in the third lecture.
\end{itemize}
{\it Thus the mapping $X$ represents a regular surface or two-dimen\-sional immersion of disc-type.}\index{immersion}
\subsection{Tangent space and normal space}
\label{par_tangentspace}
For $X$ represents an immersion, at each point $w\in\overline B$ there exist two linearly independent {\it tangent vectors}\index{tangent vector}
  $$X_u=\frac{\partial X}{\partial u}
    \quad\mbox{und}\quad
    X_v=\frac{\partial X}{\partial v}\,,$$
the derivatives of $X,$\footnote{Symbols like $X_u,$ $N_{\sigma,u},$ $g_{ij,u}$ etc. denote partial derivatives w.r.t. $u.$} spanning the two-dimensional {\it tangent space at}\index{tangent space} $w\in\overline B:$
  $${\mathbb T}_X(w):=\mbox{span}\,\big\{X_u(w),X_v(w)\big\}\cong\mathbb R^2\,.$$
Its orthogonal complement forms the {\it normal space at}\index{normal space} $w\in\overline B,$ i.e.
  $${\mathbb N}_X(w):=\big\{Z\in\mathbb R^{n+2}\,:\,X_u\cdot Z=X_v\cdot Z=0\big\}\cong\mathbb R^n$$
where $X\cdot Y$ denotes the Euclidean inner product between two vectors $X,Y\in\mathbb R^{n+2},$ that is
  $$X\cdot Y:=\sum_{i=1}^{n+2}x^iy^i\,.$$
\subsection{Normal frames}
\label{par_normalframes}
At each point $w\in\overline B$ we choose $n\ge 1$ unit normal vectors $N_\sigma=N_\sigma(w),$ $\sigma=1,\ldots,n,$ satisfying the orthonormality relations\index{orthonormality relations}
  $$N_\sigma\cdot N_\vartheta
    =\delta_{\sigma\vartheta}
    =\left\{
       \begin{array}{ll}
         1 & \mbox{for}\ \sigma=\vartheta \\
         0 & \mbox{for}\ \sigma\not=\vartheta
       \end{array}
     \right.
    \quad\mbox{for all}\ \sigma,\vartheta=1,\ldots,n$$
with Kronecker's symbol\index{Kronecker symbol} $\delta_{\sigma\vartheta}.$ We also use notations like $\delta_{\sigma\vartheta},$ $\delta_\sigma^\vartheta,$ or $\delta^{\sigma\vartheta}$ for the Kronecker symbol. Now choose $N_\sigma(w)$ in such a way that
\begin{itemize}
\item[(i)]
they span the normal space $\mathbb N_X(w)$ at $w\in\overline B,$
\vspace*{-1.4ex}
\item[(ii)]
they are oriented in the following sense
 $$\det\big(X_u,X_v,N_1,\ldots,N_n\big)>0.$$
\end{itemize}
Thanks to the contractibility of the domain $\overline B$ we may set
\begin{definition}
The matrix
  $$N=(N_1,\ldots,N_n)\in C^{k-1,\alpha}(\overline B,\mathbb R^{n\times(n+2)}),$$
which consists of $n\ge 1$ orthonormal unit normal vectors $N_\sigma=N_\sigma(w),$ oriented in the above sense and spanning the $n$-dimensional normal space ${\mathbb N}(w)$ at each $w\in\overline B,$ i.e. moving $C^{k-1,\alpha}$-smooth along the whole surface $X,$ is called a normal frame.\index{normal frame}
\end{definition}
\vspace*{6ex}
\begin{center}
\rule{35ex}{0.1ex}
\end{center}
\vspace*{6ex}
\section{Examples}
\label{section_examples}
We briefly consider some explicit given surfaces which simultaneously represent important examples for our following analysis.
\subsection{Surface graphs}
\label{par_surfacegraphs}
\begin{definition}
A surface graph\index{surface graph} is a mapping
  $$\mathbb R^2\ni(x,y)\mapsto\big(x,y,z_1(x,y),\ldots,z_n(x,y)\big)\in\mathbb R^{n+2}$$
with sufficiently smooth functions $z_\sigma,$ $\sigma=1,\ldots,n,$ generating the graph.
\end{definition}
\noindent
Graphs are always immersions. We can even specify a possible normal frame\index{normal frame} in the form
  $$\begin{array}{lll}
      N_1\negthickspace
      & = & \displaystyle\negthickspace
            \frac{1}{\sqrt{1+|\nabla z_1|^2}}\,\big(z_{1,x},z_{1,y},1,0,0,\ldots,0\big), \\[4ex]
      N_2\negthickspace
      & = & \displaystyle\negthickspace
            \frac{1}{\sqrt{1+|\nabla z_2|^2}}\,\big(z_{2,x},z_{2,y},0,1,0,\ldots,0\big)
            \quad\mbox{etc.}
    \end{array}$$
with $z_{\sigma,x}$ and $z_{\sigma,y}$ denoting the partial derivatives of $z_\sigma,$ and $\nabla z_\sigma=(z_{\sigma,x},z_{\sigma,y})\in\mathbb R^2$ its Euclidean gradient.
\begin{definition}
These special unit normal vectors $N_\sigma$ are called the Euler unit normals of the graph $X.$\index{Euler unit normals}
\end{definition}
\noindent
In general, Euler unit normals are not orthogonal, but by means of Gram-Schmidt orthogonalization\index{Gram Schmidt orthogonalization} we can always construct an orthonormal basis from the Euler normal frame.
\subsection{Holomorphic surface graphs}
\label{par_holomorpheGraphen}
Now let us consider surface graphs of the special form\index{holomorphic graph}
  $$\mathbb R^2\ni(x,y)\mapsto\big(x,y,\varphi(x,y),\psi(x,y)\big)\in\mathbb R^4$$
with $\varphi$ and $\psi$ being real resp. imaginary part of a complex-valued holomorphic function
  $$\Phi(x,y)=\varphi(x,y)+i\psi(x,y)\in\mathbb C\,.$$
For example, say $\varphi+i\psi=(x+iy)^2=x^2-y^2+2ixy,$ so that $\varphi$ and $\psi$ solve the Cauchy-Riemann equations\index{Cauchy-Riemann equations}
  $$\varphi_x=\psi_y\,,\quad
    \varphi_y=-\psi_x\,.$$
Then the associated Euler unit normals
  $$\begin{array}{lll}
      N_1\negthickspace
      & = & \negthickspace\displaystyle
            \frac{1}{\sqrt{1+|\nabla\varphi|^2}}\,\big(\varphi_x,\varphi_y,1,0\big), \\[4ex]
      N_2\negthickspace
      & = & \negthickspace\displaystyle
            \frac{1}{\sqrt{1+|\nabla\psi|^2}}\,\big(\psi_x,\psi_y,0,1\big)
            \,=\,\frac{1}{\sqrt{1+|\nabla\varphi|^2}}\,\big(-\varphi_y,\varphi_x,0,1\big)
    \end{array}$$
form an orthonormal frame (in the normal space) because we immediately verify
  $$N_1\cdot N_2
    =\frac{1}{1+|\nabla\varphi|^2}\,\big(-\varphi_x\varphi_y+\varphi_y\varphi_x\big)
    =0.$$
\subsection{The Veronese surface}
\label{par_veronese}
The following surface\index{Veronese surface}
  $$\lambda
    \left(
      \frac{yz}{\sqrt{3}}\,,
      \frac{xz}{\sqrt{3}}\,,
      \frac{xy}{\sqrt{3}}\,,
      \frac{x^2-y^2}{2\sqrt{3}}\,,
      \frac{1}{6}\,\big(x^2+y^2-2z^2\big)
    \right)
    \quad\mbox{with}\quad
    x^2+y^2+z^2=3,\quad
    \lambda\in\mathbb R,$$
first described by Giuseppe Veronese (1854-1917), is a special example for our analysis.
\begin{proposition}
(Chen and Ludden \cite{chen_ludden_1972}, 1972)\\
The Veronese surface is the only compact surface (without boundary) in $\mathbb R^5$ with parallel mean curvature vector and constant normal curvature. Furthermore, it has constant Gau\ss{} curvature.
\end{proposition}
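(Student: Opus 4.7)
The plan is to reduce the problem, via parallel mean curvature, to a surface inside a round hypersphere, then use a Simons-type identity to rigidify the second fundamental form, and finally identify the resulting algebraic model with the Veronese immersion.

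First, I would show $M$ lies in a small sphere. Since $\nabla^\perp H = 0$, we have $d|H|^2 = 0$, so $|H|$ is constant. If $|H| \equiv 0$, then $M$ would be a compact minimal surface in $\mathbb R^5$, which is excluded by the maximum principle applied to the ambient coordinate functions. Hence $|H| > 0$, and by Yau's theorem (extending Hoffman-Osserman) a compact surface in Euclidean space with parallel, nowhere vanishing mean curvature vector must lie in a round hypersphere; in our case $M \subset S^4(r)$ with $r = 1/|H|$.

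Second, working inside $S^4(r)$, I would combine the Codazzi-Mainardi equations, the Gauss integrability conditions, and the Ricci equations (all developed in Lecture I) into a Simons-type identity
$$\tfrac12\,\Delta |h|^2 \;=\; |\nabla^\perp h|^2 + P\bigl(K,\,K^\perp,\,|H|^2,\,|h|^2\bigr),$$
where $P$ is an explicit polynomial and the formula uses parallel MCV to eliminate derivative terms in $H$. Integrating over the closed $M$ the Laplacian integrates to zero; since $K^\perp$ and $|H|^2$ are constant by hypothesis, the resulting $L^2$-identity forces $\nabla^\perp h \equiv 0$ and $|h|^2$ constant pointwise. The Gauss equation $K = r^{-2} + 2|H|^2 - \tfrac12|h|^2$ then yields that $K$ is constant, which is the second claim of the proposition.

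Finally, a compact surface in $S^4(r)$ with parallel second fundamental form and constant $|H|,\,|h|^2,\,K^\perp$ is extremely rigid. Expressed in a normal Coulomb frame along $M$ (three orthonormal sections of the normal bundle, two tangent to $S^4$ and one radial), the shape operators satisfy an algebraic system---trace fixed by $|H|$, commutator norms by $K^\perp$, total length by $|h|^2$---whose orbit under ambient isometry is exhausted by the $\mathrm{SO}(3)$-equivariant Veronese embedding $\mathbb{RP}^2 \to S^4(r) \subset \mathbb R^5$. The main obstacle is precisely this last identification: passing from the pointwise algebraic constraints on the three shape operators to the global conclusion that $M$ equals, up to rigid motion, the Veronese surface. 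This is the classification step originally carried out by Chen and Ludden.
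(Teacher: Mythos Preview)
The paper does not prove this proposition. It is stated as a result of Chen and Ludden \cite{chen_ludden_1972} and immediately followed by the sentence ``The contents of this proposition will become clear after the study of this first lecture,'' which refers only to the terminology (parallel mean curvature vector, normal curvature, Gau\ss{} curvature), not to an argument. No proof is given anywhere in the text; the proposition serves as a motivating example in \S\ref{par_veronese}. So there is nothing in the paper to compare your proposal against.

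As an independent assessment: your outline is a reasonable sketch of the standard route to such rigidity statements---reduce to a hypersphere via parallel $H$, integrate a Simons-type identity to force $\nabla^\perp h=0$ and $|h|^2$ constant, then invoke an algebraic classification of the shape operators. You correctly flag the last step as the genuine content and defer it back to Chen--Ludden; that is honest, but it also means your write-up is more a roadmap than a proof. One small caution: in $\mathbb R^5$ the normal bundle has rank three, so ``constant normal curvature'' needs to be interpreted carefully (cf.\ the curvature vector $\mathfrak S_N$ in \S\ref{par_normalcurvaturevector}); make sure the scalar $K^\perp$ you use matches the hypothesis actually assumed in \cite{chen_ludden_1972}.
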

\noindent
The contents of this proposition will become clear after the study of this first lecture.
\vspace*{6ex}
\begin{center}
\rule{35ex}{0.1ex}
\end{center}
\vspace*{6ex}
\section{Fundamental forms}
\label{section_fundamentalforms}
\subsection{The first fundamental form}
\label{par_firstfundamentalform}
Let $u^1=u$ and $u^2=v.$
\begin{definition}
The first fundamental form\index{first fundamental form} $g=(g_{ij})_{i,j=1,2}\in\mathbb R^{2\times 2}$ of $X$ is given by
  $$g_{ij}:=X_{u^i}\cdot X_{u^j}\,,\quad i,j=1,2.$$
\end{definition}
\noindent
The differential line element\index{line element} of the surface reads as follows
  $$ds^2=\sum_{i,j=1}^2g_{ij}\,du^idu^j\,.$$
\begin{remark}
Formally it results from inserting the representation of $X$ into the Euclidean form
  $$ds^2=\sum_{k,\ell=1}^{n+2}\delta_{k\ell}\,dx^kdx^\ell$$
of the embedding space $\mathbb R^{n+2}$ with coordinates $x^k,$ $k=1,\ldots,n+2,$ by means of the following calculation
  $$ds^2
    =\sum_{k,\ell=1}^n
     \delta_{k\ell}(x_u^k\,du+x_v^k\,dv)(x_u^\ell\,du+x_v^\ell\,dv)
    =\sum_{k=1}^n
     \Big\{
       (x_u^k)^2\,du^2+2(x_u^kx_v^k)\,dudv+(x_v^k)^2\,dv^2
     \Big\}\,.$$
\end{remark}
\noindent
We note that the first fundamental form is invertible on account of the regularity property $\mbox{rank}\,DX=2$ for the Jacobian $DX$ of the mapping $X.$ For its inverse we write
  $$g^{-1}=(g^{ij})_{i,j=1,2}\in\mathbb R^{2\times 2}\,.$$
At each point $w\in\overline B$ it holds
  $$\sum_{j=1}^2g_{ij}g^{jk}=\delta_i^k$$
with Kronecker's symbol $\delta_i^k.$
\subsection{The tensor of the second fundamental forms}
\label{par_secondfundamentalform}
\begin{definition}
To each unit normal vector $N_\sigma$ of a given normal frame $N=(N_1,\ldots,N_n)$ we assign a second fundamental form with coefficients\index{second fundamental form}
  $$L_{\sigma,ij}
    :=X_{u^iu^j}\cdot N_\sigma\,,
    \quad i,j=1,2,\ \sigma=1,\ldots,n.$$
\end{definition}
\noindent
Notice that
  $$X_{u^iu^j}\cdot N_\sigma
    =-X_{u^i}\cdot N_{\sigma,u^j}$$
which follows directly after differentiation of the orthogonality relations $X_{u^i}\cdot N_\sigma=0$ for all $i=1,2,$ and $\sigma=1,\ldots,n.$ In case $n=1$ of one codimension there is only one second fundamental form.
\subsection{Conformal parameters}
\label{par_konformeParameter}
Mainly we will work with a {\it conformal parameter system}\index{conformal parameters} $(u,v)\in\overline B$ on whose account the first fundamental form takes the diagonal form
  $$g_{11}=W=g_{22}\,,\quad
    g_{12}=0
    \quad\mbox{in}\ \overline B.$$
Then the area element
  $$W:=\sqrt{g_{11}g_{22}-g_{12}}$$
represents the {\it conformal factor}\index{conformal factor} w.r.t. this special parametrization. Introducing conformal parameters is justified by the following results.
\begin{proposition}
(Sauvigny \cite{sauvigny_2005}, 2005)\\
Assume that the coefficients $a,$ $b$ and $c$ of the Riemannian metric\index{Riemannian metric}
  $$ds^2=a\,du^2+2b\,dudv+c\,dv^2$$
are of class $C^{1+\alpha}(\overline B,\mathbb R)$ with $\alpha\in(0,1).$ Then there is a conformal parameter system $(u,v)\in\overline B.$
\end{proposition}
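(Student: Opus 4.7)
The plan is to reduce the statement to the solvability of a Beltrami equation for a complex-valued function $w=\tilde u+i\tilde v$ whose real and imaginary parts furnish the desired conformal parameters. Throughout, write $z=u+iv$ and recall that, since $g$ is a Riemannian metric, the discriminant $\Delta:=\sqrt{ac-b^2}$ is strictly positive on $\overline B$, and together with $a,b,c$ lies in $C^{1+\alpha}(\overline B,\mathbb R)$.

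First I would factor the quadratic form formally as
$$a\,du^2+2b\,du\,dv+c\,dv^2=\frac{1}{a}\bigl(a\,du+(b+i\Delta)\,dv\bigr)\bigl(a\,du+(b-i\Delta)\,dv\bigr),$$
which shows that pulling the Euclidean form $d\tilde u^2+d\tilde v^2=dw\,\overline{dw}$ back to $(u,v)$ produces something proportional to $ds^2$ if and only if $dw$ is a (nonzero) complex multiple of $a\,du+(b+i\Delta)\,dv$. Writing $dw=w_u\,du+w_v\,dv$ and eliminating the proportionality factor, this compatibility is
$$\sqrt{a}\,w_v=(b+i\Delta)\,\frac{w_u}{\sqrt{a}}\,,$$
which after converting to the Wirtinger derivatives $w_u=w_z+w_{\bar z}$, $w_v=i(w_z-w_{\bar z})$ becomes the Beltrami equation
$$w_{\bar z}=\mu(z)\,w_z\,,\qquad \mu:=\frac{a-c+2ib}{a+c+2\Delta}\,.$$
A short computation gives $|\mu|^2=\frac{a+c-2\Delta}{a+c+2\Delta}<1$ uniformly on $\overline B$, so the equation is uniformly elliptic, and $\mu\in C^{1+\alpha}(\overline B,\mathbb C)$ because $a,b,c,\Delta$ are.

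Next I would invoke the classical Korn--Lichtenstein existence theorem for the Beltrami equation with Hölder-continuous coefficient: there exists a solution $w\in C^{2+\alpha}(\overline B,\mathbb C)$, obtained up to a possible shrinking and affine rescaling of the parameter domain, which satisfies $w_z\neq 0$ everywhere and hence defines a $C^{2+\alpha}$-diffeomorphism $\Phi:\overline B\to\Phi(\overline B)$, $(u,v)\mapsto(\tilde u,\tilde v):=(\operatorname{Re} w,\operatorname{Im} w)$. Composing $X$ with $\Phi^{-1}$ gives a reparametrization whose first fundamental form is, by construction of $\mu$, of the form $\tilde g_{11}=\tilde g_{22}$ and $\tilde g_{12}=0$, that is, conformal. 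After transplanting back onto $\overline B$ via a suitable conformal mapping of the image domain onto $\overline B$ (which exists and is $C^{2+\alpha}$ up to the boundary by the Riemann mapping theorem together with Kellogg-type boundary regularity, both applicable thanks to the Hölder regularity of the parametrization), one obtains the announced conformal parameter system on the original disc.

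The heart of the argument, and its main obstacle, is the existence step for the Beltrami equation. This is proved by reformulating $w_{\bar z}=\mu w_z$ as the fixed-point problem $w_z-\mu\,T(w_z)=1$, where $T$ is the Beurling--Ahlfors singular integral operator; the boundedness of $T$ on $C^\alpha$ (or on $L^p$ for $p$ close to $2$) together with the strict contraction bound $|\mu|\le k<1$ yields a unique solution in the appropriate function space, whose Hölder regularity propagates to $w\in C^{2+\alpha}$ by Schauder theory for the associated Poisson equation $\Delta w=4(\mu w_z)_{\bar z}$. The remaining points — nonvanishing of $w_z$ (hence the diffeomorphism property) and verification of the conformality of the pulled-back metric — are then routine consequences of the construction of $\mu$.
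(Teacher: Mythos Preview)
The paper does not prove this proposition; it is stated as a citation of Sauvigny's textbook, and the only commentary is that the regularity hypothesis is met in the paper's setting and that Sauvigny's result is global (``in the large'') on $\overline B$, in contrast to Chern's local theorem quoted immediately afterwards. There is therefore no proof in the paper to compare your attempt against.

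Your sketch follows the classical Korn--Lichtenstein route via the Beltrami equation, and the computation of the dilatation $\mu$ and the bound $|\mu|<1$ are correct. The one point that deserves more care is precisely the global aspect the paper singles out: the basic Korn--Lichtenstein argument is local, and your passage to a global conformal chart on $\overline B$ relies on (i) solving $w_{\bar z}=\mu w_z$ globally on $\overline B$ with a homeomorphic solution of class $C^{2+\alpha}(\overline B)$, and (ii) post-composing with a Riemann map of the image Jordan domain back onto $\overline B$ that is $C^{2+\alpha}$ up to the boundary. Step (i) is available from the $L^p$/H\"older theory of the Beurling--Ahlfors transform together with Schauder estimates, as you indicate; step (ii) needs Kellogg--Warschawski boundary regularity, which in turn requires that $\partial\Phi(B)$ be a $C^{2+\alpha}$ Jordan curve --- this follows from $w\in C^{2+\alpha}(\overline B)$ and $w_z\neq 0$ on $\overline B$, but you should state it, since it is exactly the ingredient that upgrades the local existence to the global statement on the closed disc.
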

\noindent
Recall that $ds^2$ is called of {\it Riemannian type} if $ac-b^2>0$ in $\overline B.$\\[1ex]
The regularity condition required here is satisfied in our situation because $g_{ij}\in C^{k-1}(\overline B)$ with $k\ge 3.$ While Sauvigny's result holds {\it in the large,} i.e. on the whole closed disc $\overline B,$ another optimal resultat {\it in the small} is the following.
\begin{proposition}
(Chern \cite{chern_1955}, 1955)\\
Assume that the coefficients of the Riemannian metric
  $$ds^2=a\,du^2+2b\,dudv+c\,dv^2$$
are H\"older continuous in $\overline B.$ Then for every point $w\in B$ there exists an open neighborhood over which the surface can be pararmetrized conformally.
\end{proposition}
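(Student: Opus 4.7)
The plan is to reduce the existence of conformal parameters to the local solvability of a Beltrami equation, and then to handle that equation via a Cauchy-kernel fixed-point argument in H\"older spaces. First, identify $(u,v)$ with $w := u+iv$; since $ac - b^2 > 0$, picking the root with positive imaginary part gives the factorization
$$ds^2 = \frac{1}{a}\bigl|a\,du + (b + i\sqrt{ac-b^2})\,dv\bigr|^2 = \lambda(w)\,|dw + \mu_0(w)\,d\bar w|^2$$
with $\lambda > 0$ and a H\"older-continuous Beltrami coefficient $\mu_0$ satisfying $|\mu_0(w)| < 1$ throughout $\overline B$. A new coordinate $\zeta = \xi + i\eta$ furnishes a conformal parametrization precisely when $d\zeta$ is a complex multiple of $dw + \mu_0\,d\bar w$, i.e. when $\zeta$ solves the Beltrami equation $\zeta_{\bar w} = \mu_0(w)\,\zeta_w$ with $\zeta_w \neq 0$.

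Next I solve this Beltrami equation in a small disc $D_\rho(w_0)$ around the given point $w_0 \in B$. By composing with a preliminary affine map in $w$ one may assume $\mu_0(w_0) = 0$, so that $\|\mu_0\|_{C^0(\overline{D_\rho})}$ becomes arbitrarily small when $\rho$ is shrunk. The Cauchy--Pompeiu formula then gives the representation $\zeta(w) = w + T\varphi(w)$ with $\varphi := \zeta_{\bar w}$, where $T$ denotes the Cauchy--Green transform and $\partial_w T = S$ is the Beurling--Ahlfors singular integral. Substituting back yields the fixed-point equation
$$\varphi = \mu_0\,(1 + S\varphi),$$
to be solved in $C^{0,\alpha}(\overline{D_\rho})$. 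Since $S$ is bounded on this H\"older space and $\|\mu_0\|_{C^0}$ is small, the right-hand side defines a contraction, and Banach's fixed-point principle produces a unique H\"older solution $\varphi$; integrating back yields $\zeta \in C^{1,\alpha}(\overline{D_\rho})$ with $\zeta_w(w_0) = 1 \neq 0$.

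From $\zeta_w(w_0) \neq 0$ the inverse function theorem furnishes a neighborhood of $w_0$ on which $\zeta$ is a $C^{1,\alpha}$-diffeomorphism, and by construction $ds^2 = \lambda|\zeta_w|^{-2}\,|d\zeta|^2$, which is the required conformal form with a positive H\"older-continuous conformal factor. The main obstacle is the second step: establishing boundedness of the Beurling--Ahlfors operator on $C^{0,\alpha}$ and arranging the localization so that $\mu_0$ becomes small enough to contract. This is exactly the delicate point addressed in Chern's 1955 note, which handles it in a self-contained manner and thereby avoids the more technical $L^p$--Calder\'on--Zygmund machinery that would be needed for merely bounded Beltrami coefficients.
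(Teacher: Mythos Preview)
The paper does not give its own proof of this proposition; it merely states the result and attributes it to Chern's 1955 note, so there is nothing in the paper to compare your argument against line by line.

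That said, your outline is essentially the standard proof and is close in spirit to what Chern actually does: factor the metric to extract a H\"older-continuous Beltrami coefficient $\mu_0$ with $|\mu_0|<1$, normalize so that $\mu_0$ vanishes at the base point, and then solve $\zeta_{\bar w}=\mu_0\zeta_w$ locally by a fixed-point argument using the Cauchy transform $T$ and its $\partial_w$-derivative $S$. One point you should tighten: to make $\varphi\mapsto\mu_0(1+S\varphi)$ a contraction in $C^{0,\alpha}(\overline{D_\rho})$, smallness of $\|\mu_0\|_{C^0}$ alone is not sufficient, because the H\"older norm of a product involves the H\"older seminorm of $\mu_0$ as well, and that seminorm does not shrink merely by restricting to a smaller disc. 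The usual remedy is to rescale the disc $D_\rho\to D_1$; the pulled-back Beltrami coefficient then has H\"older seminorm of order $\rho^\alpha$, which does become small. With that adjustment (or an equivalent bookkeeping of the H\"older seminorm under localization), your sketch is correct, and the concluding inverse-function-theorem step is fine since $\zeta\in C^{1,\alpha}$ with $\zeta_w(w_0)=1$.
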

\subsection{Application to holomorphic surfaces}
\label{par_application}
Let $w=u+iv\in\overline B.$ As in paragraph \ref{par_holomorpheGraphen} we consider mappings\index{holomorphic surface}
  $$X(w)=\big(\Phi(w),\Psi(w)\big)\colon\overline B\longrightarrow\mathbb C\times\mathbb C$$
with complex-valued holomorphic functions $\Phi=(\varphi_1,\varphi_2)$ and $\Psi=(\psi_1,\psi_2).$ Real- and imaginary part are solutions of the Cauchy-Riemann equations\index{Cauchy-Riemann equations}
  $$\varphi_{1,u}=\varphi_{2,v}\,,\quad
    \varphi_{1,v}=-\varphi_{2,u}\,,\quad
    \psi_{1,u}=\psi_{2,v}\,,\quad
    \psi_{1,v}=-\psi_{2,u}\,.$$
Now we calculate the coefficients of the first fundamental form
  $$\begin{array}{lll}
      g_{11}\negthickspace
      & = & \negthickspace\displaystyle
            X_u\cdot X_u
            \,=\,(\varphi_{1,u},\varphi_{2,u},\psi_{1,u},\psi_{2,u})^2
            \,=\,\varphi_{1,u}^2+\varphi_{2,u}^2+\psi_{1,u}^2+\psi_{2,u}^2\,, \\[2ex]
      g_{22}\negthickspace
      & = & \negthickspace\displaystyle
            X_v\cdot X_v
            \,=\,\varphi_{1,v}^2+\varphi_{2,v}^2+\psi_{1,v}^2+\psi_{2,v}^2
            \,=\,\varphi_{2,u}^2+\varphi_{1,u}^2+\psi_{2,u}^2+\psi_{1,u}^2
            \,=\,g_{11}
    \end{array}$$
as well as
  $$g_{12}
    =X_u\cdot X_v
    =\varphi_{1,u}\varphi_{1,v}+\varphi_{2,u}\varphi_{2,v}+\psi_{1,u}\psi_{1,v}+\psi_{2,u}\psi_{2,v}
    =0.$$
Thus we have proved the
\begin{proposition}
The map $(\Phi(w),\Psi(w)$ with holomorphic functions $\Phi$ and $\Psi$ is conformally parametrized.\index{conformal parameters}
\end{proposition}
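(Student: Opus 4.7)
The plan is to verify the two defining conditions of a conformal parametrization, namely $g_{11}=g_{22}$ and $g_{12}=0$, directly from the definition of the first fundamental form, exploiting the Cauchy-Riemann equations separately for the two holomorphic components $\Phi=(\varphi_1,\varphi_2)$ and $\Psi=(\psi_1,\psi_2)$.

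First I would write out the tangent vectors componentwise, namely $X_u=(\varphi_{1,u},\varphi_{2,u},\psi_{1,u},\psi_{2,u})$ and $X_v=(\varphi_{1,v},\varphi_{2,v},\psi_{1,v},\psi_{2,v})$, and then compute the three coefficients $g_{11}=|X_u|^2$, $g_{22}=|X_v|^2$, and $g_{12}=X_u\cdot X_v$ as sums of four squares, respectively four products, of partial derivatives. This reduces the proposition to a pure algebraic identity among these eight derivatives.

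Next I would substitute the Cauchy-Riemann equations $\varphi_{1,u}=\varphi_{2,v}$, $\varphi_{1,v}=-\varphi_{2,u}$ together with their $\psi$-analogues. In the expression for $g_{22}$, every $v$-derivative is replaced by a $u$-derivative up to a sign that disappears upon squaring, so after relabelling one recovers precisely the same four squares that appear in $g_{11}$, yielding $g_{11}=g_{22}$. For $g_{12}$, the same substitution turns $\varphi_{1,u}\varphi_{1,v}$ into $-\varphi_{2,v}\varphi_{2,u}$, which cancels with the term $\varphi_{2,u}\varphi_{2,v}$; an identical cancellation occurs in the $\psi$-pair, giving $g_{12}=0$.

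There is essentially no obstacle here; the computation is entirely linear, and the only thing that requires care is correctly tracking the signs so that the two pairs $(\varphi_1,\varphi_2)$ and $(\psi_1,\psi_2)$ each produce the desired cancellation in $g_{12}$. Once these identities are in hand, the conclusion matches verbatim the definition of conformal parameters from paragraph~\ref{par_konformeParameter}, and the proposition follows.
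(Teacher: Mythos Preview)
Your proposal is correct and follows essentially the same approach as the paper: both compute $g_{11}$, $g_{22}$, and $g_{12}$ componentwise and use the Cauchy-Riemann equations for each holomorphic pair to obtain $g_{11}=g_{22}$ and $g_{12}=0$. The only cosmetic difference is that the paper writes the computation out explicitly while you describe it in words.
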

\subsection{Outlook. Open problems}
\label{par_outlook}
We want to itemize some important problems {\it we do not address here} but for which at least partial approaches or even solutions exist in the literature.
\subsubsection{Riemannian embedding space}
From the analytical and from the geometric point of view it is of interest to consider immersions in general Riemannian spaces.\index{Riemannian space} For example, let $X\colon\overline B\to{\mathcal N}^{n+2}$ with a $(n+2)$-dimensional manifold ${\mathcal N}$ equipped with a Riemannian metric $\eta_{k\ell}$ for $k,\ell=1,\ldots,n+2$ satisfying
  $$\sum_{i,j=1}^{n+2}
    \eta_{ij}\xi^i\xi^j>0
    \quad\mbox{for all}\ \xi=(\xi^1,\ldots,\xi^{n+2})\in\mathbb R^{n+2}\,.$$
The corresponding line element\index{line element} is given by
  $$ds^2=\sum_{i,j=1}^{n+2}\eta_{ij}\,dx^idx^j$$
from where we infer the induced line element of the surface
  $$ds^2
    =\sum_{i,j=1}^2\sum_{k,\ell=1}^{n+2}
     \eta_{k\ell}x_{u^i}^kx_{u^j}^\ell\,du^idu^j
    =\sum_{i,j=1}^2
     \gamma_{ij}\,du^idu^j
    \quad\mbox{with}\quad
    \gamma_{ij}=\sum_{k,\ell=1}^{n+2}\eta_{k\ell}x_{u^i}^kx_{u^j}^\ell\,.$$
This form is of Riemannian type and admits again a conformal reparametrization.
\subsubsection{Lorentz spaces and spaceforms}
Beside the Riemanniann spherical spaceform of constant curvature $+1$ we would also like to work with immersions living in pseudo-Euclidean spaces with non-positive signature, in a hyperbolic spaceform of curvature $-1,$ or even curved hyperbolic spaces, for all of them are particularly relevant for many applications in applications.\index{Lorentz space} The simplest example of such an embedding manifold is the fourdimensional Minkowski space $\mathbb R^{3+1}$ with metric\index{Minkowski space}
  $$(\eta_{ij})_{i,j=1,\ldots,4}
   =\left(
      \begin{array}{cccc}
        -1 & 0 & 0 & 0 \\[1ex]
         0 & 1 & 0 & 0 \\[1ex]
         0 & 0 & 1 & 0 \\[1ex]
         0 & 0 & 0 & 1
      \end{array}
    \right).$$
While most of our calculations from Lecture II can also be carried out for surfaces $X\colon\overline B\to\mathbb R^{3+1},$ it seems already difficult to us to succeed if our immersions live in higher dimensional Minkowski- or Lorentz spaces.
\subsubsection{Higher dimensional manifolds}
Various problems appear if we wish to consider higher dimensional manifolds instead of surfaces. This concerns particularly the theory of nonlinear elliptic systems we apply in Lecture III. Our theory is adjusted to the twodimensional situation.
\subsubsection{General vector bundles}
Orthonormal tangent frames are considered in Helein \cite{helein_2002}, and we develop a theory of orthonormal frames in the normal bundle of surfaces. Thus the question arises whether it is possible to work with arbitrary vector bundles over manifolds. Up to now we can not give a complete answer for several methods adapted to our special situation here are applied in Lecture III.\index{vector bundle}
\subsubsection{Mean curvature flow}
Another item are geometric flows for surfaces in higher dimensional spaces, in particular the mean curvature flow, see Ecker \cite{ecker_2004}. The literature covers numerous contributions on this problem, but mainly for surfaces with either mean curvature vector parallel in the normal bundle or even with flat normal bundles. Both approaches cover immersions close to geometric objects in Euclidean space $\mathbb R^3.$\index{mean curvature flow}
\vspace*{6ex}
\begin{center}
\rule{35ex}{0.1ex}
\end{center}
\vspace*{6ex}
\section{Differential equations}
\label{section_differentialequations}
The system $\{X_u,X_v,N_1,\ldots,N_n\}$ forms a {\it moving $(n+2)$-frame} for the immersion $X.$ In this chapter we want to quantify the rate of change of this frame under infinitesimal variations.
\subsection{The Christoffel symbols}
\label{par_christoffel}
To evaluate the derivatives of $X$ we need
\begin{definition}
The connection coefficients of the tangent bundle\index{tangent bundle}\footnote{The tangent bundle is the collection $\displaystyle\bigcup_{w\in\overline B}(w,{\mathbb T}_X(w)).$} are the Christoffel symbols\index{Christoffel symbols}
  $$\Gamma_{ij}^k
    :=\sum_{\ell=1}^2
      \frac{1}{2}\,g^{k\ell}\,
      \big(
        g_{\ell i,u^j}+g_{j\ell,u^i}-g_{ij,u^\ell}
      \big),\quad
     i,j,k=1,2.$$
\end{definition}
\noindent
Using conformal parameters from paragraph \ref{par_konformeParameter}, the Christoffel symbols take the following form
  $$\begin{array}{l}
      \displaystyle
      \Gamma_{11}^1=\frac{W_u}{2W}\,,\quad
      \Gamma_{12}^1=\Gamma_{21}^1=\frac{W_v}{2W}\,,\quad
      \Gamma_{22}^1=-\frac{W_u}{2W}\,,\\[3ex]
      \displaystyle
      \Gamma_{11}^2=-\frac{W_v}{2W}\,,\quad
      \Gamma_{12}^2=\Gamma_{21}^2=\frac{W_u}{2W}\,,\quad
      \Gamma_{22}^2=\frac{W_v}{2W}
    \end{array}$$
with the area element $W.$ The Christoffel symbols encode the way of parallel transport of surface vector fields. Our main objects of investigation are the connection coefficients of the normal bundle.
\subsection{The Gau\ss{} equations}
\label{par_gaussequations}
Now we come to the\index{Gauss equations}
\begin{proposition}
Let the immersion $X$ together with a normal frame $N$ be given. Then there hold
  $$X_{u^iu^j}
    =\sum_{k=1}^2
     \Gamma_{ij}^kX_{u^k}
     +\sum_{\sigma=1}^n
      L_{\sigma,ij}N_\sigma$$
for $i,j=1,2.$
\end{proposition}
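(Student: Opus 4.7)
The plan is to use the fact that at every $w\in\overline B$ the system $\{X_{u^1},X_{u^2},N_1,\ldots,N_n\}$ is a basis of $\mathbb R^{n+2}$, so the second derivative $X_{u^iu^j}$ can be uniquely decomposed as
$$X_{u^iu^j}=\sum_{k=1}^2 A_{ij}^k\,X_{u^k}+\sum_{\sigma=1}^n B_{\sigma,ij}\,N_\sigma,$$
and then to identify the coefficients by taking inner products with the basis elements. The claim is that $A_{ij}^k=\Gamma_{ij}^k$ and $B_{\sigma,ij}=L_{\sigma,ij}$.

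The normal coefficients are immediate. Since $N_\sigma\cdot N_\vartheta=\delta_{\sigma\vartheta}$ and $X_{u^k}\cdot N_\sigma=0$, I would dot the decomposition with $N_\sigma$ to get $B_{\sigma,ij}=X_{u^iu^j}\cdot N_\sigma$, which by the definition in paragraph \ref{par_secondfundamentalform} is exactly $L_{\sigma,ij}$.

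For the tangential coefficients, I would dot the decomposition with $X_{u^\ell}$, producing $X_{u^iu^j}\cdot X_{u^\ell}=\sum_k A_{ij}^k g_{k\ell}$. To evaluate the left-hand side I apply the standard symmetrization trick: differentiate $g_{ij}=X_{u^i}\cdot X_{u^j}$ and obtain $g_{ij,u^\ell}=X_{u^iu^\ell}\cdot X_{u^j}+X_{u^i}\cdot X_{u^ju^\ell}$, together with the two cyclic permutations. Adding the permutations with $(i,j,\ell)\to(\ell,i,j)$ and $(j,\ell,i)$ and exploiting the symmetry $X_{u^iu^j}=X_{u^ju^i}$ yields
$$X_{u^iu^j}\cdot X_{u^\ell}=\tfrac{1}{2}\bigl(g_{\ell i,u^j}+g_{j\ell,u^i}-g_{ij,u^\ell}\bigr).$$
Multiplying by $g^{k\ell}$, summing over $\ell$, and using $\sum_\ell g_{k'\ell}g^{k\ell}=\delta_{k'}^{k}$ to invert the metric gives $A_{ij}^k=\Gamma_{ij}^k$ by definition in paragraph \ref{par_christoffel}.

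There is no real obstacle here beyond index bookkeeping: the only subtlety is to carry out the cyclic permutation symmetrization carefully and to remember that symmetry of mixed partials, guaranteed by $X\in C^{k,\alpha}(\overline B)$ with $k\ge 3$, is needed when one identifies the two terms produced by the permutations. Everything else is linear algebra in the frame $\{X_{u^1},X_{u^2},N_1,\ldots,N_n\}$.
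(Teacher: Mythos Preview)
Your proof is correct and follows essentially the same approach as the paper: both expand $X_{u^iu^j}$ in the moving frame, read off the normal coefficients by dotting with $N_\sigma$, and identify the tangential coefficients via the standard symmetrization $X_{u^iu^j}\cdot X_{u^\ell}=\tfrac12(g_{\ell i,u^j}+g_{j\ell,u^i}-g_{ij,u^\ell})$ followed by inversion with $g^{k\ell}$. The only difference is notational---the paper introduces the auxiliary symbols $a_{i\ell j}:=X_{u^iu^j}\cdot X_{u^\ell}$ and manipulates their index symmetries, whereas you phrase the same computation as cyclic permutation of the differentiated metric identity.
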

\begin{proof}
We follow Blaschke and Leichtwei\ss{} \cite{blaschke_leichtweiss_1973}, \S 57 and evaluate the following ansatz
  $$X_{u^iu^j}
    =\sum_{k=1}^2
     a_{ij}^kX_{u^k}
     +\sum_{\vartheta=1}^n
       b_{\vartheta,ij}N_\vartheta$$
with functions $a_{ij}^k$ and $b_{\vartheta,ij}$ to be determined. A first multiplication by $N_\omega$ gives
  $$L_{\omega,ij}
    =X_{u^iu^j}\cdot N_\omega
    =\sum_{\vartheta=1}^nb_{\vartheta,ij}N_\vartheta\cdot N_\omega
    =\sum_{\vartheta=1}^nb_{\vartheta,ij}\delta_{\vartheta\omega}
    =b_{\omega,ij}\,.$$
To compute the $a_{ij}^k$ we multiply our ansatz by $X_{u^\ell}$ and arrive at
  $$X_{u^iu^j}\cdot X_{u^\ell}
    =\sum_{k=1}^2a_{ij}^kg_{k\ell}
    =:a_{i\ell j}\,.$$
Note that $a_{i\ell j}=a_{j\ell i}.$ We calculate
  $$a_{i\ell j}=(X_{u^i}\cdot X_{u^\ell})_{u^j}-X_{u^i}\cdot X_{u^\ell u^j}=g_{i\ell,u^j}-a_{\ell ij}$$
which implies $g_{i\ell,u^j}=a_{i\ell j}+a_{\ell ij}.$ We infer
  $$g_{j\ell,u^i}+g_{\ell i,u^j}-g_{ij,u^\ell}
    =a_{j\ell i}+a_{\ell ji}+a_{\ell ij}+a_{i\ell j}-a_{ij\ell}-a_{ji\ell}
    =2a_{i\ell j}\,,$$
and therefore it holds
  $$\sum_{k=1}^2a_{ij}^kg_{k\ell}
    =\frac{1}{2}\,(g_{j\ell,u^i}+g_{\ell i,u^j}-g_{ij,u^\ell}).$$
Rearranging this identity for the unknown functions $a_{ij}^k$ shows
  $$a_{ij}^m
    =\frac{1}{2}\,\sum_{\ell=1}^2g^{m\ell}(g_{j\ell,u^i}+g_{\ell i,u^j}-g_{ij,u^\ell})$$
proving the statement.
\end{proof}
\subsection{The torsion coefficients}
\label{par_torsionen}
To determine the infinitesimal variation of the unit normal vectors $N_\sigma$ of some fixed chosen normal frame $N$ we need the following connection coefficients of the normal bundle.\footnote{The normal bundle is the collection $\displaystyle\bigcup_{w\in\overline B}(w,{\mathbb N}_X(w)),$ see section 7.}
\begin{definition}
The connection coefficients of the normal bundle\index{normal bundle} are the torsion coefficients\index{torsion coefficient}
  $$T_{\sigma,i}^\vartheta:=N_{\sigma,u^i}\cdot N_\vartheta$$
for $i=1,2$ and $\sigma,\vartheta=1,\ldots,n.$
\end{definition}
\noindent
Taking $N_\sigma\cdot N_\vartheta=\delta_{\sigma\vartheta}$ into accound we immediately infer
\begin{proposition}
The torsion coefficients are skew-symmetric w.r.t. interchanging $\sigma\leftrightarrow\vartheta,$ i.e. it holds
  $$T_{\sigma,i}^\vartheta=-T_{\vartheta,i}^\sigma\,.$$
\end{proposition}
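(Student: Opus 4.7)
The plan is to exploit the orthonormality relation $N_\sigma\cdot N_\vartheta=\delta_{\sigma\vartheta}$, which holds identically on $\overline B$ by the very definition of a normal frame. Since the right-hand side is a constant Kronecker symbol independent of $u^i$, differentiation with respect to $u^i$ must give zero.

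Concretely, I would apply $\partial/\partial u^i$ to both sides of $N_\sigma\cdot N_\vartheta=\delta_{\sigma\vartheta}$ and use the Leibniz rule for the Euclidean inner product. This yields
\[
N_{\sigma,u^i}\cdot N_\vartheta+N_\sigma\cdot N_{\vartheta,u^i}=0.
\]
Recognizing the first term as $T_{\sigma,i}^\vartheta$ by Definition of the torsion coefficients and the second as $T_{\vartheta,i}^\sigma$ (again by the same definition, with the roles of $\sigma$ and $\vartheta$ swapped and using the symmetry of the dot product to rewrite $N_\sigma\cdot N_{\vartheta,u^i}=N_{\vartheta,u^i}\cdot N_\sigma$), the identity $T_{\sigma,i}^\vartheta+T_{\vartheta,i}^\sigma=0$ follows, which is precisely the claim.

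There is no genuine obstacle: the proof is a one-line differentiation argument, in the same spirit as the familiar fact that the connection matrix of any orthonormal frame takes values in the skew-symmetric matrices. The only thing to double-check is the $C^{k-1,\alpha}$-regularity of $N_\sigma$ recorded in the definition of a normal frame, which with $k\ge 3$ comfortably justifies differentiating the relation once. Taking $\sigma=\vartheta$ as a consistency check gives $2T_{\sigma,i}^\sigma=0$, i.e. the diagonal torsion coefficients vanish, which is the expected consequence of $|N_\sigma|=1$.
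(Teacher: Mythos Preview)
Your proof is correct and follows exactly the approach indicated in the paper, which simply remarks that the skew-symmetry follows immediately from the orthonormality relation $N_\sigma\cdot N_\vartheta=\delta_{\sigma\vartheta}$; you have merely spelled out the one-line differentiation.
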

\begin{remark}
The torsion coefficients behave like tensors of rank $1$ w.r.t. the $i$-index, and therefore they depend on the parametrization.
\end{remark}
\noindent
To justify the name ``torsion coeffcient'' we consider an arc-length parametrized curve $c(s)$ in $\mathbb R^3$ together with the moving $3$-frame $(t(s),n(s),b(s))$ consisting of the unit tangent vector $t(s),$ the unit normal vector $n(s)$ and the unit binormal vector $b(s).$ Then its curvature\index{curvature of curves} $\kappa(s)$ and torsion\index{torsion of curves} $\tau(s)$ are given by
  $$\kappa(s)=|t'(s)|,\quad
    \tau(s)=n'(s)\cdot b(s),$$
and this already clarifies the analogy to our definition of the torsion coefficients.\\[1ex]
In fact it was Weyl in \cite{weyl_1922} who first used the terminology ``torsion'':  {\it Aus einem normalen Vektor ${\mathbf n}$ in $P$ entsteht ein Vektor ${\mathbf n}'+d{\mathbf t}$ (${\mathbf n}'$ normal, $d{\mathbf t}$ tangential). Die infinitesimale lineare Abbildung ${\mathbf n}\to{\mathbf n}'$ von ${\mathfrak N}_P$ auf ${\mathfrak N}_{P'}$ ist die Torsion.}
\subsection{The Weingarten equations}
\label{par_weingarten}
Now we determine the variation of the unit normal vectors $N_\sigma$ of a given normal frame $N.$
\begin{proposition}
Let the immersion $X$ together with a normal frame $N$ be given. Then there hold\index{Weingarten equations}
  $$N_{\sigma,u^i}
    =-\sum_{j,k=1}^2
      L_{\sigma,ij}g^{jk}X_{u^k}
      +\sum_{\vartheta=1}^n
       T_{\sigma,i}^\vartheta N_\vartheta$$
for $i=1,2$ and $\sigma=1,\ldots,n.$
\end{proposition}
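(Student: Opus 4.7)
The plan is to mimic the strategy used for the Gauss equations: expand $N_{\sigma,u^i}$ in the moving frame $\{X_{u^1},X_{u^2},N_1,\ldots,N_n\}$, and determine the expansion coefficients by taking inner products with each basis element in turn. Concretely, I would start from the ansatz
$$N_{\sigma,u^i}=\sum_{k=1}^2c_{\sigma,i}^kX_{u^k}+\sum_{\vartheta=1}^nd_{\sigma,i}^\vartheta N_\vartheta$$
with unknown functions $c_{\sigma,i}^k$ and $d_{\sigma,i}^\vartheta$.

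First I would compute the normal components: taking the dot product with $N_\omega$ and using the orthonormality $N_\vartheta\cdot N_\omega=\delta_{\vartheta\omega}$, I immediately obtain $d_{\sigma,i}^\omega=N_{\sigma,u^i}\cdot N_\omega=T_{\sigma,i}^\omega$, which is exactly the definition of the torsion coefficients recorded in paragraph \ref{par_torsionen}. Next I would compute the tangential components: taking the dot product with $X_{u^\ell}$ gives
$$N_{\sigma,u^i}\cdot X_{u^\ell}=\sum_{k=1}^2c_{\sigma,i}^kg_{k\ell}.$$

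The key identity to exploit now is the one noted right after the definition of $L_{\sigma,ij}$, namely $X_{u^iu^j}\cdot N_\sigma=-X_{u^i}\cdot N_{\sigma,u^j}$, obtained by differentiating the orthogonality relation $X_{u^i}\cdot N_\sigma=0$. Applied here (and using the symmetry $L_{\sigma,ij}=L_{\sigma,ji}$ inherited from $X_{u^iu^j}=X_{u^ju^i}$), it yields
$$\sum_{k=1}^2c_{\sigma,i}^kg_{k\ell}=-L_{\sigma,i\ell}.$$
Solving for $c_{\sigma,i}^m$ by multiplying with $g^{\ell m}$ and summing over $\ell$, I use $\sum_\ell g_{k\ell}g^{\ell m}=\delta_k^m$ to conclude $c_{\sigma,i}^m=-\sum_{\ell=1}^2L_{\sigma,i\ell}g^{\ell m}$. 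Renaming indices and substituting both sets of coefficients back into the ansatz produces the asserted Weingarten formula.

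I do not anticipate any real obstacle: the argument is purely algebraic once the moving frame is established, and the only non-trivial ingredient is the symmetry plus the derivative identity for $L_{\sigma,ij}$, both of which are already available. The slight care is only in bookkeeping of indices and in using conformality-free formulas (so that the proof is valid for an arbitrary parametrization).
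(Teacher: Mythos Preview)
Your proposal is correct and follows essentially the same approach as the paper: expand $N_{\sigma,u^i}$ in the moving frame and determine the coefficients by taking inner products with $X_{u^\ell}$ and $N_\omega$ in turn. The only cosmetic difference is that the paper computes the tangential components first and the normal components second, whereas you do it in the opposite order.
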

\begin{proof}
For the proof we determine the functions $a_{\sigma,i}^k$ and $b_{\sigma,i}^\vartheta$ of the ansatz
  $$N_{\sigma,u^i}
    =\sum_{k=1}^2
     a_{\sigma,i}^kX_{u^k}
     +\sum_{\vartheta=1}^nb_{\sigma,i}^\vartheta N_\vartheta\,.$$
Multiplication by $X_{u^\ell}$ gives
  $$-L_{\sigma,i\ell}
    =N_{\sigma,u^i}\cdot X_{u^\ell}
    =\sum_{k=1}^2
     a_{\sigma,i}^k X_{u^k}\cdot X_{u^\ell}
    =\sum_{k=1}^2
     a_{\sigma,i}^kg_{k\ell}\,,
    \quad\mbox{therefore}\quad
    a_{\sigma,i}^m=-\sum_{\ell=1}^2L_{\sigma,i\ell}g^{\ell m}\,.$$
A second multiplication by $N_\omega$ shows
  $$T_{\sigma,i}^\omega
    =N_{\sigma,u^i}\cdot N_\omega
    =\sum_{\vartheta=1}^nb_{\sigma,i}^\vartheta N_\vartheta\cdot N_\omega
    =\sum_{\vartheta=1}^nb_{\sigma,i}^\vartheta\delta_{\vartheta\omega}
    =b_{\sigma,i}^\omega$$
proving the statement.
\end{proof}
\vspace*{6ex}
\begin{center}
\rule{35ex}{0.1ex}
\end{center}
\vspace*{6ex}
\section{Integrability conditions}
\label{section_integrabilityconditions}
\subsection{Problem statement}
\label{par_problem_integrierbarkeit}
In view of $X\in C^{3+\alpha}(\overline B,\mathbb R^{n+2})$ there hold various integrability conditions which we present in this chapter. In particular, a further differentiation of the Gau\ss{} equations gives us necessary conditions for the third derivatives of the surface vector $X$ resp. the second derivatives of the tangent vectors $X_{u^i}:$
\begin{center}
\begin{pspicture}(0,0)(10,5)
\rput[c](5.0,4.5){$X_{u^i,uv}-X_{u^i,vu}\equiv 0$}
\rput[c](2.0,2.5){$(X_{u^i,uv}-X_{u^i,vu})^{\rm norm}\equiv 0$}
\rput[c](8.0,2.5){$(X_{u^i,uv}-X_{u^i,vu})^{\rm tang}\equiv 0$}
\rput[c](2.0,0.5){Codazzi-Mainardi equations\index{Codazzi-Mainardi equations}}
\rput[c](8.0,0.5){theorema egregium\index{theorema egregium}}
\psline[linewidth=0.6pt]{->}(3.0,4.0)(2.0,3.0)
\psline[linewidth=0.6pt]{->}(7.0,4.0)(8.0,3.0)
\psline[linewidth=0.6pt]{->}(2.0,2.0)(2.0,1.0)
\psline[linewidth=0.6pt]{->}(8.0,2.0)(8.0,1.0)
\end{pspicture}
\end{center}
where the upper ``norm'' and ``tang'' mean the normal resp. tangent components. Analogously we proceed with differentiating the Weingarten equations to get
\begin{center}
\begin{pspicture}(0,0)(10,5)
\rput[c](5.0,4.5){$N_{\sigma,uv}-N_{\sigma,vu}\equiv 0$}
\rput[c](2.0,2.5){$(N_{\sigma,uv}-N_{\sigma,vu})^{\rm tang}\equiv 0$}
\rput[c](8.0,2.5){$(N_{\sigma,uv}-N_{\sigma,vu})^{\rm norm}\equiv 0$}
\rput[c](2.0,0.5){Codazzi-Mainardi equations\index{Codazzi-Mainardi equations}}
\rput[c](8.0,0.5){Ricci equations\index{Ricci equations}}
\psline[linewidth=0.6pt]{->}(3.0,4.0)(2.0,3.0)
\psline[linewidth=0.6pt]{->}(7.0,4.0)(8.0,3.0)
\psline[linewidth=0.6pt]{->}(2.0,2.0)(2.0,1.0)
\psline[linewidth=0.6pt]{->}(8.0,2.0)(8.0,1.0)
\end{pspicture}
\end{center}
To be more precise, let us start with the Gau\ss{} equations from paragraph \ref{par_gaussequations}, i.e.
  $$X_{u^iu}
    =\Gamma_{i1}^1X_u+\Gamma_{i1}^2X_v
     +\sum_{\sigma=1}^nL_{\sigma,i1}N_\sigma\,,\quad
    X_{u^iv}
    =\Gamma_{i2}^1X_u+\Gamma_{i2}^2X_v
     +\sum_{\sigma=1}^nL_{\sigma,i2}N_\sigma$$
for $i=1,2.$ We differentiate the first equation w.r.t. $v,$
  $$\begin{array}{lll}
      X_{u^iuv}\negthickspace
      & = & \displaystyle\negthickspace
            \left\{
              \Gamma_{i1,v}^1+\Gamma_{i1}^1\Gamma_{12}^1+\Gamma_{i1}^2\Gamma_{22}^1
              -\sum_{\ell=1}^2\sum_{\sigma=1}^n
               L_{\sigma,i1}L_{\sigma,2\ell}g^{\ell 1}
            \right\}X_u \\[4ex]
      &   & \displaystyle\negthickspace
            +\left\{
               \Gamma_{i1,v}^2+\Gamma_{i1}^1\Gamma_{12}^2+\Gamma_{i1}^2\Gamma_{22}^2
               -\sum_{\ell=1}^2\sum_{\sigma=1}^n
                L_{\sigma,i1}L_{\sigma,2\ell}g^{\ell 2}
             \right\}X_v^t \\[4ex]
      &   & \displaystyle\negthickspace
            +\,\sum_{\omega=1}^n
              \left\{ 
                L_{\omega,i1,v}
                +\Gamma_{i1}^1L_{\omega,12}
                +\Gamma_{i1}^2L_{\omega,22}  
                +\sum_{\sigma=1}^n
                 L_{\sigma,i1}T_{\sigma,2}^\omega
              \right\}N_\omega\,,
    \end{array}$$
and the second equations w.r.t. $u,$
  $$\begin{array}{lll}
      X_{u^ivu}\negthickspace
      & = & \displaystyle\negthickspace
            \left\{
              \Gamma_{i2,u}^1+\Gamma_{i2}^1\Gamma_{11}^1+\Gamma_{i2}^2\Gamma_{12}^1
              -\sum_{\ell=1}^2\sum_{\sigma=1}^n
               L_{\sigma,i2}L_{\sigma,1\ell}g^{\ell 1}
            \right\}X_u \\[4ex]
      &   & \displaystyle\negthickspace
            +\left\{
               \Gamma_{i2,u}^2+\Gamma_{i2}^1\Gamma_{11}^2+\Gamma_{i2}^2\Gamma_{12}^2
               -\sum_{\ell=1}^2\sum_{\sigma=1}^n
                L_{\sigma,i2}L_{\sigma,1\ell}g^{\ell 2}
             \right\}X_v^t \\[4ex]
      &   & \displaystyle\negthickspace
            +\,\sum_{\omega=1}^n   
              \left\{
                L_{\omega,i2,u}
                +\Gamma_{i2}^1L_{\omega,11}
                +\Gamma_{i2}^2L_{\omega,12}
                +\sum_{\sigma=1}^n
                 L_{\sigma,i2}T_{\sigma,1}^\omega
              \right\}N_\omega\,.
    \end{array}$$
Comparing the tangent and normal parts of these two identities gives the first set of integrability conditions.
\subsection{The integrability conditions of Codazzi and Mainardi}
\label{par_codazzi_mainardi}
From $(X_{u^iuv}-X_{u^ivu})^{\rm norm}\equiv 0$ we infer (we interchange $\sigma$ and $\omega$)
\begin{proposition}
Let the immersion $X$ together with a normal frame $N$ be given. Then there hold\index{Codazzi-Mainardi equations}
  $$L_{\sigma,i1,v}+\Gamma_{i1}^1L_{\sigma,12}+\Gamma_{i1}^2L_{\sigma,22}
    +\sum_{\omega=1}^nL_{\omega,i1}T_{\omega,2}^\sigma
    =L_{\sigma,i2,u}+\Gamma_{i2}^1L_{\sigma,11}+\Gamma_{i2}^2L_{\sigma,12}
     +\sum_{\omega=1}^nL_{\omega,i2}T_{\omega,1}^\sigma$$
for $i=1,2$ and $\sigma=1,2,\ldots,n.$
\end{proposition}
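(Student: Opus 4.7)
The proof is essentially packaged in the calculations already displayed before the statement, so the plan is to harvest the normal components of those expressions and invoke linear independence. Concretely, I would proceed as follows.

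First, I would observe that the $C^{3,\alpha}$-regularity of $X$ in the basic assumption guarantees $X_{u^iuv}=X_{u^ivu}$ in $\overline B$, hence
\begin{equation*}
  0\;=\;\bigl(X_{u^iuv}-X_{u^ivu}\bigr)^{\mathrm{norm}}\quad\text{for } i=1,2.
\end{equation*}
The two long expansions preceding the statement, obtained from differentiating the two branches of the Gauss equation and using, respectively, the Gauss equation (for second $X$-derivatives hiding in $\partial_v X_u$ and $\partial_u X_v$) and the Weingarten equation (for the derivatives $N_{\sigma,u^j}$), already decompose each $X_{u^iuv}$, $X_{u^ivu}$ into its tangential part (spanned by $X_u,X_v$) and its normal part (spanned by the $N_\omega$). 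Thus the work is done once I extract the $N_\omega$-coefficients.

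Next I would compare the normal parts. Equating them gives, for every $i=1,2$,
\begin{equation*}
  \sum_{\omega=1}^{n}\!\Bigl\{L_{\omega,i1,v}+\Gamma_{i1}^1L_{\omega,12}+\Gamma_{i1}^2L_{\omega,22}+\!\sum_{\sigma=1}^{n}L_{\sigma,i1}T_{\sigma,2}^\omega\Bigr\}N_\omega
  \;=\;\sum_{\omega=1}^{n}\!\Bigl\{L_{\omega,i2,u}+\Gamma_{i2}^1L_{\omega,11}+\Gamma_{i2}^2L_{\omega,12}+\!\sum_{\sigma=1}^{n}L_{\sigma,i2}T_{\sigma,1}^\omega\Bigr\}N_\omega.
\end{equation*}
Since $\{N_1,\ldots,N_n\}$ is an orthonormal basis of the normal space $\mathbb N_X(w)$ at each $w\in\overline B$ (and in particular linearly independent), I can read off the identity of coefficients for each fixed $\omega$. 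A final relabelling $\omega\leftrightarrow\sigma$, as already hinted in the paragraph preceding the statement, puts the result in the form claimed.

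There is no real obstacle here; the only delicate points are purely clerical. When differentiating the Gauss equation a second time I must apply the product rule to every term, then substitute the Gauss equation into each newly arising $X_{u^k u^j}$ (to separate its tangent and normal components) and the Weingarten equation into each $N_{\sigma,u^j}$ (which contributes both a tangential piece, via the shape operators $-L_{\sigma,ij}g^{jk}X_{u^k}$, and a normal piece, via the torsion coefficients $T_{\sigma,i}^\vartheta N_\vartheta$). Keeping track of which terms end up normal versus tangential is the only place where one can slip; the tangential contributions will be used later for the theorema egregium, whereas here only the normal ones matter. Once this bookkeeping is respected, the Codazzi–Mainardi equations follow by inspection.
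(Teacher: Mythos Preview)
Your proposal is correct and follows exactly the paper's approach: the paper simply states that the result follows from $(X_{u^iuv}-X_{u^ivu})^{\mathrm{norm}}\equiv 0$ together with the interchange $\sigma\leftrightarrow\omega$, which is precisely the extraction-of-normal-coefficients argument you describe. Your write-up is in fact more detailed than the paper's, which treats the proposition as an immediate consequence of the displayed expansions.
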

\noindent
{\it These equations contain the torsion coefficients from above.} These coefficients do not appear in case $n=1.$
\subsection{The integrability conditions of Gau\ss{}}
\label{par_gaussintegrability}
From $(X_{u^iuv}-X_{u^ivu})^{\rm tang}\equiv 0$ we infer\index{Gauss integrability conditions}
\begin{proposition}
Let the immersion $X$ together with a normal frame $N$ be given. Then there hold
  $$\Gamma_{i1,v}^\ell-\Gamma_{i2,u}^\ell
    +\sum_{m=1}^2
     \Gamma_{i1}^m\Gamma_{m2}^\ell
    -\sum_{m=1}^2
     \Gamma_{i2}^m\Gamma_{m1}^\ell
    =\sum_{m=1}^2\sum_{\sigma=1}^n
     (L_{\sigma,i1}L_{\sigma,2m}-L_{\sigma,i2}L_{\sigma,1m})g^{m\ell}$$
for $i,\ell=1,2.$
\end{proposition}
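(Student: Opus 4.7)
The plan is to exploit the two long expansions of $X_{u^iuv}$ and $X_{u^ivu}$ already derived in \S\ref{par_problem_integrierbarkeit} and simply extract their tangential parts. Since $X\in C^{3+\alpha}(\overline B,\mathbb R^{n+2})$, Schwarz's theorem yields $X_{u^iuv}\equiv X_{u^ivu}$, so in particular the difference of the two tangential components vanishes identically. Because $\mbox{rank}\,DX=2$ the vectors $X_u,X_v$ are linearly independent at every $w\in\overline B$, hence the vanishing of the tangential difference is equivalent to equating the coefficient of $X_{u^\ell}$ in both expansions separately for $\ell=1$ and $\ell=2$.

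First I would unify the lines labelled ``$X_u$'' and ``$X_v$'' in each of the two displays of \S\ref{par_problem_integrierbarkeit} into a single expression for the coefficient of $X_{u^\ell}$, which from the $X_{u^iuv}$-display reads
$$\Gamma_{i1,v}^\ell+\sum_{m=1}^2\Gamma_{i1}^m\Gamma_{m2}^\ell-\sum_{m=1}^2\sum_{\sigma=1}^n L_{\sigma,i1}L_{\sigma,2m}\,g^{m\ell},$$
and from the $X_{u^ivu}$-display reads
$$\Gamma_{i2,u}^\ell+\sum_{m=1}^2\Gamma_{i2}^m\Gamma_{m1}^\ell-\sum_{m=1}^2\sum_{\sigma=1}^n L_{\sigma,i2}L_{\sigma,1m}\,g^{m\ell}.$$
Equating these for each $\ell\in\{1,2\}$ and moving the $L$-terms to the right-hand side yields the asserted identity verbatim.

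The entire argument is a purely algebraic consolidation of what has already been computed; the only real obstacle is the notational bookkeeping, in particular correctly reading off the $\Gamma$-products $\Gamma_{i1}^1\Gamma_{12}^\ell+\Gamma_{i1}^2\Gamma_{22}^\ell$ from the two lines of the display and collapsing them into the unified sum $\sum_m\Gamma_{i1}^m\Gamma_{m2}^\ell$, and analogously for the $L$-terms. No information from the normal components of $X_{u^iuv}-X_{u^ivu}$ is used, so neither the Codazzi--Mainardi equations nor the torsion coefficients $T_{\sigma,i}^\vartheta$ appear in this integrability condition; those will enter only when the normal component is treated in \S\ref{par_codazzi_mainardi}.
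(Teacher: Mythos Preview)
Your proposal is correct and follows exactly the approach the paper itself takes: the proposition is stated immediately after the phrase ``From $(X_{u^iuv}-X_{u^ivu})^{\rm tang}\equiv 0$ we infer'', and your argument simply spells out the bookkeeping of reading off and equating the $X_{u^\ell}$-coefficients from the two displays in \S\ref{par_problem_integrierbarkeit}. If anything you have written out more detail than the paper gives; the only cosmetic slip is that \S\ref{par_codazzi_mainardi} actually precedes this proposition in the text, so ``will enter'' should read ``entered''.
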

\noindent
Note that {\it these equations do not contain the torsion coefficients.} Rather they belong to the {\it inner geometry} of the surface as will become more clear in the next paragraph.
\subsection{The curvature tensor of the tangent bundle}
\label{par_curvaturetangent}
The left hand side of the Gau\ss{} integrability conditions gives reason to our next
\begin{definition}
The curvature tensor of the tangent bundle\index{curvature tensor of tangent bundle} of the immersion $X,$ the so-called Riemannian curvature tensor\index{Riemannian curvature tensor}, is given by components
  $$R_{ijk}^\ell
    :=\Gamma_{ij,u^k}^\ell-\Gamma_{ik,u^j}^\ell
      +\sum_{m=1}^2
       \Big\{
         \Gamma_{ij}^m\Gamma_{mk}^\ell-\Gamma_{ik}^m\Gamma_{mj}^\ell
       \Big\}$$
for $i,j,k,\ell=1,2.$ Its covariant components are
  $$R_{nijk}
    =\sum_{\ell=1}^2
     R_{ijk}^\ell g_{\ell n}\,.$$
\end{definition}
\noindent
In our case of two dimensions of $X,$ these $R_{nijk}$ reduce to one essential component:
  $$\begin{array}{l}
      R_{1111}=0,\quad
      R_{2222}=0,\quad
      R_{1222}=0,\quad
      R_{2111}=0,\quad
      R_{2221}=0,\quad
      R_{1112}=0,      \\[1ex]
      R_{1122}=0,\quad
      R_{2211}=0,\quad
      R_{1121}=0,\quad
      R_{1211}=0,\quad
      R_{2212}=0,\quad
      R_{2122}=0,      \\[1ex]
      R_{2112}=R_{1221}=-R_{2121}=-R_{1212}\,.
    \end{array}$$
\subsection{The Gau\ss{} curvature. theorema egregium}
\label{par_theoremaegregium}
{\it This one essential component $R_{2112}$ of the Riemannian curvature tensor represents the inner curvature of the surface} $X.$ This is the contents of the next
\begin{proposition}
Let the immersion $X$ together with a normal frame $N$ be given. Then it holds\index{theorema egregium}
  $$R_{2112}=KW^2$$
with the Gaussian curvature\index{Gaussian curvature} of the immersion defined by
  $$K:=\sum_{\sigma=1}^nK_\sigma
    \quad\mbox{with}\ K_\sigma:=\frac{L_{\sigma,11}g_{22}-2L_{\sigma,12}g_{12}+L_{\sigma,22}g_{22}}{W^2}$$
and the area element $W.$
\end{proposition}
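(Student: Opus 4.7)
The plan is to read off $R_{2112}$ directly from the Gau\ss{} integrability conditions of paragraph~\ref{par_gaussintegrability}, then lower an index with the metric and identify the resulting contraction with $K_\sigma W^2$.

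First I would set $i=1$, $j=1$, $k=2$ in the definition of the curvature tensor of the tangent bundle from paragraph~\ref{par_curvaturetangent}, which gives
$$R_{112}^\ell=\Gamma_{11,u^2}^\ell-\Gamma_{12,u^1}^\ell+\sum_{m=1}^2\Big\{\Gamma_{11}^m\Gamma_{m2}^\ell-\Gamma_{12}^m\Gamma_{m1}^\ell\Big\}.$$
Since $u^1=u$ and $u^2=v$, the right-hand side is exactly the left-hand side of the Gau\ss{} integrability equations with $i=1$. Hence, appealing to the proposition of paragraph~\ref{par_gaussintegrability},
$$R_{112}^\ell=\sum_{m=1}^2\sum_{\sigma=1}^n\bigl(L_{\sigma,11}L_{\sigma,2m}-L_{\sigma,12}L_{\sigma,1m}\bigr)g^{m\ell}.$$

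Next I would form the covariant component $R_{2112}=\sum_{\ell}R_{112}^\ell g_{\ell 2}$ and collapse the inner sum using $\sum_\ell g^{m\ell}g_{\ell 2}=\delta_2^m$, which is the defining relation of the inverse first fundamental form recorded in paragraph~\ref{par_firstfundamentalform}. Only the term $m=2$ survives, so
$$R_{2112}=\sum_{\sigma=1}^n\bigl(L_{\sigma,11}L_{\sigma,22}-L_{\sigma,12}^2\bigr).$$
Finally I would divide and multiply by $W^2$ and recognize each summand as $K_\sigma W^2$ (the numerator $L_{\sigma,11}L_{\sigma,22}-L_{\sigma,12}^2$ being the determinant of the $\sigma$-th second fundamental form, i.e.\ $K_\sigma W^2$ after noting the standard identity for the Gau\ss{}ian curvature of a single shape operator). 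Summing over $\sigma$ produces $KW^2$.

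There is no real obstacle — the argument is a straight specialization of the Gau\ss{} integrability conditions followed by an index-lowering contraction. The only points requiring care are bookkeeping, namely choosing the correct triple $(i,j,k)=(1,1,2)$ so that the left-hand side becomes the essential component $R_{112}^\ell$, lowering with $g_{\ell 2}$ rather than $g_{\ell 1}$ (since we want $n=2$), and confirming that the antisymmetries $R_{2112}=R_{1221}=-R_{2121}=-R_{1212}$ listed in paragraph~\ref{par_curvaturetangent} are consistent with the resulting expression. Once these indices are handled cleanly, the identity $R_{2112}=KW^2$ falls out immediately.
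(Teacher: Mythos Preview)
Your proposal is correct and follows essentially the same route as the paper's own proof: both start from the Gau\ss{} integrability conditions specialized to $i=1$, lower the index via $R_{2112}=\sum_\ell R_{112}^\ell g_{\ell 2}$, collapse the contraction using $\sum_\ell g^{m\ell}g_{\ell 2}=\delta_2^m$, and identify the resulting $\sum_\sigma(L_{\sigma,11}L_{\sigma,22}-L_{\sigma,12}^2)$ with $\sum_\sigma K_\sigma W^2$.
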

\begin{proof}
Using the Gau\ss{} integrability conditions we compute
  $$\begin{array}{lll}
      R_{2112}\negthickspace
      & = & \displaystyle\negthickspace
            \sum_{\ell=1}^2
            R_{112}^\ell g_{\ell2}
            \,=\,\sum_{\ell,m=1}^2\sum_{\sigma=1}^n
                 (L_{\sigma,11}L_{\sigma,2m}-L_{\sigma,12}L_{\sigma,1m})
                 g^{m\ell}g_{\ell 2} \\[3ex]
      & = & \displaystyle\negthickspace
            \sum_{\ell=1}^2\sum_{\sigma=1}^n
            (L_{\sigma,11}L_{\sigma,22}-L_{\sigma,12}L_{\sigma,12})
            g^{2\ell}g_{\ell 2}
            \,=\,\sum_{\sigma=1}^nK_\sigma W^2
            \,=\,KW^2\,,
    \end{array}$$
and the statement follows.
\end{proof}
\begin{remark}
The Gauss curvature $K$ does neither depend on the choice of the parametrization $(u,v)\in\overline B$ nor on the choice of the normal frame $N$ (of course, its components $K_\sigma$ are not invariant). We skip a proof of these invariance properties but similar calculations follow below.
\end{remark}
\subsection{The integrability conditions of Ricci}
\label{par_ricci}
Now we want to derive an integrability condition which has no counterpart in case $n=1$ of one codimension. Let us first compute the second derivatives of the unit normal vectors of some normal frame $N:$
  $$\begin{array}{lll}
      N_{\sigma,uv}\negthickspace
      & = & \displaystyle\negthickspace
            -\,\sum_{j,k=1}^2
               L_{\sigma,1j,v}g^{jk}X_{u^k}
            -\sum_{j,k=1}^2
             L_{\sigma,1j}g^{jk}_{\ \ ,v}X_{u^k}
            -\sum_{j,k=1}^2
             L_{\sigma,1j}g^{jk}X_{u^kv}
            +\sum_{\omega=1}^nT_{\sigma,1,v}^\omega N_\omega
            +\sum_{\omega=1}^nT_{\sigma,1}^\omega N_{\omega,v} \\[4ex]
      & = & \displaystyle\negthickspace
            -\,\sum_{j,k=1}^2
               \left\{
                 L_{\sigma,1j,v}g^{jk}
                 +L_{\sigma,1j}g^{jk}_{\ \ ,v}
                 +\sum_{m=1}^2
                  L_{\sigma,1j}g^{jm}\Gamma_{m2}^k
                 +\sum_{\vartheta=1}^nT_{\sigma,1}^\vartheta L_{\vartheta,2j}g^{jk}
              \right\}\,X_{u^k} \\[5ex]
      &   & \displaystyle\negthickspace
            -\,\sum_{\omega=1}^n
               \left\{
                 \sum_{j,k=1}^2
                 L_{\sigma,1j}g^{jk}L_{\omega,k2}
                 -T_{\sigma,1,v}^\omega
                 -\sum_{\vartheta=1}^nT_{\sigma,1}^\vartheta T_{\vartheta,2}^\omega
               \right\}\,N_\omega
    \end{array}$$
as well as
  $$\begin{array}{lll}
      N_{\sigma,vu}\negthickspace
      & = & \displaystyle\negthickspace
             -\,\sum_{j,k=1}^2
                \left\{
                  L_{\sigma,2j,u}g^{jk}+L_{\sigma,2j}g^{jk}_{\ \ ,u}
                  +\sum_{m=1}^2
                   L_{\sigma,2j}g^{jm}\Gamma_{m1}^k
                  +\sum_{\vartheta=1}^nT_{\sigma,2}^\vartheta L_{\vartheta,1j}g^{jk}
                \right\}\,X_{u^k} \\[5ex]
      &   & \displaystyle\negthickspace
            -\,\sum_{\omega=1}^n
               \left\{
                 \sum_{j,k=1}^2
                 L_{\sigma,2j}g^{jk}L_{\omega,k1}-T_{\sigma,2,u}^\omega
                 -\sum_{\vartheta=1}^nT_{\sigma,2}^\vartheta T_{\vartheta,1}^\omega
               \right\}\,N_\omega
    \end{array}$$
using the Gauss and the Weingarten equations. It holds necessarily
  $$N_{\sigma,uv}-N_{\sigma,vu}\equiv 0\quad\mbox{for all}\ \sigma=1,\ldots,n.$$
\begin{remark}
A comparison between the tangential parts yields again the Codazzi-Mainardi equations.
\end{remark}
\noindent
The integrability condition $(N_{\sigma,uv}-N_{\sigma,vu})^{\rm norm}\equiv 0$ {\it is trivially satisfied in case one codimension:}
  $$T_{\sigma,i}^\vartheta\equiv 0\quad\mbox{and}\quad
    \sum_{j,k=1}^2L_{1j}g^{jk}L_{k2}
    =\sum_{j,k=1}^2 L_{2j}g^{jk}L_{k1}$$
by symmetry of the $g^{jk}$ and the coefficients $L_{ij}$ of the second fundamental form. {\it But in general case of higher codimension these identites are not trivial,} rather they yield the so-called {\it Ricci integrability equations.}
\begin{proposition}
There hold\index{Ricci equations}
  $$T_{\sigma,2,u}^\omega-T_{\sigma,1,v}^\omega
    +\sum_{\vartheta=1}^n
     T_{\sigma,2}^\vartheta T_{\vartheta,1}^\omega
    -\sum_{\vartheta=1}^n
     T_{\sigma,1}^\vartheta T_{\vartheta,2}^\omega
    =\sum_{j,k=1}^2
     (L_{\sigma,2j}L_{\omega,k1}-L_{\sigma,1j}L_{\omega,k2})g^{jk}$$
for $\sigma,\omega=1,\ldots,n.$
\end{proposition}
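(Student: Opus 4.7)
The strategy is essentially read off from the derivation immediately preceding the statement: the expansions of $N_{\sigma,uv}$ and $N_{\sigma,vu}$ via the Gau\ss{} and Weingarten equations have already been carried out in the excerpt, and the Ricci equations are exactly what the vanishing of the normal part of their difference delivers.

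First I would invoke the identity $N_{\sigma,uv}-N_{\sigma,vu}\equiv 0$, which holds because $X\in C^{3+\alpha}$ forces $N_\sigma\in C^{2+\alpha}$ so mixed partials commute. The two displayed expansions in the excerpt split this identity into a tangential part (a sum over $X_{u^k}$) and a normal part (a sum over $N_\omega$). Since $\{X_u,X_v,N_1,\ldots,N_n\}$ is a basis of $\mathbb R^{n+2}$ at every $w\in\overline B$, both parts must vanish separately. The tangential part reproduces the Codazzi-Mainardi equations (as the remark notes), so only the normal part is relevant here.

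Next I would equate the coefficients of $N_\omega$ in the normal parts of $N_{\sigma,uv}$ and $N_{\sigma,vu}$ for each fixed $\omega\in\{1,\ldots,n\}$; orthonormality $N_\vartheta\cdot N_\omega=\delta_{\vartheta\omega}$ ensures that equality of the two normal sums is equivalent to equality of the coefficients termwise. From the two explicit formulas this yields
\[
\sum_{j,k=1}^2 L_{\sigma,1j}g^{jk}L_{\omega,k2}-T_{\sigma,1,v}^\omega-\sum_{\vartheta=1}^n T_{\sigma,1}^\vartheta T_{\vartheta,2}^\omega
=\sum_{j,k=1}^2 L_{\sigma,2j}g^{jk}L_{\omega,k1}-T_{\sigma,2,u}^\omega-\sum_{\vartheta=1}^n T_{\sigma,2}^\vartheta T_{\vartheta,1}^\omega.
\]
A straightforward rearrangement then produces the claimed Ricci equations.

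There is no genuine obstacle; the whole argument is bookkeeping. The only mildly delicate point is to make certain that nothing has been lost in the split into tangential and normal parts, i.e.\ to confirm that the coefficients of $X_{u^k}$ and of $N_\omega$ displayed in the two big computations really exhaust all contributions coming from applying Gau\ss{} and Weingarten to the second differentiation. Granted that, the result follows at once by comparing the $N_\omega$-coefficients and moving the torsion-derivative terms to the left, the bilinear torsion products likewise to the left, and the second-fundamental-form products to the right.
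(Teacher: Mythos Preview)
Your proposal is correct and follows exactly the approach taken in the paper: the two displayed expansions of $N_{\sigma,uv}$ and $N_{\sigma,vu}$ are already supplied in the text, and the proposition is obtained precisely by equating the coefficients of $N_\omega$ in the normal parts and rearranging. There is nothing to add.
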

\noindent
Both sides of these identitites vanish identically if $n=1.$
\vspace*{6ex}
\begin{center}
\rule{35ex}{0.1ex}
\end{center}
\vspace*{6ex}
\section{The curvature of the normal bundle}
\label{section_curvaturenormalbundle}
\subsection{Problem statement}
\label{par_problemnormal}
In the same manner as we derived the Riemannian curvature tensor from the Gau\ss{} integrability conditions we proceed to derive the curvature tensor in the normal space from the Ricci integrability conditions. In this section we give a detailed description of this curvature quantity.
\begin{definition}
The normal bundle\index{normal bundle} of the immersion $X$ is given by
  $${\mathcal N}(X)=\bigcup_{w\in\overline B}\big(w,{\mathbb N}_X(w)\big).$$
\end{definition}
\noindent
Here are some simple examples:
\begin{itemize}
\item[1.]
The normal bundle of a surface in $\mathbb R^3$ is the collection of all normal lines, thus it resembles the Grassmann manifold $G_{3,1}.$\index{Grassmann manifold $G_{3,1}$}
\vspace*{-1.6ex}
\item[2.]
Tubular neighborhoods\index{tubular neighborhoods} of curves or surfaces are resembled by its normal bundle, see also the parallel type surfaces in the next chapter.
\end{itemize}
In case of higher codimension the normal bundle possesses an own non-trivial geometry. In particular, we can assign a curvature to the normal bundle. If this curvature vanishes identically then the normal bundle is called {\it flat.}\index{flat normal bundle} But in general it is curved. For example, $X(z)=(z,z^2)$ has non-flat normal bundle. To develop a possible analytical method to describe curved normal bundles is our concern.
\subsection{The curvature tensor of the normal bundle}
\label{par_curvnormbundle}
This is the tensor consisting of the components (compare with the considerations from paragraph \ref{par_ricci})\index{curvature tensor of normal bundle}
  $$S_{\sigma,ij}^\omega
   :=T_{\sigma,i,u^j}^\omega-T_{\sigma,j,u^i}^\omega
      +\sum_{\vartheta=1}^n
       \big(
         T_{\sigma,i}^\vartheta T_{\vartheta,j}^\omega
         -T_{\sigma,j}^\vartheta T_{\vartheta,i}^\omega
       \big)
    =\sum_{m,n=1}^2
     (L_{\sigma,im}L_{\omega,jn}-L_{\sigma,jm}L_{\omega,in})g^{mn}\,.$$
Note that the second identity follows from the integrability conditions of Ricci. The $S_{\sigma,ij}^\omega$ now take the role of the $R_{ijk}^\ell.$ Without proof we want to remark that the $S_{\sigma,ij}^\omega$ behave like a tensor of rank $2$ under parameter transformations, and are neither invariant w.r.t. parameter transformations nor on rotations of the normal frame, see our considerations below. Furthermore it is sufficient to focus on the components $S_{\sigma,12}^\omega$ because all other components vanish or are equal to $S_{\sigma,12}^\vartheta$ up to sign. Using conformal parameters we arrive at
  $$S_{\sigma,12}^\omega
    =\frac{1}{W}\,(L_{\sigma,11}-L_{\sigma,22})L_{\omega,12}
     -\frac{1}{W}\,(L_{\omega,11}-L_{\omega,22})L_{\sigma,12}\,.$$
A general definition of curvatures for connections can be found in Helein \cite{helein_2002}, chapter 2.
\subsection{The case $n=2$}
\label{par_lecture2n2}
\label{case_n2}
The definition for $S_{\sigma,12}^\vartheta$ takes a particular form in the special case $n=2:$
  $$S_{1,12}^2
    =T_{1,1,v}^2-T_{1,2,u}^2
     +T_{1,1}^1T_{1,2}^2+T_{1,1}^2T_{2,2}^2
     -T_{1,2}^1T_{1,1}^2-T_{1,2}^2T_{2,1}^2
    =\mbox{div}\,(-T_{1,2}^2,T_{1,1}^2)$$
where only $\sigma=1$ and $\vartheta=2$ are taken into account. Thus we define
\begin{definition}
The normal curvature of the two-dimensional surface $X\colon\overline B\to\mathbb R^4$ is given by\index{normal curvature}
  $$S_N:=\frac{1}{W}\,S_{1,12}^2=-\,\frac{1}{W}\,\mbox{\rm div}\,(T_{1,2}^2,-T_{1,1}^2).$$
\end{definition}
\noindent
Now {\it this curvature $S_N$ does not depend on the parametrization,} and, as we will see later, {\it it does not depend on the choice of the normal frame either.} It belongs to the inner geometry of the surface. The general situation of higher codimension is treated next.
\subsection{The normal sectional curvature}
\label{par_normalenschnittkruemmung}
{\it But if $n>2$ then the components $S_{\sigma,12}^\omega$ depend on the choice of the normal frame.} So let us fix an index pair $(\sigma,\omega)\in\{1,\ldots,n\}\times\{1,\ldots,n\}.$
\begin{proposition}
The quantity $S_{\sigma,12}^\omega$ is invariant w.r.t. rotations of the normal frame $\{N_\sigma,N_\omega\}$ spanning the plane ${\mathcal E}=\mbox{\rm span}\,\{N_\sigma,N_\omega\},$ i.e. under $SO(2)$-regulare mappings of the form
  $$\widetilde N_\sigma=\cos\varphi N_\sigma+\sin\varphi N_\omega\,,\quad
    \widetilde N_\omega=-\sin\varphi N_\sigma+\cos\varphi N_\omega\,.$$
\end{proposition}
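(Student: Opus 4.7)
The strategy is to work with the purely algebraic expression
\[
S_{\sigma,12}^\omega \;=\; \sum_{m,n=1}^2 \bigl(L_{\sigma,1m}L_{\omega,2n} - L_{\sigma,2m}L_{\omega,1n}\bigr)\,g^{mn}
\]
supplied by the Ricci integrability conditions, rather than with the original torsion-coefficient formula. The decisive advantage is that the coefficients $L_{\sigma,ij}$ of the second fundamental forms transform pointwise linearly under the rotation, whereas the torsion coefficients $T_{\sigma,i}^\vartheta$ involve derivatives of the $N_\sigma$ and would pick up extra terms should one allow $\varphi=\varphi(u,v)$.

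First I would derive the transformation rule for the $L$'s. Since $L_{\sigma,ij}=X_{u^iu^j}\cdot N_\sigma$ depends on $N_\sigma$ undifferentiated, substitution of $\widetilde N_\sigma=\cos\varphi\,N_\sigma+\sin\varphi\,N_\omega$ and $\widetilde N_\omega=-\sin\varphi\,N_\sigma+\cos\varphi\,N_\omega$ immediately yields
\[
\widetilde L_{\sigma,ij}=\cos\varphi\,L_{\sigma,ij}+\sin\varphi\,L_{\omega,ij},\qquad \widetilde L_{\omega,ij}=-\sin\varphi\,L_{\sigma,ij}+\cos\varphi\,L_{\omega,ij},
\]
an identity valid pointwise and in particular also when $\varphi$ varies over $\overline B$.

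Next I would insert these expressions into $\widetilde S_{\sigma,12}^\omega$ and expand. Abbreviating $c=\cos\varphi$ and $s=\sin\varphi$, the product $\widetilde L_{\sigma,1m}\widetilde L_{\omega,2n}-\widetilde L_{\sigma,2m}\widetilde L_{\omega,1n}$ breaks into four quadratic groups: two ``pure'' groups in $L_\sigma L_\sigma$ and in $L_\omega L_\omega$ with coefficients $-cs$ and $+cs$, and two ``mixed'' groups with coefficients $c^2$ and $-s^2$. Using the symmetry $g^{mn}=g^{nm}$, the relabelling $m\leftrightarrow n$ shows that each pure group contracts to zero and converts the $-s^2$ mixed contribution into $+s^2$ times the $c^2$ one. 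The two surviving mixed terms combine with prefactor $c^2+s^2=1$, yielding $\widetilde S_{\sigma,12}^\omega = S_{\sigma,12}^\omega$.

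The main obstacle is not conceptual but purely a matter of careful index bookkeeping: both the vanishing of the pure $L_\sigma L_\sigma$ and $L_\omega L_\omega$ contractions and the merging of the two mixed terms rely on the same $m\leftrightarrow n$ relabelling trick, and the signs must be tracked attentively. Once this is executed, the proof reduces to the short algebraic identity above.
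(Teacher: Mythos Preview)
Your argument is correct and follows essentially the same route as the paper: both proofs use the second-fundamental-form representation of $S_{\sigma,12}^\omega$, observe that the $L_{\sigma,ij}$ transform linearly under the rotation (no derivatives of $\varphi$ appear), substitute, and expand trigonometrically. The only difference is in execution: the paper first passes to conformal parameters, so that $g^{mn}=W^{-1}\delta^{mn}$, and then writes out all four cross-products explicitly before collecting the $\cos^2\varphi+\sin^2\varphi$ terms, whereas you keep the general metric $g^{mn}$ and let the symmetry $g^{mn}=g^{nm}$ together with the $m\leftrightarrow n$ relabelling do the cancellation. Your version is marginally cleaner and avoids the (harmless) detour through conformal parameters, but the underlying idea is identical.
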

\begin{proof}
For the proof we use conformal parameters $(u,v)\in\overline B.$\footnote{Note that in worst case the $S_{\sigma,12}^\omega$ differ by a Jacobian after parameter transformations.} First, with $\widetilde L_{\sigma,ij}=-N_{\sigma,u^i}\cdot X_{u^j}$ we compute
  $$\begin{array}{lll}
      W\widetilde S_{\sigma,12}^\omega\negthickspace
      & = & \displaystyle\negthickspace
            (\widetilde L_{\sigma,11}\widetilde L_{\omega,12}
            -\widetilde L_{\sigma,21}\widetilde L_{\omega,11})
            +(\widetilde L_{\sigma,12}\widetilde L_{\omega,22}
            -\widetilde L_{\sigma,22}\widetilde L_{\omega,21}) \\[2ex]
      & = & \displaystyle\negthickspace
            (\cos\varphi\,L_{\sigma,11}+\sin\varphi\,L_{\omega,11})
            (-\sin\varphi\,L_{\sigma,12}+\cos\varphi\,L_{\omega,12}) \\[2ex]
      &   & \displaystyle\negthickspace
            -\,(\cos\varphi\,L_{\sigma,21}+\sin\varphi\,L_{\omega,21})
               (-\sin\varphi\,L_{\sigma,11}+\cos\varphi\,L_{\omega,11}) \\[2ex]
      &   & \displaystyle\negthickspace
            +\,(\cos\varphi\,L_{\sigma,12}+\sin\varphi\,L_{\omega,12})
               (-\sin\varphi\,L_{\sigma,22}+\cos\varphi\,L_{\omega,22}) \\[2ex]
      &   & \displaystyle\negthickspace
            -\,(\cos\varphi\,L_{\sigma,22}+\sin\varphi\,L_{\omega,22})
               (-\sin\varphi\,L_{\sigma,21}+\cos\varphi\,L_{\omega,21}).
    \end{array}$$
Collecting and evaluating all the trigonometic squares gives
  $$W\widetilde S_{\sigma,12}^\omega
    =(L_{\sigma,11}-L_{\sigma,22})L_{\omega,12}
     -(L_{\omega,11}-L_{\omega,22})L_{\sigma,12}
    =-WS_{\sigma,12}^\omega$$
proving the statement.
\end{proof}
\noindent
Thus we can make the following
\begin{definition}
The quantity $W^{-1}S_{\sigma,12}^\omega$ represents a parametrization invariant quantity
  $$S_{N,\sigma}^\omega:=\frac{1}{W}\,S_{\sigma,12}^\omega$$
called the normal sectional curvature\index{normal sectional curvature} w.r.t. the plane ${\mathcal E}=\mbox{\rm span}\,\{N_\sigma,N_\omega\}.$
\end{definition}
\noindent
If $n=2$ then there is only one normal sectional curvature $S_N$ which is even independent of the choice of the normal frame.
\subsection{Preparations for the normal curvature vector I: Curvature matrices}
\label{par_curvaturematrices}
For the following we let
  $$T_i=(T_{\sigma,i}^\vartheta)_{\sigma,\vartheta=1,\ldots,n}\in\mathbb R^{n\times n}\,,\quad
    S_{12}=(S_{\sigma,12}^\vartheta)_{\sigma,\vartheta=1,\ldots,n}\in\mathbb R^{n\times n}\,.$$
We consider general rotations
  $$R=(R_\sigma^\vartheta)_{\sigma,\vartheta=1,\ldots,n}\in C^{k-1,\alpha}(\overline B,SO(n))$$
as special orthogonal mappings in the normal space which transform a given normal frame $N$ into a new one $\widetilde N$ by means of the transformation
  $$\widetilde N_\sigma
    =\sum_{\vartheta=1}^n
     R_\sigma^\vartheta N_\vartheta$$
for $\sigma=1,\ldots,n.$
\begin{lemma}
There hold the transformation rule\index{curvature matrix}
  $$\widetilde S_{12}=R\circ S_{12}\circ R^t$$
with $R^t$ being the transposition of $R.$
\end{lemma}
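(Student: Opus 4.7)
The plan is to exploit the alternative expression for $S_{\sigma,ij}^\omega$ in terms of the second fundamental form, which the paper has already identified via the Ricci integrability conditions:
\[
S_{\sigma,ij}^\omega=\sum_{m,n=1}^{2}(L_{\sigma,im}L_{\omega,jn}-L_{\sigma,jm}L_{\omega,in})g^{mn}.
\]
This bypasses the derivatives of the torsion coefficients entirely and reduces the problem to how the coefficients $L_{\sigma,ij}$ transform under a frame rotation.

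First I would establish the transformation law for $L$. Since $L_{\sigma,ij}=X_{u^iu^j}\cdot N_\sigma$ and the vector $X_{u^iu^j}$ does not depend on the normal frame, one obtains immediately
\[
\widetilde L_{\sigma,ij}=X_{u^iu^j}\cdot\widetilde N_\sigma=\sum_{\rho=1}^{n}R_\sigma^\rho L_{\rho,ij}.
\]
Viewing $L_{ij}:=(L_{\sigma,ij})_{\sigma=1,\ldots,n}$ as a column vector in $\mathbb R^n$, this reads $\widetilde L_{ij}=R\,L_{ij}$.

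Next I would rewrite $S_{ij}$ as a matrix in $\mathbb R^{n\times n}$ with entries $S_{\sigma,ij}^\omega$. For each fixed pair $(m,n)$ the tensorial block $(L_{\sigma,im}L_{\omega,jn})_{\sigma,\omega}$ is the outer product $L_{im}L_{jn}^{t}$, hence
\[
S_{ij}=\sum_{m,n=1}^{2}g^{mn}\bigl(L_{im}L_{jn}^{t}-L_{jm}L_{in}^{t}\bigr).
\]
Substituting the transformation rule and factoring out $R$ on the left and $R^{t}$ on the right (using $(RL_{jn})^{t}=L_{jn}^{t}R^{t}$) yields
\[
\widetilde S_{ij}=\sum_{m,n=1}^{2}g^{mn}\bigl(RL_{im}L_{jn}^{t}R^{t}-RL_{jm}L_{in}^{t}R^{t}\bigr)=R\,S_{ij}\,R^{t},
\]
and specializing to $(i,j)=(1,2)$ gives the claim.

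There is essentially no obstacle in this route; the only care required is to keep track of which index plays the role of row versus column so that the outer products assemble into $RS_{ij}R^{t}$ rather than $R^{t}S_{ij}R$. An alternative, purely connection-theoretic proof using the first representation $S_{ij}=\partial_{u^j}T_i-\partial_{u^i}T_j+[T_i,T_j]$ together with the gauge law $\widetilde T_i=R_{u^i}R^{t}+RT_iR^{t}$ would also work, but there one has to absorb the derivative terms $R_{u^i}R^{t}$ via the identity $R_{u^i}R^{t}+RR^{t}_{u^i}=0$ coming from $RR^{t}=E$, which is precisely the step that the $L$-based argument avoids.
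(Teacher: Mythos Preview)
Your argument is correct and takes a genuinely different route from the paper. The paper works directly with the connection-theoretic representation $S_{12}=T_{1,v}-T_{2,u}-T_1\circ T_2^t+T_2\circ T_1^t$, first establishing the gauge law $\widetilde T_i=R_{u^i}\circ R^t+R\circ T_i\circ R^t$ and then expanding both $\widetilde T_{1,v}-\widetilde T_{2,u}$ and $\widetilde T_1\circ\widetilde T_2^t-\widetilde T_2\circ\widetilde T_1^t$ term by term; the inhomogeneous pieces $R_{u^i}\circ R^t$ cancel only after invoking the skew-symmetry $T_i=-T_i^t$. You instead pass through the Ricci identity to the $L$-based formula and exploit that $X_{u^iu^j}$ is frame-independent, which reduces everything to the trivial transformation of outer products $RL_{im}(RL_{jn})^t=R\,L_{im}L_{jn}^t\,R^t$. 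Your route is shorter and avoids all derivative bookkeeping; the paper's route, on the other hand, is the computation that would apply verbatim to the curvature of an arbitrary metric connection on a vector bundle, where no analogue of the second fundamental form is available. Since the paper later relies on this transformation rule even in weak regularity settings (section~\ref{section_classical}), the connection-theoretic version has the minor advantage of not presupposing the Ricci equations, but for the purposes of the lemma as stated your proof is perfectly adequate.
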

\begin{proof}
For the proof we consider
$$\begin{array}{lll}
      \displaystyle
      \widetilde T_{\sigma,i}^\vartheta\negthickspace
      & = & \negthickspace\displaystyle
            \widetilde N_{\sigma,u^i}\cdot\widetilde N_\vartheta
            \,=\,\sum_{\alpha=1}^n
                 \big(R_{\sigma,u^i}^\alpha N_\alpha+R_\sigma^\alpha N_{\alpha,u^i}\big)\cdot
                 \sum_{\beta=1}^nR_\vartheta^\beta N_\beta \\[3ex]
     & = & \displaystyle\negthickspace
           \sum_{\alpha,\beta=1}^n
           \big(R_{\sigma,u^i}^\alpha R_\vartheta^\beta\delta_{\alpha\beta}
            +R_\sigma^\alpha R_\vartheta^\beta T_{\alpha,i}^\beta\big)
           \,=\,\sum_{\alpha=1}^n
                R_{\sigma,u^i}^\alpha (R_\alpha^\vartheta)^t
                 +\sum_{\alpha,\beta=1}^nR_\sigma^\alpha T_{\alpha,i}^\beta(R_\beta^\vartheta)^t
    \end{array}$$
taking account of $(R_\vartheta^\alpha)_{\vartheta,\alpha=1,\ldots,n}=(R_\alpha^\vartheta)_{\alpha,\vartheta=1,\ldots,n}^t.$ Thus we arrive at the rule
  $$\widetilde T_i=R_{u^i}\circ R^t+R\circ T_i\circ R^t\,.$$
Using this formula we evaluate
  $$\widetilde S_{12}
    =\widetilde T_{1,v}-\widetilde T_{2,u}-\widetilde T_1\circ\widetilde T_2^t+\widetilde T_2\circ\widetilde T_1^t\,.$$
First
  $$\begin{array}{lll}
      \widetilde T_{1,v}-\widetilde T_{2,u}\negthickspace
      & = & \displaystyle\negthickspace
            (R_u\circ R^t+R\circ T_1\circ R^t)_v
            -(R_v\circ R^t+R\circ T_2\circ R^t)_u \\[2ex]
      & = & \displaystyle\negthickspace
            R_u\circ R_v^t-R_v\circ R_u^t
            +R\circ(T_{1,v}-T_{2,u})\circ R^t \\[2ex]
      &   & \displaystyle\negthickspace
            +\,R_v\circ T_1\circ R^t
            +R\circ T_1\circ R_v^t
            -R_u\circ T_2\circ R^t
            -R\circ T_2\circ R_u^t\,,
    \end{array}$$
and furthermore
  $$\begin{array}{lll} 
     \widetilde T_1\circ\widetilde T_2^t-\widetilde T_2\circ\widetilde T_1^t\negthickspace
      & = & \displaystyle\negthickspace
            (R_u\circ R^t+R\circ T_1\circ R^t)
            \circ(R\circ R_v^t+R\circ T_2^t\circ R^t) \\[2ex]
      &   & \displaystyle\negthickspace
            -\,(R_v\circ R^t+R\circ T_2\circ R^t)
               \circ(R\circ R_u^t+R\circ T_1^t\circ R^t) \\[2ex]
      & = & \displaystyle\negthickspace
            R_u\circ R_v^t
            +R_u\circ T_2^t\circ R^t
            +R\circ T_1\circ R_v^t
            +R\circ T_1\circ T_2^t\circ R^t \\[2ex]
      &   & \displaystyle\negthickspace
            -\,R_v\circ R_u^t
            -R_v\circ T_1^t\circ R^t
            -R\circ T_2\circ R_u^t
            -R\circ T_2\circ T_1^t\circ R^t\,
    \end{array}$$
because $R\circ R^t=R^t\circ R=\mbox{id}.$ Taking both identities together gives
  $$\begin{array}{l}
      \widetilde T_{1,v}-\widetilde T_{2,u}-\widetilde T_1\circ\widetilde T_2^t+\widetilde T_2\circ\widetilde T_1^t \\[2ex]
      \hspace*{3ex}\displaystyle
      =\,R\circ(T_{1,v}-T_{2,u}-T_1\circ T_2^t+T_2\circ T_1^t)\circ R^t \\[2ex]
      \hspace*{6ex}\displaystyle
      +\,R_v\circ T_1\circ R^t+R\circ T_1\circ R_v^t
      -R_u\circ T_2\circ R^t-R\circ T_2\circ R_u^t \\[2ex]
      \hspace*{6ex}\displaystyle
      -\,R_u\circ T_2^t\circ R^t-R\circ T_1\circ R_v^t
      +R_v\circ T_1^t\circ R^t+R\circ T_2\circ R_u^t \\[2ex]
      \hspace*{3ex}\displaystyle
      =\,R\circ
         (T_{1,v}- T_{2,u}-T_1\circ T_2^t+T_2\circ T_1^t)
         \circ R^t
    \end{array}$$
using $T_i=-T_i^t$. This proves the statement.
\end{proof}
\subsection{Preparations for the normal curvature vector II: The exterior product}
\label{par_preparations2}
For the following algebraic concepts of the Grassmann geometry we refer to Cartan \cite{cartan_1974} or Heil \cite{heil_1974}.
\begin{definition}
The {\it exterior product}\index{exterior product}
\begin{equation*}
  \wedge\colon\mathbb R^n\times\mathbb R^n\to\mathbb R^N\,,\quad
  N=\binom{n}{2}=\frac{n(n+1)}{2}\,,
\end{equation*}
is defined by means of the following rules:
\begin{itemize}
\item[(E1)]
\quad
The mapping $\mathbb R^n\times\mathbb R^n\ni(X,Y)\mapsto X\wedge Y\in\mathbb R^N$ is bilinear,
  $$(\alpha_1X_1+\alpha_2X_2)\wedge(\beta_1Y_1+\beta_2Y_2)
    =\alpha_1\beta_2\,X_1\wedge Y_1
     +\alpha_1\beta_2\,X_1\wedge Y_2
     +\alpha_2\beta_1 X_2\wedge Y_1
     +\alpha_2\beta_2 X_2\wedge Y_2$$
for all $\alpha_i,\beta_i\in\mathbb R$ and $X_i,Y_i\in\mathbb R^n;$ and $\wedge$ is skew-symmetric,
  $$X\wedge Y=-Y\wedge X$$
for all $X,Y\in\mathbb R^n;$ in particular, it holds $X\wedge X=0.$
\vspace*{0.6ex}
\item[(E2)]
\quad
Let $e_1=(1,0,0,\ldots,0)\in\mathbb R^n,$ $e_2=(0,1,0,\ldots,0)\in\mathbb R^n$ etc. represent the standard orthonormal basis in $\mathbb R^n.$ Then we set
  $$\begin{array}{rcl}
      e_1\wedge e_2
      & := & (1,0,0,\ldots,0,0)\in\mathbb R^N\,, \\[2ex]
      e_1\wedge e_3
      & := & (0,1,0,\ldots,0,0)\in\mathbb R^N\,, \\[2ex]
      & & \vdots\\[2ex]
      e_{n-1}\wedge e_n
      & := & (0,0,0,\ldots,0,1)\in\mathbb R^N\,.
    \end{array}$$
\end{itemize}
\end{definition}
\noindent
From this settings we immediately obtain the
\begin{lemma}
The vectors $e_k\wedge e_\ell$ form a basis of $\mathbb R^N$ which is orthonormal w.r.t. the Euldidean metric
  $$(e_i\wedge e_j)\cdot(e_k\wedge e_\ell)
    =\left\{
       \begin{array}{l}
         1\quad\mbox{if}\ i=k\ \mbox{and}\ j=\ell \\[1ex]
         0\quad\mbox{if}\ i\not=k\ \mbox{or}\ j\not=\ell
       \end{array}
     \right..$$
\end{lemma}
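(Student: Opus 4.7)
The plan is to read the assertion as a direct consequence of the defining rules (E1) and (E2) rather than a substantive theorem, and to organise the proof along the two claims hidden in the statement: basis property and orthonormality.

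First I would fix the natural index set $\{(k,\ell)\,:\,1\le k<\ell\le n\}$, which has cardinality $N=\binom{n}{2}$, and observe that (E2) defines the $N$ vectors $e_k\wedge e_\ell$ with $k<\ell$ to be precisely the canonical unit vectors of $\mathbb R^N$ (in the lexicographic order listed there). With this identification the orthonormality
\[
 (e_i\wedge e_j)\cdot(e_k\wedge e_\ell)=\delta_{ik}\delta_{j\ell},\qquad i<j,\ k<\ell,
\]
is tautological, because the Euclidean inner product of two canonical unit vectors in $\mathbb R^N$ gives precisely this Kronecker expression.

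Next I would dispose of the unordered case, which is what the lemma as stated covers. Using $e_i\wedge e_i=0$ (part of (E1)) together with the skew-symmetry $e_i\wedge e_j=-e_j\wedge e_i$, any pair $(i,j)$ with $i\ne j$ reduces to an ordered pair up to a sign, and pairs with $i=j$ yield the zero vector. A short case analysis then verifies that the inner product is $1$ exactly when $(i,j)=(k,\ell)$ as ordered tuples, and $0$ otherwise, with the sign from skew-symmetry absorbed correctly when both pairs are flipped simultaneously.

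Finally, for the basis property I would expand an arbitrary value of the exterior product: for $X=\sum_i x^i e_i$ and $Y=\sum_j y^j e_j$, the bilinearity in (E1) yields
\[
 X\wedge Y=\sum_{i,j=1}^n x^iy^j\,e_i\wedge e_j=\sum_{1\le k<\ell\le n}(x^ky^\ell-x^\ell y^k)\,e_k\wedge e_\ell,
\]
after collapsing the diagonal via $e_i\wedge e_i=0$ and pairing off-diagonal terms via skew-symmetry. Hence the $N$ vectors in (E2) span the image of $\wedge$, and being orthonormal they are linearly independent, so they form a basis of $\mathbb R^N$. There is no real obstacle here: everything is bookkeeping directly from the axioms, and the only point that needs care is keeping the ordered/unordered index conventions consistent so that the skew-symmetry signs cancel as they should.
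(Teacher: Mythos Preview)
Your proposal is correct and matches the paper's approach: the paper states the lemma without proof, treating it as immediate from the definition (E2), which literally lists the $e_k\wedge e_\ell$ for $k<\ell$ as the standard unit vectors of $\mathbb R^N$. Your write-up is simply a careful unpacking of that observation; the only superfluous part is the final expansion of $X\wedge Y$, which is not needed for the basis claim (it is the content of the next lemma in the paper) since $N$ orthonormal vectors in $\mathbb R^N$ already form a basis.
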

\begin{lemma}
For two vectors $X=(x^1,\ldots,x^n)$ and $Y=(y^1,\ldots,y^n)$ it holds
  $$X\wedge Y=\sum_{1\le i<j\le n}(x^iy^j-x^jy^i)e_i\wedge e_j\,.$$
\end{lemma}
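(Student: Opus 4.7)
The plan is to expand both vectors in the standard basis $\{e_1,\ldots,e_n\}$ of $\mathbb R^n$ and then grind the result through the two axioms (E1) and (E2). Specifically, I would write
$$X=\sum_{i=1}^nx^ie_i,\qquad Y=\sum_{j=1}^ny^je_j,$$
and invoke the bilinearity of $\wedge$ stated in (E1) to obtain the preliminary identity
$$X\wedge Y=\sum_{i,j=1}^nx^iy^j\,(e_i\wedge e_j).$$

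Next I would split this double sum according to the three cases $i<j$, $i=j$, and $i>j$. The diagonal contribution vanishes by the skew-symmetry clause in (E1), which forces $e_i\wedge e_i=0$ for every $i$. For the terms with $i>j$ I would again apply skew-symmetry to rewrite $e_i\wedge e_j=-e_j\wedge e_i$, and then relabel the summation indices so that they match the ones in the sum with $i<j$. Collecting the two resulting contributions yields
$$X\wedge Y=\sum_{1\le i<j\le n}(x^iy^j-x^jy^i)\,e_i\wedge e_j,$$
which is the claimed formula.

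The proof rests entirely on the axiomatic properties (E1); the explicit basis identification (E2) is not needed for the algebraic identity itself (though it is what gives the right-hand side its concrete meaning as a vector in $\mathbb R^N$). There is no real obstacle here: the only point requiring a modicum of care is the bookkeeping when reorganizing the double sum into a sum over ordered pairs $i<j$, and making sure that the sign produced by swapping $e_i\wedge e_j$ matches the sign in the stated formula. Since this is a direct verification, two or three lines of computation following the outline above suffice.
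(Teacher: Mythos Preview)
Your proposal is correct and follows exactly the same approach as the paper's proof: expand in the standard basis, apply bilinearity to obtain the full double sum $\sum_{i,j}x^iy^j\,e_i\wedge e_j$, and then use skew-symmetry to collapse it to the sum over ordered pairs $i<j$. The paper merely compresses the case split into one line, but the argument is identical.
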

\begin{proof}
We compute
  $$X\wedge Y
    =\left(\sum_{i=1}^nx^ie_i\right)\wedge\left(\sum_{j=1}^ny^je_j\right)
    =\sum_{i,j=1}^nx^iy^je_i\wedge e_j
    =\sum_{1\le i<j\le n}(x^iy^j-x^jy^i)e_i\wedge e_j\,,$$
proving the statement.\qed
\end{proof}
\noindent
Note the following
\begin{remark}
For $n=3$ we have
  $$e_1\wedge e_2=(1,0,0),\quad
    e_1\wedge e_3=(0,1,0),\quad
    e_2\wedge e_3=(0,0,1),$$
and then some vectors $X=(x^1,x^2,x^3)$ and $Y=(y^1,y^2,y^3)$ we compute
  $$\begin{array}{lll}
      X\wedge Y
      & = & \displaystyle
            x^1y^2\,e_1\wedge e_2
            -x^1y^3\,e_1\wedge e_3
            +x^2y^1\,e_2\wedge e_1
            +x^2y^3\,e_2\wedge e_3
            -x^3y^1\,e_3\wedge e_1
            +x^3y^2\,e_3\wedge e_2 \\[2ex]
      & = & \displaystyle
            (x^1y^2-x^2y^1)e_1\wedge e_2
            +(x^3y^1-x^1y^3)e_1\wedge e_3
            +(x^2y^3-x^3y^2)e_2\wedge e_3 \\[2ex]
      & = & \displaystyle
            (x^1y^2-x^2y^1,x^3y^1-x^1y^3,x^2y^3-x^3y^2).
    \end{array}$$
In other words, the usual vector product $X\times Y$ in $\mathbb R^3$ does not coincide with the exterior product $X\wedge Y,$
  $$X\times Y=(x^2y^3-x^3y^2,x^3y^1-x^1y^3,x^1y^2-x^2y^1)\not=X\wedge Y.$$
\end{remark}
\noindent
Without proof we want to collect some algebraic and analytical properties of the exterior product.
\begin{lemma}
For arbitrary vectors $A,B,C\in\mathbb R^n$ there hold
\begin{itemize}
\item[$\bullet$]
$(\lambda A)\wedge B=\lambda(A\wedge B);$
\vspace*{0.6ex}
\item[$\bullet$]
$(A+B)\wedge C=A\wedge C+B\wedge C;$
\vspace*{0.6ex}
\item[$\bullet$]
$(A\wedge B)_{u^i}=A_{u^i}\wedge B+A\wedge B_{u^i}\,.$
\end{itemize}
Finally, let two vectors $X=(x^1,x^2,0,\ldots,0)$ and $Y=(y^1,y^2,0,\ldots,0)$ be given, i.e. assume that $X,Y\in\mbox{\rm span}\,\{e_1,e_2\}.$ Then it holds
\begin{itemize}
\item[$\bullet$]
$X\wedge Y
 \perp\mbox{\rm span}\,
      \big\{
        X\wedge e_3,\ldots,X\wedge e_n,Y\wedge e_3,\ldots,Y\wedge e_n,e_3\wedge e_n,\ldots,e_{n-1}\wedge e_n
      \big\}.$
\end{itemize}
\end{lemma}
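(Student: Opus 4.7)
The first two algebraic identities are immediate restrictions of the bilinearity axiom (E1): setting $\alpha_1=\lambda$, $\alpha_2=0$, $\beta_1=1$, $\beta_2=0$ gives the first; setting $\alpha_1=\alpha_2=1$ and $\beta_1=1$, $\beta_2=0$ together with the second analogous choice gives distributivity. The Leibniz rule for partial derivatives then follows by writing out the difference quotient
\[
\frac{(A\wedge B)(w+he_i)-(A\wedge B)(w)}{h}
 =\frac{A(w+he_i)-A(w)}{h}\wedge B(w+he_i)
   +A(w)\wedge\frac{B(w+he_i)-B(w)}{h},
\]
where the splitting is justified by bilinearity, and then passing to the limit $h\to 0$.

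The substantive part is the final orthogonality assertion. The plan is to expand every vector appearing in the claim with respect to the basis $\{e_k\wedge e_\ell\}_{1\le k<\ell\le n}$ of $\mathbb R^N$, and then to invoke the preceding lemma which states that this basis is orthonormal with respect to the Euclidean inner product on $\mathbb R^N$. Since $X,Y\in\mbox{span}\{e_1,e_2\}$, the formula from the preceding lemma yields
\[
X\wedge Y=(x^1y^2-x^2y^1)\,e_1\wedge e_2,
\]
so $X\wedge Y$ lies in the one-dimensional subspace spanned by the single basis vector $e_1\wedge e_2$. On the other hand, for $3\le k\le n$ bilinearity gives
\[
X\wedge e_k=x^1\,e_1\wedge e_k+x^2\,e_2\wedge e_k,\qquad
Y\wedge e_k=y^1\,e_1\wedge e_k+y^2\,e_2\wedge e_k,
\]
which lie in $\mbox{span}\{e_1\wedge e_k,e_2\wedge e_k\}$, while the remaining generators $e_i\wedge e_j$ with $3\le i<j\le n$ are themselves basis vectors. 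None of these index pairs equals $(1,2)$, so by the orthonormality lemma each of the spanning vectors on the right-hand side of the claim has vanishing $e_1\wedge e_2$-component and is therefore orthogonal to $X\wedge Y$.

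I do not anticipate any real obstacle: once the expansion in the orthonormal basis $\{e_k\wedge e_\ell\}$ is written down, the orthogonality reduces to the purely combinatorial observation that the index pair $(1,2)$ does not appear among the pairs $(1,k)$, $(2,k)$ with $k\ge 3$ or $(i,j)$ with $3\le i<j\le n$. The only care required is to treat the whole spanning set simultaneously rather than checking the generators one at a time; this is handled by noting that the orthogonal complement of a vector is a linear subspace, so it suffices to verify orthogonality on a generating set.
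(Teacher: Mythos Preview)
Your argument is correct. The paper actually states this lemma \emph{without proof}, so there is no approach to compare against; your write-up supplies exactly the kind of verification the text omits. The first three bullets are indeed immediate from (E1) and the difference-quotient trick, and your treatment of the orthogonality claim---reducing everything to the orthonormal basis $\{e_k\wedge e_\ell\}_{k<\ell}$ and observing that the index pair $(1,2)$ never occurs among the generators on the right---is the clean way to see it. Your closing remark, that it suffices to check orthogonality on a generating set because $(X\wedge Y)^\perp$ is a linear subspace, is the right justification for passing from the generators to their span.
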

\subsection{The curvature vector of the normal bundle}
\label{par_normalcurvaturevector}
Now let us come back to transformation rule for $\widetilde S_{12}$ which is the basic for definiting of a new geometric curvature quantity.
\begin{definition}
The curvature vector of the normal bundle is given by\index{curvature vector of normal bundle}
  $${\mathfrak S}_N
    :=\frac{1}{W}\,
      \sum_{1\le\sigma<\vartheta\le n}
      S_{\sigma,12}^\vartheta\,N_\sigma\wedge N_\vartheta\,.$$
Here $\wedge$ denotes the exterior product between two vectors in $\mathbb R^{n+2}$ from the previous paragraph. If $n=2$ then ${\mathfrak S}_N$ is a scalar, and we simply write $S_N={\mathfrak S}_N$ as before.
\end{definition}
\begin{proposition}
The curvature vector of the normal bundle neither depends on the parametrization nor on the choice of the normal frame. In particular, its length
  $$|{\mathfrak S}_N|
    =\sqrt{{\mathfrak S}_N\cdot{\mathfrak S}_N}
    =\sqrt{\,\frac{1}{W}\sum_{1\le\sigma<\vartheta\le n}(S_{\sigma,12}^\vartheta)^2}$$
represents a geometric quantity, the so-called curvature of the normal bundle.
\end{proposition}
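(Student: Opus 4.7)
The plan is to verify the two claimed invariances separately, then to read off the length formula from orthonormality of exterior-product basis vectors.

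For \emph{frame invariance}, I would first note that $S_{\sigma,12}^\vartheta$ is skew in $(\sigma,\vartheta)$ -- a direct consequence of $T_{\sigma,i}^\vartheta = -T_{\vartheta,i}^\sigma$ applied to the Ricci identity defining it -- so that
$$\mathfrak S_N = \frac{1}{2W}\sum_{\sigma,\vartheta=1}^n S_{\sigma,12}^\vartheta\, N_\sigma \wedge N_\vartheta.$$
Bilinearity of $\wedge$ applied to $\widetilde N_\sigma = \sum_\gamma R_\sigma^\gamma N_\gamma$ gives $\widetilde N_\sigma \wedge \widetilde N_\vartheta = \sum_{\gamma,\delta} R_\sigma^\gamma R_\vartheta^\delta\, N_\gamma \wedge N_\delta$. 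Combining this with the transformation rule $\widetilde S_{12} = R\circ S_{12} \circ R^t$ from the curvature-matrix lemma, the four factors of $R$ appearing in $\widetilde{\mathfrak S}_N$ pair off and contract via $\sum_\sigma R_\sigma^\alpha R_\sigma^\gamma = \delta_{\alpha\gamma}$ and $\sum_\vartheta R_\vartheta^\beta R_\vartheta^\delta = \delta_{\beta\delta}$ (the entries of $R^tR = \mathrm{id}$), collapsing the expression back to $\mathfrak S_N$.

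For \emph{parametrization invariance}, I would exploit that $S_{\sigma,ij}^\vartheta$ is a rank-$2$ skew tensor in the surface indices $i,j$, so that under an orientation-preserving coordinate change with Jacobian $J>0$ the scalar $S_{\sigma,12}^\vartheta$ picks up exactly the factor $J$, cancelling precisely the factor $|J|=J$ coming from $\widetilde W = JW$. The normal vectors $N_\sigma$ are intrinsic to points of the surface and unaffected by the reparametrization, so $N_\sigma \wedge N_\vartheta$ is unchanged as well, and one concludes $\widetilde{\mathfrak S}_N = \mathfrak S_N$.

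For the \emph{length formula}, I would apply the Gram identity $(A\wedge B)\cdot(C\wedge D) = (A\cdot C)(B\cdot D) - (A\cdot D)(B\cdot C)$, which for the orthonormal system $N_1,\ldots,N_n$ shows that $\{N_\sigma \wedge N_\vartheta\}_{\sigma<\vartheta}$ is itself an orthonormal system in $\mathbb R^{\binom{n+2}{2}}$; a Pythagorean expansion of $\mathfrak S_N \cdot \mathfrak S_N$ then produces the stated sum of squares of the $S_{\sigma,12}^\vartheta$, and the geometric invariance of $|\mathfrak S_N|$ is inherited from that of the vector $\mathfrak S_N$. The main obstacle is purely bookkeeping -- keeping straight the row/column conventions on $R$ and $R^t$ and checking that the factor $\tfrac12$ coming from the unrestricted sum over $(\sigma,\vartheta)$ agrees with the restricted sum $\sigma<\vartheta$ via the simultaneous skew-symmetries of $S_{\sigma,12}^\vartheta$ and of $N_\sigma\wedge N_\vartheta$.
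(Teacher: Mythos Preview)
Your proposal is correct and, for the frame-invariance part that the paper actually proves, follows the same route: rewrite $\mathfrak S_N$ as an unrestricted double sum, expand $\widetilde N_\sigma\wedge\widetilde N_\vartheta$ bilinearly, and invoke the transformation rule $\widetilde S_{12}=R\circ S_{12}\circ R^t$ together with $R^tR=\mathrm{id}$ to collapse everything back. The paper's proof stops there and does not spell out the parametrization invariance or the length formula; your sketches of those two points (rank-$2$ skew behaviour of $S_{\sigma,ij}^\vartheta$ cancelling the Jacobian in $W$, and orthonormality of $\{N_\sigma\wedge N_\vartheta\}_{\sigma<\vartheta}$ via the Gram identity) are the natural completions and are sound.
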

\begin{proof}
We check the invariance w.r.t. rotations: Using our transformation rule from paragraph \ref{par_curvaturematrices} we get
  $$\begin{array}{lll}
      \displaystyle
      \sum_{\sigma,\vartheta=1}^n
      \widetilde S_{\sigma,12}^\vartheta\,\widetilde N_\sigma\wedge\widetilde N_\vartheta\negthickspace
      & = & \displaystyle\negthickspace
            \sum_{\sigma,\vartheta=1}^n
            \sum_{\alpha,\beta=1}^n
            \widetilde S_{\sigma,12}^\vartheta R_\sigma^\alpha R_\vartheta^\beta\,N_\alpha\wedge N_\beta \\[4ex]
      & = & \displaystyle\negthickspace
            \sum_{\sigma,\vartheta=1}^n
            \sum_{\alpha,\beta=1}^n
            (R^t)_\alpha^\sigma\widetilde S_{\sigma,12}^\vartheta R_\vartheta^\beta\,N_\alpha\wedge N_\beta \\[4ex]
      & = & \displaystyle\negthickspace
            \sum_{\alpha,\beta=1}^n
            S_{\alpha,12}^\beta\,N_\alpha\wedge N_\beta
    \end{array}$$
which already proves the statement.
\end{proof}
\noindent
We want to point out that in case $n=2$ we can distinguish the signs of the normal curvature $S_N$ - positive or negative. In contrast to this special situation, the normal curvature is vector-valued if $n>2$ so that in general we can not speak of ``negatively'' or ``positively'' curved normal bundles.\\[1ex]
It seems to us the the normal curvature vector ${\mathfrak S}_N$ has not been considered in the literature so far, though manifolds with normal curvature are already widely studied, see e.g. Asperti \cite{asperti_1983}.
\vspace*{6ex}
\begin{center}
\rule{35ex}{0.1ex}
\end{center}
\vspace*{6ex}
\section{Elliptic systems}
\label{section_ellipticsystems}
\subsection{The mean curvature vector}
\label{par_meancurvaturevector}
The construction of suitable moving frames $N$ in the normal bundle of surfaces requires a profound knowledge on the analytical behaviour of these geometric objects. For that purpose we already start establishing some basic estimates for conformally parametrized immersions of prescribed mean curvature vector.
\begin{definition}
The mean curvature $H_N$ of an immersion $X$ {\it w.r.t. any unit normal vector} $N_\sigma$ is defined as\index{mean curvature}
  $$H_{N_\sigma}
   :=\frac{1}{2}\,\sum_{i,j=1}^2g^{ij}L_{N_\sigma,ij}
    =\frac{L_{N_\sigma,11}g_{22}-2L_{N_\sigma,12}g_{12}+L_{N_\sigma,22}g_{11}}{2W^2}\,.$$
\end{definition}
\noindent
Consider a normal frame $N=(N_1,\ldots,N_n),$ and set $H_\sigma:=H_{N_\sigma}.$
\begin{definition}
The mean curvature vector $H\in\mathbb R^n$ of the immersion $X$ is\index{mean curvature vector}
  $$H:=\sum_{\sigma=1}^nH_\sigma N_\sigma\,.$$
\end{definition}
\noindent
For surfaces in $\mathbb R^3$ there is, up to orientation, exactly one mean curvature and, thus, exactly one mean curvature vector. We could be misleaded to believe that in the general situation here the mean curvature vector $H$ could {\it replace} this special unit normal vector $N$ for surfaces in one codimension. This is not the case since, for example, for minimal surfaces the vector $H$ {\it vanishes identically.}
\begin{definition}
The immersion $X$ is called a minimal surface if and only if\index{minimal surface}
  $$H\equiv 0\quad\mbox{in}\ B.$$
\end{definition}
\noindent
The property $H\equiv 0$ does not depend on the choice of the normal frame $N.$ In fact, it holds even more: {\it The mean curvature vector $H$ neither depends on the parametrization nor on the choice of the normal frame.}\\[1ex]
Minimal surfaces are the topic of a huge literature: Courant \cite{courant_1950}, Nitsche \cite{nitsche_1975}, Osserman \cite{osserman_1986}, Dierkes et al. \cite{dhkw_1992}, Colding and Minicozzi \cite{colding_minicozzi_1999}, Eschenburg and Jost \cite{eschenburg_jost_2007} to enumerate only some few significant distributions and to illustrate the importance of this surface class in the field of geometric analysis.
\subsection{The mean curvature system}
\label{par_meancurvaturesystem}
From the Gauss equations together with the conformal representation of the Christoffel symbols from paragraph \ref{par_christoffel} we derive an elliptic system for conformally parametrized immersions with prescribed mean curvature vector $H$ as follows.
\begin{proposition}
Given the conformally parametrized immersion $X$ of prescribed mean curvature vector $H.$ Then it holds
  $$\triangle X=2\sum_{\vartheta=1}^nH_\vartheta WN_\vartheta=2HW
    \quad\mbox{in}\ B$$
where $N$ is an arbitrary normal frame.\index{mean curvature system}
\end{proposition}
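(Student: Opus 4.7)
The plan is to derive the formula by directly substituting conformal parameters into the Gau\ss{} equations from paragraph \ref{par_gaussequations} and exploiting the cancellation of the tangential part.

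First I would write down the two Gau\ss{} equations relevant to the Laplacian,
  $$X_{uu}=\sum_{k=1}^2\Gamma_{11}^kX_{u^k}+\sum_{\sigma=1}^nL_{\sigma,11}N_\sigma\,,\qquad
    X_{vv}=\sum_{k=1}^2\Gamma_{22}^kX_{u^k}+\sum_{\sigma=1}^nL_{\sigma,22}N_\sigma\,,$$
and add them to obtain
  $$\triangle X=\sum_{k=1}^2(\Gamma_{11}^k+\Gamma_{22}^k)X_{u^k}+\sum_{\sigma=1}^n(L_{\sigma,11}+L_{\sigma,22})N_\sigma\,.$$

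The key observation is that in conformal parameters the tangential contribution vanishes identically. Indeed, using the explicit expressions for the Christoffel symbols computed in paragraph \ref{par_christoffel},
  $$\Gamma_{11}^1+\Gamma_{22}^1=\frac{W_u}{2W}-\frac{W_u}{2W}=0\,,\qquad
    \Gamma_{11}^2+\Gamma_{22}^2=-\frac{W_v}{2W}+\frac{W_v}{2W}=0\,,$$
so that only the normal component survives, leaving
  $$\triangle X=\sum_{\sigma=1}^n(L_{\sigma,11}+L_{\sigma,22})N_\sigma\,.$$

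Finally I would specialize the mean curvature formula from paragraph \ref{par_meancurvaturevector} to the conformal situation $g_{11}=g_{22}=W,$ $g_{12}=0,$ $W^2=g_{11}g_{22}$: the off-diagonal term drops and one reads off
  $$H_\sigma=\frac{L_{\sigma,11}+L_{\sigma,22}}{2W}\,,\quad\mbox{equivalently}\quad L_{\sigma,11}+L_{\sigma,22}=2H_\sigma W\,.$$
Inserting this into the previous display and using the definition $H=\sum_\vartheta H_\vartheta N_\vartheta$ yields the claim $\triangle X=2\sum_\vartheta H_\vartheta WN_\vartheta=2HW.$

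There is no real obstacle here beyond bookkeeping; the content of the proposition is entirely the combination of (i) the conformal collapse of the Christoffel symbol pairs $\Gamma_{11}^k+\Gamma_{22}^k$ and (ii) the conformal simplification of the expression for $H_\sigma$. Both are available directly from paragraphs \ref{par_christoffel} and \ref{par_meancurvaturevector}, so the proof reduces to a short substitution.
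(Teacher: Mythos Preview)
Your proof is correct and essentially identical to the paper's own argument: both add the two Gau\ss{} equations, use the conformal Christoffel symbols from paragraph \ref{par_christoffel} to kill the tangential part, and then invoke the conformal form of $H_\sigma$ to identify the normal part as $2H_\sigma W$.
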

\begin{proof}
From the Gauss equations we infer
  $$\triangle X
    =(\Gamma_{11}^1+\Gamma_{22}^1)X_u+(\Gamma_{11}^2+\Gamma_{22}^2)X_v
     +\sum_{\vartheta=1}^n(L_{\vartheta,11}+L_{\vartheta,22})N_\vartheta\,.$$
Note that
  $$\Gamma_{11}^1+\Gamma_{22}^1=\frac{W_u}{2W}-\frac{W_u}{2W}=0,\quad
    \Gamma_{11}^2+\Gamma_{22}^2=-\frac{W_v}{2W}+\frac{W_v}{2W}=0,$$
as well as
  $$L_{\vartheta,11}+L_{\vartheta,22}=2H_\vartheta W$$
from the definition of $H_\vartheta.$ The statement follows.
\end{proof}
\noindent
This system generalizes the classical mean-curvature-system
  $$\triangle X=2HWN$$
from Hopf \cite{hopf_1950} for $n=1$ where $H\in\mathbb R$ denotes the scalar mean curvature of $X.$
\subsection{Quadratic growth in the gradient. A maximum principle}
\label{par_maximumprinciple}
We want to give a geometric application of the classical maximum principle for subharmonic functions: Given an upper bound $|H|\le h_0$ in $\overline B$ for the conformally parametrized immersion $X,$ we infer
  $$|\triangle X|\le 2h_0W\le h_0|\nabla X|^2\quad\mbox{in}\ B$$
on account of
  $$\begin{array}{lll}
      W\negthickspace
      & = & \negthickspace\displaystyle
            \sqrt{(X_u\cdot X_u)(X_v\cdot X_v)-(X_u\cdot X_v)^2}
            \,=\,\sqrt{(X_u\cdot X_u)^2} \\[2ex]
      & = & \negthickspace\displaystyle
            |X_u||X_u|
            \,\le\,\frac{1}{2}\,\big(X_u^2+X_u^2\big)
            \,=\,\frac{1}{2}\,\big(X_u^2+X_v^2)
            \,=\,\frac{1}{2}\,|\nabla X|^2\,.
    \end{array}$$
Thus $X$ is solution of a nonlinear elliptic system with quadratic growth in the gradient. Systems of this kind will play an important role for all of our considerations.
\begin{proposition}
Let $X\in C^{3+\alpha}(\overline B,\mathbb R^{n+2})$ be an immersion with prescribed mean curvature vector $H.$ Let $|H|\le h_0$ in $\overline B,$ and suppose that $h_0\le 1.$ Then it holds\index{geometric maximum principle}
  $$\max_{(u,v)\in\overline B}|X(u,v)|^2
    =\max_{(u,v)\in\partial B}|X(u,v)|^2\,.$$
\end{proposition}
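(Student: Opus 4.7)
The plan is to show that $|X|^2$ is subharmonic on $B$ and then apply the weak maximum principle for continuous subharmonic functions on $\overline B$.

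First I would pass to a conformal reparametrization, available by paragraph~\ref{par_konformeParameter}; this is harmless since $|X|^2$ depends only on the image. In such parameters the mean curvature system of paragraph~\ref{par_meancurvaturesystem} reads $\triangle X = 2HW$, and using $|\nabla X|^2 = 2W$ (as in the derivation of the quadratic-gradient estimate preceding the statement) it takes the compact form $\triangle X = H\,|\nabla X|^2$. A direct computation then gives
\begin{equation*}
\triangle|X|^2 \,=\, 2|\nabla X|^2 + 2\,X\cdot\triangle X \,=\, 2|\nabla X|^2\bigl(1+X\cdot H\bigr),
\end{equation*}
and the Cauchy--Schwarz bound $X\cdot H \ge -|X|\,|H| \ge -h_0\,|X|$ yields the key estimate
\begin{equation*}
\triangle|X|^2 \,\ge\, 2|\nabla X|^2\bigl(1-h_0|X|\bigr) \quad\text{in }B.
\end{equation*}
The hypothesis $h_0\le 1$ is then used to conclude that the right-hand side is non-negative, so $|X|^2$ is subharmonic on $B$, and the weak maximum principle gives $\max_{\overline B}|X|^2 = \max_{\partial B}|X|^2$.

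The step I expect to be the main obstacle is extracting the pointwise non-negativity of the factor $1-h_0|X|$ from the bound $h_0\le 1$ alone. The natural route is by contradiction: at an interior maximum $w_0 \in B$ of $|X|^2$ one has $\triangle|X|^2(w_0)\le 0$, while the immersion property forces $|\nabla X|^2(w_0)>0$, so the lower bound above forces $h_0|X(w_0)|\ge 1$, i.e.\ $|X(w_0)|\ge 1/h_0\ge 1$. Ruling this out is where the full content of the smallness hypothesis $h_0\le 1$ must be exploited in conjunction with the disc-type framework, and so I would expect this final paragraph of the proof to be the most delicate one and to dictate whether the statement goes through as written or whether an additional a priori bound such as $|X|\le 1$ is tacitly assumed.
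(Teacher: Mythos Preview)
Your approach is exactly the paper's: pass to conformal parameters, compute $\triangle|X|^2$, and invoke the maximum principle. Your identification of the delicate point is also correct --- but the paper does \emph{not} resolve it. The paper's proof simply writes
\[
\triangle|X|^2=2\big(|\nabla X|^2+X\cdot\triangle X\big)\ge 2\big(|\nabla X|^2-h_0|\nabla X|^2\big)=2|\nabla X|^2(1-h_0)\ge 0,
\]
i.e.\ it asserts $X\cdot\triangle X\ge -h_0|\nabla X|^2$ without the factor $|X|$ that you (correctly) obtain from Cauchy--Schwarz. As it stands, this inequality is only justified if $|X|\le 1$. So your careful derivation $\triangle|X|^2\ge 2|\nabla X|^2(1-h_0|X|)$ is the honest one, and your suspicion that an a~priori bound of the type $|X|\le 1$ (or more generally $h_0\sup|X|\le 1$) is tacitly assumed is well founded: the paper's argument does not supply any mechanism to remove the $|X|$, it simply omits it.

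Your proposed contradiction argument does not close the gap either: at an interior maximum you get $|X(w_0)|\ge 1/h_0\ge 1$, but nothing in the hypotheses excludes this. In short, you have reproduced the paper's proof and, in addition, correctly spotted that the statement as written needs the extra normalization $\|X\|_{C^0(\overline B)}\le 1$ (or $h_0\|X\|_{C^0}\le 1$) for the subharmonicity step to go through.
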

\begin{proof}
We introduce conformal parameters $(u,v)\in\overline B$ which does not affect the maximum norm of $X.$ We compute
  $$\triangle|X|^2
    =2\big(|\nabla X|^2+X\cdot\triangle X\big)
    \ge 2\big(|\nabla X|^2-h_0|\nabla X|^2\big)
    =2|\nabla X|^2(1-h_0)
    \ge 0.$$
Thus $|X|^2$ is subharmonic, and the result follows from the classical maximum principle.
\end{proof}
\noindent
For further considerations we want to refer to Dierkes \cite{dierkes_2005} and the references therein. The method of proof presented here goes already back to E. Heinz (see Sauvigny \cite{sauvigny_2005}, volume II, chapter XII).
\subsection{A curvature estimate}
\label{par_curvatureestimate}
We want to conclude this first lecture with some applications of the theory of harmonic mappings to curvature estimates for conformally parametrized minimal surfaces. Our first observations is based upon the representation formula\index{curvature estimates}
  $$S_{\sigma,12}^\omega
    =\frac{1}{W}\,(L_{\sigma,11}-L_{\sigma,22})L_{\omega,12}
     -\frac{1}{W}\,(L_{\omega,11}-L_{\omega,22})L_{\sigma,12}$$
of the normal curvature tensor from paragraph \ref{par_curvnormbundle}. Applying the Cauchy-Schwarz inequality gives
  $$|S_{\sigma,12}^\omega|
    \le\frac{1}{2W}\,(L_{\sigma,11}^2+2L_{\sigma,12}^2+L_{\sigma,22}^2)
       +\frac{1}{2W}\,(L_{\omega,11}^2+2L_{\omega,12}^2+L_{\omega,22}^2).$$
On the other hand we verify
  $$2H_\sigma^2-K_\sigma
    =\frac{L_{\sigma,11}^2+2L_{\sigma,11}L_{\sigma,22}+L_{\sigma,22}^2}{2W^2}
     -\frac{L_{\sigma,11}L_{\sigma,22}-L_{\sigma,12}^2}{W^2}
    =\frac{L_{\sigma,11}^2+2L_{\sigma,12}^2+L_{\sigma,22}^2}{2W^2}$$
so that we arrive at the
\begin{proposition}
It holds
  $$|S_{\sigma,12}^\omega|
    \le(2H_\sigma^2-K_\sigma)W+(2H_\omega^2-K_\omega)W.$$
\end{proposition}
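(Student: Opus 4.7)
The plan is essentially to combine the two preparatory computations that immediately precede the statement, so the argument is almost a one-line assembly. I would begin by recording the representation
$$S_{\sigma,12}^\omega = \frac{1}{W}(L_{\sigma,11}-L_{\sigma,22})L_{\omega,12} - \frac{1}{W}(L_{\omega,11}-L_{\omega,22})L_{\sigma,12},$$
which is the formula derived in paragraph \ref{par_curvnormbundle} under conformal parametrization.

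Next I would apply the elementary inequality $2|ab|\le a^2+b^2$ termwise, splitting the product $(L_{\sigma,11}-L_{\sigma,22})L_{\omega,12}$ into a sum of squares and similarly for the second term, reorganising the squares by the index $\sigma$ versus $\omega$. This is precisely the Cauchy-Schwarz step already recorded in the excerpt:
$$|S_{\sigma,12}^\omega| \le \frac{1}{2W}\bigl(L_{\sigma,11}^2+2L_{\sigma,12}^2+L_{\sigma,22}^2\bigr) + \frac{1}{2W}\bigl(L_{\omega,11}^2+2L_{\omega,12}^2+L_{\omega,22}^2\bigr).$$

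Then I would substitute the algebraic identity, also stated just above,
$$\frac{L_{\tau,11}^2+2L_{\tau,12}^2+L_{\tau,22}^2}{2W^2} = 2H_\tau^2 - K_\tau \qquad (\tau=\sigma,\omega),$$
which follows by expanding $2H_\tau^2$ and $K_\tau$ from their definitions in paragraph \ref{par_meancurvaturevector} and paragraph \ref{par_theoremaegregium} respectively and collecting terms. Multiplying through by $W$ gives
$$\frac{L_{\tau,11}^2+2L_{\tau,12}^2+L_{\tau,22}^2}{2W} = (2H_\tau^2-K_\tau)W,$$
and inserting this for $\tau=\sigma$ and $\tau=\omega$ into the Cauchy-Schwarz bound yields the claim.

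There is no real obstacle here: the identity $2H_\tau^2-K_\tau = (L_{\tau,11}^2+2L_{\tau,12}^2+L_{\tau,22}^2)/(2W^2)$ requires only conformal coordinates (so that $g_{11}=g_{22}=W$, $g_{12}=0$), and the Cauchy-Schwarz step uses only the AM-GM inequality. The only thing to double-check is that conformal parameters can be assumed without loss of generality — which is guaranteed by Sauvigny's result from paragraph \ref{par_konformeParameter}, together with the parametrization-invariant nature of $|S_{\sigma,12}^\omega|/W$, $H_\sigma$, $H_\omega$, $K_\sigma$, $K_\omega$ and the area element product $(\cdot)W$ appearing on both sides.
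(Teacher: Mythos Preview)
Your argument is correct and follows essentially the same route as the paper: start from the conformal representation of $S_{\sigma,12}^\omega$, apply the elementary inequality $2|ab|\le a^2+b^2$ to obtain the sum-of-squares bound, and then identify each bracket via the algebraic identity $2H_\tau^2-K_\tau=(L_{\tau,11}^2+2L_{\tau,12}^2+L_{\tau,22}^2)/(2W^2)$. Your closing remark about the legitimacy of passing to conformal parameters is a welcome extra check, but otherwise there is nothing to add.
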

\noindent
In particular, we conclude that spherical surfaces characterized by the property $2H_\sigma^2-K_\sigma\equiv 0$ w.r.t. arbitrary $N_\sigma$ have flat normal bundle: $S_{\sigma,12}^\omega=0.$ Furthermore, we infer that bounds for $|S_{\sigma,12}^\vartheta|$ are achieved by establishing bound for the surface curvatures and its area element $W.$\\[1ex]
We want to demonstrate how the right hand side of this inequality can be controlled by means of minimal graphs on closed discs $\overline B_r$ of radius $r>0.$ Let $X\colon\overline B_r\to\mathbb R^{n+2}$ be conformal representation of such a minimal graph.\footnote{We can make use of Riemann's mapping theorem to introduce conformal parameters $(u,v)\in\overline B_r.$} Suppose furthermore the growth condition
  $$|X(u,v)|\le\Omega r^\varepsilon$$
with a universal constants $\Omega>0,$ and later we will chose the parameter $\varepsilon\ge 0$ small enough. First of all, there hold $H_\sigma\equiv 0$ for all $\sigma=1,\ldots,n.$ Thus the mean curvature systems reduces to
  $$\triangle X=0\quad\mbox{in}\ B_r\,.$$
Applying potential theoretic estimates, see e.g. Gilbarg and Trudinger \cite{gilbarg_trudinger_1983}, Theorem 4.6, we find
\begin{lemma}
There is a real constant $C_1\in(0,+\infty)$ so that
  $$|X_{u^iu^j}(0,0)|\le C_1\|X\|_{C^0(B_r)}\,,\quad i,j=1,2.$$
\end{lemma}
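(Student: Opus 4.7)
The plan is to invoke the interior derivative estimates for harmonic functions, citing exactly the Gilbarg--Trudinger reference that the excerpt has already singled out. The point is that under the present hypotheses the mean curvature system degenerates to the Laplace equation, so each component of $X$ becomes an honestly harmonic function on $B_r$, and second derivatives at the centre of a disc of radius $r$ can be controlled by the supremum of the function on that disc.

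More concretely, I would proceed as follows. First, recall that $H_\sigma \equiv 0$ for every $\sigma=1,\ldots,n$ by the minimality assumption, so that the mean-curvature system of paragraph \ref{par_meancurvaturesystem} reduces to $\triangle X=0$ on $B_r$. Writing $X=(x^1,\ldots,x^{n+2})$ componentwise, each $x^k$ is a harmonic real-valued function on $B_r$. Second, differentiating the Laplace equation shows that $x^k_{u^iu^j}$ is again harmonic on $B_r$, so the classical derivative estimates for harmonic functions apply: Theorem 4.6 in Gilbarg--Trudinger \cite{gilbarg_trudinger_1983} (or equivalently the Cauchy estimates obtained from the Poisson kernel) yields a constant $C=C(2)$, depending only on the order of differentiation, such that
  $$|D^2 x^k(0,0)|\le \frac{C}{r^2}\,\sup_{B_r}|x^k|$$
for every $k=1,\ldots,n+2$.

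Third, since $\|x^k\|_{C^0(B_r)}\le \|X\|_{C^0(B_r)}$ for each $k$, summing over $k$ (or taking the Euclidean norm componentwise) absorbs the vector structure at the cost of a dimensional factor. Collecting constants into a single $C_1\in(0,\infty)$, which is allowed to depend on $r$ and on the ambient dimension $n+2$, one arrives at
  $$|X_{u^iu^j}(0,0)|\le C_1\,\|X\|_{C^0(B_r)}\qquad\text{for } i,j=1,2,$$
which is the claimed inequality. There is really no hard step here; the only thing to be careful about is that the constant $C_1$ implicitly incorporates the factor $1/r^2$ coming from the potential-theoretic estimate, together with the passage from the scalar bound to the vector-valued bound.
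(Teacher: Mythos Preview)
Your argument is correct and matches the paper's own treatment: the paper does not give a detailed proof either, but simply notes that $\triangle X=0$ for the minimal graph and then cites Gilbarg--Trudinger \cite{gilbarg_trudinger_1983}, Theorem~4.6, for the interior derivative estimate. Your remark that $C_1$ absorbs the $r^{-2}$ factor and a dimensional constant is a useful clarification that the paper leaves implicit.
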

\noindent
Here $\|X\|_{C^0(B_r)}$ denotes the Schauder maximum norm\index{Schauder maximum norm} of the mapping $X,$ i.e.
  $$\|X\|_{C^0(B_r)}=\sup_{(u,v)\in B_r}|X(u,v)|$$
which does not depend on the parametrization.\\[1ex]
This lemma is the first step for us to establish an upper bound for the Gaussian curvature $K_\sigma.$ Namely, we estimate as follows
  $$|K_\sigma(0,0)|
    \le\frac{|L_{\sigma,11}(0,0)||L_{\sigma,22}(0,0)|+2|L_{\sigma,12}(0,0)|^2}{W(0,0)^2}
    \le\frac{|X_{uu}(0,0)||X_{vv}(0,0)|+2|X_{uv}(0,0)|}{W(0,0)^2}$$
which leads us to
  $$|K_\sigma(0,0)|
    \le\frac{2C_1^2\|X\|_{C^0(B_r)}^2}{W(0,0)^2}
    \le\frac{2C_1^2\Omega^2}{W(0,0)^2}\,r^{2\varepsilon}\,.$$
Thus it remains to find a lower bound for the area element at the origin $(0,0)\in B_r:$ To this end we consider the plane mapping\index{plane mapping}
  $$F(u,v)=\big(x^1(u,v),x^2(u,v)\big)\colon\overline B_r\longrightarrow\mathbb R^2$$
of the graph's conformal representation $X.$ Following Sauvigny \cite{sauvigny_2005}, volume II, chapter XII, Satz 1 there is a universal constant $C_2\in(0,\infty)$ so that it holds
  $$|\nabla F(0,0)|\ge C_2r.$$
The proof of this lower estimate is very intricate. Originally it goes back to E. Heinz in 1952, and it makes essential use of ingredients characterizing our regular parameter transformation:
\begin{itemize}
\item[(i)]
$F(0,0)=(0,0);$
\vspace*{-1ex}
\item[(ii)]
$F\big|_{\partial B_r}\colon{\partial B_r}\to\partial B_r$ is a positively oriented and toplogical mapping;
\vspace*{-1ex}
\item[(iii)]
$J_F(u,v)>0$ for the Jacobian of $F.$
\end{itemize}
Especially the third property is fulfilled for conformal reparametrizations of {\it surface graphs.}\index{conformal parameters} Proving  (iii) for general self-intersecting immersions in higher dimensional spaces turns out to be difficult.\\[1ex]
Now we obtain
  $$2W(0,0)
    =|\nabla x^1(0,0)|^2+|\nabla x^2(0,0)|^2+\ldots+|\nabla x^{n+2}(0,0)|^2
    \ge|\nabla F(0,0)|^2
    \ge C_2^2r^2\,.$$
Collecting all the obtained estimates proves the following Bernstein-Liouville type result.\index{Bernstein-Liouville type result}
\begin{theorem}
(Fr\"ohlich \cite{froehlich_2005})\\
For the conformally parametrized minimal graph $X\colon\overline B_r\to\mathbb R$ it holds\index{minimal graph}
  $$|K_\sigma(0,0)|\le\frac{2C_1^2\Omega^2}{C_2^2}\,\frac{r^{2\varepsilon}}{r^4}
    \quad\mbox{for all}\ \sigma=1,\ldots,n.$$
\end{theorem}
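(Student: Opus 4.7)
The plan is to assemble three ingredients already prepared in the preceding paragraphs and read off the bound from the conformal expression of $K_\sigma$. The core observation is that, because $X$ is a minimal graph, the mean-curvature system $\triangle X = 2HW$ degenerates to $\triangle X \equiv 0$ in $B_r$, so every component $x^k$ of $X$ is harmonic. The estimate then follows by playing a potential-theoretic upper bound on the second derivatives against a geometric lower bound on the area element.

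First I would bound the numerator of $K_\sigma$ at the origin. In conformal parameters $g_{11}=g_{22}=W,$ $g_{12}=0,$ the Gaussian component takes the shape
  $$K_\sigma(0,0)=\frac{L_{\sigma,11}(0,0)L_{\sigma,22}(0,0)-L_{\sigma,12}(0,0)^2}{W(0,0)^2}\,.$$
Since $|L_{\sigma,ij}(0,0)|=|X_{u^iu^j}(0,0)\cdot N_\sigma|\le|X_{u^iu^j}(0,0)|$ by the Cauchy–Schwarz inequality and $|N_\sigma|=1,$ the interior estimate of the preceding lemma combined with the growth hypothesis $|X|\le\Omega r^\varepsilon$ gives $|X_{u^iu^j}(0,0)|\le C_1\Omega r^\varepsilon,$ hence the numerator is bounded by $2C_1^2\Omega^2r^{2\varepsilon}.$

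Second I would bound $W(0,0)$ from below. By conformality $W=\tfrac12|\nabla X|^2,$ and since the first two coordinates of $X$ are $x^1,x^2$ themselves, the projection $F=(x^1,x^2)$ satisfies
  $$2W(0,0)=|\nabla x^1(0,0)|^2+\ldots+|\nabla x^{n+2}(0,0)|^2\ge|\nabla F(0,0)|^2\ge C_2^2r^2\,.$$
The last step is precisely the Heinz–Sauvigny lower estimate quoted above. Squaring and inserting both bounds into the formula for $K_\sigma(0,0)$ yields a bound of the claimed form $|K_\sigma(0,0)|\le\mbox{const}\cdot r^{2\varepsilon}/r^4,$ with the universal constant naturally appearing as $2C_1^2\Omega^2/C_2^2$ after absorbing numerical factors.

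The hard part is not the present assembly but the quotable ingredient $|\nabla F(0,0)|\ge C_2r$: its proof, going back to E.~Heinz in 1952, is delicate and depends crucially on the three properties (i)–(iii) of the plane projection $F,$ in particular on $J_F>0,$ which is automatic for \emph{graphs} over the $(x^1,x^2)$-plane but fails for general self-intersecting immersions. Once this Heinz–Sauvigny ingredient is granted, and the minimal-graph hypothesis is used to reduce the mean-curvature system to Laplace's equation so that the harmonic-function derivative estimate applies, the theorem is obtained by the straightforward bookkeeping just described.
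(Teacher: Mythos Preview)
Your proposal is correct and follows essentially the same route as the paper: harmonicity of $X$ from minimality, the potential-theoretic bound $|X_{u^iu^j}(0,0)|\le C_1\|X\|_{C^0(B_r)}$ on the second derivatives, Cauchy--Schwarz to pass from $L_{\sigma,ij}$ to $X_{u^iu^j}$, and the Heinz--Sauvigny lower bound $|\nabla F(0,0)|\ge C_2r$ for the plane projection to control $W(0,0)$ from below. Your closing remark about the difficulty residing in the quoted Heinz ingredient, and its reliance on the graph property via $J_F>0$, matches the paper's emphasis exactly.
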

\noindent
In particular, if $\varepsilon\in[0,2),$ and if $X$ is defined on the whole plane $\mathbb R^2$ so that we can go to the limit $r\to\infty$ due to a theorem of Hadamard (see e.g. Klingenberg \cite{klingenberg_1973}, Theorem 6.4.4), and we infer
  $$|K_\sigma(u,v)|\longrightarrow 0\quad\mbox{for}\ r\to\infty$$
which holds for all $\sigma=1,\ldots,n$ and all $(u,v)\in\mathbb R^2.$ {\it Thus the complete and entire minimal graph $X$ with growth exponent $\varepsilon\in[0,2)$ represents a plane.}\\[1ex]
Note that his result is sharp in the sense that $X(z)=(z,z^2),$ defined on the whole plane $\mathbb R^2,$ has quadratic growth with $\varepsilon=2$ and is not a plane!\\[1ex]
A curvature estimate for surface graphs of prescribed mean curvature and theorems of Bernstein type for minimal graphs can be found for example in Bergner and Fr\"ohlich \cite{bergner_froehlich_2008}. Curvature estimates resting upon methods of Schoen, Simon, Yau \cite{ssy_1975} and Ecker, Huisken \cite{ecker_2004}, \cite{ecker_huisken_1989} can be found in Wang \cite{wang_2002}, \cite{wang_2004}, Fr\"ohlich and Winklmann \cite{froehlich_winklmann_2007}, or Xin \cite{xin_2009}.
\vspace*{6ex}
\begin{center}
\rule{35ex}{0.1ex}
\end{center}
\vspace*{6ex}
\cleardoublepage\noindent
  $$ $$\\[15ex]
\thispagestyle{empty}
{\bf{\sc{\Large Lecture II}}}\\[4ex]
{\bf{\sc{\huge Normal Coulomb Frames in $\mathbb R^4$}}}\\[6ex]
\rule{108ex}{0.2ex}
\vspace*{20ex}
\begin{itemize}
\item[8.]
Problem statement. Curves in $\mathbb R^3$
\item[9.]
Torsion free normal frames
\item[10.]
Examples
\item[11.]
Normal Coulomb frames
\item[12.]
Estimating the torsion coefficients
\item[13.]
Estimates for the total torsion
\item[14.]
An example: Holomorphic graphs
\end{itemize}
\cleardoublepage\noindent
\thispagestyle{empty}
\vspace*{10ex}
\section{Problem statement. Curves in $\mathbb R^3$}
\label{section_curves}
As in paragraph \ref{par_torsionen} we consider an arc-length parametrized curve $c(s)$ in $\mathbb R^3$ with unit tangent vector $t(s)=c'(s)$ and unit normal vector $n(s)=\frac{t'(s)}{|t'(s)|}.$ The associated torsion $\tau$ is given by $\tau=n'\cdot b=-n\cdot b',$ $n$ being the unit normal and $b$ the binormal vector. We introduce a new normal frame $(\widetilde t,\widetilde n)$ by means of
  $$\widetilde n=\cos\varphi\,n+\sin\varphi\,b,\quad
    \widetilde b=-\sin\varphi\,n+\cos\varphi\,b.$$
The new torsion $\widetilde\tau$ associated to this frame then satisfies\index{torsion of curves}
  $$\begin{array}{lll}
      \widetilde\tau\negthickspace
      & = & \negthickspace\displaystyle
            \widetilde n'\cdot\widetilde b
            \,=\,(-\varphi'\sin\varphi\,n+\cos\varphi\,n'+\varphi'\cos\varphi\,b+\sin\varphi\,b')
                 \cdot(-\sin\varphi\,n+\cos\varphi\,b) \\[2ex]
      & = & \negthickspace\displaystyle
            \varphi'\sin^2\varphi-\sin^2\varphi\,(b'\cdot n)+\cos^2\varphi\,(n'\cdot b)+\varphi'\cos^2\varphi
            \,=\,\varphi'+\tau,
    \end{array}$$
so in particular constructing a normal frame $(\widetilde n,\widetilde b)$ which is {\it free of torsion,}\index{torsion free normal frame} i.e. fulfilling $\widetilde\tau\equiv 0,$ starting with a given frame $(n,b)$ reduces to solving the ordinary initial value problem
  $$\varphi'(s)=-\tau(s),\quad
    \varphi(s_0)=\tau_0$$
with some initial value $\tau_0.$
\begin{proposition}
Rotating the standard frame $(t,n)$ by an angle
  $$\varphi(s)=-\int\limits_{s_0}^s\tau(\sigma)\,d\sigma+\varphi_0$$
with arbitrary $\varphi_0\in\mathbb R$ generates a normal frame $(\widetilde t,\widetilde n)$ which is free of torsion.
\end{proposition}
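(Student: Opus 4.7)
The plan is to read off the result directly from the transformation rule for $\widetilde\tau$ that was computed just before the proposition. Recall that for the rotated frame
$$\widetilde n=\cos\varphi\,n+\sin\varphi\,b,\quad\widetilde b=-\sin\varphi\,n+\cos\varphi\,b,$$
the preceding calculation established the identity $\widetilde\tau=\varphi'+\tau$. Demanding $\widetilde\tau\equiv 0$ therefore reduces the torsion-freeness condition to the scalar linear ODE
$$\varphi'(s)=-\tau(s).$$

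The main step is then to integrate this ODE. Since $\tau\in C^0$ along the curve, the fundamental theorem of calculus supplies a unique solution for each prescribed initial value $\varphi(s_0)=\varphi_0$, namely
$$\varphi(s)=-\int_{s_0}^s\tau(\sigma)\,d\sigma+\varphi_0,$$
which is exactly the angle in the proposition. Plugging this $\varphi$ back into the rotation formulas yields a frame $(\widetilde n,\widetilde b)$ with $\widetilde\tau\equiv 0$.

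It remains only to note that $(\widetilde n,\widetilde b)$ really is an orthonormal normal frame of the curve. This is immediate: $(\widetilde n,\widetilde b)$ is obtained from the orthonormal pair $(n,b)\subset c'(s)^\perp$ by an $SO(2)$-rotation inside the normal plane $c'(s)^\perp$, so orthonormality and the spanning property are preserved pointwise in $s$; the regularity of $\widetilde n,\widetilde b$ follows from that of $n,b,\tau$ via the explicit formula for $\varphi$. No step poses a genuine obstacle — the work has already been done in establishing $\widetilde\tau=\varphi'+\tau$, and the proposition is really just an integration of that first-order linear relation.
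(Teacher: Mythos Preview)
Your proof is correct and follows essentially the same approach as the paper: the paper derives $\widetilde\tau=\varphi'+\tau$ in the text preceding the proposition and notes that torsion-freeness reduces to the initial value problem $\varphi'(s)=-\tau(s)$, $\varphi(s_0)=\varphi_0$, whose solution is precisely the stated integral. Your additional remarks on orthonormality and regularity are fine but not strictly needed, as they are immediate from the $SO(2)$-rotation.
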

\noindent
Such a torsion-free normal frame is called {\it parallel} because all derivatives of normal vectors are tangential to the curve, i.e. parallel to the tangent vector $t(s).$ This would easily follow from the Fren\'et equations for curves, but let us refer to our next considerations. Parallel normal frames are special Coulomb frames as will become clear in the following.
\vspace*{6ex}
\begin{center}
\rule{35ex}{0.1ex}
\end{center}
\vspace*{6ex}
\section{Torsion free normal frames}
\label{section_torsionfree}
The question arises whether there is a similar construction of torsion-free normal frames {\it if the underlying manifold has two dimensions.} In this chapter we focus on the case of two codimensions, i.e. we consider regular surfaces $X\colon\overline B\to\mathbb R^4.$ So let a normal frame $N=(N_1,N_2)$ be given. Then by means of the following $SO(2)$-valued transformation
  $$\widetilde N_1=\cos\varphi\,N_1+\sin\varphi\,N_2\,,\quad
    \widetilde N_2=-\sin\varphi\,N_1+\cos\varphi\,N_2$$
with a rotation angle $\varphi$ we arrive at a new normal frame $\widetilde N.$
\begin{lemma}
There hold
  $$\widetilde T_{1,1}^2=T_{1,1}^2+\varphi_u\,,\quad
    \widetilde T_{1,2}^2=T_{1,2}^2+\varphi_v\,.$$
\end{lemma}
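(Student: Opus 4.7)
The plan is to apply the definition $\widetilde T_{1,i}^2=\widetilde N_{1,u^i}\cdot\widetilde N_2$ directly, substituting the rotation formulas and expanding by the product rule. First I would differentiate
$$\widetilde N_1=\cos\varphi\,N_1+\sin\varphi\,N_2$$
with respect to $u$ to obtain
$$\widetilde N_{1,u}=-\varphi_u\sin\varphi\,N_1+\cos\varphi\,N_{1,u}+\varphi_u\cos\varphi\,N_2+\sin\varphi\,N_{2,u}\,,$$
then take the Euclidean inner product with $\widetilde N_2=-\sin\varphi\,N_1+\cos\varphi\,N_2$.

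Next I would sort the eight resulting terms into three groups using the orthonormality of the normal frame. The terms that carry the factor $\varphi_u$ couple through $N_1\cdot N_1=1$ and $N_2\cdot N_2=1$ and combine via $\sin^2\varphi+\cos^2\varphi=1$ to give exactly $\varphi_u$. The pure derivative terms with mixed indices $N_{1,u}\cdot N_1$ and $N_{2,u}\cdot N_2$ drop out because differentiation of $N_\sigma\cdot N_\sigma=1$ forces $N_{\sigma,u}\cdot N_\sigma=0$. The remaining derivative terms reduce to $\cos^2\varphi\,(N_{1,u}\cdot N_2)-\sin^2\varphi\,(N_{2,u}\cdot N_1)$, which by the definition $T_{\sigma,i}^\vartheta=N_{\sigma,u^i}\cdot N_\vartheta$ equal $\cos^2\varphi\,T_{1,1}^2-\sin^2\varphi\,T_{2,1}^1$. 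Invoking the skew-symmetry proposition $T_{2,1}^1=-T_{1,1}^2$ already established, this becomes $(\cos^2\varphi+\sin^2\varphi)\,T_{1,1}^2=T_{1,1}^2$. Summing the three groups yields the claim $\widetilde T_{1,1}^2=T_{1,1}^2+\varphi_u$.

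The second identity $\widetilde T_{1,2}^2=T_{1,2}^2+\varphi_v$ is obtained by repeating the above computation verbatim with $\partial_v$ in place of $\partial_u$; no new idea is required.

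The main obstacle is purely bookkeeping: one must track six cross terms and apply orthonormality together with the skew-symmetry $T_{\sigma,i}^\vartheta=-T_{\vartheta,i}^\sigma$ at the right place. Conceptually the content of the lemma is just the statement that the torsion coefficient behaves under an $SO(2)$ gauge transformation like a $U(1)$ connection under a change of phase, picking up the logarithmic derivative $d\varphi$ of the gauge, and this is exactly what the calculation reveals.
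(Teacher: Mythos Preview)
Your proof is correct and follows exactly the approach the paper has in mind: the paper omits the proof, remarking only that it ``follows the same lines as our calculation at the beginning of this chapter,'' namely the curve computation $\widetilde\tau=\widetilde n'\cdot\widetilde b=\varphi'+\tau$ in Section~\ref{section_curves}. Your direct expansion of $\widetilde N_{1,u^i}\cdot\widetilde N_2$ using orthonormality and the skew-symmetry $T_{2,i}^1=-T_{1,i}^2$ is precisely that computation carried over to two parameters.
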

\noindent
We omit the proof of this lemma which follows the same lines as our calulation at the beginning of this chapter. However, that special angle $\varphi$ which carries $(N_1,N_2)$ into a normal frame which is {\it free of torsion,} i.e. which satisfies
  $$\widetilde T_{1,1}^2=0
    \quad\mbox{and}\quad
    \widetilde T_{1,2}^2=0
    \quad\mbox{everywhere in}\ B,$$
can now be computed as solution of the linear system of partial differential equations
  $$\varphi_u=-T_{1,1}^2
    \quad\mbox{and}\quad
    \varphi_v=-T_{1,2}^2\,.$$
Recall that such a linear system is solvable if and only if the integrability condition
  $$0=-\varphi_{uv}+\varphi_{vu}
     =T_{1,1,v}^2-T_{1,2,u}^2
     =\mbox{div}\,(-T_{1,2}^2,T_{1,1}^2)
     =S_NW
     \quad\mbox{in}\ B$$
is satisfied with the curvature $S_N$ of the normal bundle from paragraph \ref{case_n2} and the area element $W$ of the immersion. Thus we have proved
\begin{theorem}
The immersion $X\colon\overline B\to\mathbb R^4$ admits a torsion-free normal frame $N=(N_1,N_2)$ if and only if the curvature of its normal bundle vanishes identically in $\overline B.$\index{torsion free normal frame}
\end{theorem}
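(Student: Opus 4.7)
The plan is to exploit the transformation law for torsion coefficients under an $SO(2)$-rotation of the normal frame, as derived in the lemma just preceding the theorem. Reading the equation $\widetilde T_{1,i}^2=T_{1,i}^2+\varphi_{u^i}$ as a prescription for $\varphi$, the problem of achieving $\widetilde T_{1,1}^2\equiv 0\equiv\widetilde T_{1,2}^2$ becomes a first-order overdetermined linear PDE system for the rotation angle. The proof is then a clean application of Poincar\'e's lemma on the simply connected disc $\overline B$, once the integrability obstruction has been identified as the normal curvature $S_N$.

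For the necessity direction, I start with a torsion-free frame $N=(N_1,N_2)$, so $T_{1,1}^2\equiv 0\equiv T_{1,2}^2$ in $\overline B$ (all other torsion coefficients then vanish by skew-symmetry $T_{1,i}^2=-T_{2,i}^1$ and by $T_{\sigma,i}^\sigma\equiv 0$). Inserting these identities into the formula
\[
S_N=-\,\frac{1}{W}\,\mathrm{div}\,\bigl(T_{1,2}^2,\,-T_{1,1}^2\bigr)
\]
from paragraph \ref{case_n2} gives $S_N\equiv 0$ in $\overline B$ at once. Since $S_N$ is independent of the choice of frame, the normal bundle of $X$ is flat.

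For sufficiency, assume $S_N\equiv 0$ and pick an arbitrary normal frame $N=(N_1,N_2)$ with torsion coefficients $T_{1,1}^2$ and $T_{1,2}^2$. I search for an angle $\varphi\in C^{k-1,\alpha}(\overline B,\mathbb R)$ with
\[
\varphi_u=-T_{1,1}^2\,,\quad\varphi_v=-T_{1,2}^2\quad\text{in }\overline B\,.
\]
The integrability condition $\varphi_{uv}=\varphi_{vu}$ is equivalent to $T_{1,1,v}^2-T_{1,2,u}^2=0$, which by the very definition recalled above is precisely $S_N W\equiv 0$. Since $W>0$ and $S_N\equiv 0$, this condition is fulfilled. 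The disc $\overline B$ being simply connected, Poincar\'e's lemma (equivalently, a direct construction by integration along rays) produces a $\varphi$ of the required regularity. Applying the rotation $\widetilde N_1=\cos\varphi\,N_1+\sin\varphi\,N_2$, $\widetilde N_2=-\sin\varphi\,N_1+\cos\varphi\,N_2$ and invoking the transformation rule $\widetilde T_{1,i}^2=T_{1,i}^2+\varphi_{u^i}$ yields $\widetilde T_{1,1}^2\equiv 0\equiv\widetilde T_{1,2}^2$; the rotated frame is torsion-free.

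No step is a real obstacle here since the transformation law, the identification of the integrability condition with $S_N W$, and the regularity of the $T_{\sigma,i}^\vartheta$ have all been prepared in the preceding paragraphs. The only point requiring mild care is the frame-independence of $S_N$, which justifies starting from an arbitrary $N$ in the sufficiency direction; this is recorded in paragraph \ref{case_n2} and needs only to be cited. The argument thus reduces to Poincar\'e's lemma on $\overline B$, with simple connectedness of the disc doing the real work.
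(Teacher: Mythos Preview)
Your proof is correct and follows essentially the same approach as the paper: both reduce the construction of a torsion-free frame to solving the first-order system $\varphi_u=-T_{1,1}^2$, $\varphi_v=-T_{1,2}^2$ via Poincar\'e's lemma, identifying the integrability obstruction as $S_NW$. Your write-up is in fact more explicit than the paper's on the necessity direction and on the role of frame-independence of $S_N$, but the underlying argument is the same.
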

\noindent
{\it This torsion-free frame is parallel} in the sense that its derivatives have no normal parts, i.e. the Weingarten equations from paragraph
\ref{par_weingarten} take the form
  $$N_{\sigma,u}=-\,\frac{L_{\sigma,11}}{W}\,X_u-\frac{L_{\sigma,12}}{W}\,X_v\,,\quad
    N_{\sigma,v}=-\,\frac{L_{\sigma,12}}{W}\,X_u-\frac{L_{\sigma,22}}{W}\,X_v$$
for $\sigma=1,2$ and using conformal parameters. This frame is not uniquely determined, rather we can rotate the whole frame by a constant angle $\varphi_0$ without effecting the torsion coefficients because the above differential equations contain only derivatives of $\varphi.$\\[1ex]
We want to remark that existence of such parallel frames in case of vanishing curvature is well settled. With the considerations here we give a new proof of this fact. But rather we intend to establish existence of regular normal frames if the normal bundle is curved, and these frames should replace parallel frames in this more general situation.
\vspace*{6ex}
\begin{center}
\rule{35ex}{0.1ex}
\end{center}
\vspace*{6ex}
\section{Examples}
\label{section_examplesn4}
\subsection{Spherical surfaces}
\label{par_spherical}
Suppose $|X(u,v)|=1$ for all $(u,v)\in\overline B.$ We immediately compute\index{spherical surface}
  $$X_u\cdot X=0,\quad X_v\cdot X=0,$$
i.e. $X$ itself is our first unit normal vector, say $X=N_1.$ A second one follows after completion of $\{X_u,X_v,N_1\}$ to a basis of the whole embedding space $\mathbb R^4.$ Then the normal frame $(N_1,N_2)$ is free of torsion because
  $$T_{1,1}^2=N_{1,u}\cdot N_2=X_u\cdot N_2=0,\quad
    T_{1,2}^2=N_{1,v}\cdot N_2=X_v\cdot N_2\,.$$
\subsection{The flat Clifford torus}
\label{par_clifford}
This surface is build up from the product (see e.g. do Carmo \cite{docarmo_1992}, chapter 6)\index{flat Clifford torus}
  $$X(u,v)=\frac{1}{\sqrt{2}}\,(\cos u,\sin u,\cos v,\sin v)\sim S^1\times S^1\,.$$
We assign the moving $4$-frame consisting of
  $$X_u=\frac{1}{\sqrt{2}}\,(-\sin u,\cos u,0,0),\quad
    X_v=\frac{1}{\sqrt{2}}\,(0,0,-\sin v,\cos v)$$
as well as
  $$N_1=\frac{1}{\sqrt{2}}\,(\cos u,\sin u,\cos v,\sin v),\quad
    N_2=\frac{1}{\sqrt{2}}\,(-\cos u,-\sin u,\cos v,\sin v).$$
This special normal frame $N=(N_1,N_2)$ is free of torsion.
\subsection{Parallel type surfaces}
\label{par_parallel}
Consider the normal transport\index{parallel type surfaces}
  $$R(u,v)=X(u,v)+f(u,v)N_1(u,v)+g(u,v)N_2(u,v).$$
If the functions $f$ and $g$ are constant then we say $R$ {\it is the parallel surface of} $X$ and vice versa, at least if the surfaces are immersed in $\mathbb R^3.$ Parallelity in higher codimensional space depends on the curvature $S_N$ of the normal bundle.
\begin{proposition}
The normal transport $R$ of an immersion $X\colon\overline B\to\mathbb R^4$ is parallel, i.e. $R_{u^i}\cdot N_\sigma=0,$ if and only if $S_N\equiv 0.$\index{normal curvature}
\end{proposition}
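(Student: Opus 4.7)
The plan is to convert the parallelism requirement $R_{u^i} \cdot N_\sigma = 0$ into an explicit first-order PDE system for $(f,g)$ by a direct computation using the Weingarten equations, and then to read off $S_N \equiv 0$ as its integrability condition.

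First I would differentiate $R = X + f N_1 + g N_2$ by the product rule and expand $N_{\sigma,u^i}$ via the Weingarten equations of paragraph \ref{par_weingarten}. Projecting onto $N_1$ and $N_2$ and using $X_{u^i} \cdot N_\sigma = 0$, together with $T_{1,i}^1 = T_{2,i}^2 = 0$ and the skew-symmetry $T_{2,i}^1 = -T_{1,i}^2$, the tangential Weingarten contributions drop out completely, leaving
\begin{equation*}
  R_{u^i} \cdot N_1 = f_{u^i} - g\,T_{1,i}^2, \qquad
  R_{u^i} \cdot N_2 = g_{u^i} + f\,T_{1,i}^2.
\end{equation*}
Hence $R$ is parallel if and only if $(f,g)$ solves the Pfaffian system $df = g\,\omega$, $dg = -f\,\omega$ with the torsion $1$-form $\omega = T_{1,1}^2\,du + T_{1,2}^2\,dv$.

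Next I would test the compatibility conditions $f_{uv} = f_{vu}$ and $g_{uv} = g_{vu}$ by substituting the system into itself. The terms quadratic in $f,g$ cancel because $T_{1,1}^2 T_{1,2}^2 = T_{1,2}^2 T_{1,1}^2$, and what remains is
\begin{equation*}
  g\,(T_{1,1,v}^2 - T_{1,2,u}^2) = 0, \qquad f\,(T_{1,1,v}^2 - T_{1,2,u}^2) = 0,
\end{equation*}
which by the representation of $S_N$ in paragraph \ref{par_lecture2n2} is just $(f^2+g^2)\,S_N W = 0$. Thus a non-trivial parallel normal transport forces $S_N \equiv 0$, which is the ``only if'' direction.

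For the converse, assuming $S_N \equiv 0$ I would invoke the theorem of section \ref{section_torsionfree} to rotate to a frame $(\widetilde N_1, \widetilde N_2)$ in which all torsion coefficients vanish identically. In that frame the Pfaffian system degenerates to $d\widetilde f = d\widetilde g = 0$, so any choice of constants $(\widetilde f,\widetilde g)$ yields a genuine parallel transport; since $R_{u^i} \cdot N_\sigma = 0$ is a geometric condition on the normal plane spanned by $N_1,N_2$ and hence frame-independent, the resulting $R$ is also parallel with respect to the original frame. The main technical subtlety is exactly this frame issue: the torsion coefficients $T_{1,i}^2$ transform nontrivially under rotation of the frame while $S_N$ is an invariant, so the existence half of the proof genuinely depends on the earlier solvability result for torsion-free frames rather than a direct integration in the originally given frame.
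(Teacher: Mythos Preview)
Your proof is correct and follows the same computation as the paper for the ``only if'' direction: you derive the first-order system $f_{u^i}=g\,T_{1,i}^2$, $g_{u^i}=-f\,T_{1,i}^2$ from the Weingarten equations and then read off $f\,S_NW=0$, $g\,S_NW=0$ from cross-differentiation, exactly as the paper does.

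You actually go further than the paper on the converse. The paper's proof stops after obtaining $g\cdot S_NW=0$ and $f\cdot S_NW=0$ and declares the proposition proved, leaving the existence of a non-trivial parallel transport when $S_N\equiv 0$ implicit (presumably via Frobenius integrability on the simply connected disc). You instead invoke the torsion-free frame theorem from section~\ref{section_torsionfree} to reduce to the trivial system $d\widetilde f=d\widetilde g=0$, which is cleaner and makes the existence explicit. Your remark that the parallelism condition $R_{u^i}\cdot N_\sigma=0$ is a statement about the full normal space and hence frame-independent is the right justification for passing between frames. Either route works, but yours is more self-contained and better documents the ``if'' half.
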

\begin{proof}
For the proof we use the Weingarten equations and compute the normal parts $R_u^\perp$ and $R_v^\perp$ of the tangential vectors $R_u$ resp. $R_v,$
  $$\begin{array}{lll}
      R_u^\perp\negthickspace
      & = & \negthickspace\displaystyle
            f_uN_1+g_uN_2+fN_{1,u}^\perp+gN_{2,u}^\perp
            \,=\,(f_u-gT_{1,1}^2\big)N_1+\big(g_u+fT_{1,1}^2\big)N_2\,, \\[2ex]
      R_v^\perp\negthickspace
      & = & \negthickspace\displaystyle
            f_vN_1+g_vN_2+fN_{1,v}^\perp+gN_{2,v}^\perp
            \,=\,\big(f_v-gT_{1,2}^2\big)N_1+\big(g_v+fT_{1,2}^2\big)N_2\,.
    \end{array}$$
The condition of parallelity leads us to the first order system
  $$f_u-gT_{1,1}^2=0,\quad
    f_v-gT_{1,2}^2=0,\quad
    g_u+fT_{1,1}^2=0,\quad
    g_v+fT_{1,2}^2=0.$$
We differentiate the first two equations and make use of the other two conditions to get
  $$\begin{array}{lll}
      0\negthickspace
      & = & \negthickspace\displaystyle
            f_{uv}-g_vT_{1,1}^2-gT_{1,1,v}^2
            \,=\,f_{uv}+fT_{1,1}^2T_{1,2}^2-gT_{1,1,v}^2\,, \\[2ex]
      0\negthickspace
      & = & \negthickspace\displaystyle
            f_{vu}-g_uT_{1,2}^2-gT_{1,2,u}^2
            \,=\,f_{vu}+fT_{1,1}^2T_{1,2}^2-gT_{1,2,u}^2\,,
    \end{array}$$
and a comparison of the right hand sides shows
  $$0=-gT_{1,1,v}^2+gT_{1,2,u}^2=-g\cdot S_NW.$$
Similarly we find $0=f\cdot S_NW,$ which proves the statement of the proposition.
\end{proof}
\noindent
Parallel type surface are widely used in geometry and mathematical physics. We would like to refer the reader to da Costa \cite{dacosta_1982} for an application in quantum mechanics in curved spaces.
\vspace*{6ex}
\begin{center}
\rule{35ex}{0.1ex}
\end{center}
\vspace*{6ex}
\section{Normal Coulomb frames}
\label{section_normalcoulombframes}
\subsection{The total torsion}
\label{par_totaltorsion}
But what happens if $S_N\not=0?$ The first fact we can immediately state is that {\it there is no parallel frame.} Thus it is desirable to construct frames possessing similiar features. For this purpose we make the following
\begin{definition}
The total torsion\index{total torsion} of the normal frame $N=(N_1,N_2)$ is given by
  $${\mathcal T}(N)
    =\sum_{i,j=1}^2\sum_{\sigma,\vartheta=1}^2
     \int\hspace{-1.5ex}\int\limits_{\hspace{-2.0ex}B}
     g^{ij}T_{\sigma,i}^\vartheta T_{\sigma,j}^\vartheta W\,dudv.$$
\end{definition}
\noindent
Using conformal parameters and taking the skew-symmetry of the torsion coefficients into account shows the convex character of the total torsion functional for a fixed surface
  $${\mathcal T}(N)
    =2\int\hspace{-1.5ex}\int\limits_{\hspace{-2.0ex}B}
      \Big\{
        (T_{1,1}^2)^2+(T_{1,2}^2)^2
      \Big\}\,dudv.$$
We mention {\it that the functional ${\mathcal T}(N)$ does not depend on the choice of the parametrization by its definition.}
\subsection{Definition of normal Coulomb frames}
\label{par_definitioncoulomb}
{\it Rather it depends on the choice of the normal frame.} In particular, if the immersion admits a frame parallel in the normal bundle, then ${\mathcal T}(N)=0$ for this special frame. But on the other hand the total torsion can be made arbitrarily large! So our goal is to construct normal frames which give ${\mathcal T}(N)$ the smallest possible value. Thus our next
\begin{definition}
The frame $N=(N_1,N_2)$ is called a normal Coulomb frame\index{normal Coulomb frame} if it is critical for the functional ${\mathcal T}(N)$ of total torsion w.r.t. to $SO(2)$-valued variations of the form
  $$\widetilde N_1=\cos\varphi\,N_1+\sin\varphi\,N_2\,,\quad
    \widetilde N_2=-\sin\varphi\,N_1+\cos\varphi\,N_2\,.$$
\end{definition}
\subsection{The Euler-Lagrange equation}
\label{par_eulerlagrangen4}
\label{par_euler_n4}
Because the new and the old torsion coefficients are related by
  $$\widetilde T_{1,1}^2=T_{1,1}^2+\varphi_u\,,\quad
    \widetilde T_{1,2}^2=T_{1,2}^2+\varphi_v\,,$$
we can immediately calculate the difference between the new and the old total torsion by partial integration
  $$\begin{array}{lll}
      \displaystyle
      {\mathcal T}(\widetilde N)-{\mathcal T}(N)\negthickspace
      & = & \negthickspace\displaystyle
            2\int\hspace{-1.5ex}\int\limits_{\hspace{-2.0ex}B}|\nabla\varphi|^2\,dudv
            +4\int\hspace{-1.5ex}\int\limits_{\hspace{-2.0ex}B}
              (T_{1,1}^2\varphi_u+T_{1,2}^2\varphi_v)\,dudv \\[5ex]
      & = & \negthickspace\displaystyle
            2\int\hspace{-1.5ex}\int\limits_{\hspace{-2.0ex}B}|\nabla\varphi|^2\,dudv
            +4\int\limits_{\partial B}(T_{1,1}^2,T_{1,2}^2)\cdot\nu^t\,\varphi\,ds
            -4\int\hspace{-1.5ex}\int\limits_{\hspace{-2.0ex}B}
              \mbox{div}\,(T_{1,1}^2,T_{1,2}^2)\varphi\,dudv
    \end{array}$$
where $\nu$ denotes the outer unit normal vector at the boundary $\partial B,$ and $\varphi$ is an arbitrary rotation angle. This already gives us a criterion for $N=(N_1,N_2)$ being a critical point:
\begin{proposition}
Let $\{N_1,N_2\}$ be critical for ${\mathcal T}(N).$ Then using conformal parameters $(u,v)\in\overline B,$ the torsion coefficients satisfy the first order Neumann boundary value problem
  $$\mbox{\rm div}\,(T_{1,1}^2,T_{1,2}^2)=0\quad\mbox{in}\ B,\quad
    (T_{1,1}^2,T_{1,2}^2)\cdot\nu^t=0\quad\mbox{on}\ \partial B.$$
\end{proposition}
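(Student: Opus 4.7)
The identity
\[
{\mathcal T}(\widetilde N)-{\mathcal T}(N)
=2\!\iint_B|\nabla\varphi|^2\,dudv
+4\!\int_{\partial B}(T_{1,1}^2,T_{1,2}^2)\cdot\nu^t\,\varphi\,ds
-4\!\iint_B\mbox{div}\,(T_{1,1}^2,T_{1,2}^2)\,\varphi\,dudv
\]
displayed just above the proposition does almost all the work; my plan is simply to extract the Euler-Lagrange equations from it. First I would replace $\varphi$ by $t\varphi$ with $t\in\mathbb R$ a small parameter, obtaining a one-parameter family of $SO(2)$-rotations $\widetilde N^{(t)}$ with $\widetilde N^{(0)}=N$. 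Substituting into the boxed identity gives
\[
{\mathcal T}(\widetilde N^{(t)})-{\mathcal T}(N)
=2t^2\!\iint_B|\nabla\varphi|^2\,dudv
+4t\!\int_{\partial B}(T_{1,1}^2,T_{1,2}^2)\cdot\nu^t\,\varphi\,ds
-4t\!\iint_B\mbox{div}\,(T_{1,1}^2,T_{1,2}^2)\,\varphi\,dudv,
\]
so that the first variation is the $t$-linear part
\[
\delta{\mathcal T}(N)[\varphi]
=4\!\int_{\partial B}(T_{1,1}^2,T_{1,2}^2)\cdot\nu^t\,\varphi\,ds
-4\!\iint_B\mbox{div}\,(T_{1,1}^2,T_{1,2}^2)\,\varphi\,dudv.
\]

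Criticality of $N$ means $\delta{\mathcal T}(N)[\varphi]=0$ for every admissible rotation angle $\varphi\in C^\infty(\overline B)$. I would then apply this in two stages. In the first stage I restrict to $\varphi\in C_0^\infty(B)$, which kills the boundary term and leaves
\[
\iint_B\mbox{div}\,(T_{1,1}^2,T_{1,2}^2)\,\varphi\,dudv=0\quad\mbox{for all}\ \varphi\in C_0^\infty(B),
\]
so the fundamental lemma of the calculus of variations yields the interior equation $\mbox{div}\,(T_{1,1}^2,T_{1,2}^2)=0$ in $B$. In the second stage I feed the already established interior equation back into the general identity, which collapses to
\[
\int_{\partial B}(T_{1,1}^2,T_{1,2}^2)\cdot\nu^t\,\varphi\,ds=0\quad\mbox{for all}\ \varphi\in C^\infty(\overline B),
\]
and a standard boundary variant of the fundamental lemma produces the Neumann condition $(T_{1,1}^2,T_{1,2}^2)\cdot\nu^t=0$ on $\partial B$.

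Nothing here is really hard; the only two mild care-points are (i) that the conformal parametrization has already been used to bring ${\mathcal T}$ into the convenient form $2\iint_B\{(T_{1,1}^2)^2+(T_{1,2}^2)^2\}\,dudv$ that underlies the boxed variational identity, and (ii) that the admissible variations are exactly the $SO(2)$-rotations, for which every scalar $\varphi$ is admissible — so no constraint on the boundary values of $\varphi$ is imposed and the boundary integral genuinely contributes to the Euler-Lagrange system. Given these, the argument is a direct application of the two-step fundamental lemma.
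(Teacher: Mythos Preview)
Your proposal is correct and follows exactly the route the paper intends: the paper derives the integrated-by-parts identity and then simply asserts that ``this already gives us a criterion for $N=(N_1,N_2)$ being a critical point,'' leaving the two-step fundamental-lemma argument implicit. You have spelled out precisely those omitted details --- the scaling $\varphi\mapsto t\varphi$, the identification of the first variation, and the separation into interior and boundary conditions --- so there is nothing to add or correct.
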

\noindent
The conservation law structure of this Euler-Lagrange equation explains the terminology {\it normal Coulomb frame} in analogy to Coulomb gauges from physics.\index{Coulomb gauge}
\subsection{Constuction of normal Coulomb frames via a Neumann problem}
\label{par_neumannproblemn4}
How can we construct a normal Coulomb frame $N$ from a given frame $\widetilde N?$ For a critical normal frame $N$ we have to solve the boundary value problem
  $$\begin{array}{l}
      0\,=\,\mbox{div}\,(T_{1,1}^2,T_{1,2}^2)
       \,=\,\mbox{div}\,(\widetilde T_{1,1}^2-\varphi_u,\widetilde T_{1,2}^2-\varphi_v)\quad\mbox{in}\ B, \\[2ex]
      0\,=\,(T_{1,1}^2,T_{1,2}^2)\cdot\nu^t
       \,=\,(\widetilde T_{1,1}^2-\varphi_u,\widetilde T_{1,2}^2-\varphi_v)\cdot\nu^t\quad\mbox{on}\ \partial B
    \end{array}$$
by virtue of the Euler-Lagrange equation from the preceding paragraph. This implies our next result.
\begin{proposition}
The given normal frame $\widetilde N$ transforms into a normal Coulomb frame $N$ by means of our $SO(2)$-action if and only if\index{normal Coulomb frame}
  $$\begin{array}{l}
      \Delta\varphi=\mbox{\rm div}\,(\widetilde T_{1,1}^2,\widetilde T_{1,2}^2)\quad\mbox{in}\ B, \\[0.4cm]
      \displaystyle
      \frac{\partial\varphi}{\partial\nu}=(\widetilde T_{1,1}^2,\widetilde T_{1,2}^2)\cdot\nu^t
      \quad\mbox{on}\ \partial B
    \end{array}$$
holds true for the rotation angle $\varphi=\varphi(u,v)$.
\end{proposition}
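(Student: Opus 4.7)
The plan is to combine the Euler--Lagrange characterization of normal Coulomb frames derived in paragraph \ref{par_euler_n4} with the transformation rule for the torsion coefficients under an $SO(2)$-action stated in section \ref{section_torsionfree}. The statement will then follow by a direct substitution; no further analysis is required at this stage, since existence and regularity of the resulting Neumann problem is deferred to paragraph \ref{par_neumannproblemn4}.

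First I would recall the relevant transformation rule: if $N=(N_1,N_2)$ and $\widetilde N=(\widetilde N_1,\widetilde N_2)$ are related by the $SO(2)$-rotation
$$\widetilde N_1=\cos\varphi\,N_1+\sin\varphi\,N_2\,,\quad \widetilde N_2=-\sin\varphi\,N_1+\cos\varphi\,N_2\,,$$
then the lemma in section \ref{section_torsionfree} yields
$$\widetilde T_{1,1}^2=T_{1,1}^2+\varphi_u\,,\qquad \widetilde T_{1,2}^2=T_{1,2}^2+\varphi_v\,,$$
so that, equivalently, $T_{1,i}^2=\widetilde T_{1,i}^2-\varphi_{u^i}$ for $i=1,2$. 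Second, I would invoke the proposition from paragraph \ref{par_euler_n4}: in conformal parameters, $N$ is critical for the total torsion $\mathcal T(N)$ if and only if
$$\mbox{div}\,(T_{1,1}^2,T_{1,2}^2)=0\ \text{in}\ B,\qquad (T_{1,1}^2,T_{1,2}^2)\cdot\nu^t=0\ \text{on}\ \partial B.$$

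Third, I would substitute the transformation rule into this Neumann characterization. In $B$, using the linearity of the divergence operator,
$$0=\mbox{div}\,(\widetilde T_{1,1}^2-\varphi_u,\widetilde T_{1,2}^2-\varphi_v)=\mbox{div}\,(\widetilde T_{1,1}^2,\widetilde T_{1,2}^2)-\Delta\varphi,$$
which gives the Poisson equation $\Delta\varphi=\mbox{div}\,(\widetilde T_{1,1}^2,\widetilde T_{1,2}^2)$ in $B$. On $\partial B$, writing $\nabla\varphi\cdot\nu^t=\partial\varphi/\partial\nu$, the boundary condition becomes
$$0=(\widetilde T_{1,1}^2-\varphi_u,\widetilde T_{1,2}^2-\varphi_v)\cdot\nu^t=(\widetilde T_{1,1}^2,\widetilde T_{1,2}^2)\cdot\nu^t-\frac{\partial\varphi}{\partial\nu}\,,$$
yielding the stated oblique Neumann condition. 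Both implications of the ``if and only if'' follow from the same algebraic identities being equivalences.

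There is no real obstacle here: the only point that must be handled with care is the sign convention in the transformation rule for the torsion coefficients, i.e.\ making sure that the rotation carrying $N$ into $\widetilde N$ produces the $+\varphi_{u^i}$ shift (so that the inverse rotation carrying $\widetilde N$ back to $N$ produces $-\varphi_{u^i}$). Once this bookkeeping is in place, the equivalence is immediate. The genuinely nontrivial issue — namely, solvability of the resulting Neumann problem, which requires the compatibility condition obtained from the divergence theorem applied to the right-hand sides — belongs to the construction step in paragraph \ref{par_neumannproblemn4} and is not part of the present statement.
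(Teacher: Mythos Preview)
Your proposal is correct and follows essentially the same route as the paper: the paper simply substitutes the transformation rule $T_{1,i}^2=\widetilde T_{1,i}^2-\varphi_{u^i}$ into the Euler--Lagrange characterization $\mbox{div}\,(T_{1,1}^2,T_{1,2}^2)=0$ in $B$ and $(T_{1,1}^2,T_{1,2}^2)\cdot\nu^t=0$ on $\partial B$, exactly as you do. Your remark about the sign bookkeeping in the rotation direction is the only subtlety, and you have handled it correctly.
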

\noindent
What can be said about the solvability of this Neumann boundary value problem\index{Neumann boundary value problem}? It is well known that the solvability of the Neumann problem
\begin{equation*}
    \Delta\varphi=f\quad\mbox{in}\ B,\quad
    \frac{\partial\varphi}{\partial\nu}=g\quad\mbox{on}\ \partial B
\end{equation*}
depends necessarily and sufficiently on the integrability condition
\begin{equation*}
  \int\hspace*{-1.5ex}\int\limits_{\hspace*{-2.0ex}B}f\,dudv
  =\int\limits_{\partial B}g\,ds,
\end{equation*}
which is fulfilled in our proposition! {\it Thus starting from any given normal frame $\widetilde N$ it is always possible to construct a normal Coulomb frame which is critical for the functional of total torsion.} For a general orientation on Neumann boundary value problems we want to refer to Courant and Hilbert \cite{courant_hilbert_1962}.
\subsection{Minimality property of normal Coulomb frames}
\label{par_minimalityn4}
Let $N$ be a normal Coulomb frame. From our little calculation from paragraph \ref{par_euler_n4} we infer
  $$\begin{array}{lll}
      {\mathcal T}(\widetilde N)\negthickspace
      & = & \negthickspace\displaystyle
            {\mathcal T}(N)
            +2\int\hspace{-1.5ex}\int\limits_{\hspace{-2.0ex}B}
             |\nabla\varphi|^2\,dudv
            +4\int\limits_{\partial B}(T_{1,1}^2,T_{1,2}^2)\cdot\nu\,\varphi\,ds
            -4\int\hspace{-1.5ex}\int\limits_{\hspace{-2.0ex}B}
              \mbox{div}\,(T_{1,1}^2,T_{1,2}^2)\varphi\,dudv \\[4ex]
      & = & \displaystyle\negthickspace
            {\mathcal T}(N)
            +2\int\hspace{-1.5ex}\int\limits_{\hspace{-2.0ex}B}
             |\nabla\varphi|^2\,dudv
            \,\ge\,{\mathcal T}(N)
    \end{array}$$
because the boundary integral and the integral over the divergence term vanish due to the Euler-Lagrange equation. Thus we haved proved
\begin{proposition}
A normal Coulomb frame $N$ minimizes the total torsion, i.e. we have\index{normal Coulomb frame}
  $${\mathcal T}(N)\le{\mathcal T}(\widetilde N)$$
for all normal frames $\widetilde N$ resulting from our $SO(2)$-action. Equality occurs if and only if $\varphi\equiv\mbox{\rm const}.$
\end{proposition}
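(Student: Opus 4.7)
The plan is to exploit the variation identity recorded in paragraph \ref{par_euler_n4}, which already writes $\mathcal{T}(\widetilde N)$ as $\mathcal{T}(N)$ plus three correction terms when $\widetilde N$ arises from $N$ by the $SO(2)$-rotation of angle $\varphi$. First I would simply quote that identity in the form
$$\mathcal{T}(\widetilde N)-\mathcal{T}(N)
 =2\int_{B}|\nabla\varphi|^2\,dudv
 +4\int_{\partial B}(T_{1,1}^2,T_{1,2}^2)\cdot\nu^t\,\varphi\,ds
 -4\int_{B}\operatorname{div}(T_{1,1}^2,T_{1,2}^2)\,\varphi\,dudv,$$
so that the difference of the two total torsions is decomposed into a manifestly nonnegative Dirichlet term, a boundary term, and a bulk divergence term weighted by the arbitrary angle $\varphi$.

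The second step is to invoke the hypothesis that $N$ is a normal Coulomb frame. By the Euler--Lagrange characterization of paragraph \ref{par_euler_n4} this means precisely that $\operatorname{div}(T_{1,1}^2,T_{1,2}^2)\equiv 0$ throughout $B$ and $(T_{1,1}^2,T_{1,2}^2)\cdot\nu^t\equiv 0$ along $\partial B$. These two conditions annihilate the boundary and the bulk divergence terms in the variation identity, collapsing it to
$$\mathcal{T}(\widetilde N)-\mathcal{T}(N)
 =2\int_{B}|\nabla\varphi|^2\,dudv\ge 0,$$
which is the desired minimality inequality.

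For the equality clause I would note that $\mathcal{T}(\widetilde N)=\mathcal{T}(N)$ forces the Dirichlet integral of $\varphi$ over $B$ to vanish, hence $\nabla\varphi\equiv 0$ on $B$. Since the unit disc $B$ is connected, this is equivalent to $\varphi\equiv\mbox{\rm const}$; the converse is obvious because a constant rotation angle leaves the torsion coefficients unchanged by the transformation rule from paragraph \ref{par_definitioncoulomb}, and therefore preserves $\mathcal{T}$ exactly.

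I do not expect any real obstacle here: the substantive content has already been packaged into the variation formula and the Euler--Lagrange equation of paragraph \ref{par_euler_n4}, and the remainder is the elementary observation that the Dirichlet energy of $\varphi$ is nonnegative and vanishes only on constants on a connected domain. The only point requiring passing mention is the regularity of $\varphi$, which is sufficient to legitimate the integration by parts producing the three-term decomposition; this is automatic within the admissible class of $SO(2)$-valued variations considered here.
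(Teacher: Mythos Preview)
Your proof is correct and follows essentially the same approach as the paper: quote the variation identity from paragraph \ref{par_euler_n4}, use the Euler--Lagrange equation and boundary condition to kill the divergence and boundary terms, and conclude from the nonnegativity of the remaining Dirichlet integral, with the equality clause following from $\nabla\varphi\equiv 0$ on the connected disc.
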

\vspace*{6ex}
\begin{center}
\rule{35ex}{0.1ex}
\end{center}
\vspace*{6ex}
\section{Estimating the torsion coefficients}
\label{section_estimatingtorsion}
\subsection{Reduction for flat normal bundles}
\label{par_reduction}
Let $(u,v)\in\overline B$ be conformal parameters. We want to study critical normal frames in case of flat normal bundles\index{flat normal bundle}
  $$WS_N=T_{1,1,v}^2-T_{1,2,u}^2=\mbox{div}\,(-T_{1,2}^2,T_{1,1}^2)\equiv 0.$$
If $N$ is a normal Coulomb frame then the Neumann boundary condition $(T_{1,1}^2,T_{1,2}^2)\cdot\nu=0$ of the Euler-Lagrange equation says that the vector field $(-T_{1,2}^2,T_{1,1}^2)$ is parallel to the outer unit normal vector $\nu$ along $\partial B$. Thus a partial integration yields
  $$\int\hspace*{-1.5ex}\int\limits_{\hspace*{-2.0ex}B}
    S_NW\,dudv
    =\int\limits_{\partial B}(-T_{1,2}^2,T_{1,1}^2)\cdot\nu^t\,ds
    =\pm\int\limits_{\partial B}\sqrt{(T_{1,1}^2)^2+(T_{1,2}^2)^2}\,ds.$$
In particular, if $S_N\equiv 0,$ i.e. if the normal bundle is flat, then we find
\begin{equation*}
  T_{\sigma,i}^\vartheta\equiv 0\quad\mbox{on}\ \partial B
\end{equation*}
for all $i=1,2$ and $\sigma,\vartheta=1,2.$ On the other hand, differentiating $0=T_{1,1,v}^2-T_{1,2,u}^2$ w.r.t. $u$ and $v,$ and taking the Euler-Lagrange equation $T_{1,1,u}^2+T_{1,2,v}^2=0$ into account, gives us
  $$\Delta T_{1,1}^2=0\,,\quad
    \Delta T_{1,2}^2=0.$$
Thus $T_{1,1}^2$ and $T_{1,2}^2$ are harmonic functions, and the maximum principle implies
\begin{equation*}
  T_{\sigma,i}^\vartheta\equiv 0\quad\mbox{in}\ B
  \quad\mbox{for all}\ i=1,2\ \mbox{and}\ \sigma,\vartheta=1,2.
\end{equation*}
\begin{theorem}
A normal Coulomb frame $N$\index{normal Coulomb frame} of an immersion $X\colon\overline B\to\mathbb R^4$ with flat normal bundle is free of torsion, i.e. it is parallel.\index{flat normal bundle}
\end{theorem}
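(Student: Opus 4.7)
The plan is to prove that both essential torsion coefficients $T_{1,1}^2$ and $T_{1,2}^2$ vanish identically on $\overline B$; by the skew-symmetry established in paragraph \ref{par_torsionen}, this reduces the Weingarten equations of paragraph \ref{par_weingarten} to purely tangential form, which is precisely the parallelity asserted.

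Working throughout in conformal parameters, I would first encode the two hypotheses as a Hodge-type first-order system for the planar vector field $\tau := (T_{1,1}^2, T_{1,2}^2)$. The Euler-Lagrange characterisation of paragraph \ref{par_eulerlagrangen4} yields
\[
\mbox{div}\,(T_{1,1}^2, T_{1,2}^2) = 0 \ \text{in}\ B,
\qquad
(T_{1,1}^2, T_{1,2}^2)\cdot\nu^t = 0 \ \text{on}\ \partial B,
\]
while the flatness assumption $S_N \equiv 0$ combined with the identity $W S_N = T_{1,1,v}^2 - T_{1,2,u}^2$ from paragraph \ref{par_lecture2n2} gives the curl-free counterpart
\[
\mbox{div}\,(-T_{1,2}^2, T_{1,1}^2) = 0 \ \text{in}\ B.
\]

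Next I would extract two elementary consequences. Cross-differentiating the divergence equation in $u$ and the curl equation in $v$ (and symmetrically) gives $\Delta T_{1,1}^2 = 0$ and $\Delta T_{1,2}^2 = 0$ in $B$. To determine the boundary values, I would apply the divergence theorem to $W S_N = \mbox{div}\,(-T_{1,2}^2, T_{1,1}^2)$: the Coulomb Neumann condition forces $(T_{1,1}^2, T_{1,2}^2)$ to be tangent to $\partial B$, so the conjugate vector $(-T_{1,2}^2, T_{1,1}^2)$ is collinear with the outer normal $\nu$, and its flux through $\partial B$ equals $\pm\int_{\partial B}\sqrt{(T_{1,1}^2)^2 + (T_{1,2}^2)^2}\,ds$. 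Flatness makes the area integral vanish, so both torsion components must vanish on $\partial B$.

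Finally, the classical maximum principle applied to the harmonic functions $T_{1,1}^2$ and $T_{1,2}^2$ with zero Dirichlet data forces $T_{1,1}^2 \equiv T_{1,2}^2 \equiv 0$ in $B$, which completes the argument. I do not foresee a genuine obstacle: the whole proof is a vanishing theorem for a Hodge-type system on the disc with simultaneously Dirichlet-type and Neumann-type overdetermination. The only delicate bookkeeping is the sign of the boundary flux, which is automatic because the integrand on the right is non-negative while the left-hand side vanishes identically.
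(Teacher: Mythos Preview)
Your proposal is correct and follows essentially the same route as the paper's own argument in paragraph \ref{par_reduction}: both combine the Euler--Lagrange divergence condition with the flatness identity $WS_N=\mbox{div}\,(-T_{1,2}^2,T_{1,1}^2)$ to obtain harmonicity of the torsion coefficients, use the divergence theorem together with the Neumann boundary condition to force $T_{\sigma,i}^\vartheta=0$ on $\partial B$, and conclude via the maximum principle. The only difference is the order of presentation (the paper derives the boundary vanishing first, then harmonicity), which is immaterial.
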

\noindent
To summarize the preceding considerations we have proved existence (and simultaneously regularity) of normal frames critical for the functional of total torsion. If additionally the curvature $S_N$ of the normal bundle vanishes, then such a critical frame is free of torsion.
\subsection{Estimates via the maximum principle}
\label{par_maxprinciple}
Next we want to consider the case of {\it non-flat} normal bundles. From the Euler-Lagrange equation we know that the torsion vector $(T_{1,1}^2,T_{1,2}^2)$ of a normal Coulomb frame is divergence-free. Thus the differential $1$-form
  $$\omega:=-T_{1,2}^2\,du+T_{1,1}^2\,dv$$
is closed, i.e. for its outer derivative we calculate
  $$d\omega
    =T_{1,1,u}^2\,du\wedge dv+T_{1,2,v}^2\,du\wedge dv
    =\mbox{div}\,(T_{1,1}^2,T_{1,2}^2)\,du\wedge dv
    =0,$$
and Poincar\'e's lemma\index{Poincar\'e's lemma} ensures the existence of a $C^2$-regular function $\tau$ satisfying (see e.g. Sauvigny \cite{sauvigny_2005}, volume I, chapter I, \S 7)\index{integral function}
  $$d\tau=\tau_u\,du+\tau_v\,dv
         =\omega
    \quad\mbox{which finally implies}\ \nabla\tau=(-T_{1,2}^2,T_{1,1}^2).$$
A second differentiation, taking account of the representation $S_NW=\mbox{div}\,(-T_{1,2}^2,T_{1,1}^2),$ leads us to the inhomogeneous boundary value problem
  $$\triangle\tau=S_NW\quad\mbox{in}\ B,\quad
    \tau=0\quad\mbox{on}\ \partial B.$$
To justify the homogeneous boundary condition note that $\nabla\tau\cdot(-v,u)^t=0$ on $\partial B$ for normal Coulomb frames because $(T_{1,1}^2,T_{1,2}^2)$ is perpendicular to $\partial B.$ Therefore it holds $\tau=\mbox{const}$ along $\partial B.$ But $\tau$ is only defined up to a constant of integration which can be chosen such that the homogeneous boundary condition is satisfied! Thus Poisson's representation formula for the solution $\tau$ yields
  $$\tau(w)
    =\int\hspace*{-1.5ex}\int\limits_{\hspace*{-2.0ex}B}
     \Phi(\zeta;w)S_N(\zeta)W(\zeta)\,d\xi d\eta,\quad
    \zeta=(\xi,\eta)\in B,$$
with the non-positive Green kernel\index{Green kernel}
  $$\Phi(\zeta;w):=\frac{1}{2\pi}\,\log\left|\frac{\zeta-w}{1-\overline w\zeta}\right|,\quad\zeta\not=w,$$
of the Laplace operator $\triangle$ (see e.g. Sauvigny \cite{sauvigny_2005}, volume II, chapter VIII, \S 1). We want to give an estimate for this kernel to establish an estimate for the integral function $\tau:$ For this purpose note that
  $$\psi(w)=\frac{|w|^2-1}{4}
    \quad{\rm solves}\quad\triangle\psi=1\ \mbox{\rm in}\ B
    \ \mbox{\rm and}\ \psi=0\ \mbox{on}\ \partial B.$$
Therefore we conclude
  $$\int\hspace*{-1.5ex}\int\limits_{\hspace*{-2.0ex}B}
    |\Phi(\zeta;w)|\cdot 1\,d\xi d\eta
    =\frac{1-|w|^2}{4}
    \le\frac{1}{4}\,.$$
Thus we arrive at
\begin{lemma}
Let $(u,v)\in\overline B$ be conformal parameters. The integral function $\tau$ for the Poisson problem for the curvature $S_N$ of the normal bundle satisfies
  $$|\tau(w)|\le\frac{1}{4}\,\|S_NW\|_{C^0(\overline B)}\quad\mbox{in}\ \overline B\,,\quad
    \tau(w)=0\quad\mbox{on}\ \partial B.$$
\end{lemma}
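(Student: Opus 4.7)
The plan is to take the Poisson representation for $\tau$ already written out just above the statement and turn it into a pointwise bound by combining three ingredients: non-positivity of the Green kernel $\Phi(\zeta;w)$, the explicit solution $\psi(w) = \tfrac{|w|^2-1}{4}$ of the comparison problem $\triangle\psi = 1$, $\psi\big|_{\partial B}=0$, and a trivial pull-out of the $C^0$-norm of the source term.

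First I would start from
\[
  \tau(w) = \iint_B \Phi(\zeta;w)\,S_N(\zeta)W(\zeta)\,d\xi\,d\eta
\]
and estimate
\[
  |\tau(w)| \le \|S_N W\|_{C^0(\overline B)} \iint_B |\Phi(\zeta;w)|\,d\xi\,d\eta.
\]
The only nontrivial point is to evaluate (or at least bound) the integral of $|\Phi|$ over $B$. Since $\Phi(\zeta;w)\le 0$ for $\zeta\in B$, $\zeta\neq w$, the absolute value may be replaced by $-\Phi$, and hence the integral equals $-\iint_B \Phi(\zeta;w)\,d\xi\,d\eta$. This last integral, however, is exactly the Poisson representation applied to the source $f\equiv 1$ with zero boundary data, and therefore coincides with the unique solution of $\triangle\psi = 1$ in $B$, $\psi = 0$ on $\partial B$. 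By direct verification this solution is $\psi(w)=\tfrac{|w|^2-1}{4}$.

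Putting these steps together I obtain
\[
  \iint_B |\Phi(\zeta;w)|\,d\xi\,d\eta \;=\; -\psi(w) \;=\; \frac{1-|w|^2}{4} \;\le\; \frac{1}{4},
\]
valid for all $w\in\overline B$, and feeding this back into the first displayed inequality yields the claimed estimate $|\tau(w)|\le \tfrac14\,\|S_N W\|_{C^0(\overline B)}$. The homogeneous boundary condition $\tau\big|_{\partial B}=0$ was already justified in the paragraph preceding the statement, so nothing more has to be shown there.

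The only place that requires any care is the identification $\iint_B \Phi(\zeta;w)\,d\xi\,d\eta = \psi(w)$; this uses the uniqueness of solutions to the Dirichlet problem $\triangle u = 1$, $u\big|_{\partial B}=0$, together with the fact that Poisson's formula gives one such solution. Everything else is genuinely routine, so I expect no real obstacle — the whole estimate is a one-line consequence of the comparison function $\psi$ together with $\Phi\le 0$.
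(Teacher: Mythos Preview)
Your proof is correct and follows essentially the same route as the paper: Poisson's representation for $\tau$, pulling out $\|S_NW\|_{C^0(\overline B)}$, and identifying $\iint_B|\Phi(\zeta;w)|\,d\xi\,d\eta$ with $-\psi(w)=\tfrac{1-|w|^2}{4}\le\tfrac14$ via the comparison function $\psi(w)=\tfrac{|w|^2-1}{4}$ and the non-positivity of the Green kernel.
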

\noindent
Thus potential theoretic estimates for the Laplacian (see e.g. Sauvigny \cite{sauvigny_2005}, chapter IX, \S 4, Satz 1) ensure the existence of a real constant $C=C(\alpha)$ such that
  $$\|\tau\|_{C^{2+\alpha}(\overline B)}\le C(\alpha)\|S_NW\|_{C^\alpha(\overline B)}$$
holds true for all $\alpha\in(0,1).$ Finally, this $C^{2+\alpha}$-bound provides simultaneously an upper bound for the $C^{1+\alpha}$-norm of the torsion coefficients. We have proved the main result of the present chapter:
\begin{theorem}
\label{estimate_r4_a}
Let the conformally parametrized immersion $X\colon\overline B\to\mathbb R^4$ with normal bundle of curvature $S_N$ be given. Then there exists a normal Coulomb frame $N$ minimizing the functional of total torsion, and whose torsion coefficients satisfy
  $$\|T_{\sigma,i}^\vartheta\|_{C^{1+\alpha}(\overline B)}\le C(\alpha)\|S_NW\|_{C^\alpha(\overline B)}$$
for all $\alpha\in(0,1)$ with the constant $C(\alpha)$ from above.
\end{theorem}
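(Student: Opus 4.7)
The strategy is to assemble Theorem \ref{estimate_r4_a} from the three strands already established in this chapter: existence via a Neumann problem (paragraph \ref{par_neumannproblemn4}), minimality (paragraph \ref{par_minimalityn4}), and the Poisson representation of the integral function $\tau$ developed in the preceding lemma. For the existence and minimality part, I would start from any reference frame $\widetilde N \in C^{k-1,\alpha}(\overline B,\mathbb R^{2\times 4})$; the divergence theorem shows that the integral of $\mathrm{div}(\widetilde T_{1,1}^2,\widetilde T_{1,2}^2)$ over $B$ equals the boundary integral of $(\widetilde T_{1,1}^2,\widetilde T_{1,2}^2)\cdot\nu^t$, so the Neumann compatibility condition holds automatically. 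Classical Neumann theory then supplies a rotation angle $\varphi$, and the rotated frame $N$ satisfies the Euler-Lagrange system by construction; the minimality inequality $\mathcal T(N)\le\mathcal T(\widetilde N)$ follows immediately from the first-variation computation of paragraph \ref{par_minimalityn4}.

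For the quantitative estimate, I would pass from the torsion vector to its integral function $\tau$. Because $N$ is Coulomb we have $\mathrm{div}(T_{1,1}^2,T_{1,2}^2)=0$ in $B$, so the $1$-form $\omega=-T_{1,2}^2\,du+T_{1,1}^2\,dv$ is closed on the simply connected disc, and Poincar\'e's lemma yields a $C^2$-potential $\tau$ with $\nabla\tau=(-T_{1,2}^2,T_{1,1}^2)$. A second differentiation, together with the identity $S_NW=\mathrm{div}(-T_{1,2}^2,T_{1,1}^2)$ from paragraph \ref{case_n2}, produces the Poisson equation $\Delta\tau=S_NW$ in $B$. The Coulomb boundary condition $(T_{1,1}^2,T_{1,2}^2)\cdot\nu=0$ is equivalent, after a rotation by $\pi/2$, to the vanishing of the tangential derivative of $\tau$ along $\partial B$; hence $\tau$ is constant on $\partial B$, and the freedom in the additive constant of the Poincar\'e primitive allows me to set this constant to zero, producing the homogeneous Dirichlet problem.

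Finally, I would invoke the Schauder estimate for the Dirichlet problem on the disc (as cited to Sauvigny in the preceding paragraph) to obtain
\begin{equation*}
\|\tau\|_{C^{2+\alpha}(\overline B)}\le C(\alpha)\,\|S_NW\|_{C^\alpha(\overline B)}.
\end{equation*}
Since $T_{1,1}^2=\tau_v$ and $T_{1,2}^2=-\tau_u$, every second-derivative bound on $\tau$ translates into a first-derivative bound on these two torsion coefficients, giving the desired $C^{1+\alpha}$-estimate. The remaining components of $T_{\sigma,i}^\vartheta$ either vanish ($\sigma=\vartheta$) or coincide with the two already bounded up to sign by the skew-symmetry relation $T_{\sigma,i}^\vartheta=-T_{\vartheta,i}^\sigma$, so the asserted inequality extends to all indices $\sigma,\vartheta=1,2$ and $i=1,2$.

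The main obstacle is conceptual rather than computational: the careful bookkeeping of boundary conditions in passing from the normal-derivative Coulomb condition for the torsion vector to a homogeneous Dirichlet condition for $\tau$. This passage hinges on the fact that the Poincar\'e primitive of a closed form is determined only up to an additive constant, and one must exploit this gauge freedom at precisely the moment when the tangential derivative of $\tau$ has been shown to vanish along $\partial B$. Once that translation is in place, the remainder is a routine application of elliptic Schauder theory on the simplest possible domain, the unit disc in conformal parameters.
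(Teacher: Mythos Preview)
Your proposal is correct and follows essentially the same route as the paper: existence and minimality via the Neumann problem, construction of the integral function $\tau$ solving the homogeneous Dirichlet problem $\Delta\tau=S_NW$, and then the Schauder estimate $\|\tau\|_{C^{2+\alpha}(\overline B)}\le C(\alpha)\|S_NW\|_{C^\alpha(\overline B)}$ translated back to the torsion coefficients via $\nabla\tau=(-T_{1,2}^2,T_{1,1}^2)$ and skew-symmetry. The only cosmetic difference is that the paper pauses to record an explicit $L^\infty$-bound for $\tau$ via the Green kernel before invoking the Schauder estimate, whereas you appeal to Schauder theory directly; this intermediate bound is not needed for the final inequality, so nothing is lost.
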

\noindent
In particular, for flat normal bundles with $S_N\equiv 0$ we recover the characterization of minimizing normal frames from paragraph \ref{par_reduction}: Normal Coulomb frames for flat normal bundles are free of torsion.
\subsection{Estimates via a Cauchy-Riemann boundary value problem}
\label{par_cauchyriemann}
Once again, let us consider the Euler-Lagrange equation together with the representation formula for the curvature $S_N$ of the normal bundle, i.e.
  $$\frac{\partial}{\partial u}\,T_{1,1}^2+\frac{\partial}{\partial v}\,T_{1,2}^2=0,\quad
    \frac{\partial}{\partial v}\,T_{1,1}^2-\frac{\partial}{\partial u}\,T_{1,2}^2=S_NW.$$
We want to present briefly a second method to control the torsion coefficients of a normal Coulomb frame which is strongly adapted to the case of two codimensions.
\begin{lemma}
Let $N$ be a normal Coulomb frame. Then the complex-valued torsion
  $$\Psi:=T_{1,1}^2-iT_{1,2}^2\in{\mathbb C}$$
solves the inhomogeneous Cauchy-Riemann equation\index{Cauchy-Riemann equations}
  $$\Psi_{\overline w}=\frac{i}{2}\,S_NW
    \quad\mbox{in}\ B,\quad
    \mbox{\rm Re}\big[w\Psi(w)\big]=0\quad\mbox{for}\ w\in\partial B$$
using conformal parameters $(u,v)\in\overline B.$
\end{lemma}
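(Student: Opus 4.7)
The plan is to prove both the interior equation and the boundary condition by direct computation, using the Wirtinger derivative on one hand and an explicit parametrization of $\partial B$ on the other. The ingredients are already laid out earlier in the excerpt: the Euler–Lagrange equation from paragraph \ref{par_euler_n4}, namely
$$
T_{1,1,u}^2 + T_{1,2,v}^2 = 0 \quad \text{in } B, \qquad (T_{1,1}^2,T_{1,2}^2)\cdot\nu = 0 \quad \text{on } \partial B,
$$
together with the curvature identity from paragraph \ref{case_n2} in the form $S_N W = T_{1,1,v}^2 - T_{1,2,u}^2$.

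First I would compute the Wirtinger derivative $\Psi_{\overline w} = \tfrac12(\partial_u + i\partial_v)\Psi$ of $\Psi = T_{1,1}^2 - iT_{1,2}^2$. Expanding and grouping real and imaginary parts gives
$$
\Psi_{\overline w} = \tfrac12\bigl[(T_{1,1,u}^2 + T_{1,2,v}^2) + i(T_{1,1,v}^2 - T_{1,2,u}^2)\bigr].
$$
The real part vanishes by the Euler–Lagrange equation, while the imaginary part equals $S_N W$, so $\Psi_{\overline w} = \tfrac{i}{2}S_N W$ as claimed. This is the interior part, and it requires no further work beyond substituting the two identities recalled above.

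For the boundary condition I would use that on $\partial B$ the outer unit normal is $\nu = (u,v)$ since $|w|=1$. A short expansion yields
$$
w\Psi(w) = (u+iv)(T_{1,1}^2 - iT_{1,2}^2) = \bigl(uT_{1,1}^2 + vT_{1,2}^2\bigr) + i\bigl(vT_{1,1}^2 - uT_{1,2}^2\bigr),
$$
so $\mathrm{Re}[w\Psi(w)] = uT_{1,1}^2 + vT_{1,2}^2 = (T_{1,1}^2,T_{1,2}^2)\cdot\nu^t$, which is exactly the Neumann condition contained in the Euler–Lagrange formulation of paragraph \ref{par_euler_n4}, hence vanishes on $\partial B$.

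I do not anticipate a genuine obstacle here: the statement is really a reformulation of the two conditions characterizing a normal Coulomb frame in complex-analytic language. The only point where one has to be slightly careful is the normalization of the Wirtinger derivative and the sign conventions when separating real and imaginary parts; everything else is a direct substitution. The payoff of this lemma is that it brings the torsion of a Coulomb frame into the realm of inhomogeneous Cauchy–Riemann problems with a Riemann–Hilbert-type boundary condition, which is exactly the framework exploited in the subsequent analytical estimates.
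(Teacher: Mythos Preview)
Your proof is correct and follows essentially the same approach as the paper's: both compute $\Psi_{\overline w}$ directly via the Wirtinger operator and identify the real and imaginary parts with the Euler--Lagrange equation and the curvature identity, then verify the boundary condition by expanding $w\Psi(w)$ and recognizing the Neumann condition. The only cosmetic difference is that the paper phrases the boundary step via the integral function $\tau$ (writing $uT_{1,1}^2+vT_{1,2}^2=\nabla\tau\cdot(-v,u)$), whereas you invoke the Neumann condition $(T_{1,1}^2,T_{1,2}^2)\cdot\nu=0$ directly; the content is the same.
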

\begin{proof}
We compute
  $$2\Psi_{\overline w}
    =\Psi_u+i\Psi_v
    =(T_{1,1,u}^2+T_{1,2,v}^2)+i(T_{1,1,v}^2-T_{1,2,u}^2)
    =0+iS_NW\in\mathbb C$$
as well as
  $$\mbox{Re}\,\big[w\Psi(w)\big]
    =\mbox{Re}\,\big[(u+iv)(T_{1,1}^2-iT_{1,2}^2)\big]
    =uT_{1,1}^2+vT_{1,2}^2
    =(-T_{1,2}^2,T_{1,1}^2)\cdot(-v,u)
    =\nabla\tau\cdot(-v,u)
    =0$$
with the integral function $\tau$ from the previous paragraph.
\end{proof}
\noindent
A relation of this form is called a {\it linear Riemann-Hilbert problem}\index{Riemann-Hilbert problem} for the complex-valued function $\Psi.$ There is a huge complex analysis machinery to attack such a mathematical problem! In particular, we want to derive an integral representation for $\Psi.$
\begin{lemma}
The above Riemann-Hilbert problem for the complex-valued torsion vector $\Psi$ of a normal Coulomb frame possesses at most one solution $\Psi\in C^1(B)\cap C^0(\overline B)$.
\end{lemma}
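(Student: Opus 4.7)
The plan is the standard uniqueness argument for linear Riemann--Hilbert problems on the disc: reduce to the homogeneous version and exploit that a holomorphic function with purely imaginary boundary values on $\partial B$ is a constant.

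Suppose $\Psi_1,\Psi_2\in C^1(B)\cap C^0(\overline B)$ both solve the Riemann--Hilbert problem of the lemma. Setting $\Psi:=\Psi_1-\Psi_2$, the inhomogeneous term $\tfrac{i}{2}S_NW$ cancels and the boundary condition is linear, so $\Psi$ satisfies
  $$\Psi_{\overline w}=0\quad\mbox{in}\ B,\qquad
    \mbox{\rm Re}\bigl[w\Psi(w)\bigr]=0\quad\mbox{for}\ w\in\partial B.$$
The first equation says $\Psi$ is holomorphic in $B$. I would then introduce the auxiliary function
  $$F(w):=w\Psi(w),\quad w\in\overline B,$$
which is again holomorphic in $B$ and continuous up to $\partial B$, and whose boundary values satisfy $\mbox{\rm Re}\,F=0$ on $\partial B$ by construction.

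The main step is to conclude $F\equiv\mathrm{const}$ from the boundary condition $\mbox{\rm Re}\,F\big|_{\partial B}=0$. Since $\mbox{\rm Re}\,F$ is harmonic in $B$, continuous on $\overline B$, and vanishes on $\partial B$, the classical maximum principle yields $\mbox{\rm Re}\,F\equiv 0$ in $B$. But a holomorphic function with identically vanishing real part is purely imaginary, and the Cauchy--Riemann equations force its imaginary part to be constant; hence $F(w)\equiv ic$ for some real constant $c$.

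Unwinding the definition of $F$ gives $\Psi(w)=ic/w$ in $B\setminus\{0\}$. Here the hypothesis $\Psi\in C^0(\overline B)$ enters decisively: $\Psi$ must be continuous (hence bounded) at the origin $w=0$, which is only compatible with $\Psi(w)=ic/w$ if $c=0$. Therefore $\Psi\equiv 0$ in $B$, that is, $\Psi_1=\Psi_2$, proving uniqueness. The step I expect to require the most care is precisely this removal of the potential singularity of $F(w)/w$ at the origin; one has to verify that the regularity assumed for solutions of the Riemann--Hilbert problem really excludes the one-parameter family $ic/w$, which is where the $C^0(\overline B)$-condition (and not merely $C^1(B)$) is used.
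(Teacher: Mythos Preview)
Your proof is correct and follows essentially the same approach as the paper: form the difference, multiply by $w$ to obtain a holomorphic function with vanishing real part on $\partial B$, conclude it equals $ic$ via the maximum principle and Cauchy--Riemann, and use continuity at the origin to force $c=0$. The paper's version is more terse (it names the auxiliary function $\Phi$ and jumps straight to $\Phi\equiv ic$), but your added detail on the maximum principle and the role of the $C^0(\overline B)$-hypothesis at $w=0$ only makes the argument clearer.
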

\begin{proof}
Let $\Psi_1,\Psi_2$ be two such solutions. Then we set $\Phi(w):=w[\Psi_1(w)-\Psi_2(w)]$ and compute
  $$\Phi_{\overline w}=0\quad\mbox{in}\ B,\qquad \mbox{Re}\,\Phi=0\quad\mbox{on}\ \partial B.$$
Thus the real part of the holomorphic function $\Phi$ vanishes on the set $\partial B,$ and the Cauchy-Riemann equations yield $\Phi\equiv ic$ in $B$ with some constant $c\in\mathbb R.$ The continuity of $\Psi_1$ and $\Psi_2$ implies $c=0.$
\end{proof}
\noindent
Now our Riemann-Hilbert problem can be solved by means of so-called {\it generalized analytic functions.} We want to present some basic facts about this important class of complex-valued functions (see e.g. Sauvigny \cite{sauvigny_2005} or the monograph Vekua \cite{vekua_1963}). For arbitrary $f\in C^1(B,\mathbb C)$ we define {\it Cauchy's integral operator}\index{Cauchy integral operator} by
  $$T_B[f](w)
    :=-\frac1\pi
       \int\hspace*{-1.3ex}\int\limits_{\hspace*{-1.8ex}B}
       \frac{f(\zeta)}{\zeta-w}\,d\xi d\eta,\quad w\in\mathbb C.$$
\begin{lemma}
\label{lemma_ableitung}
There hold $T_B[f]\in C^1(\mathbb C\setminus\partial B)\cap C^0(\mathbb C)$ as well as
  $$\frac\partial{\partial\overline w}\,T_B[f](w)=\left\{
    \begin{array}{ll}
      f(w),& w\in B\\[1ex]
      0,& w\in\mathbb C\setminus\overline B
    \end{array}\right..$$
\end{lemma}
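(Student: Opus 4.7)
The plan is to treat the two cases $w\in\mathbb C\setminus\overline B$ and $w\in B$ separately, after first settling global continuity, and to reduce the delicate interior case to the classical Cauchy--Pompeiu formula by a cutoff argument.

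\textbf{Step 1 (continuity on $\mathbb C$).} I would first observe that the kernel $\zeta\mapsto 1/|\zeta-w|$ is locally integrable on $\mathbb R^2$ (in polar coordinates centred at $w$ its singularity is only of order $1/r$, integrated against $r\,dr\,d\theta$). Since $f\in C^1(B,\mathbb C)$ is bounded on $\overline B$, the integral defining $T_B[f](w)$ converges absolutely for every $w\in\mathbb C$. Continuity then follows from dominated convergence applied to a sequence $w_k\to w_0$: the integrands are uniformly dominated by $C/|\zeta-w_0|\cdot\mathbf 1_B(\zeta)$ on a small neighbourhood of $w_0$ (after a standard splitting of $B$ into a small disc around $w_0$ and its complement). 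This yields $T_B[f]\in C^0(\mathbb C)$.

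\textbf{Step 2 (exterior case $w\in\mathbb C\setminus\overline B$).} For $w_0\in\mathbb C\setminus\overline B$ I would choose $\delta>0$ with $\mbox{dist}(w_0,\overline B)>2\delta$. On $B_\delta(w_0)$ the kernel $1/(\zeta-w)$ is jointly smooth in $(\zeta,w)$ with $\zeta$ ranging over $\overline B$, and all its derivatives in $w$ are uniformly bounded. Hence one may differentiate under the integral sign, which shows $T_B[f]\in C^1$ near $w_0$. Since $1/(\zeta-w)$ is holomorphic in $w$, we obtain $\partial_{\overline w}(1/(\zeta-w))=0$, and therefore $\partial_{\overline w}T_B[f](w_0)=0$ as claimed. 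In fact this argument shows $T_B[f]$ is holomorphic on $\mathbb C\setminus\overline B$.

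\textbf{Step 3 (interior case $w\in B$).} This is the main obstacle because the singularity of $1/(\zeta-w)$ now sits inside the domain of integration. I would fix $w_0\in B$, pick $\varepsilon>0$ with $\overline{B_{2\varepsilon}(w_0)}\subset B$, and introduce a cutoff $\chi\in C_c^\infty(B)$ with $\chi\equiv 1$ on $B_\varepsilon(w_0)$ and $\mbox{supp}\,\chi\subset B_{2\varepsilon}(w_0)$. Split
  $$T_B[f]=T_B[(1-\chi)f]+T_B[\chi f].$$
The first term is handled exactly as in Step~2: for $w$ in a small neighbourhood of $w_0$ the integrand stays smooth (since $(1-\chi)f$ vanishes near $w$), so $T_B[(1-\chi)f]$ is holomorphic near $w_0$ and contributes $0$ to $\partial_{\overline w}$. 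For the second term, extend $g:=\chi f$ by zero to a function $g\in C_c^1(\mathbb C)$; translating the integration variable via $\eta=\zeta-w$ gives
  $$T_B[g](w)=-\frac1\pi\int\hspace{-1.3ex}\int\limits_{\hspace{-1.8ex}\mathbb C}\frac{g(w+\eta)}{\eta}\,d\eta_1 d\eta_2,$$
a domain that no longer depends on $w$. Since $1/|\eta|$ is locally integrable and $g$ has bounded derivatives of compact support, differentiation under the integral is justified and yields
  $$\partial_{\overline w}T_B[g](w)=-\frac1\pi\int\hspace{-1.3ex}\int\limits_{\hspace{-1.8ex}\mathbb C}\frac{g_{\overline\zeta}(\zeta)}{\zeta-w}\,d\xi d\eta,$$
after changing back to $\zeta=w+\eta$. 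At this point I would invoke the classical Cauchy--Pompeiu representation for $C_c^1$ functions on $\mathbb C$ (see e.g.\ Vekua \cite{vekua_1963} or Sauvigny \cite{sauvigny_2005}), which asserts that the right-hand side equals $g(w)$. Since $\chi\equiv 1$ on $B_\varepsilon(w_0)$, we get $\partial_{\overline w}T_B[f](w)=f(w)$ for $w$ in that neighbourhood, establishing the interior formula and, together with Step~2, the $C^1$-regularity on $\mathbb C\setminus\partial B$.

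The main obstacle is the singularity inside $B$; the cutoff-plus-translation trick isolates it into a compactly supported piece where the Cauchy--Pompeiu formula can be cited directly, which is the essential analytic input.
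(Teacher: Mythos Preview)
Your proof is correct, but it takes a different route from the paper's. The paper refers to Vekua for the full regularity statement and only verifies the interior identity $\partial_{\overline w}T_B[f]=f$; it does this directly from the definition of the $\overline\partial$-derivative as a limit over shrinking domains. Concretely, for a sequence of smooth simply connected domains $G_k$ contracting to $z_0\in B$ it evaluates
\[
  \frac{1}{2i|G_k|}\int_{\partial G_k}T_B[f](w)\,dw,
\]
swaps the order of integration by Fubini, and uses Cauchy's formula $\int_{\partial G_k}\frac{dw}{w-\zeta}=2\pi i\,\chi_{G_k}(\zeta)$ to collapse the inner integral; what remains is the average $|G_k|^{-1}\iint_{G_k}f$, which tends to $f(z_0)$. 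Your cutoff-plus-translation argument instead isolates a compactly supported piece and invokes the Cauchy--Pompeiu representation as a black box. The paper's approach is more self-contained (only Cauchy's formula and the complex Green identity are needed), while yours is slicker once one is willing to cite Cauchy--Pompeiu, and you additionally spell out the continuity and exterior-holomorphy arguments that the paper leaves to the reference.
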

\begin{proof}
For a detailed proof see Vekua \cite{vekua_1963}, chapter I, \S 5. We want to verify the complex derivative: Let $\{G_k\}_{k=1,2,\ldots}$ be a sequence of open, simply connected and smoothly bounded domains contracting to some point $z_0\in B$ for $k\to\infty.$ Let $|G_k|$ denote its area. We compute (see Sauvigny \cite{sauvigny_2005}, chapter IV, \S 5)
  $$\begin{array}{lll}
      \displaystyle
      \frac{1}{2i|G_k|}\,\int\limits_{\partial G_k}T_B[f](w)\,dw\negthickspace
      & = & \negthickspace\displaystyle
            \frac{1}{2i|G_k|}\,
            \int\limits_{\partial G_k}
            \left(
              -\frac{1}{\pi}
               \int\hspace{-1.3ex}\int\limits_{\hspace*{-1.8ex}B}
               \frac{f(\zeta)}{\zeta-w}\,d\xi d\eta
            \right)dw \\[5ex]
      & = & \negthickspace\displaystyle
            \frac{1}{2\pi i|G_k|}\,
            \int\hspace*{-1.3ex}\int\limits_{\hspace{-1.8ex}B}
            \left(
              f(\zeta)
              \int\limits_{\partial G_k}
              \frac{1}{w-\zeta}\,dw
            \right)d\xi d\eta \\[5ex]
      & = & \negthickspace\displaystyle
            \frac{1}{2\pi i|G_k|}\,
            \int\hspace*{-1.3ex}\int\limits_{\hspace{-1.8ex}B}
            f(\zeta)\cdot 2\pi i\chi_{G_k}(\zeta)\,d\xi d\eta \\[5ex]
      & = & \negthickspace\displaystyle
            \frac{1}{|G_k|}\,
            \int\hspace*{-1.3ex}\int\limits_{\hspace*{-1.8ex}G_k}
            f(\zeta)\,d\xi d\eta
    \end{array}$$
with the characteristic function $\chi.$ Here we have used Cauchy's formula
  $$\int\limits_{G_k}\frac{g(w)}{w-\zeta}\,dw=2\pi ig(\zeta)$$
for a holomorphic function $g$ with $\zeta\in G_k.$ Now recalling the integration by parts rule in complex form
  $$\int\hspace{-1.3ex}\int\limits_{\hspace{-1.8ex}G_k}
    \frac{d}{d\overline w}\,f(w)\,d\xi d\eta
    =\frac{1}{2i}\,\int\limits_{\partial G_k}f(z)\,dz$$
we get in the limit
  $$\frac{d}{d\overline w}\,T_B[f](w)
    =\lim_{k\to\infty}
     \frac{1}{2i|G_k|}\,
     \int\limits_{\partial G_k}T_B[f](w)\,dw
    =f(z_0)$$
for all $z_0\in B.$ The statement follows.
\end{proof}
\noindent
Next we set
  $$P_B[f](w)
    :=-\frac1\pi
       \int\hspace*{-1.3ex}\int\limits_{\hspace*{-1.8ex}B}
       \left\{
         \frac{f(\zeta)}{\zeta-w}+\frac{\overline\zeta\,
         \overline{f(\zeta)}}{1-w\overline\zeta}
       \right\}d\xi\,d\eta
    =T_B[f](w)-\frac1w\,\overline{T_B[wf]\Big(\frac1{\overline w}\Big)}\,.$$
Now Satz 1.24 in Vekua \cite{vekua_1963} states the following
\begin{lemma}
With the definitions above, we have the uniform estimate
  $$\big|P_B[f](w)\big|
    \le C(p)\|f\|_{L^p(B)},\quad w\in\overline B,$$
where $p\in(2,+\infty],$ and $C(p)$ is a positive constant dependent on $p$.
\end{lemma}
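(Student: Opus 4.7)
The plan is to bound $|P_B[f](w)|$ by reducing both kernels appearing in its defining integral to the single Cauchy-type kernel $1/|\zeta-w|$, and then apply Hölder's inequality on the disc. The first summand already has modulus $|f(\zeta)|/|\zeta-w|$, so the work is entirely in controlling the reflected kernel $\overline\zeta\,\overline{f(\zeta)}/(1-w\overline\zeta)$.

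The key identity I would invoke is the pseudo-hyperbolic one,
$$|1-w\overline\zeta|^2 \;=\; |w-\zeta|^2 + (1-|w|^2)(1-|\zeta|^2),$$
which is verified by expanding both sides. For $w\in\overline B$ and $\zeta\in B$ the second term on the right is non-negative, so $|1-w\overline\zeta|\ge|w-\zeta|$. Combining this with the trivial $|\overline\zeta|\le 1$, the reflected kernel is pointwise dominated by $1/|\zeta-w|$, and therefore
$$|P_B[f](w)| \;\le\; \frac{2}{\pi}\int\hspace*{-1.3ex}\int\limits_{\hspace*{-1.8ex}B} \frac{|f(\zeta)|}{|\zeta-w|}\,d\xi\,d\eta.$$

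Once this reduction is in place, the rest is standard. For $p\in(2,\infty)$ with conjugate exponent $q=p/(p-1)\in(1,2)$, Hölder gives
$$\int\hspace*{-1.3ex}\int\limits_{\hspace*{-1.8ex}B} \frac{|f(\zeta)|}{|\zeta-w|}\,d\xi\,d\eta \;\le\; \|f\|_{L^p(B)} \left(\,\int\hspace*{-1.3ex}\int\limits_{\hspace*{-1.8ex}B} \frac{d\xi\,d\eta}{|\zeta-w|^q}\right)^{1/q}.$$
Since $w\in\overline B$, one has $B\subset B_2(w)$, so passing to polar coordinates centred at $w$,
$$\int\hspace*{-1.3ex}\int\limits_{\hspace*{-1.8ex}B} \frac{d\xi\,d\eta}{|\zeta-w|^q} \;\le\; \int_0^{2\pi}\!\!\int_0^2 r^{1-q}\,dr\,d\varphi \;=\; \frac{2\pi\cdot 2^{2-q}}{2-q},$$
finite precisely because $q<2$, and independent of $w$. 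This yields the stated bound with $C(p) = \frac{2}{\pi}\bigl(\frac{2\pi\cdot 2^{2-q}}{2-q}\bigr)^{1/q}$. The case $p=\infty$ is the direct analogue with $q=1$, giving $|P_B[f](w)|\le 8\|f\|_\infty$.

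The only non-routine step is the pseudo-hyperbolic identity turning $1/|1-w\overline\zeta|$ into a Cauchy-type singularity; without it, the second kernel would look harmless only for $|w|$ small but degenerate as $|w|\to 1$, and a naive estimate using $|1-w\overline\zeta|\ge 1-|\zeta|$ would fail to be integrable against $|f|^p$ for any $p<\infty$. Once that identity is deployed, the proof is just Hölder plus a polar integration.
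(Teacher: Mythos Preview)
Your proof is correct. The pseudo-hyperbolic identity is valid, the resulting pointwise domination $|1-w\overline\zeta|\ge|\zeta-w|$ for $w\in\overline B$, $\zeta\in B$ is exactly what is needed to reduce both kernels to the Cauchy singularity, and the subsequent H\"older argument with $q=p/(p-1)<2$ goes through cleanly.

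The paper, however, does not prove this lemma at all: it simply quotes the statement as Satz~1.24 from Vekua's monograph and explicitly says ``which remains unproved here.'' So there is no approach in the paper to compare against. What you have supplied is a short, self-contained argument that replaces a citation to a specialized reference; in that sense your contribution is strictly an addition rather than an alternative. The pseudo-hyperbolic identity you single out is indeed the crux --- it is the standard device behind the conformal invariance of the hyperbolic metric on the disc, and here it is precisely what tames the reflected kernel near the boundary.
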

\noindent
Using this result (which remains unproved here) we obtain our second torsion esimate in terms of $L^p$-norms, the main result of this paragraph.
\begin{theorem}
\label{estimate_psi}
Let the conformally parametrized immersion $X\colon\overline B\to\mathbb R^4$ be given. Then the complex-valued torsion $\Psi$ of a normal Coulomb frames $(N_1,N_2)$ satisfies\index{normal Coulomb frame}
  $$|\Psi(w)|\le c(p)\|S_NW\|_{L^p(B)}\quad\mbox{for all}\ w\in B$$
with some positive constant $c(p)$ and $p\in(2,+\infty].$
\end{theorem}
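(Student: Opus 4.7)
The plan is to recognize that the Riemann-Hilbert problem from the preceding lemma uniquely determines $\Psi$, and to exhibit an explicit solution built from the generalized Cauchy-type operator $P_B$. Concretely, I intend to identify $\Psi$ with $P_B[f]$ for the choice $f := \frac{i}{2}\,S_NW$; the announced bound will then drop out of the $L^p$-estimate for $P_B$ with $c(p) := C(p)/2$.

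First I would verify that $\Psi_0 := P_B[f]$ satisfies the interior equation $\Psi_{0,\overline w} = f$ in $B$. Using the representation $P_B[f](w) = T_B[f](w) - \frac{1}{w}\,\overline{T_B[wf](1/\overline w)}$, Lemma \ref{lemma_ableitung} immediately gives $\partial_{\overline w}T_B[f] = f$. For the second summand, the decisive observation is that $1/\overline w \in \mathbb C\setminus\overline B$ whenever $w\in B$, so by Lemma \ref{lemma_ableitung} again $T_B[wf]$ is holomorphic at $1/\overline w$; a short chain-rule calculation using $\partial_{\overline w}(1/\overline w) = -1/\overline w^{\,2}$ and $\partial_{\overline w}(1/w) = 0$ then confirms that this term is $\overline w$-holomorphic in $B$ and contributes nothing to $\Psi_{0,\overline w}$.

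The step I expect to be the actual obstacle is the boundary condition $\mbox{Re}[w\Psi_0(w)] = 0$ on $\partial B$. On $|w|=1$ the identity $\overline w = 1/w$ couples the two kernels defining $P_B$; pairing each summand with its conjugate partner collapses the cross terms and produces the clean identity
  $$w P_B[f](w) + \overline{w P_B[f](w)} = \frac{2}{\pi}\int\hspace{-1.5ex}\int\limits_{\hspace{-2.0ex}B}\mbox{Re}[f(\zeta)]\,d\xi\,d\eta
    \quad\mbox{for all}\ w\in\partial B.$$
Precisely here the choice $f = \frac{i}{2}\,S_NW$ pays off: since $S_NW$ is real, $f$ is purely imaginary, hence $\mbox{Re}[f]\equiv 0$ and the boundary condition follows.

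Having verified both parts of the Riemann-Hilbert problem, and observing that $\Psi_0\in C^1(B)\cap C^0(\overline B)$ from the regularity of $S_NW$ together with the mapping properties of $T_B$, the uniqueness lemma above forces $\Psi\equiv P_B[f]$ in $B$. The uniform estimate for $P_B$ then yields
  $$|\Psi(w)| = |P_B[f](w)| \le C(p)\,\|f\|_{L^p(B)} = \frac{C(p)}{2}\,\|S_NW\|_{L^p(B)}$$
for every $w\in\overline B$ and $p\in(2,+\infty]$, which is the desired estimate.
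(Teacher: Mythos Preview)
Your proposal is correct and follows essentially the same route as the paper: you show that $P_B[f]$ with $f=\frac{i}{2}S_NW$ solves the Riemann--Hilbert problem, invoke the uniqueness lemma to identify it with $\Psi$, and then apply the $L^p$-bound for $P_B$. Your verification of the boundary condition via the identity $wP_B[f](w)+\overline{wP_B[f](w)}=\frac{2}{\pi}\int\!\!\int_B\mbox{Re}\,f\,d\xi\,d\eta$ on $\partial B$ is a slightly more direct algebraic computation than the paper's (which first rewrites $wP_B[f]$ in terms of $T_B[wf]$), but both hinge on exactly the same point, namely that $f$ is purely imaginary.
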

\begin{remark}
For a flat normal bundle\index{flat normal bundle} with $S_N\equiv 0$ we verify our results from paragraph \ref{par_reduction}.
\end{remark}
\begin{proof}
Let us write $f:=\frac{i}{2}\,S_NW\in C^1(\overline B)$ to apply the previous results. We claim that the complex-valued torsion $\Psi$ possesses the integral representation 
\begin{equation*}
  \Psi(w)
  =P_B[f](w)
  =-\,\frac{1}{\pi}\,
      \int\hspace{-1.3ex}\int\limits\limits_{\hspace{-1.8ex}B}
      \left\{
        \frac{f(\zeta)}{\zeta-w}+\frac{\overline\zeta\,\overline{f(\zeta)}}{1-w\overline\zeta}	
      \right\}d\xi\,d\eta,
  \quad w\in B.
\end{equation*}
Then the stated estimate follows at once from the above lemma. First we claim
  $$wP_B[f](w)
    =\frac{1}{\pi}\,
     \int\hspace{-1.3ex}\int\limits_{\hspace{-1.8ex}B}
     f(\zeta)\,d\xi\,d\eta+T_B[wf](w)-\overline{T_B[wf]\Big(\frac 1{\overline w}\Big)}\,.$$
Let us check this identity:
  $$\begin{array}{l}
      \displaystyle
      \frac{1}{\pi}\,
      \int\hspace*{-1.3ex}\int\limits_{\hspace*{-1.8ex}B}
      f(\zeta)\,d\xi d\eta
      +T_B[wf](w)-\overline{T_B[wf](\overline w^{\,-1})} \\[5ex]
      \hspace*{6ex}\displaystyle
      =\,\frac{1}{\pi}\,
         \int\hspace*{-1.3ex}\int\limits_{\hspace{-1.8ex}B}
         f(\zeta)\,d\xi d\eta
         -\frac{1}{\pi}\,
          \int\hspace*{-1.3ex}\int\limits_{\hspace{-1.8ex}B}
          \frac{\zeta f(\zeta)}{\zeta-w}\,d\xi d\eta
         -\overline{T_B[wf](\overline w^{\,-1})} \\[5ex]
      \hspace*{6ex}\displaystyle
      =\,-\,\frac{w}{\pi}\,
            \int\hspace*{-1.3ex}\int\limits_{\hspace*{-1.8ex}B}
            \frac{f(\zeta)}{\zeta-w}\,d\xi d\eta
         +\frac{1}{\pi}\,
          \int\hspace*{-1.3ex}\int\limits_{\hspace{-1.8ex}B}
          \frac{\overline\zeta\,\overline{f(\zeta)}}{\overline\zeta-\frac{1}{w}}\,d\xi d\eta \\[5ex]
      \hspace*{6ex}\displaystyle
      =\,-\,\frac{w}{\pi}\,
            \int\hspace*{-1.3ex}\int\limits_{\hspace*{-1.8ex}B}
            \frac{f(\zeta)}{\zeta-w}\,d\xi d\eta
         +\frac{w}{\pi}\,
          \int\hspace*{-1.3ex}\int\limits_{\hspace{-1.8ex}B}
          \frac{\overline\zeta\,\overline{f(\zeta)}}{\overline\zeta\,w-1}\,d\xi d\eta \\[5ex]
      \hspace*{6ex}\displaystyle
      =\,-\,\frac{w}{\pi}\,
            \int\hspace{-1.3ex}\int\limits_{\hspace{-1.8ex}B}
            \left(
              \frac{f(\zeta)}{\zeta-w}
              +\frac{\overline\zeta\,\overline{f(\zeta)}}{1-\overline\zeta\,w}
            \right)d\xi d\eta$$
    \end{array}$$
which shows this statement. Next, taking $f=\frac{i}{2}\,S_NW$ into account, we infer
  $$\mbox{Re}\,\big\{wP_B[f](w)\big\}=0,\quad w\in\partial B,$$
what follows from
  $$\begin{array}{l}
      T_B[{\textstyle\frac{1}{2}}iwS_NW](w)
      -\overline{T_B[{\textstyle\frac{1}{2}}iwS_NW](\overline w^{\,-1})\,} \\[3ex]
      \hspace*{6ex}\displaystyle
      =\,-\,\frac{1}{\pi}\,
            \int\hspace*{-1.3ex}\int\limits_{\hspace{-1.8ex}B}
            \frac{i}{2}\,\frac{\zeta S_NW}{\zeta-w}\,d\xi d\eta
           +\frac{1}{\pi}\,
            \overline{
              \int\hspace*{-1.3ex}\int\limits_{\hspace{-1.8ex}B}
              \frac{i}{2}\,\frac{\zeta S_NW}{\zeta-\overline w^{\,-1}}}
      \,=\,-\,\frac{i}{2\pi}\,
              \int\hspace{-1.3ex}\int\limits_{\hspace{-1.8ex}B}
              \left(
                \frac{\zeta}{\zeta-w}+\frac{\overline\zeta}{\overline\zeta-\frac{1}{w}}
              \right)
            S_NW\,d\xi d\eta.
    \end{array}$$
The entry in the brackets is a real number because $\frac{1}{w}=\frac{\overline w}{|w|^2}=\overline w$ holds true on the boundary $\partial B.$ Investing additionally
  $$\frac{\partial}{\partial\overline w}\,P_B[f](w)=f(w),$$
which follows from Lemma \ref{lemma_ableitung} and our representation of $P_B[f](w),$ we conclude that $P_B[f](w)$ solves the Riemann-Hilbert problem for $\Psi.$ The above uniqueness result for the Riemann-Hilbert problem proves the stated representation.
\end{proof}
\noindent
Note that our proof relies crucially on the fact $f=\frac{i}{2}\,S_NW$  is {\it purely imaginary!}
\vspace*{6ex}
\begin{center}
\rule{35ex}{0.1ex}
\end{center}
\vspace*{6ex}
\section{Estimates for the total torsion}
\label{section_estimatestotaltorsion}
The previous results allow us to establish immediately lower and upper bounds for the total torsion ${\mathcal T}(N)$ for normal Coulomb frames $N.$ In particular, Theorems \ref{estimate_r4_a} and \ref{estimate_psi} show\index{total torsion}
\begin{theorem}
Let the conformally parametrized immersion $X\colon\overline B\to\mathbb R^4$ with a normal Coulomb frame $N$ be given. Then there hold
  $${\mathcal T}(N)\le C(\alpha)^2\|S_NW\|_{C^\alpha(\overline B)}^2$$
for all $\alpha\in(0,1)$ with the real constant $C=C(\alpha)$ from Theorem \ref{estimate_r4_a}, as well as
  $${\mathcal T}(N)\le c(p)^2\|S_NW\|_{L^p(B)}^2$$
for all $p\in(2,+\infty]$ with the real constant $c=c(p)$ from Theorem \ref{estimate_psi}.
\end{theorem}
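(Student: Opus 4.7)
The plan is to recognize that this theorem is essentially a direct corollary of Theorems \ref{estimate_r4_a} and \ref{estimate_psi} combined with the explicit representation of the total torsion in conformal parameters. Recall that in conformal parameters and using the skew-symmetry of the torsion coefficients, one has
$${\mathcal T}(N)=2\int\hspace{-1.5ex}\int\limits_{\hspace{-2.0ex}B}\Big\{(T_{1,1}^2)^2+(T_{1,2}^2)^2\Big\}\,du\,dv,$$
so both estimates reduce to controlling the $L^2(B)$-norm of the torsion vector $(T_{1,1}^2,T_{1,2}^2).$

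For the first inequality, I would invoke Theorem \ref{estimate_r4_a}, which gives a pointwise bound $|T_{\sigma,i}^\vartheta(w)|\le\|T_{\sigma,i}^\vartheta\|_{C^{1+\alpha}(\overline B)}\le C(\alpha)\|S_NW\|_{C^\alpha(\overline B)}$ valid for every $w\in\overline B.$ Squaring, summing the two squared coefficients, and integrating over the disc $B$ of area $\pi$ produces a bound of the stated form after absorbing the harmless factor $2\pi$ into the constant $C(\alpha)$ (which is already a generic constant depending only on $\alpha$).

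For the second inequality, the key observation is that the complex-valued torsion $\Psi=T_{1,1}^2-iT_{1,2}^2$ from paragraph \ref{par_cauchyriemann} satisfies $|\Psi|^2=(T_{1,1}^2)^2+(T_{1,2}^2)^2,$ so
$${\mathcal T}(N)=2\int\hspace{-1.5ex}\int\limits_{\hspace{-2.0ex}B}|\Psi(w)|^2\,du\,dv.$$
Theorem \ref{estimate_psi} provides the uniform bound $|\Psi(w)|\le c(p)\|S_NW\|_{L^p(B)}$ for all $w\in B,$ valid for $p\in(2,+\infty].$ Squaring and integrating again picks up the factor $\pi$, which I would again absorb into the constant $c(p).$

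No serious obstacle arises here: both estimates are immediate consequences of pointwise $L^\infty$-type bounds on the torsion coefficients obtained in the previous two paragraphs. The only cosmetic issue is the handling of the absolute constants (powers of $\pi$ and the factor $2$ in the integrand of ${\mathcal T}(N)$), which are silently folded into the already generic constants $C(\alpha)$ and $c(p)$ from the referenced theorems.
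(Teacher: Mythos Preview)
Your proposal is correct and matches the paper's approach exactly: the paper presents this theorem as an immediate consequence of Theorems \ref{estimate_r4_a} and \ref{estimate_psi} without writing out any further details, and your argument spells out precisely the obvious steps (pointwise bounds squared and integrated over the unit disc, with the harmless numerical factors absorbed into the generic constants). There is nothing to add.
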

\noindent
The following lower bound for the total torsion of normal Coulomb frames $N$ is a special case of a general estimate which we will prove later when we consider the case of higher codimension.
\begin{theorem}
Let the conformally parametrized immersion $X\colon\overline B\to\mathbb R^4$ with a normal Coulomb frame $N$ be given. Assume $S_N\not\equiv 0$ for the curvature of its normal bundle. Then it holds
  $${\mathcal T}(N)
    \ge\left(
         \frac{\|S\|_{2}^{2}}{2(1-\varrho)^2\|S\|_{2,\varrho}^2}
         +\frac{\|\nabla S\|_2^{2}}{\|S\|_{2,\varrho}^2}
       \right)^{-1}\|S\|_{2,\varrho}^2>0$$
where $\varrho=\varrho(S)\in(0,1)$ is chosen such that
  $$\int\hspace{-1.5ex}\int\limits_{\hspace{-2.2ex}B_\varrho(0)}
    |S_NW|^2\,dudv>0.$$
\end{theorem}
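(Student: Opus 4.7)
The argument reduces the desired lower bound to an estimate on the integral function $\tau$ constructed in paragraph~\ref{par_maxprinciple}. Recall that the Euler--Lagrange equation $\mbox{div}\,(T_{1,1}^2,T_{1,2}^2)=0$ together with the Neumann boundary condition forces the existence of $\tau\in C^2(\overline B)$ with $\nabla\tau=(-T_{1,2}^2,T_{1,1}^2),$ $\tau\equiv 0$ on $\partial B,$ and $\triangle\tau=S_NW$ in $B.$ Since $|\nabla\tau|^2=(T_{1,1}^2)^2+(T_{1,2}^2)^2,$ one has the clean reformulation ${\mathcal T}(N)=2\|\nabla\tau\|_{L^2(B)}^2,$ so it suffices to bound $\|\nabla\tau\|_{L^2(B)}^2$ from below in terms of the $L^2$-norms of $S_NW$ on $B_\varrho$ and $B,$ and of $\nabla(S_NW)$ on $B.$

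The core idea is to test the Poisson equation $\triangle\tau=S_NW$ against $\eta\cdot S_NW,$ where $\eta\in C^2(\overline B)$ is a radial cutoff with $\eta\equiv 1$ on $B_\varrho,$ $\eta\equiv 0$ on $\partial B,$ $0\le\eta\le 1,$ and $|\nabla\eta|\le(1-\varrho)^{-1}.$ Integrating by parts once (the boundary term vanishes since $\eta|_{\partial B}=0$) and applying Cauchy--Schwarz yields
$$\|S\|_{2,\varrho}^2
 \le\int\hspace{-1.3ex}\int\limits_{\hspace{-1.8ex}B}\eta\,(S_NW)^2\,dudv
 =-\int\hspace{-1.3ex}\int\limits_{\hspace{-1.8ex}B}\nabla(\eta S_NW)\cdot\nabla\tau\,dudv
 \le\|\nabla(\eta S_NW)\|_{L^2(B)}\cdot\|\nabla\tau\|_{L^2(B)}.$$

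To estimate $\|\nabla(\eta S_NW)\|_{L^2}$ sharply, I would integrate by parts a second time on the cross term. Writing $\nabla(\eta S_NW)=(\nabla\eta)S_NW+\eta\nabla(S_NW),$ the mixed term $2\int\eta(\nabla\eta)(S_NW)\cdot\nabla(S_NW)=\int\eta\nabla\eta\cdot\nabla((S_NW)^2)$ integrates to $-\int\mbox{div}\,(\eta\nabla\eta)(S_NW)^2$ (again no boundary contribution), producing the Bochner-type identity
$$\|\nabla(\eta S_NW)\|_{L^2(B)}^2
 =-\int\hspace{-1.3ex}\int\limits_{\hspace{-1.8ex}B}\eta\,\triangle\eta\,(S_NW)^2\,dudv
 +\int\hspace{-1.3ex}\int\limits_{\hspace{-1.8ex}B}\eta^2|\nabla(S_NW)|^2\,dudv.$$
With $\eta$ chosen so that $|\eta\triangle\eta|\le(1-\varrho)^{-2}$ pointwise (a sharpened radial profile suffices, see below), this is bounded by $(1-\varrho)^{-2}\|S\|_2^2+\|\nabla S\|_2^2$ using $\eta^2\le 1.$ Squaring the Cauchy--Schwarz bound and substituting ${\mathcal T}(N)=2\|\nabla\tau\|_{L^2}^2$ gives
$$\|S\|_{2,\varrho}^4\le\frac{{\mathcal T}(N)}{2}\left(\frac{\|S\|_2^2}{(1-\varrho)^2}+\|\nabla S\|_2^2\right),$$
which rearranges at once to ${\mathcal T}(N)\ge\|S\|_{2,\varrho}^4/\!\left(\tfrac{\|S\|_2^2}{2(1-\varrho)^2}+\tfrac{\|\nabla S\|_2^2}{2}\right).$ Since $\|\nabla S\|_2^2/2\le\|\nabla S\|_2^2,$ this implies the stated inequality. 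Strict positivity is automatic: continuity of $S_NW$ together with the assumption $S_N\not\equiv 0$ guarantees that some $\varrho\in(0,1)$ satisfies $\|S\|_{2,\varrho}>0.$

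The main obstacle is the precise control $|\eta\triangle\eta|\le(1-\varrho)^{-2},$ because in two dimensions $\triangle\eta=\eta''+\eta'/r$ carries the awkward $1/r$-term. One resolves this with a carefully designed radial profile (for instance a scaled cosine-bump) and verifies the pointwise bound by direct calculation; this is the only nontrivial bookkeeping. If one prefers to avoid the second integration by parts altogether, a naive Young inequality $|\nabla(\eta S_NW)|^2\le 2|\nabla\eta|^2(S_NW)^2+2\eta^2|\nabla(S_NW)|^2$ would yield a lower bound with constant $1/(1-\varrho)^2$ in place of $1/[2(1-\varrho)^2],$ so the Bochner trick is what produces the sharper constant claimed in the theorem.
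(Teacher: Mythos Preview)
Your overall strategy --- test the Poisson equation $\Delta\tau=S_NW$ against $\eta\,S_NW$ with a cutoff $\eta$ supported in $B$ and equal to $1$ on $B_\varrho$ --- is exactly the route the paper takes (in the general form proved in paragraph~\ref{par_lowerbounds}). The divergence comes after the first integration by parts. The paper does \emph{not} apply Cauchy--Schwarz globally and then a Bochner identity; instead it splits $\nabla(\eta S)=(\nabla\eta)S+\eta\nabla S$ and applies Young's inequality with two free parameters $\varepsilon,\delta$ to the two resulting terms separately, then optimizes over $\varepsilon,\delta$. That argument only ever uses the elementary bound $|\nabla\eta|\le(1-\varrho)^{-1}$, which the standard linear cutoff satisfies.

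Your Bochner identity $\|\nabla(\eta S)\|_{L^2}^2=-\int\eta\,\Delta\eta\,S^2+\int\eta^2|\nabla S|^2$ is correct, but the step you flag as ``the only nontrivial bookkeeping'' is a genuine gap: the pointwise control $|\eta\,\Delta\eta|\le(1-\varrho)^{-2}$ cannot be achieved in general for small $\varrho$. In two dimensions $\Delta\eta=\eta''+\eta'/r$, and the first-order term is the obstruction. For the linear cutoff $\eta(r)=(1-r)/(1-\varrho)$ one computes $|\eta\,\Delta\eta|=(1-r)/\big(r(1-\varrho)^2\big)$, which at $r=\varrho$ equals $1/\big(\varrho(1-\varrho)\big)$ and exceeds $(1-\varrho)^{-2}$ whenever $\varrho<\tfrac12$. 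A cosine bump fares no better: writing $\eta(r)=h\big((r-\varrho)/(1-\varrho)\big)$ with $h(t)=\cos(\pi t/2)$, one finds $|h(h''+h'/t)|\to\pi^2/2>1$ as $t\to 0^+$, so the bound fails near the inner edge when $\varrho$ is small. You would need a $\varrho$-dependent constant here, which destroys the form of the inequality you are aiming for.

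The fix is simply to abandon the second integration by parts and proceed as the paper does: estimate $\int|\nabla\eta|\,|S|\,|\nabla\tau|\le\tfrac{\varepsilon}{2}\|S\|_2^2+\tfrac{1}{2\varepsilon(1-\varrho)^2}\|\nabla\tau\|_2^2$ and $\int\eta|\nabla S|\,|\nabla\tau|\le\tfrac{\delta}{2}\|\nabla S\|_2^2+\tfrac{1}{2\delta}\|\nabla\tau\|_2^2$, then choose $\varepsilon=\|S\|_{2,\varrho}^2/\|S\|_2^2$ and $\delta=\|S\|_{2,\varrho}^2/(2\|\nabla S\|_2^2)$. This is effectively your ``naive'' fallback; the sharper constant you were chasing with the Bochner trick is not what the paper's own proof produces.
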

\begin{remark}
In Lecture III we catch up on estimating the area element $W$ what is still left to complete our investigations so far.
\end{remark}
\vspace*{6ex}
\begin{center}
\rule{35ex}{0.1ex}
\end{center}
\vspace*{6ex}
\section{An example: Holomorphic graphs}
\label{section_holomorphicgraphs}
We consider again graphs $X(w)=(w,\Phi(w)),$ $w=u+iv\in B,$ with a holomorphic function $\Phi=\varphi+i\psi.$ We compute\index{holomorphic graph}
  $$g_{11}=1+|\nabla\varphi|^2=g_{22}\,,\quad
    g_{12}=0$$
due to the Cauchy-Riemann equations $\varphi_u=\psi_v$, $\varphi_v=-\psi_u.$ In particular, there hold $\Delta\varphi=\Delta\psi=0,$ i.e. the graph $X=X(u,v)$ is conformally parametrized and represents a minimal graph in $\mathbb R^4:$
  $$\Delta X(u,v)=0\quad\mbox{in}\ B.$$
The area element $W$ of $X$ equals $W=1+|\nabla\varphi|^2=1+|\nabla\psi|^2.$ Its Euler unit normal vectors read as
  $$N_1=\frac{1}{\sqrt{W}}\,(-\varphi_u,-\varphi_v,1,0),\quad
    N_2=\frac{1}{\sqrt{W}}\,(-\psi_u,-\psi_v,0,1).$$
For the associated torsion coefficients we calculate
  $$T_{1,1}^2
    =\frac{1}{W}\,(-\varphi_{uu}\varphi_v+\varphi_{uv}\varphi_u)
    =\frac1{2W}\frac{\partial}{\partial v}\,(|\nabla\varphi|^2)\,,\quad
    T_{1,2}^2
    =-\,\frac{1}{2W}\,\frac{\partial}{\partial u}\,(|\nabla\varphi|^2)\,.$$
Consequently, due to the special form of $W$ we infer
  $$\begin{array}{lll}
      \displaystyle
      \mbox{div}\,(T_{1,1}^2,T_{1,2}^2)\negthickspace
      & = & \negthickspace\displaystyle
            \frac{\partial}{\partial u}\left(\frac{1}{2W}\right)\frac{\partial}{\partial v}\,|\nabla\varphi|^2
            -\frac{\partial}{\partial v}\left(\frac{1}{2W}\right)\frac{\partial}{\partial u}\,|\nabla\varphi|^2
            +\frac{1}{2W}\,\frac{\partial^2}{\partial v\partial u}\,|\nabla\varphi|^2
            -\frac{1}{2W}\,\frac{\partial^2}{\partial u\partial v}\,|\nabla\varphi|^2 \\[4ex]
      & = & \negthickspace\displaystyle
            \frac{1}{2}\,\frac{\partial}{\partial u}\,|\nabla\varphi|^2\,
            \frac{\partial}{\partial v}\,|\nabla\varphi|^2
            -\frac{1}{2}\,\frac{\partial}{\partial v}\,|\nabla\varphi|^2\,
             \frac{\partial}{\partial u}\,|\nabla\varphi|^2
            \,=\,0.
    \end{array}$$
Thus the Euler-Lagrange equation for a normal Coulomb frame is satisfied! In order to check the boundary condition of the Euler-Lagrange equation for the total torsion we introduce polar coordinates $u=r\cos\alpha,$ $v=r\sin\alpha.$ Note that $\frac{1}{r}\,\frac{\partial}{\partial\alpha}=u\,\frac{\partial}{\partial v}-v\,\frac{\partial}{\partial u}.$ In our case we obtain
  $$(T_{1,1}^2,T_{1,2}^2)\cdot\nu^t
    =\frac{1}{2W}
     \left(
       u\,\frac{\partial}{\partial v}-v\,\frac{\partial}{\partial u}
     \right)
     |\nabla\varphi|^2
    =\frac{1}{2W}\,\frac{\partial}{\partial\alpha}\,|\Phi_w|^2\quad\mbox{on}\ \partial B$$
with the complex derivative $\Phi_w=\frac{1}{2}\,(\Phi_u+i\Phi_v)\in\mathbb C.$ We infer that the necessary boundary condition is satisfied if and only if $\frac{\partial}{\partial\alpha}\,|\Phi_w|$ vanishes at the boundary curve $\partial B.$ Examples of minimal graph satisfying this special property are
  $$X(w)=(w,w^n)\quad\mbox{with}\ n\in\mathbb N.$$
We have proved
\begin{proposition}
Let the conformally parametrized minimal graph $(w,\Phi(w))$ with a holomorphic function $\Phi=\varphi+i\psi$ be given. Then its Euler unit normals $N_1$ and $N_2$ form a normal Coulomb frame $N$ if $\Phi_w$ does not depend on the angle $\alpha$ along the boundary curve $\partial B.$\index{minimal graph}
\end{proposition}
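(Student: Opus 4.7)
The strategy is to invoke the Euler-Lagrange characterization of normal Coulomb frames from paragraph \ref{par_eulerlagrangen4}: a pair $(N_1,N_2)$ is a normal Coulomb frame if and only if the torsion vector $(T_{1,1}^2,T_{1,2}^2)$ satisfies the homogeneous divergence equation in $B$ together with the vanishing flux condition $(T_{1,1}^2,T_{1,2}^2)\cdot\nu^t=0$ on $\partial B$. So the task is reduced to checking these two conditions for the Euler unit normals of a holomorphic graph.

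First I would exploit the holomorphy of $\Phi=\varphi+i\psi$ to verify conformality ($g_{11}=g_{22}=1+|\nabla\varphi|^2$, $g_{12}=0$) via the Cauchy-Riemann equations and to record that $W=1+|\nabla\varphi|^2$; harmonicity of $\varphi,\psi$ then makes $X$ a minimal graph. Next I would write out the Euler unit normals $N_1,N_2$ explicitly and, by direct differentiation combined with the Cauchy-Riemann relations, derive the compact forms
\[
T_{1,1}^2=\frac{1}{2W}\,\frac{\partial}{\partial v}|\nabla\varphi|^2,\qquad
T_{1,2}^2=-\frac{1}{2W}\,\frac{\partial}{\partial u}|\nabla\varphi|^2.
\]
These are the key identities; everything afterwards is algebraic.

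Then I would compute $\mbox{div}(T_{1,1}^2,T_{1,2}^2)$ by the product rule. The mixed second partials $\partial_u\partial_v|\nabla\varphi|^2$ cancel exactly, and the remaining terms involving $\partial_u(1/2W)\,\partial_v|\nabla\varphi|^2-\partial_v(1/2W)\,\partial_u|\nabla\varphi|^2$ vanish because $W$ is itself a function of $|\nabla\varphi|^2$ (so its gradient is parallel to $\nabla|\nabla\varphi|^2$), i.e.\ the Jacobian determinant is zero. This yields the interior Euler-Lagrange equation. For the Neumann condition I would introduce polar coordinates and use the identity $\tfrac{1}{r}\,\partial_\alpha=u\,\partial_v-v\,\partial_u$ to recognize
\[
(T_{1,1}^2,T_{1,2}^2)\cdot\nu^t=\frac{1}{2W}\,\frac{\partial}{\partial\alpha}|\Phi_w|^2
\quad\text{on }\partial B,
\]
after identifying $|\nabla\varphi|^2=2|\Phi_w|^2$ from the Cauchy-Riemann relations.

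The only substantive obstacle is the interior cancellation: it is not obvious a priori that the divergence vanishes identically. The clean resolution is the observation that both the conformal factor $W$ and the torsion vector are expressible purely through $|\nabla\varphi|^2$, which forces the divergence to be a Jacobian of two functionally dependent quantities and hence to vanish. The boundary step is then purely formal: under the assumption that $|\Phi_w|$ is independent of $\alpha$ along $\partial B$, the flux condition holds automatically, completing the verification of both conditions in the Euler-Lagrange characterization.
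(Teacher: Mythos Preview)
Your proposal is correct and follows essentially the same route as the paper's own argument: the paper computes the torsion coefficients in exactly the form you display, verifies the interior divergence condition by the same cancellation (mixed partials drop out, and the remaining terms vanish because $W=1+|\nabla\varphi|^2$ depends only on $|\nabla\varphi|^2$), and then passes to polar coordinates to rewrite the boundary flux as $\frac{1}{2W}\,\partial_\alpha|\Phi_w|^2$. One minor slip: with the standard convention $\Phi_w=\tfrac12(\Phi_u-i\Phi_v)$ one gets $|\Phi_w|^2=|\nabla\varphi|^2$ rather than $2|\Phi_w|^2$, but this constant factor is irrelevant to the argument.
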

\vspace*{6ex}
\begin{center}
\rule{35ex}{0.1ex}
\end{center}
\vspace*{6ex}
\cleardoublepage\noindent
  $$ $$\\[15ex]
\thispagestyle{empty}
{\bf{\sc{\Large Lecture III}}}\\[4ex]
{\bf{\sc{\huge Normal Coulomb Frames in $\mathbb R^{n+2}$}}}\\[6ex]
\rule{108ex}{0.2ex}
\vspace*{20ex}
\begin{itemize}
\item[15.]
Problem statement
\item[16.]
The Euler-Lagrange equations
\item[17.]
Examples
\item[18.]
Quadratic growth in the gradient
\item[19.]
Torsion free normal frames
\item[20.]
Non-flat normal bundles
\item[21.]
Bounds for the total torsion
\item[22.]
Existence and regularity of weak normal Coulomb frames
\item[23.]
Classical regularity of normal Coulomb frames
\item[24.]
Estimates for the area element $W$
\end{itemize}
\cleardoublepage\noindent
\thispagestyle{empty}
\vspace*{10ex}
\section{Problem formulation}
\label{section_problembeliebig}
In this lecture we want to generalize the previous considerations to the case of higher codimensions $n>2.$ We start with computing the Euler-Lagrange equations for the functional of total torsion\index{total torsion}
  $${\mathcal T}(N)
    =\sum_{i,j=1}^2\sum_{\sigma,\vartheta=1}^n
     g^{ij}T_{\sigma,i}^\vartheta T_{\sigma,j}^\vartheta W\,dudv$$
for a normal frame $N=(N_1,\ldots,N_n).$ These equations form a nonlinear elliptic system with quadratic growth in the gradient. We derive analytical and geometric properties of normal Coulomb frames, and we prove existence and regularity of parallel frames in case of vanishing curvature of the normal bundle as well as critical points of ${\mathcal T}(N)$ in the general situation of nonflat normal bundle.
\vspace*{6ex}
\begin{center}
\rule{35ex}{0.1ex}
\end{center}
\vspace*{6ex}
\section{The Euler-Lagrange equations}
\label{section_eulerlagrangeallgemein}
\subsection{Definition of normal Coulomb frames}
\label{par_definitionallgemein}
In this section we derive the Euler-Lagrange equations for critical normal frames $N.$ We should point out that due to do Carmo \cite{docarmo_1992}, chapter 3, section 2 we can construct a family $R(w,\varepsilon)$ of rotations from $SO(n),$ as considered throughout our first lecture, for arbitrary given skew-symmetric matrix $A(w)=(A_\sigma^\vartheta(w))_{\sigma,\vartheta=1,\ldots,n}\in C^\infty(B,{\rm so}(n))$ by means of the geodesic flow\index{geodesic flow} in the manifold $SO(n).$\\[1ex]
In terms of such rotations we consider variations $\widetilde N=(\widetilde N_1,\ldots,\widetilde N_n)$ of a given normal frame $N=(N_1,\ldots,N_n)$ by means of
  $$\widetilde N_\sigma(w,\varepsilon)
    :=\sum_{\vartheta=1}^n
      R_\sigma^\vartheta(w,\varepsilon)N_\vartheta(w),
    \quad\sigma=1,\ldots,n,$$
with a one-parameter family of rotations
  $$R(w,\varepsilon)
    =\big(R_\sigma^\vartheta(w,\varepsilon)\big)_{\sigma,\vartheta=1,\ldots,n}
    \in C^\infty(B\times(-\varepsilon_0,+\varepsilon_0),{\rm SO}(n)),$$
with sufficiently small $\varepsilon_0>0,$ such that
  $$R(w,0)={\mathbb E}^n\,,\quad
    \frac{\partial}{\partial\varepsilon}\,R(w,0)=A(w)\in C^\infty(B,{\rm so}(n))$$
is true with the $n$-dimensional unit matrix ${\mathbb E}^n.$ Such a matrix $A$ is the essential ingredient for defining the first variation of the functional of total torsion.
\begin{definition}
A normal frame $N$ is called {\it critical for the total torsion} or shortly a {\it normal Coulomb frame} if and only if the first variation\index{normal Coulomb frame}
  $$\delta{\mathcal T}(N,A)
    :=\lim_{\varepsilon\to 0}
      \frac{1}{\varepsilon}\,
      \big\{\mathcal T(\widetilde N)-\mathcal T(N)\big\}$$
vanishes w.r.t. all skew-symmetric perturbations $A(w)=(A_\sigma^\vartheta(w))_{\sigma,\vartheta=1,\ldots,n}\in C^\infty(B,{\rm so}(n)).$
\end{definition}
\subsection{The first variation}
\label{par_firstvariation}
Now we present the Euler-Lagrange equations for normal Coulomb frames.
\begin{proposition}
The normal frame $N$ is a normal Coulomb frame if and only if its torsion coefficients solve the Neumann boundary value problems
  $$\mbox{\rm div}\,(T_{\sigma,1}^\vartheta,T_{\sigma,2}^\vartheta)=0\quad\mbox{in}\ B,\quad
    (T_{\sigma,1}^\vartheta,T_{\sigma,2}^\vartheta)\cdot\nu=0\quad\mbox{on}\ \partial B$$
for all $\sigma,\vartheta=1,\ldots,n$ with $\nu$ being the outer unit normal vector along the boundary $\partial B.$
\end{proposition}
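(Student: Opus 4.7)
The plan is to work in conformal parameters $(u,v)\in\overline B$, so that $\mathcal T(N)$ reduces to
  $$\mathcal T(N)=\int\hspace{-1.5ex}\int\limits_{\hspace{-2.0ex}B}\sum_{i=1}^2\sum_{\sigma,\vartheta=1}^n\big(T_{\sigma,i}^\vartheta\big)^2\,du\,dv,$$
using $g^{ij}W=\delta^{ij}$, and to differentiate this functional along the one-parameter family $\widetilde N(\cdot,\varepsilon)$ generated by a skew-symmetric $A\in C^\infty(B,{\rm so}(n))$. The transformation rule
  $$\widetilde T_i=R_{u^i}\circ R^t+R\circ T_i\circ R^t$$
established in the Lemma of paragraph~\ref{par_curvaturematrices} does all the geometric work: I will simply differentiate it at $\varepsilon=0$, where $R=\mathbb E^n$ and $\partial_\varepsilon R=A$, noting that $\partial_\varepsilon R^t=A^t=-A$, to obtain
  $$\frac{\partial}{\partial\varepsilon}\,\widetilde T_i\Big|_{\varepsilon=0}=A_{u^i}+A\circ T_i-T_i\circ A=A_{u^i}+[A,T_i].$$

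\textbf{First variation.} Inserting this infinitesimal variation into $\mathcal T$ gives
  $$\delta\mathcal T(N,A)=2\int\hspace{-1.5ex}\int\limits_{\hspace{-2.0ex}B}\sum_{i=1}^2\mathrm{tr}\bigl(T_i^t(A_{u^i}+[A,T_i])\bigr)\,du\,dv.$$
The key algebraic point is that the commutator contribution vanishes: since $T_i^t=-T_i$, one computes
  $$\mathrm{tr}\bigl(T_i^t[A,T_i]\bigr)=-\mathrm{tr}(T_iAT_i)+\mathrm{tr}(T_i^2A)=0$$
by cyclicity of the trace. Hence only the linear term survives, and after integration by parts
  $$\delta\mathcal T(N,A)=-2\int\hspace{-1.5ex}\int\limits_{\hspace{-2.0ex}B}\sum_{i=1}^2\sum_{\sigma,\vartheta=1}^n T_{\sigma,i,u^i}^\vartheta A_\sigma^\vartheta\,du\,dv+2\int\limits_{\partial B}\sum_{i=1}^2\sum_{\sigma,\vartheta=1}^n T_{\sigma,i}^\vartheta\nu^i A_\sigma^\vartheta\,ds.$$

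\textbf{Conclusion via arbitrary $A$.} Requiring $\delta\mathcal T(N,A)=0$ for every skew-symmetric $A\in C^\infty(B,{\rm so}(n))$ and invoking the fundamental lemma of the calculus of variations—first interior with compactly supported $A$, then boundary by varying the trace of $A$ on $\partial B$—yields
  $$\mathrm{div}\,(T_{\sigma,1}^\vartheta,T_{\sigma,2}^\vartheta)=0\ \text{in}\ B,\qquad (T_{\sigma,1}^\vartheta,T_{\sigma,2}^\vartheta)\cdot\nu=0\ \text{on}\ \partial B$$
for all pairs $\sigma<\vartheta$; the remaining pairs follow immediately from $T_{\sigma,i}^\vartheta=-T_{\vartheta,i}^\sigma$, and the diagonal ($\sigma=\vartheta$) entries are trivially zero. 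The converse is built directly into the same identity.

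\textbf{Main obstacle.} The only delicate point is verifying that the tested skew-symmetric part of the Euler-Lagrange expression already controls \emph{every} index pair; this is automatic because $T_{\sigma,i}^\vartheta$ is itself skew in $(\sigma,\vartheta)$, so the pairing $\sum_{\sigma,\vartheta}(\cdot)_\sigma^\vartheta A_\sigma^\vartheta$ with arbitrary $A\in{\rm so}(n)$ indeed tests against the relevant components. A minor bookkeeping subtlety is tracking the factor arising from $(T_{\sigma,i}^\vartheta)^2$ being summed symmetrically over $(\sigma,\vartheta)$, but this only affects a global constant and leaves the Euler-Lagrange system unchanged.
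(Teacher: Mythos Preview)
Your proof is correct and follows essentially the same approach as the paper: expand the torsion of the rotated frame to first order in $\varepsilon$, observe that the commutator-type contributions cancel, integrate by parts, and apply the fundamental lemma. The only difference is packaging---you invoke the matrix transformation rule $\widetilde T_i=R_{u^i}R^t+RT_iR^t$ from paragraph~\ref{par_curvaturematrices} and kill the commutator via trace cyclicity, whereas the paper re-derives the first-order expansion componentwise and cancels the cross-terms by an explicit index relabelling; your route is a bit cleaner but the substance is identical.
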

\noindent
This system of conservation laws is again the origin of the terminology ``Coulomb frame.''\index{conservation law}\index{Coulomb frame}
\begin{proof}
We consider the one-parameter family of rotations $R(w,\varepsilon)=(R_\sigma^\vartheta(w,\varepsilon))_{\sigma,\vartheta=1,\ldots,n}$ from above. Expanding around $\varepsilon=0$ yields
  $$R(w,\varepsilon)=\mathbb E^n+\varepsilon A(w)+o(\varepsilon).$$
Now we apply the rotation $R=(R_\sigma^\vartheta)_{\sigma,\vartheta=1,\ldots,n}$ to the given normal frame $N.$ The new normal vectors $\widetilde N_1,\ldots,\widetilde N_n$ are then given by
  $$\widetilde N_\sigma
    =\sum_{\vartheta=1}^n
     R_\sigma^\vartheta N_\vartheta
    =\sum_{\vartheta=1}^n
     \big\{
       \delta_\sigma^\vartheta
       +\varepsilon A_\sigma^\vartheta
       +o(\varepsilon)
     \big\}\,N_\vartheta
    =N_\sigma
     +\varepsilon
      \sum_{\vartheta=1}^n
      A_\sigma^\vartheta N_\vartheta
     +o(\varepsilon),$$
and we compute
  $$\widetilde N_{\sigma,u^\ell}
    =N_{\sigma,u^\ell}
     +\varepsilon\sum_{\vartheta=1}^n
      \big(
        A_{\sigma,u^\ell}^\vartheta N_\vartheta
        +A_\sigma^\vartheta N_{\vartheta,u^\ell}
      \big)
     +o(\varepsilon)$$
for their derivatives. Consequently, the new torsion coefficients can be expanded to
  $$\begin{array}{lll}
      \widetilde T_{\sigma,\ell}^\omega\negthickspace
      & = & \negthickspace\displaystyle
            \widetilde N_{\sigma,u^\ell}\cdot\widetilde N_\omega
            \,=\,N_{\sigma,u^\ell}\cdot N_\omega
                 +\varepsilon
                  \sum_{\vartheta=1}^n
                  \big(
                    A_{\sigma,u^\ell}^\vartheta N_\vartheta+A_\sigma^\vartheta N_{\vartheta,u^\ell}
                  \big)\cdot N_\omega
                 +\varepsilon
                  N_{\sigma,u^\ell}\cdot
                  \sum_{\vartheta=1}^n
                  A_\omega^\vartheta N_\vartheta
                 +o(\varepsilon) \\[4ex]
      & = & \negthickspace\displaystyle
            T_{\sigma,\ell}^\omega
            +\varepsilon A_{\sigma,u^\ell}^\omega
            +\varepsilon\sum_{\vartheta=1}^n
             \big\{
               A_\sigma^\vartheta T_{\vartheta,\ell}^\omega
               +A_\omega^\vartheta T_{\sigma,\ell}^\vartheta
             \big\}
            +o(\varepsilon),
    \end{array}$$
and for their squares we infer
  $$(\widetilde T_{\sigma,\ell}^\omega)^2
    =(T_{\sigma,\ell}^\omega)^2
     +2\varepsilon\,
      \bigg\{
        A_{\sigma,u^\ell}^\omega T_{\sigma,\ell}^\omega
        +\sum_{\vartheta=1}^n
         \big(
           A_\sigma^\vartheta T_{\vartheta,\ell}^\omega T_{\sigma,\ell}^\omega
           +A_\omega^\vartheta T_{\sigma,\ell}^\vartheta T_{\sigma,\ell}^\omega
         \big)
      \bigg\}
     +o(\varepsilon).$$
Before we insert this identity into the functional ${\mathcal T}(N)$ of total torsion, we observe
  $$\begin{array}{lll}
      \displaystyle
      \sum_{\sigma,\omega,\vartheta=1}^n\negthickspace
      \big\{
        A_\sigma^\vartheta T_{\vartheta,\ell}^\omega T_{\sigma,\ell}^\omega
        +A_\omega^\vartheta T_{\sigma,\ell}^\vartheta T_{\sigma,\ell}^\omega
      \big\}
      & = & \displaystyle\negthickspace
            \sum_{\sigma,\omega,\vartheta=1}^n
            \big\{
              A_\sigma^\vartheta T_{\vartheta,\ell}^\omega T_{\sigma,\ell}^\omega
              +A_\sigma^\vartheta T_{\omega,\ell}^\vartheta T_{\omega,\ell}^\sigma
            \big\} \\[4ex]
      & = & \displaystyle\negthickspace
            2\sum_{\sigma,\omega,\vartheta=1}^n
            A_\sigma^\vartheta T_{\vartheta,\ell}^\omega T_{\sigma,\ell}^\omega
            \,=\,0
    \end{array}$$
taking the skew-symmetry of the matrix $A$ into account. Thus the difference between the torsion functionals ${\mathcal T}(\widetilde N)$ and ${\mathcal T}(N)$ computes to ($A_{\sigma,u^\ell}^\omega T_{\sigma,\ell}^\omega=A_{\omega,u^\ell}^\sigma T_{\omega,\ell}^\sigma$!)
  $$\begin{array}{lll}
      \displaystyle
      {\mathcal T}(\widetilde N)-{\mathcal T}(N)\negthickspace
      & = & \displaystyle\negthickspace
            2\varepsilon
            \sum_{\sigma,\omega=1}^n\sum_{\ell=1}^2\,
            \int\hspace{-1.3ex}\int\limits_{\hspace{-2.0ex}B}
            A_{\sigma,u^\ell}^\omega T_{\sigma,\ell}^\omega\,dudv
            +o(\varepsilon) \\[5ex]
      & = & \displaystyle\negthickspace
            4\varepsilon
            \sum_{1\le\sigma<\omega\le n}
            \int\hspace{-1.5ex}\int\limits_{\hspace{-2.0ex}B}
            \Big\{
              A_{\sigma,u}^\omega T_{\sigma,1}^\omega
              +A_{\sigma,v}^\omega T_{\sigma,2}^\omega
            \Big\}
            +o(\varepsilon)\\[5ex]
      & = & \displaystyle\negthickspace
            \,4\varepsilon
            \sum_{1\le\sigma<\omega\le n}\,
            \int\limits_{\partial B}
            A_\sigma^\omega(T_{\sigma,1}^\omega,T_{\sigma,2}^\omega)\cdot\nu^t\,ds
            -4\varepsilon
              \sum_{1\le\sigma<\omega\le n}^n\,
              \int\hspace{-1.5ex}\int\limits_{\hspace{-2.0ex}B}
              A_\sigma^\omega\,\mbox{div}\,(T_{\sigma,1}^\omega,T_{\sigma,2}^\omega)\,dudv
            +o(\varepsilon).
    \end{array}$$
But $A$ was chosen arbitrarily which proves the statement.
\end{proof}
\subsection{The integral functions}
\label{par_integralfunctions}
Interpreting the Euler-Lagrange equations as integrability conditions, analogously to the situation considered in paragraph \ref{par_maxprinciple}, we find integral functions $\tau^{(\sigma\vartheta)}\in C^{k-1}(\overline B,\mathbb R)$ satisfying\index{integral function}
  $$\nabla\tau^{(\sigma\vartheta)}=\big(-T_{\sigma,2}^\vartheta,T_{\sigma,1}^\vartheta\big)
    \quad\mbox{in}\ B\quad\mbox{for all}\ \sigma,\vartheta=1,\ldots,n.$$
Due to the boundary conditions in $(T_{\sigma,1}^\vartheta,T_{\sigma,2}^\vartheta)\cdot\nu=0$ which imply $\nabla\tau^{(\sigma\vartheta)}\cdot(-v,u)=0$ on $\partial B$ with the unit tangent vector $(-v,u)$ at $\partial B,$ we may again choose $\tau^{(\sigma\vartheta)}$ so that
  $$\tau^{(\sigma\vartheta)}=0\quad\mbox{on}\ \partial B\quad\mbox{for all}\ \sigma,\vartheta=1,\ldots,n.$$
Note that the matrix $(\tau^{(\sigma\vartheta)})_{\sigma,\vartheta=1,\ldots,n}$ is skew-symmetric.
\subsection{A nonlinear elliptic system}
\label{par_nonlinearellipticsystem}
\label{prop_tausystem}
Let us now define the quantities
  $$\delta\tau^{(\sigma\vartheta)}
    :=\sum_{\omega=1}^n\mbox{det}\,\Big(\nabla\tau^{(\sigma\omega)},\nabla\tau^{(\omega\vartheta)}\Big),
  \quad\sigma,\vartheta=1,\ldots n.$$
The matrix $(\delta\tau^{(\sigma\vartheta)})_{\sigma,\vartheta=1,\ldots,n}$ is also skew-symmetric. The aim in this paragraph is to establish an elliptic system with quadratic growth in the gradient for $\tau^{(\sigma\vartheta)}.$
\begin{proposition}
\label{par_tausystem}
Let a normal Coulomb frame $N$ be given. Then the functions $\tau^{(\sigma\vartheta)}$, $\sigma,\vartheta=1,\ldots,n$, are solutions of the boundary value problems
  $$\Delta\tau^{(\sigma\vartheta)}
    =-\,\delta\tau^{(\sigma\vartheta)}+S_{\sigma,12}^\vartheta\quad\mbox{in}\ B,\quad
    \tau^{(\sigma\vartheta)}=0\quad\mbox{on}\ \partial B\,,$$
where $\delta\tau^{(\sigma\vartheta)}$ grows quadratically in the gradient $\nabla\tau^{(\sigma\vartheta)}.$
\end{proposition}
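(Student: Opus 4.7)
The plan is to verify the PDE by direct differentiation of the defining relation $\nabla\tau^{(\sigma\vartheta)} = (-T_{\sigma,2}^\vartheta, T_{\sigma,1}^\vartheta)$, and then match the resulting expression against the Ricci integrability equation from paragraph \ref{par_ricci}. The homogeneous boundary condition $\tau^{(\sigma\vartheta)}=0$ on $\partial B$ has already been arranged in paragraph \ref{par_integralfunctions}, so only the interior identity remains to be established.

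First I would compute, just by cross-differentiating the two components of the defining relation,
$$\Delta\tau^{(\sigma\vartheta)}
 = \tau^{(\sigma\vartheta)}_{uu} + \tau^{(\sigma\vartheta)}_{vv}
 = -T_{\sigma,2,u}^\vartheta + T_{\sigma,1,v}^\vartheta
 = T_{\sigma,1,v}^\vartheta - T_{\sigma,2,u}^\vartheta.$$
Next I would invoke the Ricci equation from paragraph \ref{par_ricci}, rewritten as
$$S_{\sigma,12}^\vartheta
 = T_{\sigma,1,v}^\vartheta - T_{\sigma,2,u}^\vartheta
 + \sum_{\omega=1}^n \bigl(T_{\sigma,1}^\omega T_{\omega,2}^\vartheta - T_{\sigma,2}^\omega T_{\omega,1}^\vartheta\bigr),$$
which expresses the Laplacian as $\Delta\tau^{(\sigma\vartheta)} = S_{\sigma,12}^\vartheta - \sum_\omega(T_{\sigma,1}^\omega T_{\omega,2}^\vartheta - T_{\sigma,2}^\omega T_{\omega,1}^\vartheta)$. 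A short bookkeeping step then identifies the bracket as $\delta\tau^{(\sigma\vartheta)}$: evaluating the $2\times 2$ determinant componentwise gives
$$\det\bigl(\nabla\tau^{(\sigma\omega)},\nabla\tau^{(\omega\vartheta)}\bigr)
 = (-T_{\sigma,2}^\omega)(T_{\omega,1}^\vartheta) - (T_{\sigma,1}^\omega)(-T_{\omega,2}^\vartheta)
 = T_{\sigma,1}^\omega T_{\omega,2}^\vartheta - T_{\sigma,2}^\omega T_{\omega,1}^\vartheta,$$
so summing over $\omega$ yields precisely $\delta\tau^{(\sigma\vartheta)}$, and the claimed PDE $\Delta\tau^{(\sigma\vartheta)} = -\delta\tau^{(\sigma\vartheta)} + S_{\sigma,12}^\vartheta$ follows.

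For the quadratic growth assertion, each determinant is bilinear in the gradients, so Cauchy-Schwarz together with $|ab|\le\tfrac12(a^2+b^2)$ gives
$$|\delta\tau^{(\sigma\vartheta)}|
 \le \sum_{\omega=1}^n |\nabla\tau^{(\sigma\omega)}|\,|\nabla\tau^{(\omega\vartheta)}|
 \le \tfrac{1}{2}\sum_{\omega=1}^n \bigl(|\nabla\tau^{(\sigma\omega)}|^2 + |\nabla\tau^{(\omega\vartheta)}|^2\bigr),$$
exhibiting the required quadratic control in the full skew-symmetric tuple $(\nabla\tau^{(\alpha\beta)})_{\alpha,\beta=1,\ldots,n}$. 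There is no real obstacle in the argument itself --- it is essentially bookkeeping on the Ricci equation. The conceptual content lies in what the identity \emph{reveals}: the naive guess $\Delta\tau^{(\sigma\vartheta)} = S_{\sigma,12}^\vartheta$, which would hold in a purely linear setting (and does reduce to the Poisson problem solved in paragraph \ref{par_maxprinciple} when $n=2$, where the sum over $\omega$ collapses), is here corrected by precisely the nonlinear term $\delta\tau^{(\sigma\vartheta)}$ whose quadratic-gradient structure is responsible for all subsequent difficulties --- Wente-type $L^\infty$ estimates, Hardy-space regularity, and the existence theory for weak normal Coulomb frames --- addressed in the remainder of Lecture III.
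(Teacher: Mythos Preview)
Your proof is correct and follows essentially the same route as the paper: compute $\Delta\tau^{(\sigma\vartheta)}=T_{\sigma,1,v}^\vartheta-T_{\sigma,2,u}^\vartheta$ from the defining relation, invoke the Ricci identity for $S_{\sigma,12}^\vartheta$, and identify the quadratic torsion sum with $\delta\tau^{(\sigma\vartheta)}$ via the determinant formula. Your additional verification of the quadratic-growth bound and the concluding interpretive remarks go slightly beyond what the paper records in its proof (the sharper constant is deferred to paragraph~\ref{par_quadraticgrowth}), but the essential argument is identical.
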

\begin{proof}
Choose any $(\sigma,\vartheta)\in\{1,\ldots,n\}\times\{1,\ldots,n\}.$ The representation formula
  $$S_{\sigma,12}^\vartheta
    =T_{\sigma,1,v}^\vartheta-T_{\sigma,2,u}^\vartheta
     +\sum_{\omega=1}^n
      \Big\{
        T_{\sigma,1}^\omega T_{\omega,2}^\vartheta-T_{\sigma,2}^\omega T_{\omega,1}^\vartheta
      \Big\}$$
for the normal curvature tensor together with $\nabla\tau^{(\sigma\vartheta)}=(-T_{\sigma,2}^\vartheta,T_{\sigma,1}^\vartheta)$ yields
  $$\begin{array}{rcl}
      \Delta\tau^{(\sigma\vartheta)}\negthickspace
      & = & \negthickspace\displaystyle
            T_{\sigma,1,v}^\vartheta -T_{\sigma,2,u}^\vartheta
            \,=\,-\sum_{\omega=1}^n
                  \Big\{
                    T_{\sigma,1}^\omega T_{\omega,2}^\vartheta
                    -T_{\sigma,2}^\omega T_{\omega,1}^\vartheta
                  \Big\}
                 +S_{\sigma,12}^\vartheta \\[4ex]
      & = & \negthickspace\displaystyle
            \sum_{\omega=1}^n
            \Big\{
              \tau_v^{(\sigma\omega)}\tau_u^{(\omega\vartheta)}-\tau_u^{(\sigma\omega)}\tau_v^{(\omega\vartheta)}
            \Big\}
            +S_{\sigma,12}^\vartheta
    \end{array}$$
proving the statement.
\end{proof}
\vspace*{4ex}
\begin{center}
\rule{35ex}{0.1ex}
\end{center}
\vspace*{4ex}
\section{Examples}
\label{section_examplesallgemein}
Let us evaluate this nonlinear system in the special cases $n=2$ and $n=3.$
\subsection{The case $n=2$}
\label{par_n2allgemein}
In this case there is only one integral function $\tau^{(12)}$ satisfying
  $$\Delta\tau^{(12)}=S_{1,12}^2\quad\mbox{in}\ B,\quad
    \tau^{(12)}=0\quad\mbox{on}\ \partial B.$$
This is exactly the Poisson equation with homogeneous boundary data from paragraph \ref{par_maxprinciple} with $\tau=\tau^{(12)}$ and $S_NW=S_{1,12}^2.$
\goodbreak\noindent
\subsection{The case $n=3$}
\label{par_n3allgemein}
If $n=3$ we have three relations
  $$\begin{array}{lll}
      \Delta\tau^{(12)}\negthickspace
      & = & \negthickspace\displaystyle
            \tau_v^{(13)}\tau_u^{(32)}-\tau_u^{(13)}\tau_v^{(32)}+S_{1,12}^2\,, \\[2ex]
      \Delta \tau^{(13)}\negthickspace
      & = & \negthickspace\displaystyle
            \tau_v^{(12)}\tau_u^{(23)}-\tau_u^{(12)}\tau_v^{(23)}+S_{1,12}^3\,, \\[2ex]
      \Delta \tau^{(23)}\negthickspace
      & = & \negthickspace\displaystyle
            \tau_v^{(21)}\tau_u^{(13)}-\tau_u^{(21)}\tau_v^{(13)}+S_{2,12}^3\,.
    \end{array}$$
Now if we set
  $${\mathcal T}:=(\tau^{(12)},\tau^{(13)},\tau^{(23)})
    \quad\mbox{and}\quad
    {\mathcal S}:=(S_{1,12}^2,S_{1,12}^3,S_{2,12}^3)=W{\mathfrak S}_N$$
with the curvature vector ${\mathfrak S}_N=W^{-1}(S_{1,12}^2,S_{1,12}^3,S_{2,12}^3)$ of the normal bundle from paragraph \ref{par_normalcurvaturevector}, we find
  $$\Delta{\mathcal T}
    ={\mathcal T}_u\times{\mathcal T}_v+{\mathcal S}\quad\mbox{in}\ B,
    \quad\mathcal T=0\quad\mbox{on}\ \partial B$$
with the usual vector product $\times$ in $\mathbb R^3.$ That means: \emph{$\mathcal T$ solves an inhomogeneous $H$-surface system with constant mean curvature $H=\frac12$ and vanishing boundary data,}\index{mean curvature system} compare with the mean curvature system from Lecture I, that is,
  $$\triangle X=2HWN$$
with the scalar mean curvature $H,$ the area element $W,$ and unit normal vector $N.$ If, additionally, $X$ satsifies the conformality relations, then $X$ even represents an immersion with scalar mean curvature $H.$\\[1ex]
In particular, if ${\mathfrak S}_N\equiv 0,$ then ${\mathcal T}\equiv 0$ by a result of Wente \cite{wente_1975}. We consider the general situation of higher codimensions in the next sections.
\vspace*{6ex}
\begin{center}
\rule{35ex}{0.1ex}
\end{center}
\vspace*{6ex}
\section{Quadratic growth in the gradient}
\label{section_quadraticgrowth}
\subsection{A Grassmann-type vector}
\label{par_grassmanngrowth}
The last example gives rise to the definition of the following {\it vector of Grassmann type}\index{Grassmann type vectors}
  $${\mathcal T}
    :=\big(\tau^{(\sigma\vartheta)}\big)_{1\le\sigma<\vartheta\le n}\in\mathbb R^N\,,
    \quad N:=\frac{n}{2}\,(n-1).$$
In our examples ${\mathcal T}$ works as follows:
  $$\begin{array}{lll}
      {\mathcal T}=\tau^{(12)}\in\mathbb R                                     & \mbox{for}\ n=2, \\[2ex]
      {\mathcal T}=\big(\tau^{(12)},\tau^{(13)},\tau^{(23)}\big)\in\mathbb R^3 & \mbox{for}\ n=3.
    \end{array}$$
Analogously we define the Grassmann-type vectors
  $$\delta\mathcal T:=\big(\delta\tau^{(\sigma\vartheta)}\big)_{1\le\sigma<\vartheta\le n}\in\mathbb R^N\,,\quad
    {\mathcal S}=W{\mathfrak S}_N$$
with the curvature vector ${\mathfrak S}_N=W^{-1}(S_{1,12}^2,S_{1,12}^3,\ldots).$ Then Proposition \ref{prop_tausystem} can be written as
  $$\Delta\mathcal T
    =-\delta\mathcal T+{\mathcal S}\quad\mbox{in}\ B,\quad
   \mathcal T=0\quad\mbox{on}\ \partial B.$$
From the definition of $\delta\mathcal G$ we immediately obtain
  $$|\Delta\mathcal T|\le c\,|\nabla\mathcal T|^2+|{\mathcal S}|\quad\mbox{in}\ B$$
with some real constant $c>0.$
\subsection{Nonlinear system with quadratic growth}
\label{par_quadraticgrowth}
The exact knowledge of this constant $c>0$ will become important later.
\begin{proposition}
It holds
  $$|\Delta{\mathcal T}|
    \le\frac{\sqrt{n-2}}{2}\,|\nabla\mathcal T|^2+|{\mathcal S}|
    \quad\mbox{in}\ B.$$
\end{proposition}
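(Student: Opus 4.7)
The first move is to apply the Euclidean triangle inequality to the identity $\Delta\mathcal{T}=-\delta\mathcal{T}+\mathcal{S}$ from Proposition in Section~\ref{par_nonlinearellipticsystem}. This immediately reduces the claim to the estimate
$$|\delta\mathcal{T}|\leq\frac{\sqrt{n-2}}{2}\,|\nabla\mathcal{T}|^2,$$
so the rest of the argument focuses exclusively on the nonlinear term $\delta\mathcal{T}$.

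The structural input is that in the defining sum
$\delta\tau^{(\sigma\vartheta)}=\sum_{\omega=1}^n\det(\nabla\tau^{(\sigma\omega)},\nabla\tau^{(\omega\vartheta)})$
the summands with $\omega=\sigma$ or $\omega=\vartheta$ vanish (as $\tau^{(\omega\omega)}\equiv 0$), so precisely $n-2$ terms contribute. The plan is to apply the discrete Cauchy--Schwarz inequality to this restricted sum,
$$|\delta\tau^{(\sigma\vartheta)}|^2\leq(n-2)\sum_{\omega\neq\sigma,\vartheta}\bigl|\det(\nabla\tau^{(\sigma\omega)},\nabla\tau^{(\omega\vartheta)})\bigr|^2,$$
which produces the decisive factor $n-2$. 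Each $2\times 2$ determinant is bounded by the Hadamard inequality $|\det(a,b)|\leq|a|\,|b|$ and the AM--GM inequality $|a|\,|b|\leq\tfrac12(|a|^2+|b|^2)$, yielding the factor $\tfrac14$ after squaring.

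Summing over $1\leq\sigma<\vartheta\leq n$ and interchanging the order of summation, I would view the right-hand side through the $2\times n$ matrix $V^\omega$ whose $\sigma$-th column is $\nabla\tau^{(\omega\sigma)}$: the Cauchy--Binet identity gives $\sum_{\sigma<\vartheta}|\det(V^\omega_{:\sigma},V^\omega_{:\vartheta})|^2=\det(V^\omega(V^\omega)^T)\leq\tfrac14 F_\omega^2$, where $F_\omega:=\sum_\sigma|\nabla\tau^{(\omega\sigma)}|^2$. The skew-symmetry $\tau^{(\sigma\omega)}=-\tau^{(\omega\sigma)}$ and the definition of $\mathcal{T}$ then give the two key bookkeeping identities $\sum_\omega F_\omega=2|\nabla\mathcal{T}|^2$ and $F_\omega\leq|\nabla\mathcal{T}|^2$ (the latter because every term of $F_\omega$ also appears in $|\nabla\mathcal{T}|^2$). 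Consequently $\sum_\omega F_\omega^2\leq(\max F_\omega)\sum_\omega F_\omega\leq 2|\nabla\mathcal{T}|^4$, and assembling the estimates delivers $|\delta\mathcal{T}|^2\leq\tfrac{n-2}{4}|\nabla\mathcal{T}|^4$, hence the claim.

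The principal obstacle is getting the sharp constant $\tfrac{\sqrt{n-2}}{2}$ rather than a larger one: both the factor $\sqrt{n-2}$ coming from Cauchy--Schwarz and the factor $\tfrac12$ coming from AM--GM must be preserved, which requires handling the extreme mass distributions among the $F_\omega$ carefully. As a sanity check, for $n=3$ the inequality collapses to the cross-product formula $\delta\mathcal{T}=-\mathcal{T}_u\times\mathcal{T}_v$ of Section~\ref{par_n3allgemein}, and becomes $|\mathcal{T}_u\times\mathcal{T}_v|\leq|\mathcal{T}_u|\,|\mathcal{T}_v|\leq\tfrac12(|\mathcal{T}_u|^2+|\mathcal{T}_v|^2)$, which demonstrates both that the constant $\tfrac{\sqrt{n-2}}{2}$ is sharp and that the strategy above captures its origin correctly.
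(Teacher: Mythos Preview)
Your reduction to $|\delta\mathcal T|\le\frac{\sqrt{n-2}}{2}|\nabla\mathcal T|^2$, the Cauchy--Schwarz step producing the factor $n-2$, and the Cauchy--Binet reorganisation
\[
\sum_{1\le\sigma<\vartheta\le n}\sum_{\omega\neq\sigma,\vartheta}
\bigl|\det(\nabla\tau^{(\sigma\omega)},\nabla\tau^{(\omega\vartheta)})\bigr|^2
=\sum_{\omega=1}^n\det\bigl(V^\omega(V^\omega)^T\bigr)
\]
are all correct. The gap is in the last stage. From $\det(V^\omega(V^\omega)^T)\le\frac14F_\omega^2$ and your (correct) bookkeeping $\sum_\omega F_\omega^2\le 2|\nabla\mathcal T|^4$ you only obtain
\[
|\delta\mathcal T|^2\le\frac{n-2}{4}\sum_\omega F_\omega^2\le\frac{n-2}{2}\,|\nabla\mathcal T|^4,
\]
hence $|\delta\mathcal T|\le\frac{\sqrt{n-2}}{\sqrt2}|\nabla\mathcal T|^2$, which is a factor $\sqrt2$ off. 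Your own $n=3$ sanity check confirms the \emph{statement} but not your argument: with $|\nabla\tau^{(12)}|^2=|\nabla\tau^{(13)}|^2=|\nabla\tau^{(23)}|^2=1$ one has $\sum_\omega F_\omega^2=12>9=|\nabla\mathcal T|^4$, so the bound $\frac14\sum_\omega F_\omega^2\le\frac14|\nabla\mathcal T|^4$ you implicitly use is false. The loss comes from applying AM--GM separately on each $\omega$-block before summing.

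The paper's remedy is to recognise that the minors you sum are a \emph{subset} of all $2\times 2$ minors of the single $2\times N$ matrix $(\mathcal T_u,\mathcal T_v)$, so
\[
\sum_{\omega}\det\bigl(V^\omega(V^\omega)^T\bigr)\le|\mathcal T_u\wedge\mathcal T_v|^2
=|\mathcal T_u|^2|\mathcal T_v|^2-(\mathcal T_u\cdot\mathcal T_v)^2
\le|\mathcal T_u|^2|\mathcal T_v|^2,
\]
and then one global application of AM--GM gives $|\mathcal T_u||\mathcal T_v|\le\frac12|\nabla\mathcal T|^2$, recovering the sharp constant $\frac{\sqrt{n-2}}{2}$. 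In short: replace the $F_\omega$-argument by the global Lagrange identity for $\mathcal T_u\wedge\mathcal T_v$.
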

\begin{proof}
We already know
  $$|\Delta{\mathcal T}
    |\le |\delta\mathcal T|+|{\mathcal S}|\quad\mbox{in}\ B.$$
It remains to estimate $|\delta\mathcal T|$ appropriately. We begin with computing
  $$|\delta\mathcal T|^2
    =\sum_{1\le\sigma<\vartheta\le n}
     \left\{\,
       \sum_{\omega=1}^n
       \det\big(\nabla\tau^{(\sigma\omega)},\nabla\tau^{(\omega\vartheta)}\big)
     \right\}^2
    \le(n-2)
       \sum_{1\le\sigma<\vartheta\le n}
       \left\{\,
         \sum_{\omega=1}^n
         \det\big(\nabla\tau^{(\sigma\omega)},\nabla\tau^{(\omega\vartheta)}\big)^2
       \right\}.$$
Note that only derivatives of elements of ${\mathcal T}$ appear on the right hand side of $|\delta{\mathcal T}|^2.$ Moreover, this right hand side can be estimated by $|{\mathcal T}_u\wedge{\mathcal T}_v|^2$ because ${\mathcal T}_u\wedge{\mathcal T}_v$ has actually more elements than ${\mathcal R}.$\footnote{In particular, elements of the form $\mbox{det}\,(\nabla\tau^{(\sigma\omega)},\nabla\tau^{(\omega'\vartheta)})^2$ appear in $|{\mathcal T}_u\wedge{\mathcal T}_v|^2,$ but they do not appear in ${\mathcal R}.$} Thus using Lagrange's identity
  $$|X\wedge Y|^2=|X|^2|Y|^2-(X\cdot Y^t)^2\le|X|^2|Y|^2$$
we can estimate as follows
  $$|\delta{\mathcal T}|^2
    \le (n-2)|{\mathcal T}_u\wedge{\mathcal T}_v|^2
    \le (n-2)|{\mathcal T}_u|^2|{\mathcal T}_v|^2\,.$$
Taking all together shows
  $$|\Delta\mathcal T|
    \le\sqrt{n-2}\,|\mathcal T_u||\mathcal T_v|+|{\mathfrak S}|
    \le\frac{\sqrt{n-2}}{2}\,|\nabla\mathcal T|^2+|{\mathcal S}|$$
which proves the statement.
\end{proof}
\vspace*{6ex}
\begin{center}
\rule{35ex}{0.1ex}
\end{center}
\vspace*{6ex}
\section{Torsion free normal frames}
\subsection{The case $n=3$}
\label{par_torsionfreen3}
As already mentioned, in Wente \cite{wente_1975} we find a uniqueness result for solutions of the homogeneous system, corresponding to the flat normal bundle situation ${\mathfrak S}_N=0,$ if the codimension is $3:$
\begin{proposition}
(Wente \cite{wente_1975} )\\
The only solution of the elliptic system
  $$\Delta{\mathcal T}={\mathcal T}_u\times{\mathcal T}_v\quad\mbox{in}\ B,\quad
    {\mathcal T}=0\quad\mbox{on}\ \partial B,$$
is ${\mathcal T}\equiv 0.$
\end{proposition}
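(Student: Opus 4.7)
The plan is to combine the classical Hopf-differential argument (which will yield weak conformality) with the energy identity obtained by testing the PDE against $\mathcal{T}$, closed off by Wente's sharp $L^\infty$-estimate that is developed in paragraph~20.1. The PDE is a constant-mean-curvature system with $H=\tfrac12$, for which both of these ingredients are tailor-made.

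First I would introduce the complex coordinate $w=u+iv$ on $B$ and consider the Hopf-type quadratic differential
$$\phi(w):=\mathcal{T}_w\cdot\mathcal{T}_w=\tfrac14\bigl(|\mathcal{T}_u|^2-|\mathcal{T}_v|^2\bigr)-\tfrac{i}{2}\,\mathcal{T}_u\cdot\mathcal{T}_v.$$
Differentiating gives $\partial_{\bar w}\phi=2\,\mathcal{T}_w\cdot\mathcal{T}_{w\bar w}=\tfrac12\,\mathcal{T}_w\cdot\Delta\mathcal{T}=\tfrac12\,\mathcal{T}_w\cdot(\mathcal{T}_u\times\mathcal{T}_v)$, and this vanishes because $\mathcal{T}_u\times\mathcal{T}_v$ is orthogonal to both $\mathcal{T}_u$ and $\mathcal{T}_v$. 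Hence $\phi$ is holomorphic on $B$. The Dirichlet condition $\mathcal{T}=0$ on $\partial B$ kills the tangential derivative $\partial_\theta\mathcal{T}=i(w\mathcal{T}_w-\bar w\,\mathcal{T}_{\bar w})$ there, so $w\,\mathcal{T}_w=\bar w\,\overline{\mathcal{T}_w}$ on $|w|=1$, i.e.\ each component of $w\,\mathcal{T}_w$ is real on $\partial B$. Consequently $w^2\phi=(w\mathcal{T}_w)\cdot(w\mathcal{T}_w)$ is real and nonnegative on $\partial B$. Being holomorphic in $B$ with harmonic imaginary part vanishing on $\partial B$, the function $w^2\phi$ is real-valued throughout $B$; a real-valued holomorphic function is constant, and the value at $w=0$ is zero, so $w^2\phi\equiv0$ and therefore $\phi\equiv0$ in $B$. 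This gives the weak conformality relations $|\mathcal{T}_u|=|\mathcal{T}_v|$ and $\mathcal{T}_u\cdot\mathcal{T}_v=0$.

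To close the argument I would test the equation against $\mathcal{T}$ and integrate by parts:
$$\|\nabla\mathcal{T}\|_{L^2(B)}^2=-\int_{B}\mathcal{T}\cdot(\mathcal{T}_u\times\mathcal{T}_v)\,du\,dv.$$
Each component of the integrand is a Jacobian of two components of $\mathcal{T}$, so applying Wente's sharp $L^\infty$-estimate from paragraph~20.1 componentwise gives $\|\mathcal{T}\|_{L^\infty(B)}\le C\,\|\nabla\mathcal{T}\|_{L^2(B)}^2$. Combined with $|\mathcal{T}_u\times\mathcal{T}_v|\le\tfrac12|\nabla\mathcal{T}|^2$ (which is an equality thanks to the conformality just obtained) this yields the self-improving inequality
$$\|\nabla\mathcal{T}\|_{L^2(B)}^2\le\tfrac{C}{2}\,\|\nabla\mathcal{T}\|_{L^2(B)}^4,$$
so either $\nabla\mathcal{T}\equiv0$ (whence $\mathcal{T}\equiv0$ by the boundary condition) or $\|\nabla\mathcal{T}\|_{L^2}^2\ge 2/C$. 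In the function class in which the proposition is formulated (weak solutions of finite energy below the sphere threshold), the second alternative is precluded by the sharp value of the Wente constant, which is equivalent to the isoperimetric obstruction to a nontrivial CMC-$\tfrac12$ disc with constant boundary values.

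The main obstacle is the final dichotomy. Uniqueness for the $H$-system with vanishing Dirichlet data is genuinely delicate, since large bubble solutions (conformal parametrisations of round spheres through the origin) exist in principle; the proof must therefore pin down the precise regularity and smallness class in which $\mathcal{T}$ lives and exploit the sharp constant in Wente's inequality to rule out the bubble branch. Tracking this constant — or, equivalently, invoking the sharp isoperimetric inequality for closed CMC surfaces — is the delicate step; the Hopf-differential and testing steps above are essentially formal once the Wente $L^\infty$-estimate is in hand.
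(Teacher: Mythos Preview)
Your Hopf-differential step is correct and matches exactly what the paper does in paragraph~19.2 (the ``auxiliary function'' lemma): one shows $\Phi=\mathcal T_w\cdot\mathcal T_w$ is holomorphic, that $\mathrm{Im}(w^2\Phi)=0$ on $\partial B$, and hence $\Phi\equiv0$, yielding weak conformality. So far so good.

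The divergence---and the genuine gap---is in the second half. Your energy-testing argument produces only the dichotomy $\|\nabla\mathcal T\|_{L^2}^2=0$ or $\|\nabla\mathcal T\|_{L^2}^2\ge 2/C$, and you then try to kill the large-energy branch by invoking a ``function class\ldots below the sphere threshold.'' But the proposition carries no such smallness hypothesis; Wente's statement is unconditional for classical solutions. The large-energy alternative is not a phantom: conformal parametrisations of a round sphere through the origin are genuine solutions of the $H$-system with constant boundary value, and nothing in your argument excludes them. Appealing to the sharp Wente constant or to the isoperimetric inequality does not close this, because those tools reproduce exactly the dichotomy you already have---they tell you a nontrivial solution must carry at least the energy of the sphere, not that it cannot exist.

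The paper (and Wente's original 1975 paper) take a different route after conformality. Once $|\mathcal T_u|=|\mathcal T_v|$ and $\mathcal T_u\cdot\mathcal T_v=0$, the boundary condition $\mathcal T\equiv0$ on $\partial B$ forces the tangential derivative to vanish there, and conformality then forces $\nabla\mathcal T=0$ everywhere on $\partial B$: every boundary point is a branch point. Now one invokes the Hartman--Wintner/Heinz asymptotic expansion (the Satz of Heinz~\cite{heinz_1970} quoted in paragraph~19.3): for a nonconstant conformal solution of a system with quadratic gradient growth, branch points are isolated. A whole circle of branch points is therefore impossible unless $\mathcal T\equiv\mathrm{const}=0$. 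This argument uses no energy threshold and disposes of the large-solution branch cleanly; it is precisely the step your proposal is missing.
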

\noindent
A proof of this result follows from asymptotic expansions of solutions ${\mathcal T}$ in the interior $B$ and on the boundary $\partial B.$ We particularly refer to Hartman and Wintner \cite{hartman_wintner_1953} and Hildebrandt \cite{hildebrandt_1970}; see also Heinz' result mentionend in paragraph \ref{par_zerotorsion}. In particular, we infer
\begin{corollary}
Suppose that the immersion $X\colon\overline B\to\mathbb R^5$ admits a normal Coulomb frame\index{normal Coulomb frame}. Then this frame is free of torsion if and only if the curvature vector ${\mathfrak S}_N$ vanishes identically.\index{torsion free normal frame}
\end{corollary}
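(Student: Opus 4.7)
The easy direction is the implication ``torsion free $\Rightarrow{\mathfrak S}_N\equiv 0$.'' Suppose the Coulomb frame $N$ satisfies $T_{\sigma,i}^\vartheta\equiv 0$ for all $\sigma,\vartheta,i.$ Inserting this into the definition
  $$S_{\sigma,ij}^\omega
    =T_{\sigma,i,u^j}^\omega-T_{\sigma,j,u^i}^\omega
     +\sum_{\vartheta=1}^n\big(T_{\sigma,i}^\vartheta T_{\vartheta,j}^\omega-T_{\sigma,j}^\vartheta T_{\vartheta,i}^\omega\big)$$
from paragraph \ref{par_curvnormbundle} yields $S_{\sigma,ij}^\omega\equiv 0,$ whence ${\mathfrak S}_N\equiv 0$ by its very definition as a linear combination of the $S_{\sigma,12}^\vartheta N_\sigma\wedge N_\vartheta.$

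For the converse, I would specialize the nonlinear elliptic system from Proposition \ref{par_tausystem} to the case $n=3$ discussed in paragraph \ref{par_n3allgemein}: the integral functions $\tau^{(12)},\tau^{(13)},\tau^{(23)}$ of the Coulomb frame, assembled into the Grassmann-type vector ${\mathcal T}=(\tau^{(12)},\tau^{(13)},\tau^{(23)}),$ satisfy
  $$\Delta\mathcal T=\mathcal T_u\times\mathcal T_v+\mathcal S\quad\mbox{in}\ B,\quad
    \mathcal T=0\quad\mbox{on}\ \partial B,$$
where $\mathcal S=W\mathfrak S_N.$ Under the hypothesis ${\mathfrak S}_N\equiv 0$ the inhomogeneity $\mathcal S$ vanishes, so $\mathcal T$ solves precisely the homogeneous $H$-surface system
  $$\Delta\mathcal T=\mathcal T_u\times\mathcal T_v\quad\mbox{in}\ B,\quad
    \mathcal T=0\quad\mbox{on}\ \partial B$$
addressed by Wente's uniqueness result (the preceding proposition). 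Consequently $\mathcal T\equiv 0$ in $\overline B.$

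From $\nabla\tau^{(\sigma\vartheta)}=\big(-T_{\sigma,2}^\vartheta,T_{\sigma,1}^\vartheta\big)\equiv 0$ for all pairs $1\le\sigma<\vartheta\le 3$ we read off $T_{\sigma,1}^\vartheta\equiv T_{\sigma,2}^\vartheta\equiv 0$ for those pairs, and the remaining entries vanish by the skew-symmetry $T_{\sigma,i}^\vartheta=-T_{\vartheta,i}^\sigma$ from paragraph \ref{par_torsionen}. Hence all torsion coefficients vanish identically and the frame is free of torsion. The only non-routine ingredient is Wente's uniqueness theorem, whose proof rests on the asymptotic expansions of Hartman-Wintner and Hildebrandt cited above; granting that input, the remainder of the argument is just a direct unpacking of definitions.
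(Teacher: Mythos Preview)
Your proof is correct and follows exactly the route the paper takes: the corollary is deduced directly from Wente's uniqueness result for the homogeneous $H$-surface system, applied to the Grassmann-type vector ${\mathcal T}$ of integral functions in the case $n=3$, after which $\nabla\tau^{(\sigma\vartheta)}\equiv 0$ forces all torsion coefficients to vanish. The paper does not spell out the details beyond ``In particular, we infer,'' so your write-up is in fact more explicit than the original.
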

\noindent
This is a special case of a general result we discuss in paragraph \ref{par_zerotorsion}.
\subsection{An auxiliary function}
\label{par_auxiliaryfunction}
To handle the general case $n>3$ we need some preparations. Let us start with the
\begin{lemma}
Let ${\mathfrak S}_N=0.$ Then the function
  $$\Phi(w)
    ={\mathcal T}_w(w)\cdot{\mathcal T}_w(w)
    =\sum_{1\le\sigma<\vartheta\le n}
     \tau_w^{(\sigma\vartheta)}\tau_w^{(\sigma\vartheta)}\,,$$
with the complex derivative $\phi_{w}=\frac{1}{2}\,(\phi_u+i\phi_v),$ vanishes identically in $\overline B.$
\end{lemma}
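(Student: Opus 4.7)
The plan is to show that $\Phi$ is a holomorphic function on $B$ (a Hopf-type differential) and then pin it down using the boundary data $\mathcal{T}=0$ on $\partial B$. Under the assumption $\mathfrak{S}_N=0$, the system of Proposition \ref{par_tausystem} reduces to $\Delta\mathcal{T}=-\delta\mathcal{T}$ in $B$. Since $\mathcal{T}$ is real-valued, $4\mathcal{T}_{w\bar w}=\Delta\mathcal{T}$, so
\[
\Phi_{\bar w}=2\mathcal{T}_w\cdot\mathcal{T}_{w\bar w}
=\tfrac{1}{2}\,\mathcal{T}_w\cdot\Delta\mathcal{T}
=-\tfrac{1}{2}\sum_{1\le\sigma<\vartheta\le n}\delta\tau^{(\sigma\vartheta)}\,\tau^{(\sigma\vartheta)}_w .
\]

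The core algebraic step is to show that this last expression vanishes pointwise. To this end I would extend the collection $\tau^{(\sigma\vartheta)}$ antisymmetrically to all pairs $(\sigma,\vartheta)$ (setting $\tau^{(\vartheta\sigma)}=-\tau^{(\sigma\vartheta)}$ and $\tau^{(\sigma\sigma)}=0$), rewrite $\det(\nabla\tau^{(\sigma\omega)},\nabla\tau^{(\omega\vartheta)})$ in complex form as $2i(\tau^{(\sigma\omega)}_w\tau^{(\omega\vartheta)}_{\bar w}-\tau^{(\sigma\omega)}_{\bar w}\tau^{(\omega\vartheta)}_w)$, and then compute
\[
\sum_{\sigma,\vartheta}\delta\tau^{(\sigma\vartheta)}\tau^{(\sigma\vartheta)}_w
=2i\sum_{\sigma,\vartheta,\omega}\bigl(\tau^{(\sigma\omega)}_w\tau^{(\omega\vartheta)}_{\bar w}-\tau^{(\sigma\omega)}_{\bar w}\tau^{(\omega\vartheta)}_w\bigr)\tau^{(\sigma\vartheta)}_w .
\]
Each of the two triple sums is invariant under a suitable transposition of summation indices (exchange $\omega\leftrightarrow\vartheta$ in the first, $\sigma\leftrightarrow\omega$ in the second); the antisymmetry of $\tau$ in its index pair produces exactly three sign changes, so each sum equals its own negative and therefore vanishes. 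Consequently $\Phi_{\bar w}\equiv0$, i.e.\ $\Phi$ is holomorphic in $B$.

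Next I exploit the boundary condition. Since $\tau^{(\sigma\vartheta)}=0$ on $\partial B$, the gradient $\nabla\tau^{(\sigma\vartheta)}$ is parallel to the outer unit normal $\nu=(\cos\theta,\sin\theta)$ along $w=e^{i\theta}\in\partial B$. Writing $\nabla\tau^{(\sigma\vartheta)}=\lambda_{\sigma\vartheta}(\cos\theta,\sin\theta)$ with $\lambda_{\sigma\vartheta}\in\mathbb{R}$ gives $\tau^{(\sigma\vartheta)}_w=\tfrac12\lambda_{\sigma\vartheta}e^{-i\theta}=\tfrac12\lambda_{\sigma\vartheta}\bar w$ on $\partial B$, whence
\[
w^{2}\Phi(w)=\tfrac14\sum_{\sigma<\vartheta}\lambda_{\sigma\vartheta}^{2}\in\mathbb{R}
\quad\text{on }\partial B.
\]
The function $w^{2}\Phi(w)$ is holomorphic on $B$ and has real boundary values, so its imaginary part is a harmonic function vanishing on $\partial B$, hence identically zero in $B$; thus $w^{2}\Phi$ is a real holomorphic function on $B$, which forces it to be a real constant. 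But $\Phi$ is continuous at $w=0$ (being polynomial in the smooth first derivatives of $\mathcal{T}$), so this constant must vanish. Therefore $\Phi\equiv 0$ on $\overline{B}$.

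The main obstacle is the algebraic vanishing of $\mathcal{T}_w\cdot\delta\mathcal{T}$; it is a ``cyclic'' identity generalizing the fact that, for $H$-systems in $\mathbb{R}^{3}$, $\mathcal{T}_w\cdot(\mathcal{T}_u\times\mathcal{T}_v)=0$, and its proof here requires careful bookkeeping of the antisymmetric extension. Once this is in hand, the remainder is a standard Hopf-differential argument.
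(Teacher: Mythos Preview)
Your proof is correct and follows essentially the same route as the paper: establish holomorphicity of $\Phi$ via the cyclic/antisymmetric cancellation in $\mathcal T_w\cdot\Delta\mathcal T$, then use the Dirichlet data $\tau^{(\sigma\vartheta)}|_{\partial B}=0$ to see that $w^2\Phi$ is real on $\partial B$, hence constant, hence zero. One small bookkeeping slip: under the transposition $\omega\leftrightarrow\vartheta$ (resp.\ $\sigma\leftrightarrow\omega$) only \emph{one} of the three factors picks up a sign from antisymmetry, not three---but one is still odd, so each triple sum equals its own negative and the conclusion stands.
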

\begin{proof}
We will prove that $\Phi$ solves the boundary value problem
  $$\Phi_{\overline w}=0\quad\mbox{in}\ B,\quad
    \mbox{Im}(w^2\Phi)=0\quad\mbox{on}\ \partial B.$$
Then the analytic function $\Psi(w):=w^2\Phi(w)$ has vanishing imaginary part, and the Cauchy-Riemann equations imply $\Psi(w)\equiv c\in\mathbb R$ so that the assertion follows from $\Psi(0)=0.$
\begin{itemize}
\item[1.]
In order to deduce the stated boundary condition, recall that $\tau^{(\sigma\vartheta)}=0$ on $\partial B.$ Thus all tangential derivatives vanish identically because
\begin{equation*}
  -v\tau_u^{(\sigma\vartheta)}+u\tau_v^{(\sigma\vartheta)}
  =-\mbox{Im}(w\tau_w^{(\sigma\vartheta)})=0\quad\mbox{on}\ \partial B
\end{equation*}
for all $\sigma,\vartheta=1,\ldots,n.$ The boundary condition follows from
  $$\begin{array}{lll}
      \mbox{Im}\,(w^2\Phi)\negthickspace
      & = & \negthickspace\displaystyle
            \mbox{Im}\,\Big(w^2\,\mathcal T_w\cdot\mathcal T_w\Big)
            \,=\,\mbox{Im}\,
                 \bigg\{
                   w^2\sum_{1\le\sigma<\vartheta\le n}\tau_w^{(\sigma\vartheta)}\tau_w^{(\sigma\vartheta)}
                 \bigg\} \\[4ex]
      & = & \negthickspace\displaystyle
            \sum_{1\le\sigma<\vartheta\le n}\!
                 \mbox{Im}\,\Big\{\big(w\tau_w^{(\sigma\vartheta)}\big)\big(w\tau_w^{(\sigma\vartheta)}\big)\Big\}
            \,=\,2\!\sum_{1\le\sigma<\vartheta\le n}\!
             \mbox{Re}\,\big(w\tau_w^{(\sigma\vartheta)}\big)\,
             \mbox{Im}\,\big(w\tau_w^{(\sigma\vartheta)}\big)\,=\,0
    \end{array}$$
on the boundary $\partial B.$
\item[2.]
Now we show the analyticity of $\Phi$  with the aid of $\Delta\tau^{(\sigma\vartheta)}=4\tau_{w\overline w}^{(\sigma\vartheta)}=-\delta\tau^{(\sigma\vartheta)}:$ Interchanging indices cyclically yields
  $$\begin{array}{lll}
       2\Phi_{\overline w}\negthickspace
      & = & \negthickspace\displaystyle
            4\,\mathcal T_w\cdot\mathcal T_{w\overline w}
            \,=\,4\sum_{1\le\sigma<\vartheta\le n}
                 \tau_w^{(\sigma\vartheta)}\tau_{w\overline w}^{(\sigma\vartheta)}
            \,=\,\frac{1}{2}\,\sum_{\sigma,\vartheta=1}^n
	         \tau_w^{(\sigma\vartheta)}\Delta\tau^{(\sigma\vartheta)}\\[4ex]
      & = & \negthickspace\displaystyle
            \frac{1}{4}\,\sum_{\sigma,\vartheta,\omega=1}^n
            \Big\{
              \tau_v^{(\sigma\omega)}\tau_u^{(\omega\vartheta)}\tau_u^{(\sigma\vartheta)}
              -\tau_u^{(\sigma\omega)}\tau_v^{(\omega\vartheta)}\tau_u^{(\sigma\vartheta)}
            \Big\} \\[4ex]
      &   & \negthickspace\displaystyle
            -\,\frac{i}{4}\,\sum_{\sigma,\vartheta,\omega=1}^n
                \Big\{
                  \tau_v^{(\sigma\omega)}\tau_u^{(\omega\vartheta)}\tau_v^{(\sigma\vartheta)}
                  -\tau_u^{(\sigma\omega)}\tau_v^{(\omega\vartheta)}\tau_v^{(\sigma\vartheta)}
                \Big\} \\[4ex]
      & = & \negthickspace\displaystyle
            \frac{1}{4}\,\sum_{\sigma,\vartheta,\omega=1}^n
            \Big\{
              \tau_v^{(\omega\vartheta)}\tau_u^{(\vartheta\sigma)}\tau_u^{(\omega\sigma)}
              -\tau_u^{(\sigma\omega)}\tau_v^{(\omega\vartheta)}\tau_u^{(\sigma\vartheta)}
            \Big\} \\[4ex]
      &   & \negthickspace\displaystyle
            -\,\frac{i}{4}\,\sum_{\sigma,\vartheta,\omega=1}^n
                \Big\{
                  \tau_v^{(\vartheta\sigma)}\tau_u^{(\sigma\omega)}\tau_v^{(\vartheta\omega)}
                  -\tau_u^{(\sigma\omega)}\tau_v^{(\omega\vartheta)}\tau_v^{(\sigma\vartheta)}
                \Big\},
    \end{array}$$
which shows $\Phi_{\overline w}=0.$ The proof is complete.
\end{itemize}
\end{proof}
\subsection{The case $n>3$}
\label{par_zerotorsion}
The main result of this section is the
\begin{theorem}
Suppose that the immersion $X\colon\overline B\to\mathbb R^{n+2}$ admits a normal Coulomb frame $N.$\index{normal Coulomb frame} Then this frame is free of torsion if and only if the curvature vector ${\mathfrak S}_N$ of its normal bundle vanishes identically.\index{curvature vector of normal bundle}\index{torsion free normal frame}
\end{theorem}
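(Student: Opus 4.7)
The implication from ``torsion free'' to ${\mathfrak S}_N\equiv 0$ is immediate from the definition of the curvature tensor
  $$S_{\sigma,ij}^\omega
    =T_{\sigma,i,u^j}^\omega-T_{\sigma,j,u^i}^\omega
     +\sum_{\vartheta=1}^n
      \big(T_{\sigma,i}^\vartheta T_{\vartheta,j}^\omega-T_{\sigma,j}^\vartheta T_{\vartheta,i}^\omega\big),$$
since every term vanishes if all torsion coefficients vanish. The substantial content is the converse, and I would attack it as follows.

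First I would feed the hypothesis ${\mathfrak S}_N\equiv 0$, i.e.\ $S_{\sigma,12}^\vartheta\equiv 0$ for every admissible pair $(\sigma,\vartheta)$, into the nonlinear elliptic system of Proposition~\ref{par_tausystem} for the integral functions associated to the normal Coulomb frame. This reduces the system to the homogeneous Dirichlet problem
  $$\Delta\tau^{(\sigma\vartheta)}=-\,\delta\tau^{(\sigma\vartheta)}\ \ \mbox{in}\ B,\qquad
    \tau^{(\sigma\vartheta)}=0\ \ \mbox{on}\ \partial B,$$
equivalently the Grassmann-vector equation $\Delta\mathcal T=-\delta\mathcal T$ in $B$, $\mathcal T=0$ on $\partial B$, with the quadratic-gradient bound $|\Delta\mathcal T|\le\frac{\sqrt{n-2}}{2}|\nabla\mathcal T|^2$ from paragraph~\ref{par_quadraticgrowth}. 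Thus the problem is reduced to the uniqueness assertion $\mathcal T\equiv 0$ for this homogeneous system.

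The next step is to exploit the auxiliary holomorphic quantity $\Phi=\mathcal T_w\cdot\mathcal T_w$ from paragraph~\ref{par_auxiliaryfunction}. Its lemma, valid precisely when ${\mathfrak S}_N\equiv 0$, yields $\Phi\equiv 0$ in $\overline B$, and unpacking the real and imaginary parts gives the conformality-type relations
  $$\sum_{1\le\sigma<\vartheta\le n}\!\big[(\tau_u^{(\sigma\vartheta)})^2-(\tau_v^{(\sigma\vartheta)})^2\big]=0,\qquad
    \sum_{1\le\sigma<\vartheta\le n}\!\tau_u^{(\sigma\vartheta)}\tau_v^{(\sigma\vartheta)}=0,$$
i.e.\ $|\mathcal T_u|=|\mathcal T_v|$ and $\mathcal T_u\cdot\mathcal T_v=0$. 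This identifies $\mathcal T$ as a conformally parametrized weak solution of the $\delta\mathcal T$-system with vanishing boundary trace, exactly the analogue of the setting in which Wente's argument settles the case $n=3$ (Proposition in paragraph~\ref{par_torsionfreen3}).

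Finally, I would invoke the Hartman--Wintner / Heinz / Hildebrandt machinery referenced in the excerpt to conclude $\mathcal T\equiv 0$. The conformality $\Phi\equiv 0$ forces the Hopf-type differential of $\mathcal T$ to be identically zero, so at any interior or boundary zero of $\mathcal T$ the admissible asymptotic expansions collapse to the trivial one; combined with the Dirichlet condition on all of $\partial B$ this propagates to $\mathcal T\equiv 0$ throughout $\overline B$. From $\nabla\tau^{(\sigma\vartheta)}=(-T_{\sigma,2}^\vartheta,T_{\sigma,1}^\vartheta)$ we then read off $T_{\sigma,i}^\vartheta\equiv 0$ for all $i$ and all $\sigma,\vartheta$, i.e.\ the Coulomb frame is torsion free. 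The delicate step is the last one: upgrading the two pieces of information ``$\Phi\equiv 0$'' and ``$\mathcal T|_{\partial B}=0$'' into global vanishing, since for $n>3$ the direct Wente uniqueness no longer applies and one must replace the vector product by the quadratic Grassmann nonlinearity $\delta\mathcal T$ while keeping sharp control of the constant $\tfrac{\sqrt{n-2}}{2}$ in the growth bound.
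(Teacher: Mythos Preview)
Your proposal is correct and follows essentially the same route as the paper. The paper makes the final step slightly more concrete: from the conformality relations, every boundary point is a branch point of $\mathcal T$ (the tangential derivative vanishes by the Dirichlet condition, and conformality then forces $\nabla\mathcal T=0$ there), while Heinz's asymptotic expansion result forces boundary branch points to be isolated unless $\mathcal T\equiv\mathrm{const}$---this is the contradiction, and no sharp control of the constant $\tfrac{\sqrt{n-2}}{2}$ is actually needed.
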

\goodbreak\noindent
\begin{proof}
Let $N$ be a normal Coulomb frame. If it is free of torsion then ${\mathfrak S}_N$ vanishes identically. So assume conversely ${\mathfrak S}_N\equiv 0$ and let us show that $N$ is free of torsion. Consider for this aim the Grassmann-type vector ${\mathcal T}$ from above. Because ${\mathcal T}_w\cdot{\mathcal T}_w\equiv 0$ holds true by the previous lemma, we find
  $$|{\mathcal T}_u|=|{\mathcal T}_v|,\quad
    {\mathcal T}_u\cdot{\mathcal T}_v=0
    \quad\mbox{in}\ B,$$
i.e. ${\mathcal T}$ is a conformally parametrized solution of
  $$\Delta{\mathcal T}=-\,\delta{\mathcal T}\quad\mbox{in}\ B,\quad
    {\mathcal T}=0\quad\mbox{on}\ \partial B.$$
According to the quadratic growth condition $|\delta\mathcal T|\le c|\nabla\mathcal T|^2$ (see paragraph \ref{par_grassmanngrowth}), the arguments in Heinz \cite{heinz_1970} apply\footnote{Let $X\colon\in C^2(\overline B,R^n)$ solve the elliptic system $\triangle X=Hf(X,X_u,X_v)$ together with $X_u^2=X_v^2,$ $X_u\cdot X_v=0,$ where $|f(x,p,q)|\le\mu(|x|)(|p|^2+|q|^2).$ Then if $X_u(w_0)=0$ for some $w_0\in\partial B,$ but $X_u\not_\equiv 0,$ the following asymptotic expansion $X_w(w_0)=a(w-w_0)^\ell-o(|w-w_0|^\ell)$ holds for $w\to w_0,$ where $a\in\mathbb C$ with $a_1^2+\ldots a_n^2=0,$ and $\ell\in\mathbb N\setminus\{0\}.$}: Assume ${\mathcal T}\not\equiv\mbox{const}$ in $B$. Then the asymptotic expansion stated in the Satz of Heinz \cite{heinz_1970} implies that boundary branch points $w_0\in\partial B$ with ${\mathcal T}_u(w_0)={\mathcal T}_v(w_0)=0$ are isolated. But this contradicts our boundary condition ${\mathcal T}|_{\partial B}=0!$ Thus ${\mathcal T}\equiv\mbox{const}=0$ which implies $\tau^{(\sigma\vartheta)}\equiv 0$ for all $\sigma,\vartheta=1,\ldots,n,$ and the normal Coulomb frame is free of torsion. The theorem is proved.
\end{proof}
\vspace*{6ex}
\begin{center}
\rule{35ex}{0.1ex}
\end{center}
\vspace*{6ex}
\section{Non-flat normal bundles}
\label{section_nonflat}
We establish upper bounds for the torsion coefficients of normal Coulomb frames.
\subsection{An upper bound via Wente's $L^\infty$-estimate}
\label{par_wentebound}
Our first result is
\begin{proposition}
Let $N$ be a normal Coulomb frame for the conformally parametrized immersion $X.$ Then the Grassmann-type vector ${\mathcal T}$ satisfies\index{Grassmann type vectors}
  $$\|\mathcal T\|_{L^\infty(B)}
    =\sup_{w\in B}\sqrt{\sum_{1\le\sigma<\vartheta\le n}|\tau^{(\sigma\vartheta)}(w)|^2}
    \le\frac{n-2}{2\pi}\,\|\nabla\mathcal T\|_{L^2(B)}^2
       +\frac{n(n-1)}{8}\,\|{\mathfrak S}_NW\|_{L^\infty(B)}$$
with the Lebesgue $L^2$-norm $\|\cdot\|_{L^2(B)}$ etc.
\end{proposition}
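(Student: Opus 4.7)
The plan is to exploit linearity of the Laplacian. By the elliptic system of paragraph~\ref{par_nonlinearellipticsystem}, the Grassmann-type vector $\mathcal{T}$ satisfies $\Delta\mathcal{T}=-\delta\mathcal{T}+\mathcal{S}$ in $B$ with $\mathcal{T}|_{\partial B}=0$, where $\mathcal{S}=W\mathfrak{S}_N$. Split $\mathcal{T}=\mathcal{T}_1+\mathcal{T}_2$ component-wise by
\begin{equation*}
\Delta\tau_1^{(\sigma\vartheta)} = -\delta\tau^{(\sigma\vartheta)},\qquad
\Delta\tau_2^{(\sigma\vartheta)} = S_{\sigma,12}^\vartheta \quad\text{in } B,
\end{equation*}
both with zero boundary data. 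Then I would bound $|\mathcal{T}_2|$ by the maximum-principle term and $|\mathcal{T}_1|$ by the Wente term independently.

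For $\mathcal{T}_2$ I repeat the comparison argument from paragraph~\ref{par_maxprinciple}: the barrier $\psi(w)=(|w|^2-1)/4$ solves $\Delta\psi=1$, $\psi|_{\partial B}=0$, with $|\psi|\le\tfrac14$, giving $|\tau_2^{(\sigma\vartheta)}(w)|\le\tfrac14\|S_{\sigma,12}^\vartheta\|_{L^\infty(B)}\le\tfrac14\|\mathfrak{S}_N W\|_{L^\infty(B)}$, since $|S_{\sigma,12}^\vartheta|\le|\mathcal{S}|$ for each component. The triangle inequality $|\mathcal{T}_2|\le\sum_{\sigma<\vartheta}|\tau_2^{(\sigma\vartheta)}|$ summed over the $\binom{n}{2}=n(n-1)/2$ Grassmann indices then produces exactly the second summand $\frac{n(n-1)}{8}\|\mathfrak{S}_N W\|_{L^\infty(B)}$.

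For $\mathcal{T}_1$ I invoke the classical Wente $L^\infty$-estimate: if $\Delta\phi=\det(\nabla a,\nabla b)$ in $B$ with $\phi|_{\partial B}=0$, then $\|\phi\|_{L^\infty(B)}\le\tfrac{1}{2\pi}\|\nabla a\|_{L^2(B)}\|\nabla b\|_{L^2(B)}$. Applied term by term to
\begin{equation*}
-\delta\tau^{(\sigma\vartheta)} = -\sum_{\omega=1}^n \det\bigl(\nabla\tau^{(\sigma\omega)},\nabla\tau^{(\omega\vartheta)}\bigr),
\end{equation*}
this yields $|\tau_1^{(\sigma\vartheta)}(w)|\le\tfrac{1}{2\pi}\sum_\omega\|\nabla\tau^{(\sigma\omega)}\|_{L^2(B)}\|\nabla\tau^{(\omega\vartheta)}\|_{L^2(B)}$, the summands for $\omega\in\{\sigma,\vartheta\}$ vanishing by the skew-symmetry $\tau^{(\omega\omega)}=0$.

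The main obstacle is the subsequent combinatorial step that must produce the sharp factor $n-2$. After $|\mathcal{T}_1|\le\sum_{\sigma<\vartheta}|\tau_1^{(\sigma\vartheta)}|$ and swapping the $(\sigma,\vartheta)$- and $\omega$-summations, fix $\omega$ and set $c_\alpha:=\|\nabla\tau^{(\alpha\omega)}\|_{L^2(B)}$; by skew-symmetry $c_\omega=0$, so at most $n-1$ entries are nonzero. The identity $2\sum_{\sigma<\vartheta}c_\sigma c_\vartheta=(\sum_\alpha c_\alpha)^2-\sum_\alpha c_\alpha^2$ combined with Cauchy--Schwarz $(\sum_\alpha c_\alpha)^2\le(n-1)\sum_\alpha c_\alpha^2$ yields $\sum_{\sigma<\vartheta}c_\sigma c_\vartheta\le\tfrac{n-2}{2}\sum_\alpha c_\alpha^2$. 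Summing over $\omega$ and invoking the skew-symmetry identity $\sum_{\omega,\alpha}\|\nabla\tau^{(\alpha\omega)}\|_{L^2(B)}^2=2\|\nabla\mathcal{T}\|_{L^2(B)}^2$ delivers $|\mathcal{T}_1|\le\tfrac{n-2}{2\pi}\|\nabla\mathcal{T}\|_{L^2(B)}^2$. The proposition then follows from $|\mathcal{T}|\le|\mathcal{T}_1|+|\mathcal{T}_2|$.
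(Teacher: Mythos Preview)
Your proof is correct and follows essentially the same strategy as the paper: the same splitting $\mathcal{T}=\mathcal{T}_1+\mathcal{T}_2$, Wente's $L^\infty$-estimate applied to each Jacobian term, and the Green-function bound $\int_B|\Phi(\cdot\,;w)|\le\tfrac14$ for the $\mathcal{S}$-part. The only cosmetic differences are that the paper uses Wente in the form $\tfrac{1}{4\pi}(\|\nabla a\|_{L^2}^2+\|\nabla b\|_{L^2}^2)$ and obtains the factor $n-2$ by explicit reshuffling of the triple sum, whereas your product form $\tfrac{1}{2\pi}\|\nabla a\|_{L^2}\|\nabla b\|_{L^2}$ together with the identity $2\sum_{\sigma<\vartheta}c_\sigma c_\vartheta=(\sum c_\alpha)^2-\sum c_\alpha^2$ and Cauchy--Schwarz is a slightly cleaner packaging of the same count.
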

\begin{proof}
\begin{itemize}
\item[1.]
For $1\le \sigma<\vartheta\le n,$ $\omega\in\{1,\ldots,n\}$ with $\omega\not\in\{\sigma,\vartheta\},$ and given integral functions $\tau^{(\sigma\vartheta)}$ we define the functions $y^{(\sigma\vartheta\omega)}$ as the unique solutions of
  $$\Delta y^{(\sigma\vartheta\omega)}
    =-\det\big(\nabla\tau^{(\sigma\omega)},\nabla\tau^{(\omega\vartheta)}\big)
    \quad\mbox{in}\ B,\quad
    y^{(\sigma\vartheta\omega)}=0\quad\mbox{on}\ \partial B.$$
Wente's $L^\infty$-estimate\index{Wente's $L^\infty$-estimate} from Wente \cite{wente_1980} together with Topping \cite{topping_1997} then yields the {\it optimal inequalities}\footnote{In 1980 H. Wente proved: Let $\Phi\in C^0(\overline B)\cap H^{1,2}(B)$ be a solution of $\Delta\Phi=-(f_ug_v-f_vg_u)$ in $B,$ $\Phi=0$ on $\partial\Omega,$ with $f,g\in H^{1,2}(B),$ then $\|\Phi\|_{L^\infty(B)}+\|\nabla\Phi\|_{L^2(B)}\le C\|\nabla f\|_{L^2(B)}\|\nabla g\|_{L^2(B)}\,.$ Following Topping \cite{topping_1997} we may set $\frac{C}{2}=\frac{1}{4\pi}$ after applying H\"older's inequality. See also section 22 below.} (see also section 22.1)
  $$\|y^{(\sigma\vartheta\omega)}\|_{L^\infty(B)}
    \le\frac 1{4\pi}\Big(\|\nabla\tau^{(\sigma\omega)}\|_{L^2(B)}^2+\|\nabla\tau^{(\omega\vartheta)}\|_{L^2(B)}^2\Big),
    \quad 1\le\sigma<\vartheta\le n,\ \omega\not\in\{\sigma,\vartheta\}.$$
In addition, we introduce the Grassmann-type vector $\mathcal Z=(z^{(\sigma\vartheta)})_{1\le\sigma<\vartheta\le n}$ as the unique solution of
  $$\Delta \mathcal Z=\mathcal S\quad\mbox{in}\ B,\quad
    \mathcal Z=0\quad\mbox{on}\ \partial B.$$
with ${\mathcal S}=(S_{1,12}^2,S_{1,12}^3,\ldots)$ and ${\mathcal S}=W{\mathfrak S}_N.$ Introduce new indices to write ${\mathcal Z}=(Z_1,\ldots,Z_N)$ and ${\mathcal S}=(S_1,\ldots,S_N)$ for the moment. We use Poisson's representation formula to estimate as follows
  $$\begin{array}{lll}
      |{\mathcal Z}(w)|\negthickspace
      & \le & \negthickspace\displaystyle
              \sum_{i=1}^N|Z_i|
              \,=\,\sum_{i=1}^N
                   \left|\,
                     \int\hspace{-0.25cm}\int\limits_{\hspace{-0.3cm}B}
                     \phi(\zeta;w)S_i(\zeta)\,d\xi d\eta\,
                   \right|
              \,\le\,\sum_{i=1}^N
                     \int\hspace{-0.25cm}\int\limits_{\hspace{-0.3cm}B}
                     |\phi(\zeta;w)||S_i(\zeta)|\,d\xi d\eta\, \\[5ex]
      & \le & \negthickspace\displaystyle
              \sqrt{N}\,
              \int\hspace{-0.25cm}\int\limits_{\hspace{-0.3cm}B}
              |\phi(\zeta;w)|\sqrt{\sum_{i=1}^N|S_i(\zeta)|^2}\,d\xi d\eta
              \,=\,\sqrt{N}\,
                   \int\hspace{-0.25cm}\int\limits_{\hspace{-0.3cm}B}
                   |\phi(\zeta;w)||{\mathcal S}(\zeta)|\,d\xi d\eta
    \end{array}$$
with the Green function $\phi(\zeta;w)$ for $\Delta$ in $B;$ $\zeta=(\xi,\eta).$ From Lecture II we already know
  $$\int\hspace{-1.3ex}\int\limits_{\hspace{-1.8ex}B}
    |\phi(\zeta;w)|\,d\xi d\eta
    =\frac{1-|w|^2}{4}
     \le\frac{1}{4}\,,$$
which enables us to continue to estimate $|{\mathcal Z}(w)|$ to get
  $$\|\mathcal Z\|_{L^\infty(B)}
    \le\sqrt{N}\,\|{\mathcal S}\|_{L^\infty(B)}
       \int\hspace{-0.25cm}\int\limits_{\hspace{-0.3cm}B}
      |\phi(\zeta;w)|\,d\xi d\eta
    \le\frac{\sqrt N}4\|\mathcal S\|_{L^\infty(B)}\,.$$
\item[2.]
Now recall that
  $$\Delta\tau^{(\sigma\vartheta)}
    =-\sum_{\omega=1}^n
      \mbox{det}\,(\nabla\tau^{(\sigma\omega)},\nabla\tau^{(\omega\vartheta)})
      +S_{\sigma,12}^\vartheta
    =\sum_{\omega=1}^n
     \Delta y^{(\sigma\vartheta\omega)}
     +\Delta z^{(\sigma\vartheta)}\,.$$
Taking account of the unique solvability of the above mentioned Dirichlet problems with vanishing boundary data, the maximum principle yields
  $$\tau^{(\sigma\vartheta)}
    =\sum_{\omega\not\in\{\sigma,\vartheta\}}y^{(\sigma\vartheta\omega)}
     +z^{(\sigma\vartheta)},
     \quad 1\le\sigma<\vartheta\le n.$$
Now we collect all the estimates obtained and get (we rearrange the summations and redefine some indices of the sums)
  $$\begin{array}{rcl}
      \|\mathcal T\|_{L^\infty(B)}\negthickspace
      & \le & \negthickspace\displaystyle
              \sum_{1\le\sigma<\vartheta\le n}
              \sum_{\omega\not\in\{\sigma,\vartheta\}}
              \|y^{(\sigma\vartheta\omega)}\|_{L^\infty(B)}
              +\sum_{1\le\sigma<\vartheta\le n}
               \|z^{(\sigma\vartheta)}\|_{L^\infty(B)} \\[5ex]
      & \le & \negthickspace\displaystyle
              \sum_{1\le\sigma<\vartheta\le n}
              \sum_{\omega\not\in\{\sigma,\vartheta\}}
              \|y^{(\sigma\vartheta\omega)}\|_{L^\infty(B)}
              +\sqrt{N}\,\|\mathcal Z\|_{L^\infty(B)}\\[5ex]
      & \le & \negthickspace\displaystyle
              \frac1{4\pi}
              \sum_{1\le\sigma<\vartheta\le n}
              \sum_{\omega\not\in\{\sigma,\vartheta\}}
              \Big(
                \|\nabla\tau^{(\sigma\omega)}\|_{L^2(B)}^2+\|\nabla\tau^{(\omega\vartheta)}\|_{L^2(B)}^2
              \Big)
              +\frac{N}{4}\,\|\mathcal S\|_{L^\infty(B)}\\[5ex]
      & = & \negthickspace\displaystyle
              \frac1{4\pi}\,
              \Bigg\{
                \sum_{1\le\omega<\sigma<\vartheta\le n}
                \Big(
                  \|\nabla\tau^{(\omega\sigma)}\|_{L^2(B)}^2+\|\nabla\tau^{(\omega\vartheta)}\|_{L^2(B)}^2
                \Big) \\[5ex]
      &   & \negthickspace\displaystyle
            \hspace{6ex}
            +\,\sum_{1\le\sigma<\omega<\vartheta\le n}
                 \Big(
                   \|\nabla\tau^{(\sigma\omega)}\|_{L^2(B)}^2+\|\nabla\tau^{(\omega\vartheta)}\|_{L^2(B)}^2
                 \Big)\\[4ex]
      &     & \negthickspace\displaystyle
              \hspace*{6ex}
                +\sum_{1\le\sigma<\vartheta<\omega\le n}
                 \Big(
                   \|\nabla\tau^{(\sigma\omega)}\|_{L^2(B)}^2+\|\nabla\tau^{(\vartheta\omega)}\|_{L^2(B)}^2
                 \Big)
              \Bigg\}
              +\frac{N}{4}\,\|\mathcal S\|_{L^\infty(B)} \\[5ex]
      &  =  & \negthickspace\displaystyle
              \ldots
    \end{array}$$
  $$\begin{array}{rcl}
      \ldots\negthickspace
      &  =  & \negthickspace\displaystyle
              \frac1{4\pi}\,
              \Bigg\{
                \sum_{1\le \sigma<\vartheta<\omega\le n}
                \|\nabla\tau^{(\sigma\vartheta)}\|_{L^2(B)}^2
                +\sum_{1\le\sigma<\omega<\vartheta\le n}
                 \|\nabla\tau^{(\sigma\vartheta)}\|_{L^2(B)}^2
                +\sum_{1\le\sigma<\vartheta<\omega\le n}
                 \|\nabla\tau^{(\sigma\vartheta)}\|_{L^2(B)}^2\\[5ex]
      &     & \negthickspace\displaystyle
              \hspace*{5.2ex}
                +\!\sum_{1\le\omega<\sigma<\vartheta\le n}\!
                 \|\nabla\tau^{(\sigma\vartheta)}\|_{L^2(B)}^2
                +\!\sum_{1\le\sigma<\omega<\vartheta\le n}\!
                 \|\nabla\tau^{(\sigma\vartheta)}\|_{L^2(B)}^2
                +\!\sum_{1\le\omega<\sigma<\vartheta\le n}\!
                 \|\nabla\tau^{(\sigma\vartheta)}\|_{L^2(B)}^2
              \Bigg\} \\[5ex]
      &     & \negthickspace\displaystyle
              +\,\frac{N}{4}\,\|\mathcal S\|_{L^\infty(B)}\\[4ex]
      &  =  & \negthickspace\displaystyle
              \frac1{2\pi}
              \sum_{1\le\sigma<\vartheta\le n}
              \sum_{\omega\not\in\{\sigma,\vartheta\}}
              \|\nabla\tau^{(\sigma\vartheta)}\|_{L^2(B)}^2
              +\frac{N}{4}\,\|\mathcal S\|_{L^\infty(B)}\\[5ex]
      &  =  & \negthickspace\displaystyle
              \frac{n-2}{2\pi}\,\|\nabla\mathcal T\|_{L^2(B)}^2
              +\frac{1}{4}\,\frac{n(n-1)}{2}\,\|\mathcal S\|_{L^\infty(B)}\,.
    \end{array}$$
This proves the statement.
\end{itemize}
\end{proof}
\subsection{An upper bound via Poincar\'e's inequality}
\label{par_poincarebound}
For large codimension $n$, the above estimate is somewhat unsatisfactory. Alternatively, we show
  $$|z^{(\sigma\vartheta)}(w)|\le\sqrt{\frac{2}{\pi}}\,\|S_{\sigma,12}^\vartheta\|_{L^2(B)}
    \quad\mbox{in}\ B\quad\mbox{for all}\ 1\le\sigma<\vartheta\le n$$
for the Grassmann-type vector ${\mathcal Z}=(z^{(12)},z^{(13)},\ldots)$ from the previous paragraph. Then we would have
  $$\|\mathcal Z\|_{L^\infty(B)}
    =\sup_B\sqrt{\sum_{1\le\sigma<\vartheta\le n}|z^{(\sigma\vartheta)}(w)|^2}
     \le\sqrt{\frac2\pi}\,\sqrt{\sum_{1\le\sigma<\vartheta\le n}\|S_{\sigma,12}^\vartheta\|_{L^2(B)}^2}
    =\sqrt{\frac2\pi}\,\|\mathcal S\|_{L^2(B)}
     \le\sqrt2\,\|\mathcal S\|_{L^\infty(B)}$$
which finally leads us to a smaller upper bound for $\|{\mathcal T}\|_{L^\infty(B)}$ at least for codimensions $n=2,3.$ In order to prove the stated inequality we start with the Poisson representation formula
  $$z^{(\sigma\vartheta)}
    =\int\hspace*{-1.3ex}\int\limits_{\hspace{-1.8ex}B}
     \phi(\zeta;w)S_{\sigma,12}^\vartheta(\zeta)\,d\xi d\eta,\quad
    z^{(\sigma\vartheta)}=0\quad\mbox{on}\ \partial B.$$
Applying the H\"older and the Poincar\'e inequality gives\index{Poincar\'e inequality}
\begin{equation}\label{6.24}
  |z^{(\sigma\vartheta)}(w)|
  \le\|\phi(\cdot\,;w)\|_{L^2(B)}\|S_{\sigma,12}^\vartheta\|_{L^2(B)}
  \le\frac1{2\sqrt\pi}\|\nabla_\zeta\phi(\cdot\,;w)\|_{L^1(B)}\|S_{\sigma,12}^\vartheta\|_{L^2(B)}\,.
\end{equation}
For the optimal constant $\frac1{2\sqrt\pi}$ in the Sobolev inequality we refer to Gilbarg and Trudinger \cite{gilbarg_trudinger_1983}, paragraph 7.7 and the references therein. Furthermore, $\phi=\phi(\zeta;w):=\frac1{2\pi}\log|\frac{\zeta-w}{1-\overline w\zeta}|$ denotes again Green's function for the Laplace operator $\Delta$ in $B$ which satisfies $\phi(\cdot\,;w)\in H^1_0(B)$ as well as
  $$2\phi_\zeta(\zeta;w)
    \equiv\phi_\xi(\zeta;w)-i\phi_\eta(\zeta;w)
    =\frac{1}{2\pi}\,
     \overline{
       \left(
         \frac{\zeta-w}{|\zeta-w|^2}+w\frac{1-\overline w\zeta}{|1-\overline w\zeta|^2}
         \right)},
    \quad w\not=\zeta.$$
A straightforward calculation shows
\begin{equation*}
  |\nabla_\zeta\phi(\zeta;w)|
  \equiv 2|\phi_\zeta(\zeta;w)|
  =\frac1{2\pi}\frac{1-|w|^2}{|\zeta-w|\,|1-\overline w\zeta|}\le\frac1{2\pi}\frac{1+|w|}{|\zeta-w|}
  \le\frac1\pi\frac1{|\zeta-w|},\quad\zeta\not=w.
\end{equation*}
And since the right hand side in the inequality
\begin{equation*}
  \int\hspace{-0.25cm}\int\limits_{\hspace{-0.3cm}B}
  |\nabla_\zeta\phi(\zeta;w)|\,d\xi d\eta
  \le\frac{1}{\pi}\ \,
     \int\hspace*{-0.25cm}\int\limits_{\hspace*{-0.4cm}B_\delta(w)}
     \frac1{|\zeta-w|}\,d\xi\,d\eta
  +\frac{1}{\pi}\ \ \,
   \int\hspace*{-0.4cm}\int\limits_{\hspace*{-0.45cm}B\setminus B_\delta(w)}
   \frac1{|\zeta-w|}\,d\xi\,d\eta
  \le 2\delta+\frac{1}{\delta}
\end{equation*}
becomes minimal for $\delta=\frac1{\sqrt2}$, we arrive at
  $$|z^{(\sigma\vartheta)}(w)|
    \le\frac{1}{2\sqrt{\pi}}
       \left(
         2\cdot\frac{1}{\sqrt{2}}+\sqrt{2}
       \right)\|S_{\sigma,12}^\vartheta\|_{L^2(B)}
    =\frac{\sqrt{2}}{\sqrt{\pi}}\,\|S_{\sigma,12}^\vartheta\|_{L^2(B)}$$
proving the stated inequality. Rearranging gives
  $$|{\mathcal Z}(w)|
    \le\frac{\sqrt{2}}{\sqrt{\pi}}\cdot\sqrt{\pi}\cdot\|S_{\sigma,12}^\vartheta\|_{L^\infty(B)}
    \le\sqrt{2}\,\|{\mathcal S}\|_{L^\infty(B)}\,.$$
Thus we have
\begin{proposition}
Let $N$ be a normal Coulomb frame for the conformally parametrized immersion $X.$ Then the Grassmann-type vector ${\mathcal T}$ satisfies
  $$\|\mathcal T\|_{L^\infty(B)}
    \le\frac{n-2}{2\pi}\|\nabla \mathcal T\|_{L^2(B)}^2+\sqrt2\,\|\mathcal S\|_{L^\infty(B)}\,.$$
\end{proposition}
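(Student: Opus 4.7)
The plan is to keep the decomposition $\tau^{(\sigma\vartheta)}=\sum_{\omega\notin\{\sigma,\vartheta\}}y^{(\sigma\vartheta\omega)}+z^{(\sigma\vartheta)}$ from the proof of the previous proposition and to re-estimate only the linear Poisson part $\mathcal Z=(z^{(\sigma\vartheta)})_{1\le\sigma<\vartheta\le n}$ in a sharper way. The Jacobian contributions $y^{(\sigma\vartheta\omega)}$ are already optimally controlled through Wente's $L^\infty$-estimate, and the rearrangement of the double sum over index triples $(\sigma,\vartheta,\omega)$ carried out in paragraph \ref{par_wentebound} again produces the term $\frac{n-2}{2\pi}\|\nabla\mathcal T\|_{L^2(B)}^2$. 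Nothing has to be changed there.

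For the new bound on $\mathcal Z$ I would proceed componentwise. Starting from the Poisson representation $z^{(\sigma\vartheta)}(w)=\iint_B\phi(\zeta;w)S^{\vartheta}_{\sigma,12}(\zeta)\,d\xi d\eta$ with the Green kernel $\phi$ of $-\Delta$ on $B$, apply H\"older's inequality to separate $\phi(\cdot\,;w)$ from the source in $L^2$, then invoke the Sobolev-type inequality $\|\phi(\cdot\,;w)\|_{L^2(B)}\le\frac{1}{2\sqrt\pi}\|\nabla_\zeta\phi(\cdot\,;w)\|_{L^1(B)}$ with the optimal Sobolev constant (cf.\ Gilbarg and Trudinger \cite{gilbarg_trudinger_1983}, \S 7.7). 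This reduces the task to estimating $\|\nabla_\zeta\phi(\cdot\,;w)\|_{L^1(B)}$.

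To control this $L^1$-norm I would use the explicit pointwise bound $|\nabla_\zeta\phi(\zeta;w)|\le\frac{1}{\pi}\frac{1}{|\zeta-w|}$, which follows directly from the closed form of $\phi$, split $B$ into a small disc $B_\delta(w)$ and its complement, bound each piece by polar coordinates, and optimize over $\delta>0$. The choice $\delta=1/\sqrt 2$ yields $\|\nabla_\zeta\phi(\cdot\,;w)\|_{L^1(B)}\le 2\sqrt 2$, hence $|z^{(\sigma\vartheta)}(w)|\le\sqrt{2/\pi}\,\|S^{\vartheta}_{\sigma,12}\|_{L^2(B)}$. Taking Euclidean length and using the trivial estimate $\|\mathcal S\|_{L^2(B)}\le\sqrt\pi\,\|\mathcal S\|_{L^\infty(B)}$ on the unit disc then gives $\|\mathcal Z\|_{L^\infty(B)}\le\sqrt 2\,\|\mathcal S\|_{L^\infty(B)}$, which combined with the unchanged Wente piece yields the claim.

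The main obstacle is purely technical: tracking the sharpest available constants in the Sobolev embedding and in the $L^1$-control of the Green kernel, and carrying out the radius-splitting optimization correctly so that the resulting prefactor really is $\sqrt 2$ and not something larger. No additional analytic input is required beyond the material of paragraph \ref{par_wentebound} and a careful use of Poincar\'e's inequality applied to $\phi(\cdot\,;w)\in H^1_0(B)$.
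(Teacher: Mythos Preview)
Your proposal is correct and follows essentially the same route as the paper: keep the decomposition and the Wente contribution from paragraph \ref{par_wentebound} unchanged, and re-estimate the Poisson part $\mathcal Z$ componentwise via H\"older, the sharp Sobolev--Poincar\'e inequality $\|\phi(\cdot\,;w)\|_{L^2(B)}\le\frac{1}{2\sqrt\pi}\|\nabla_\zeta\phi(\cdot\,;w)\|_{L^1(B)}$, the explicit bound $|\nabla_\zeta\phi(\zeta;w)|\le\frac{1}{\pi|\zeta-w|}$, and the radial splitting optimized at $\delta=1/\sqrt 2$. Every step you outline, including the constants, matches the paper's argument.
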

\noindent
This provides us a better estimate at least if $n=2,3.$
\subsection{An estimate for the torsion coefficients}
\label{par_torsionestimate}
We are now in the position to prove our main result of this section.
\begin{theorem}
Let $N$ be a normal Coulomb frame for the conformally parametrized immersion $X\colon\overline B\to\mathbb R^{n+2}$ with total torsion ${\mathcal T}(N)$ and given $\|{\mathcal S}\|_{L^\infty(B)}.$ Assume that the smallness condition
  $$\frac{\sqrt{n-2}}2
    \left(
      \frac{n-2}{4\pi}\,{\mathcal T}(N)+C(n)\|\mathcal S\|_{L^\infty(B)}
    \right)<1$$
is satisfied with $C(n):=\min\big\{\frac{n(n-1)}{8},\sqrt2\big\}$. Then the torsion coefficients of $N$ can be estimated by
  $$\|T_{\sigma,i}^\vartheta\|_{L^\infty(B)}\le c,\quad i=1,2,\ 1\le\sigma<\vartheta\le n,$$
with a nonnegative constant $c=c(n,\|S\|_{L^\infty(B)},\mathcal T(N))<+\infty$.
\end{theorem}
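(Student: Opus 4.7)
The plan is to deduce the pointwise torsion bound by first converting the hypothesis into an effective $L^\infty$-bound on the Grassmann-type vector $\mathcal T=(\tau^{(\sigma\vartheta)})_{1\le\sigma<\vartheta\le n}$, and then upgrading that to a pointwise gradient bound through an E.~Heinz-style maximum-principle argument tailored to systems with quadratic gradient growth. Since $\nabla\tau^{(\sigma\vartheta)}=(-T_{\sigma,2}^\vartheta,T_{\sigma,1}^\vartheta)$, the asserted estimate $\|T_{\sigma,i}^\vartheta\|_{L^\infty(B)}\le c$ is equivalent to $\|\nabla\mathcal T\|_{L^\infty(B)}\le c'$. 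By Proposition \ref{par_tausystem} and the estimate in paragraph \ref{par_quadraticgrowth}, $\mathcal T$ solves the Dirichlet problem $\Delta\mathcal T=-\delta\mathcal T+\mathcal S$ in $B$, $\mathcal T=0$ on $\partial B$, with $|\delta\mathcal T|\le\tfrac{\sqrt{n-2}}{2}|\nabla\mathcal T|^2$.

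The first step is to rewrite the total torsion in terms of $\mathcal T$. Since conformal parameters are in use and the matrix $(\tau^{(\sigma\vartheta)})$ is skew-symmetric, a direct computation gives $\mathcal T(N)=2\|\nabla\mathcal T\|_{L^2(B)}^2$. Inserting this into the Wente-type bound of paragraph \ref{par_wentebound} and, where sharper, the Poincar\'e-based bound of paragraph \ref{par_poincarebound}, I obtain
\[
\|\mathcal T\|_{L^\infty(B)}\le\frac{n-2}{4\pi}\,\mathcal T(N)+C(n)\|\mathcal S\|_{L^\infty(B)}=:M,
\]
where $C(n)=\min\{n(n-1)/8,\sqrt{2}\}$. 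The smallness hypothesis of the theorem is then exactly $aM<1$ for the quadratic-growth coefficient $a=\tfrac{\sqrt{n-2}}{2}$ of the inequality $|\Delta\mathcal T|\le a|\nabla\mathcal T|^2+\|\mathcal S\|_{L^\infty(B)}$; this is precisely the regime in which Heinz-type gradient estimates become available.

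Under the smallness $aM<1$ together with $\|\mathcal T\|_{L^\infty(B)}\le M$, I would next apply the classical E.~Heinz technique (see Sauvigny \cite{sauvigny_2005}, vol.~II, Ch.~XII): introduce an auxiliary scalar function of the form $\Phi(\mathcal T,\nabla\mathcal T)=h(|\mathcal T|^2)\,|\nabla\mathcal T|^2+k(|\mathcal T|^2)$, with $h,k$ chosen as exponentials in $|\mathcal T|^2$, so that after differentiating the system and invoking the quadratic growth one obtains $\Delta\Phi\ge-\kappa\|\mathcal S\|_{L^\infty(B)}^2$ for an explicit constant $\kappa$. The condition $aM<1$ is precisely what allows the $|\nabla\mathcal T|^4$ cross-terms arising in $\Delta|\nabla\mathcal T|^2$ to be absorbed into $h\,|\nabla\mathcal T|^2$. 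The maximum principle then yields an interior pointwise bound, and the homogeneous Dirichlet datum together with the Hopf boundary point lemma (after locally straightening $\partial B$) supplies the matching boundary gradient estimate. A parallel bootstrap route is available: the $L^2$-control of $|\nabla\mathcal T|^2$ coming from Wente places $\Delta\mathcal T$ in $L^p$, whereupon Calder\'on--Zygmund and Schauder theory under the same smallness close the loop.

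The main obstacle lies in this maximum-principle step: because $\mathcal T$ is vector-valued and the nonlinearity $\delta\tau^{(\sigma\vartheta)}=\sum_\omega\det(\nabla\tau^{(\sigma\omega)},\nabla\tau^{(\omega\vartheta)})$ couples the components, a naive componentwise argument fails. One has to exploit the bilinear, skew-symmetric structure of $\delta\mathcal T$ when computing $\Delta|\nabla\mathcal T|^2$, so that, after Cauchy--Schwarz and the growth bound of paragraph \ref{par_quadraticgrowth}, the dangerous terms condense into a factor $aM$ multiplying $h\,|\nabla\mathcal T|^2$ and are absorbed by the choice of $h$. Once $\|\nabla\mathcal T\|_{L^\infty(B)}\le c$ is secured, the identity $|\nabla\tau^{(\sigma\vartheta)}|^2=(T_{\sigma,1}^\vartheta)^2+(T_{\sigma,2}^\vartheta)^2$ immediately delivers the announced bound on each torsion coefficient, with $c$ depending only on $n$, $\|\mathcal S\|_{L^\infty(B)}$, and $\mathcal T(N)$ through $M$.
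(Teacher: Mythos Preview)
Your proposal is correct and follows essentially the same route as the paper: establish $\|\mathcal T\|_{L^\infty(B)}\le M$ from the Wente/Poincar\'e bounds, recognize the smallness condition as $aM<1$ with $a=\tfrac{\sqrt{n-2}}{2}$, and invoke Heinz's global gradient estimate for systems with quadratic growth and zero boundary data. The only difference is presentational: the paper cites Sauvigny's theorem (Ch.~XII, \S3, Satz~1) as a black box, whereas you sketch the auxiliary-function mechanism behind it. Your stated ``main obstacle'' about the vector-valued coupling is not actually an obstacle here, since the Heinz--Sauvigny result is already formulated for systems $|\Delta X|\le a|\nabla X|^2+b$ with $X$ vector-valued; the specific bilinear structure of $\delta\mathcal T$ plays no role beyond furnishing the growth bound $|\delta\mathcal T|\le\tfrac{\sqrt{n-2}}{2}|\nabla\mathcal T|^2$ already established in paragraph~\ref{par_quadraticgrowth}.
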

\begin{proof}
We have the following elliptic system
 $$\begin{array}{l}
     \displaystyle
     |\Delta{\mathcal T}|
     \le\sqrt{\frac{n(n-1)}{2}}\,|\nabla{\mathcal T}|^2+|{\mathcal S}|\quad\mbox{in}\ B,
     \quad{\mathcal T}=0\quad\mbox{on}\ \partial B, \\[3ex]
     \displaystyle
     \|{\mathcal T}\|_{L^\infty(B)}
     \le\frac{n-2}{2\pi}\,\|\nabla{\mathcal T}\|_{L^2(B)}^2+C(n)\|{\mathcal S}\|_{L^\infty(B)}
     \le M\in[0,+\infty)\,.
   \end{array}$$
The smallness condition ensures that we can can apply Heinz's global gradient estimate Theorem\,1 in Sauvigny \cite{sauvigny_2005}, chapter XII, \S\,3, obtaining $\|\nabla\mathcal T\|_\infty\le c$.\footnote{This theorem states: Let $X\in C^2(\overline B)$ be a solution of the elliptic system $|\Delta X|\le a|\nabla X|^2+b$ in $B$ with $X=0$ on $\partial B$ and $\|X\|_{L^\infty(B)}\le M.$ Assume $aM<1.$ Then there is a constant $c=c(a,b,M,\alpha)$ such that $\|X\|_{C^{1+\alpha}(\overline B)}\le c(a,b,M,\alpha).$} This in turn yields the desired estimate.
\end{proof}
\begin{remark}
It remains open to prove global pointwise estimates for the torsion coefficients without the smallness condition. In particular, we would like to get rid of the a priori knowledge of ${\mathcal T}(N).$
\end{remark}
\vspace*{6ex}
\begin{center}
\rule{35ex}{0.1ex}
\end{center}
\vspace*{6ex}
\section{Bounds for the total torsion}
\label{section_boundstotaltorsion}
\subsection{Upper bounds}
\label{par_upperbounds}
From the torsion estimates above we can easily infer various upper bounds for the functional of total torsion: For example, the previous theorem yields an estimate of the form\index{total torsion}
  $${\mathcal T}(N)
    \le 2\sum_{1\le\sigma<\vartheta\le n}\,
         \int\hspace{-1.3ex}\int\limits_{\hspace{-1.8ex}B}
         \Big\{
           \|T_{\sigma,1}^\vartheta\|_{L^\infty(B)^2}
           +\|T_{\sigma,2}^\vartheta\|_{L^\infty(B)}
         \Big\}\,dudv
    =:C(n,\|{\mathcal S}\|_{L^\infty(B)},{\mathcal T}(N)).$$
Let us focus an alternative way for {\it small solutions} ${\mathcal T}:$ Namely, multiplying
  $$\Delta{\mathcal T}=-\delta{\mathcal T}+{\mathcal S}$$
by ${\mathcal T}$ and integrating by parts yields
\begin{proposition}
For small solutions $\|{\mathcal T}\|_{L^\infty(B)}\le\frac{2}{\sqrt{n-2}}$ it holds
  $$\mathcal T(N)
    =2\|\nabla\mathcal T\|_{L^2(B)}^2
    \le\frac{4\|\mathcal T\|_{L^\infty(B)}\|\mathcal S\|_{L^1(B)}}{2-\sqrt{n-2}\|{\mathcal T}\|_{L^\infty(B)}}\,.$$
\end{proposition}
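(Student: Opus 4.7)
The plan is to test the Euler--Lagrange system of Proposition \ref{par_tausystem} against $\mathcal T$ itself and absorb the nonlinearity using the quadratic gradient bound derived in paragraph \ref{par_quadraticgrowth}.

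First I would verify the identity $\mathcal T(N)=2\|\nabla\mathcal T\|_{L^2(B)}^2$. Working in conformal parameters we have $g^{ij}W=\delta^{ij}$, hence
  $$\mathcal T(N)
    =\sum_{i=1}^2\sum_{\sigma,\vartheta=1}^n
     \int\hspace{-1.3ex}\int\limits_{\hspace{-1.8ex}B}
     (T_{\sigma,i}^\vartheta)^2\,dudv
    =2\sum_{1\le\sigma<\vartheta\le n}\sum_{i=1}^2
     \int\hspace{-1.3ex}\int\limits_{\hspace{-1.8ex}B}
     (T_{\sigma,i}^\vartheta)^2\,dudv$$
by the skew-symmetry $T_{\sigma,i}^\vartheta=-T_{\vartheta,i}^\sigma$. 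On the other hand $\nabla\tau^{(\sigma\vartheta)}=(-T_{\sigma,2}^\vartheta,T_{\sigma,1}^\vartheta)$, so $|\nabla\tau^{(\sigma\vartheta)}|^2=(T_{\sigma,1}^\vartheta)^2+(T_{\sigma,2}^\vartheta)^2$, and summing yields the asserted identity.

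Next I would multiply the system $\Delta\mathcal T=-\delta\mathcal T+\mathcal S$ scalarly by $\mathcal T$ and integrate over $B$. Using the homogeneous boundary condition $\mathcal T|_{\partial B}=0$, integration by parts gives
  $$-\,\|\nabla\mathcal T\|_{L^2(B)}^2
    =-\!\int\hspace{-1.3ex}\int\limits_{\hspace{-1.8ex}B}\!\mathcal T\cdot\delta\mathcal T\,dudv
     +\!\int\hspace{-1.3ex}\int\limits_{\hspace{-1.8ex}B}\!\mathcal T\cdot\mathcal S\,dudv,$$
and a pointwise H\"older estimate together with the bound $|\delta\mathcal T|\le\frac{\sqrt{n-2}}{2}|\nabla\mathcal T|^2$ established in paragraph \ref{par_quadraticgrowth} (it is exactly the part of $|\Delta\mathcal T|\le\frac{\sqrt{n-2}}{2}|\nabla\mathcal T|^2+|\mathcal S|$ coming from $\delta\mathcal T$) produces
  $$\|\nabla\mathcal T\|_{L^2(B)}^2
    \le\frac{\sqrt{n-2}}{2}\,\|\mathcal T\|_{L^\infty(B)}\|\nabla\mathcal T\|_{L^2(B)}^2
       +\|\mathcal T\|_{L^\infty(B)}\|\mathcal S\|_{L^1(B)}.$$

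Finally I would invoke the smallness hypothesis $\|\mathcal T\|_{L^\infty(B)}\le\tfrac{2}{\sqrt{n-2}}$: under this assumption the factor $1-\frac{\sqrt{n-2}}{2}\|\mathcal T\|_{L^\infty(B)}$ is nonnegative, and (for strict inequality, with the boundary case handled by a limiting argument or by regarding the statement as vacuous) we may rearrange to obtain
  $$\|\nabla\mathcal T\|_{L^2(B)}^2
    \le\frac{2\|\mathcal T\|_{L^\infty(B)}\|\mathcal S\|_{L^1(B)}}{2-\sqrt{n-2}\,\|\mathcal T\|_{L^\infty(B)}}.$$
Multiplying by $2$ and using the identity of the first step yields the claimed inequality. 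The only potential obstacle is justifying the integration by parts and the termwise $L^\infty$--$L^1$ splitting with the correct constants; since $\mathcal T$ is obtained from integral functions and hence belongs to the natural Sobolev class in which Proposition \ref{par_tausystem} holds, no real difficulty arises.
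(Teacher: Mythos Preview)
Your proof is correct and follows exactly the approach indicated in the paper: multiply the system $\Delta\mathcal T=-\delta\mathcal T+\mathcal S$ by $\mathcal T$, integrate by parts using $\mathcal T|_{\partial B}=0$, apply the bound $|\delta\mathcal T|\le\frac{\sqrt{n-2}}{2}|\nabla\mathcal T|^2$ from paragraph~\ref{par_quadraticgrowth}, and rearrange under the smallness assumption. You have merely spelled out the details the paper leaves implicit.
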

\noindent
\begin{remark}
The reader is refered to Sauvigny \cite{sauvigny_2005} where such small solutions of nonlinear elliptic systems are constructed. Let us emphasize that the case $n=2$ is much easier to handle: The classical maximum principle controls $\|\mathcal T\|_{L^\infty(B)}$ in terms of $\|S\|_{L^\infty(B)},$ and no smallness condition is needed to bound the functional of total torsion.
\end{remark}
\subsection{A lower bound}
\label{par_lowerbounds}
Finally we complete this section with establishing a lower bound for the functional of total torsion.
\begin{proposition}
Let $N$ a normal Coulomb frame for the immersion $X.$ Assume that the curvature vector of its normal bundle satisfies ${\mathcal S}\not\equiv 0$ as well as $\|\nabla{\mathcal S}\|_{L^2(B)}>0.$ Then it holds\index{total torsion}
  $${\mathcal T}(N)
    \ge\left(
         \sqrt{n-2}\,\|\mathcal S\|_{L^\infty(B)}
         +\frac{\|\mathcal S\|_{L^2(B)}^{2}}{(1-\varrho)^2\|\mathcal S\|_{L^2(B_\varrho)}^2}
         +\frac{2\|\nabla\mathcal S\|_{L^2(B)}^{2}}{\|\mathcal S\|_{L^2(B_\varrho)}^2}
       \right)^{-1}\|\mathcal S\|_{L^2(B_\varrho)}^2>0$$
with $\varrho=\varrho(\mathcal S)\in(0,1)$ constructed in the proof given below, and $B_\varrho=\{(u,v)\in\mathbb R^2\,u^2+v^2<\varrho\}.$
\end{proposition}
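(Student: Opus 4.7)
The plan is to test the nonlinear elliptic system from Proposition \ref{par_tausystem}, namely $\Delta\mathcal{T} = -\delta\mathcal{T} + \mathcal{S}$ in $B$ with $\mathcal{T}|_{\partial B} = 0$, against a suitably cut-off version of $\mathcal{S}$ and then to combine the resulting integral identity with the quadratic-growth bound $|\delta\mathcal{T}| \le \tfrac{\sqrt{n-2}}{2}|\nabla\mathcal{T}|^2$ proven in Section \ref{section_quadraticgrowth}. A preliminary observation, valid in conformal parameters, is that $\nabla\tau^{(\sigma\vartheta)} = (-T^{\vartheta}_{\sigma,2}, T^{\vartheta}_{\sigma,1})$, so $|\nabla\tau^{(\sigma\vartheta)}|^2 = (T^{\vartheta}_{\sigma,1})^2 + (T^{\vartheta}_{\sigma,2})^2$ and therefore $\mathcal{T}(N) = 2\|\nabla\mathcal{T}\|_{L^2(B)}^2$, which turns the claim into a lower bound for $\|\nabla\mathcal{T}\|_{L^2(B)}^2$.

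Because $\mathcal{S} \in C^0(\overline B)$ and $\mathcal{S}\not\equiv 0$, the continuous function $r \mapsto \|\mathcal{S}\|_{L^2(B_r)}^2$ tends to $\|\mathcal{S}\|_{L^2(B)}^2 > 0$ as $r\to 1^-$, so I fix $\varrho\in(0,1)$ with $A := \|\mathcal{S}\|_{L^2(B_\varrho)}^2 > 0$. I then introduce the Lipschitz cutoff
\begin{equation*}
\psi(w) := \min\!\Bigl\{1,\ \tfrac{1-|w|}{1-\varrho}\Bigr\},
\end{equation*}
which satisfies $\psi \equiv 1$ on $\overline{B_\varrho}$, $\psi|_{\partial B} = 0$, and $|\nabla\psi|\le(1-\varrho)^{-1}$ almost everywhere. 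Taking the componentwise inner product of the system with $\psi\mathcal{S}\in H^1_0(B,\mathbb R^N)$ and integrating by parts over $B$ kills the boundary term because $\psi$ vanishes there, and leaves
\begin{equation*}
\int_B \psi|\mathcal{S}|^2\,du\,dv
= -\sum_{i=1}^N\int_B \mathcal{S}^i\,\nabla\psi\!\cdot\!\nabla\mathcal{T}^i\,du\,dv
- \sum_{i=1}^N\int_B \psi\,\nabla\mathcal{S}^i\!\cdot\!\nabla\mathcal{T}^i\,du\,dv
+ \int_B \psi\,\mathcal{S}\!\cdot\!\delta\mathcal{T}\,du\,dv.
\end{equation*}

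Writing $X := \|\nabla\mathcal{T}\|_{L^2(B)}$, bounding $\int_B \psi|\mathcal{S}|^2 \ge A$ from below, and applying Cauchy--Schwarz to the first two terms together with the pointwise bound on $\delta\mathcal{T}$ for the third gives
\begin{equation*}
A \le \tfrac{\|\mathcal{S}\|_{L^2(B)}}{1-\varrho}\,X + \|\nabla\mathcal{S}\|_{L^2(B)}\,X + \tfrac{\sqrt{n-2}}{2}\|\mathcal{S}\|_{L^\infty(B)}\,X^2.
\end{equation*}
Multiplying by $A$ and applying Young's inequality $ab\le\tfrac{a^2}{2\epsilon}+\tfrac{\epsilon b^2}{2}$ to the two cross terms, with $\epsilon=1$ and $\epsilon=\tfrac12$ respectively, absorbs $\tfrac34 A^2$ on the left. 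After multiplying by $4$ and substituting $\mathcal{T}(N) = 2X^2$, this delivers
\begin{equation*}
A^2 \le \mathcal{T}(N)\,\bigl[\sqrt{n-2}\,\|\mathcal{S}\|_{L^\infty(B)}\,A + \tfrac{\|\mathcal{S}\|_{L^2(B)}^2}{(1-\varrho)^2} + 2\|\nabla\mathcal{S}\|_{L^2(B)}^2\bigr],
\end{equation*}
and dividing by the bracket (positive because $A>0$ and $\|\mathcal{S}\|_{L^\infty(B)}>0$, finite by the regularity of $\mathcal{S}$) produces the stated lower bound.

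The only calibrated step is the choice of Young parameters: $\epsilon_1 = 1$ and $\epsilon_2 = \tfrac12$ are precisely what yield the target coefficients $\tfrac{1}{(1-\varrho)^2}$ and $2$ appearing in the denominator; other admissible choices would give strictly weaker constants. Beyond this, the argument is routine: the cutoff $\psi$ compensates for the fact that $\mathcal{S}$ need not vanish on $\partial B$, and $\psi\mathcal{S}\in H^1_0(B,\mathbb R^N)$ has ample regularity to justify integration by parts against the $C^{k-1,\alpha}$-regular $\mathcal{T}$ in the usual way.
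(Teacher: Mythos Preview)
Your proof is correct and follows essentially the same route as the paper: you test the system $\Delta\mathcal T=-\delta\mathcal T+\mathcal S$ against a cutoff times $\mathcal S$, integrate by parts, invoke the quadratic growth bound $|\delta\mathcal T|\le\tfrac{\sqrt{n-2}}{2}|\nabla\mathcal T|^2$, and then choose Young parameters to land on the stated constants. The only organizational difference is that the paper applies Young's inequality pointwise (with parameters $\varepsilon,\delta$ later specialized to $\varepsilon=\|\mathcal S\|_{L^2}^{-2}\|\mathcal S\|_{L^2(B_\varrho)}^2$ and $\delta=\tfrac12\|\nabla\mathcal S\|_{L^2}^{-2}\|\mathcal S\|_{L^2(B_\varrho)}^2$) before integrating, whereas you first pass to the scalar inequality via Cauchy--Schwarz and then apply Young; the two bookkeepings are equivalent and yield the same final bound.
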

\begin{proof}
\begin{itemize}
\item[1.]
Because of ${\mathcal S}\not=0$ there exists (a first) $\varrho=\varrho(\mathcal S)\in(0,1)$ such that
  $$\|\mathcal S\|_{L^2(B_\varrho)}
    =\left(\ \,\,
       \int\hspace{-1.5ex}\int\limits_{\hspace{-2.2ex}B_\varrho(0)}|{\mathcal S}|^2\,dudv
     \right)^{\frac12}>0,$$
and this is already our $\varrho$ from the assumption of the proposition. Now we choose a test function $\eta\in C^0(B,\mathbb R)\cap H_0^{1,2}(B)$ with the properties
  $$\eta\in[0,1]\quad\mbox{in}\ B,\quad
    \eta=1\quad\mbox{in}\ B_\varrho\,,\quad
    |\nabla\eta|\le\frac{1}{1-\varrho}\quad\mbox{in}\ B.$$
Multiplying $\Delta{\mathcal T}=-\delta{\mathcal T}+{\mathcal S}$ by $(\eta{\mathcal S})$ and integrating by parts yields
  $$\int\hspace{-1.3ex}\int\limits_{\hspace{-1.8ex}B}
    \nabla\mathcal T\cdot\nabla(\eta\mathcal S)\,dudv
    =\int\hspace{-1.3ex}\int\limits_{\hspace{-1.8ex}B}
     \eta\,\delta\mathcal T\cdot\mathcal S\,dudv
     -\int\hspace{-1.3ex}\int\limits_{\hspace{-1.8ex}B}
      \eta\,|\mathcal S|^2\,dudv.$$
Taking $|\delta{\mathcal T}|\le\sqrt{n-2}|{\mathcal T}_u||{\mathcal T}_v|$ (see section 18.2) into account, we can now estimate as follows:
  $$\begin{array}{lll}
      \displaystyle
      \int\hspace{-1.3ex}\int\limits_{\hspace{-1.8ex}B_\varrho}
      |{\mathcal S}|^2\,dudv\negthickspace
      & \le & \negthickspace\displaystyle
              \int\hspace{-1.3ex}\int\limits_{\hspace{-1.8ex}B}
              \eta\,|{\mathcal S}|^2\,dudv
              \,\le\,\int\hspace{-1.3ex}\int\limits_{\hspace{-1.8ex}B}
                     \eta\,\big|\delta{\mathcal T}\cdot{\mathcal S}\big|\,dudv
		    +\int\hspace{-1.3ex}\int\limits_{\hspace{-1.8ex}B}
                     \big|\nabla{\mathcal T}\cdot\nabla(\eta{\mathcal S})\big|\,dudv \\[5ex]
      & \le & \negthickspace\displaystyle
              \|{\mathcal S}\|_{L^\infty(B)}
              \int\hspace{-1.3ex}\int\limits_{\hspace{-1.8ex}B}
              \eta\,|\delta{\mathcal T}|\,dudv
              +\int\hspace{-1.3ex}\int\limits_{\hspace{-1.8ex}B}
               |\nabla\eta|\,|\mathcal S|\,|\nabla{\mathcal T}|\,dudv
              +\int\hspace{-1.3ex}\int\limits_{\hspace{-1.8ex}B}
               \eta\,|\nabla\mathcal S|\,|\nabla\mathcal T|\,dudv \\[5ex]
      & \le & \negthickspace\displaystyle
              \frac{\sqrt{n-2}}{2}\,\|{\mathcal S}\|_{L^\infty(B)}
              \int\hspace{-1.3ex}\int\limits_{\hspace{-1.8ex}B}
              |\nabla{\mathcal T}|^2\,dudv
              +\frac\varepsilon2
               \int\hspace{-1.3ex}\int\limits_{\hspace{-1.8ex}B}
               |\mathcal S|^2\,dudv
              +\frac1{2\varepsilon(1-\varrho)^2}
	       \int\hspace{-1.3ex}\int\limits_{\hspace{-1.8ex}B}
               |\nabla{\mathcal T}|^2\,dudv \\[5ex]
      &    & \negthickspace\displaystyle
             +\frac\delta2
              \int\hspace{-1.3ex}\int\limits_{\hspace{-1.8ex}B}
              |\nabla\mathcal S|^2\,dudv
             +\frac1{2\delta}
              \int\hspace{-1.3ex}\int\limits_{\hspace{-1.8ex}B}
	      |\nabla\mathcal T|^2\,dudv
    \end{array}$$
with arbitrary real numbers $\varepsilon,\delta>0$. Summarizing we arrive at
  $$\|\mathcal S\|_{L^2(B_\varrho)}^2
    \le\left(
         \frac{\sqrt{n-2}}{2}\,\|\mathcal S\|_{L^\infty(B)}
         +\frac{1}{2\varepsilon(1-\varrho)^2}
         +\frac1{2\delta}
      \right)\|\nabla\mathcal T\|_{L^2(B)}^2
      +\frac{\varepsilon}{2}\,\|\mathcal S\|_{L^2(B)}^2
      +\frac{\delta}{2}\,\|\nabla\mathcal S\|_{L^2(B)}^2\,.$$
\item[2.]
Now we choose $\varepsilon:$ Inserting $\varepsilon=\|\mathcal S\|_{L^2(B)}^{-2}\|\mathcal S\|_{L^2(B_\varrho)}^2>0$ and rearranging for $\|{\mathcal S}\|_{B_\varrho(B)}^2$ gives
  $$\|\mathcal S\|_{L^2(B_\varrho)}^2
    \le\left(
         \sqrt{n-2}\,\|\mathcal S\|_{L^\infty(B)}
         +\frac{\|\mathcal S\|_{L^2(B)}^{2}}{(1-\varrho)^2\|\mathcal S\|_{L^2(B_\varrho)}^2}
         +\frac{1}{\delta}
       \right)\|\nabla\mathcal T\|_{L^2(B)}^2
       +\delta\|\nabla\mathcal S\|_{L^2(B)}^2\,.$$
And since $\|\nabla{\mathcal S}\|_{L^2(B)}>0$ we can insert $\delta=\frac12\|\nabla\mathcal S\|_{L^2(B)}^{-2}\|\mathcal S\|_{L^2(B_\varrho)}^2$ which implies
  $$\|\mathcal S\|_{L^2(B_\varrho)}^2
    \le 2\left(
           \sqrt{n-2}\,\|\mathcal S\|_{L^\infty(B)}
           +\frac{\|\mathcal S\|_{L^2(B)}^{2}}{(1-\varrho)^2\|\mathcal S\|_{L^2(B_\varrho)}^2}
           +\frac{2\|\nabla\mathcal S\|_{L^2(B)}^2}{\|\mathcal S\|_{L^2(B_\varrho)}^2}
         \right)
         \|\nabla\mathcal T\|_{L^2(B)}^2\,.$$
Having $\mathcal T(N)=2\|\nabla\mathcal T\|_{L^2(B)}^2$ in mind we arrive at the stated estimate.
\end{itemize}
\end{proof}
\vspace*{6ex}
\begin{center}
\rule{35ex}{0.1ex}
\end{center}
\vspace*{6ex}
\section{Existence and regularity of weak normal Coulomb frames}
\label{section_existence}
\subsection{Regularity results for the homogeneous Poisson problem}
\label{par_poisson}
To introduce the function spaces coming next into play we consider the Dirichlet boundary value problem
  $$\Delta\phi(u,v)=r(u,v)\quad\mbox{in}\ B,\quad
    \phi(u,v)=0\quad\mbox{on}\ \partial B.$$
Let us abbreviate this problem by (DP).
\subsubsection{Schauder estimates}
Let $r\in C^\alpha(\overline B)$ hold true for the right hand side $r.$ Then there exist a classical solution of (DB) satisfying\index{Schauder estimates}
  $$\|\phi\|_{C^{2+\alpha}(\overline B)}\le C(\alpha)\|r\|_{C^\alpha(\overline B)}\,,$$
see e.g Gilbarg and Trudinger \cite{gilbarg_trudinger_1983}.
\subsubsection{$L^p$-estimates}
Let $r\in L^2(B)$ then any solution $\phi\in H^{1,2}(B)$ is of class $H^{2,2}(B),$ and it holds\index{$L^p$-estimates}
  $$\|\phi\|_{H^{2,2}(B)}\le C\big(\|\phi\|_{H^{1,2}(B)}+\|r\|_{L^2(B)}\big)\,.$$
In particular, if $r\in H^{m-2,2}(B)$ for the right hand side we have
  $$\|\phi\|_{H^{m,2}(B)}\le C\big(\|\phi\|_{H^{1,2}(B)}+\|r\|_{H^{m-2,2}(B)}\big).$$
For a detailed analysis we refer the reader to Dobrowolski \cite{dobrowolski_2006}, chapter 7. Note that we must require $r\in L^2(B)$ to infer higher regularity $\phi\in C^0(B)$ because $H^{2,2}(B)$ is continuously emedded in $C^0(B)$ by Sobolev's embedding theorem. Buf if, on the other hand, $r\in L^1(B)$ then any weak solution $\phi\in H^{1,2}(B)$ of (DB) satisfies
  $$\begin{array}{l}
      \|\phi\|_{L^q(B)}\le C\|r\|_{L^1(B)}\quad\mbox{for all}\ 1\le q<\infty\,, \\[2ex]
      \|\phi\|_{H^{1,p}(B)}\le C\|r\|_{L^1(B)}\quad\mbox{for all}\ 1\le p<2.
    \end{array}$$
In particular, a function $\phi\in H^{1,2}(B)$ is not necessarily continuous.
\subsubsection{Wente's $L^\infty$-estimate}
This situation changes dramatically if the side hand side $r$ possesses a certain algebraic structure. Namely assume\index{Wente's $L^\infty$-estimate}
  $$r=\frac{\partial a}{\partial u}\,\frac{\partial b}{\partial v}
      -\frac{\partial a}{\partial v}\,\frac{\partial b}{\partial u}$$
with functions $a,b\in H^{1,2}(B).$ Then again $r\in L^1(B)$ but any solution $\phi\in H^{1,2}(B)$ is of class $C^0(B)$ and satisfies Wente's $L^\infty$-estimate
  $$\|\phi\|_{L^\infty(B)}+\|\nabla\phi\|_{L^2(B)}
    \le\frac{1}{4\pi}\,\|\nabla a\|_{L^2(B)}\|\nabla b\|_{L^2(B)}\,;$$
see Wente \cite{wente_1980}. We already used this inequality in section 20.1 for establishing an upper bound for the total torsion of normal Coulomb frames. The idea of its proof of this inequality follows from an ingenious partial integration of the right hand side presented in $\mbox{curl}$-structure using the conformal invariance of the differential equation. Finally we approximate $a$ and $b$ by smooth functions $a_n,b_n\in C^\infty(B)$ which form a Cauchy sequence in $H{1,2}(B)\cap L^\infty(B)$ with continuous limit.
\subsubsection{Hardy spaces}
Wente's discovery is the starting point of the modern harmonic analysis. Its general framework is the concept of {\it Hardy spaces.} From Helein \cite{helein_2002} we quote two possible definitions of Hardy spaces leading to equivalent formulations.\index{Hardy spaces}
\begin{definition}
(Tempered-distribution definition)\\
Let $\Psi\in C_0^\infty(\mathbb R^m)$ such that\index{tempered distribution}
  $$\int\limits_{\mathbb R^m}\Psi(x)\,dx=1.$$
For each $\varepsilon>0$ we set
  $$\Psi_\varepsilon(x)=\frac{1}{\varepsilon^m}\,\Psi(\varepsilon^{-1}x),$$
and for $\phi\in L^1(\mathbb R^m)$ define
  $$\phi^*(x)=\sup_{\varepsilon>0}|(\Psi_\varepsilon\star\phi)(x)|.$$
Then $\phi$ belongs to ${\mathcal H}^1(\mathbb R^m)$ if and only if $\phi^*\in L^1(\mathbb R^m)$ with norm
  $$\|\phi\|_{{\mathcal H}^1(\mathbb R^m)}
    \le\|\phi\|_{L^1(\mathbb R^m)}+\|\phi^*\|_{L^1(\mathbb R^m)}\,.$$
\end{definition}
\begin{definition}
(Riesz-Fourier-transform definition)\\
For any function $\phi\in L^1(\mathbb R^m)$ we denote by $R_\alpha\phi$ the function defined by\index{Riesz-Fourier transform}
  $${\mathcal F}(R_\alpha\phi)
    =\frac{\xi_\alpha}{|\xi|}\,{\mathcal F}(\phi)(\xi)$$
with the $\phi$-Fourier transform
  $${\mathcal F}(\phi)(\xi)
    =\frac{1}{(2\pi)^\frac{m}{2}}\,
     \int\limits_{\mathbb R^m}e^{-i x\cdot\xi}\phi(x)\,dx.$$
Then $\phi$ belongs to ${\mathcal H}^1(\mathbb R^m)$ if and only if
  $$R_\alpha\phi\in L^1(\mathbb R^m)\quad\mbox{for all}\ \alpha=1,\ldots,m$$
with norm
  $$\|\phi\|_{{\mathcal H}^1(\mathbb R^m)}
    =\|\phi\|_{L^1(\mathbb R^m)}
     +\sum_{\alpha=1}^m\|R_\alpha\phi\|_{L^1(\mathbb R^m)}\,.$$
\end{definition}
\noindent
For a profound and comprehensive presentation of harmonic analysis we want to refer the reader to Stein's monograph \cite{stein_1993}.\\[1ex]
Consider again our Dirichlet problem (DP). Let $a,b\in H^{1,2}(B),$ and consider its extensions $a\mapsto\widehat a$ and $b\mapsto\widehat b$ in $H^{1,2}$ to the whole space $\mathbb R^2$ such that these mappings are continuous from $H^{1,2}(B)$ to $H^{1,2}(\mathbb R^2).$ Then, referring again to Helein \cite{helein_2002}, $v\in{\mathcal H}(\mathbb R^2)!$
\subsubsection{Lorentz interpolation spaces}
This fact becomes especially important now. Let us start with
\begin{definition}
Let $\Omega\subset\mathbb R^m$ be open, and let $p\in(1,+\infty)$ and $q\in[1,+\infty].$ The Lorentz space $L^{(p,q)}(\Omega)$ is the set of measurable functions $\phi\colon\Omega\to\mathbb R$ such that
  $$\|f\|_{L^{(p,q)}}
    :=\left(\ \int\limits_0^\infty\big\{t^\frac{1}{p}\phi^*(t)\big\}^q\,\frac{dt}{t}\right)^\frac{1}{q}<\infty
    \quad\mbox{if}\ q<+\infty$$
or
  $$\|f\|_{L^{(p,q)}}:=\sup_{t>0}\,t^\frac{1}{p}\phi^*(t)<\infty
    \quad\mbox{if}\ q=+\infty.$$
Here $\phi^*$ denotes the unique non-increasing rearrangement of $|\phi|$ on $[0,\mbox{meas}\,\Omega].$
\end{definition}
\noindent
Lorentz spaces are Banach spaces with a suitable norm. They may be considered as a deformation of $L^p.$ Note that
  $$L^{(p,p)}(B)=L^p(B)\,,\quad
    L^{(p,1)}(B)\subset L^{(p,q')}(B)\subset L^{(p,q'')}(B)\subset L^{(p,\infty)}(B)$$
for $1<q'<q''.$ Then
\begin{itemize}
\item[(i)]
if $\phi\in H^{1,2}(B)$ solves (DP) with $r\in{\mathcal H}^1(B)$ then $\frac{\partial\phi}{\partial x},\frac{\partial\phi}{\partial y}\in L^{(2,1)}(B);$
\vspace*{-1ex}
\item[(i)]
if $\phi\in H^{1,2}(B)$ with $\frac{\partial\phi}{\partial x},\frac{\partial\phi}{\partial y}\in L^{(2,1)}(B)$ then $\phi\in C^0(B).$
\end{itemize}
\subsubsection{The general regularity result}
Summarizing we can state the following regularity result from Helein \cite{helein_2002}, chapter 3.
\begin{proposition}
Let $a,b\in H^{1,2}(B),$ and assume $\phi\in H^{1,2}(B)$ solves (DB). Then $\frac{\partial\phi}{\partial u},\frac{\partial\phi}{\partial v}\in L^{(2,1)}(B),$ and in particular $\phi\in C^0(\overline B).$
\end{proposition}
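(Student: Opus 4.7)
The plan is to exploit the special Jacobian structure $r=a_ub_v-a_vb_u$ recalled just above the proposition, which places $r$ not merely in $L^1(B)$ but in a Hardy space on $\mathbb{R}^2$ after suitable extension, and then to invoke improved mapping properties of $\Delta^{-1}$ on $\mathcal{H}^1$. Concretely, first I would fix a bounded linear extension operator $E\colon H^{1,2}(B)\to H^{1,2}(\mathbb{R}^2)$ whose range consists of compactly supported functions, set $\widehat a=Ea$, $\widehat b=Eb$, and observe that $\widehat a_u\widehat b_v-\widehat a_v\widehat b_u$ equals $r$ on $B$. The theorem of Coifman--Lions--Meyer--Semmes then yields $\widehat a_u\widehat b_v-\widehat a_v\widehat b_u\in\mathcal{H}^1(\mathbb{R}^2)$ with norm controlled by $\|\nabla\widehat a\|_{L^2}\|\nabla\widehat b\|_{L^2}$, matching the Hardy-space philosophy announced in section 22.1.

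Next, I would split the solution as $\phi=\phi_1+\phi_2$, where $\phi_1$ is the Newtonian potential in $\mathbb{R}^2$ of the Hardy-space function $\widehat a_u\widehat b_v-\widehat a_v\widehat b_u$, and $\phi_2$ is harmonic in $B$ with boundary values $-\phi_1|_{\partial B}$. For the particular solution $\phi_1$, the operator $\nabla\Delta^{-1}$ in two dimensions is a composition of a Riesz transform with the Newtonian convolution; the standard harmonic-analytic fact that Riesz transforms preserve $\mathcal{H}^1$, combined with the Hardy-space Sobolev embedding $\mathcal{H}^1(\mathbb{R}^2)\hookrightarrow L^{(2,1)}$ applied one level up, gives $\nabla\phi_1\in L^{(2,1)}(\mathbb{R}^2)$. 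In particular $\phi_1\in C^0$ by the Lorentz embedding recalled at the end of section 22.1.

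For the harmonic part $\phi_2$, continuity of $\phi_1|_{\partial B}$ allows one to solve the Dirichlet problem in $B$ classically, yielding $\phi_2\in C^0(\overline B)\cap C^\infty(B)$; since $\phi_1|_{\partial B}$ actually inherits better than continuity (it lies in the trace space coming from the $L^{(2,1)}$ gradient regularity of $\phi_1$), Poisson integration shows $\nabla\phi_2\in L^{(2,1)}(B)$ as well. Adding the two pieces proves $\partial\phi/\partial u,\partial\phi/\partial v\in L^{(2,1)}(B)$, and then the continuity $\phi\in C^0(\overline B)$ follows either directly from continuity of each summand or from the Lorentz--Sobolev embedding $\nabla\phi\in L^{(2,1)}(B)\Rightarrow\phi\in C^0(\overline B)$ recorded just before the proposition.

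The principal obstacle is the passage from a Hardy-space estimate on all of $\mathbb{R}^2$ to the Lorentz estimate for $\nabla\phi$ on the bounded domain $B$: the free-space Newtonian potential and the Green's function of the disc differ by a harmonic correction, and one must verify that this correction does not spoil the $L^{(2,1)}$-membership of the gradient up to the boundary. This is handled by the remark that harmonic extensions of boundary traces of $L^{(2,1)}$-gradient functions remain in that class, so that the decomposition $\phi=\phi_1+\phi_2$ is compatible with the target regularity; everything then reduces to the two ingredients already invoked, namely Coifman--Lions--Meyer--Semmes for the Hardy-space input and the Lorentz-space Calder\'on--Zygmund theory for the output.
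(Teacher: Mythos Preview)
Your proposal is correct and follows exactly the approach the paper has in mind: the paper does not give its own proof here but cites H\'elein \cite{helein_2002}, chapter 3, after assembling the ingredients---the CLMS theorem placing the Jacobian in $\mathcal H^1$, the mapping property $r\in\mathcal H^1\Rightarrow\nabla\phi\in L^{(2,1)}$, and the Lorentz--Sobolev embedding $\nabla\phi\in L^{(2,1)}\Rightarrow\phi\in C^0$---and your sketch is precisely how these pieces are put together. The one point you correctly flag as the principal obstacle (the harmonic correction $\phi_2$ on the bounded domain) is real but standard, and is handled in H\'elein's treatment exactly as you indicate.
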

\subsection{Existence of weak normal Coulomb frames}
\label{par_weakexistence}
In case $n=2$ we constructed critical points of the functional of total torsion solving the Euler-Lagrange equation explicitely and varified its minimal character. In the general situation now we construct critical points by means of direct methods of the calculus of variations. We start with the following
\begin{definition}
Let $m\in\mathbb N,$ $m\ge 2.$ For two matrices $A,B\in{\mathbb R}^{m\times m}$ we define their inner product
  $$\langle A,B\rangle
    =\mbox{\rm trace}\,(A\circ B^t)
    =\sum_{\sigma,\vartheta=1}^m
     A_\sigma^\vartheta B_\sigma^\vartheta$$
and the associated norm
  $$|A|=\sqrt{\langle A,A\rangle}
       =\left(\ \sum_{\sigma,\vartheta=1}^m(A_\sigma^\vartheta)^2\right)^\frac{1}{2}\,.$$
\end{definition}
\noindent
Helein proved in \cite{helein_2002}, Lemma 4.1.3, existence of weak Coulomb frames in the tangent bundle of surfaces. We want to carry out his arguments and adapt his methods to our situation. Additionally we want to present classical regularity of weak normal Coulomb frames. We always use conformal parameters $(u,v)\in\overline B.$
\begin{proposition}\label{prop_exist}
There exists a weak normal Coulomb frame $N\in H^{1,2}(B)\cap L^\infty(B)$ minimizing the functional $\mathcal T(N)$ of total torsion in the set of all weak normal frames of class $H^{1,2}(B)\cap L^\infty(B)$.\index{normal Coulomb frame}
\end{proposition}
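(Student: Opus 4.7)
The plan is to apply the direct method of the calculus of variations. First I would fix once and for all a smooth reference frame $N_0 \in C^{k-1,\alpha}(\overline B)$ (whose existence is guaranteed by paragraph~1.3) with associated torsion coefficient matrices $T_{0,i} = (T_{0,\sigma,i}^\vartheta)_{\sigma,\vartheta=1,\ldots,n}$, $i=1,2$. Since $B$ is contractible, every weak normal frame $N\in H^{1,2}(B)\cap L^\infty(B)$ can be represented in the form $N = RN_0$ with $R\in H^{1,2}(B,SO(n))$, and conversely any such $R$ produces an admissible frame. The $L^\infty$-bound is automatic because the rows of $R(w)\in SO(n)$ are orthonormal. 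Using the transformation rule from paragraph~6.5 together with the orthogonal invariance $|RT_{0,i}R^t|=|T_{0,i}|$, the functional takes the form
\begin{equation*}
\mathcal T(RN_0) = \sum_{i=1}^2 \int_B \bigl\{|R_{u^i}|^2 + 2\langle R_{u^i}R^t,\,RT_{0,i}R^t\rangle + |T_{0,i}|^2\bigr\}\,dudv.
\end{equation*}

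The first step is coercivity. A weighted Cauchy--Schwarz applied to the cross term (with weight $\varepsilon = 1/2$) yields
\begin{equation*}
\mathcal T(RN_0) \ge \tfrac12\,\|\nabla R\|_{L^2(B)}^2 - \int_B \bigl(|T_{0,1}|^2 + |T_{0,2}|^2\bigr)\,dudv,
\end{equation*}
so any minimizing sequence $\{R_k\}$ is bounded in $H^{1,2}(B,\mathbb R^{n\times n})$. Combined with the uniform $L^\infty$-bound, Banach--Alaoglu and Rellich--Kondrachov provide a subsequence with $R_k \rightharpoonup R$ weakly in $H^{1,2}$, strongly in $L^2$, and pointwise a.e.\ in $B$. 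Since the orthogonal group $SO(n)\subset\mathbb R^{n\times n}$ is closed, the pointwise a.e.\ limit $R(w)$ lies in $SO(n)$ for a.e.\ $w$, so $R$ is admissible.

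Next I would verify weak lower semicontinuity by separately analyzing the three pieces of the integrand. The quadratic term $\int_B |R_{u^i}|^2\,dudv$ is weakly $H^{1,2}$-lower semicontinuous by convexity in $\nabla R$, and the constant term $\int_B |T_{0,i}|^2\,dudv$ is independent of $R$. For the cross term, the coefficient matrix $RT_{0,i}R^t$ associated with $R_k$ converges strongly in $L^2(B)$ by dominated convergence (pointwise a.e.\ limit with uniform $L^\infty$-bound), while $R_{k,u^i}\rightharpoonup R_{u^i}$ weakly in $L^2$; the standard weak-strong pairing then passes to the limit. Combining the three items gives $\mathcal T(RN_0) \le \liminf_{k\to\infty}\mathcal T(R_kN_0) = \inf\mathcal T$, and $N := RN_0 \in H^{1,2}(B)\cap L^\infty(B)$ is the desired minimizer.

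The principal obstacle is preserving the pointwise constraint $R(w)\in SO(n)$ under a weak $H^{1,2}$-limit, since this condition is genuinely nonlinear and is not stable under weak convergence alone. The argument hinges on the compactness of the embedding $H^{1,2}(B)\hookrightarrow L^2(B)$, which upgrades weak to pointwise a.e.\ convergence along a subsequence; closedness of $SO(n)$ then closes the argument. A secondary subtlety is that the full integrand of $\mathcal T$ is not jointly convex in $(R,\nabla R)$; here the explicit splitting above is essential, isolating the convex $|\nabla R|^2$-contribution while exploiting the strong $L^2$-convergence of the nonlinear coefficients $RT_{0,i}R^t$ to control the remainder.
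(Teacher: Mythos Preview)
Your proof is correct and follows essentially the same strategy as the paper's: fix a smooth reference frame, parametrize admissible frames by $R\in H^{1,2}(B,SO(n))$ via the transformation rule from paragraph~6.5, establish coercivity to bound a minimizing sequence in $H^{1,2}$, extract a weakly convergent subsequence (strongly in $L^2$, pointwise a.e.) via Rellich, use closedness of $SO(n)$ to preserve the constraint, and pass to the limit in the three pieces of the integrand by combining weak lower semicontinuity of $\|\nabla R\|_{L^2}^2$ with the weak--strong pairing in the cross term. The only cosmetic difference is that the paper obtains coercivity from the reverse triangle inequality $|T_i|^2\ge(|\widetilde T_i|-|R_{u^i}|)^2$ while you use a weighted Cauchy--Schwarz; both yield the same $H^{1,2}$-bound on the minimizing sequence.
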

\begin{proof}
We fix\footnote{Note that now we start from $\widetilde N$ and transform into $N.$} some normal frame $\widetilde N\in C^{k-1,\alpha}(\overline B)$ and interpret $\mathcal T(N)$ as a functional ${\mathcal F}(R)$ of $SO(n)$-regular rotations $R=(R_\sigma^\vartheta)_{\sigma,\vartheta=1,\ldots,n}\in H^{1,2}(B,SO(n))$ by setting
  $$\mathcal F(R)=\sum_{\sigma,\vartheta=1}^n\sum_{i=1}^2\int\hspace{-1.3ex}\int\limits_{\hspace{-1.8ex}B}
    (T_{\sigma,i}^\vartheta)^2\,dudv=\int\hspace{-1.3ex}\int\limits_{\hspace{-1.8ex}B}\big(|T_1|^2+|T_2|^2\big)\,dudv,
    \quad N_\sigma:=\sum_{\vartheta=1}^nR_\sigma^\vartheta\widetilde N_\vartheta\,,$$
setting $T_i=(T_{\sigma,i}^\vartheta)_{\sigma,\vartheta=1,\ldots,n}.$ Choose a minimizing sequence ${}^{\scriptscriptstyle \ell}\!R=({}^{\scriptscriptstyle \ell}\!R_\sigma^\vartheta)_{\sigma,\vartheta=1,\ldots,n}\in H^{1,2}(B,SO(n))$ and define ${}^{\scriptscriptstyle \ell}\!N_\sigma:=\sum\limits_{\vartheta=1}^n{}^{\scriptscriptstyle \ell}\!R_\sigma^\vartheta\widetilde N_\vartheta$. As in paragraph \ref{par_curvaturematrices} we find ${}^{\scriptscriptstyle \ell}T_i={}^{\scriptscriptstyle \ell}\!R_{u^i}\circ{}^{\scriptscriptstyle \ell}\!R^t+{}^{\scriptscriptstyle \ell}\!R\circ \widetilde T_i\circ{}^{\scriptscriptstyle \ell}\!R^t$\footnote{Note that the proof of this identity remains true for $R\in H^{2,1}(B,SO(n))\cap H^{1,2}(B,SO(n)).$} which implies
  $$\begin{array}{lll}
      {}^{\scriptscriptstyle \ell}T_i\circ{}^{\scriptscriptstyle \ell}T_i^t\negthickspace
      & = & \displaystyle\negthickspace
            ({}^{\scriptscriptstyle \ell}\!R_{u^i}\circ
             {}^{\scriptscriptstyle \ell}\!R^t
             +{}^{\scriptscriptstyle \ell}\!R\circ
              \widetilde T_i\circ
              {}^{\scriptscriptstyle \ell}\!R^t)\circ
            ({}^{\scriptscriptstyle \ell}\!R\circ
             {}^{\scriptscriptstyle \ell}\!R_{u^i}^t
             +{}^{\scriptscriptstyle \ell}\!R\circ
              \widetilde T_i^t\circ
              {}^{\scriptscriptstyle \ell}\!R^t) \\[2ex]
      & = & \displaystyle\negthickspace
            {}^{\scriptscriptstyle \ell}\!R_{u^i}\circ
            {}^{\scriptscriptstyle \ell}\!R_{u^i}^t
            +{}^{\scriptscriptstyle \ell}\!R\circ
             \widetilde T_i\circ
             {}^{\scriptscriptstyle \ell}\!R_{u^i}^t
            +{}^{\scriptscriptstyle \ell}\!R_{u^i}\circ
             \widetilde T_i^t\circ
             {}^{\scriptscriptstyle \ell}\!R^t
            +{}^{\scriptscriptstyle \ell}\!R\circ
             \widetilde T_i\circ
             \widetilde T_i^t\circ
             {}^{\scriptscriptstyle \ell}\!R^t.
    \end{array}$$
In particular, we conclude
  $$\mbox{trace}\,({}^{\scriptscriptstyle \ell}T_i
    \circ{}^{\scriptscriptstyle \ell}T_i^t)
    =\mbox{trace}\,
     ({}^{\scriptscriptstyle \ell}\!R_{u^i}
     \circ{}^{\scriptscriptstyle \ell}\!R_{u^i}^t)
     +2\,\mbox{trace}\,
      ({}^{\scriptscriptstyle \ell}\!R
      \circ \widetilde T_i\circ{}^{\scriptscriptstyle \ell}\!R_{u^i}^t)
     +\mbox{trace}\,(\widetilde T_i\circ \widetilde T_i^t)$$
or using our notion of a matrix norm
\begin{equation}\label{torsion_sequence}
  |{}^{\scriptscriptstyle \ell}T_i|^2
  =|{}^{\scriptscriptstyle \ell}\!R_{u^i}|^2
   +2\langle{}^{\scriptscriptstyle \ell}\!R\circ\widetilde T_i,{}^{\scriptscriptstyle \ell}\!R_{u^i}\rangle
   +|\widetilde T_i|^2.
\end{equation}
Furthermore, taking $|{}^{\scriptscriptstyle \ell}\!R\circ\widetilde T_i|=|\widetilde T_i|$ into account, we arrive at the estimate
  $$|{}^{\scriptscriptstyle \ell}T_i|^2
    \ge\big(|\widetilde T_i|-|{}^{\scriptscriptstyle \ell}\!R_{u^i}|\big)^2\quad\mbox{a.e.~on}\ B, 
    \quad\mbox{for all}\ \ell\in\mathbb N.$$
Now because the $\widetilde T_i$ are bounded in $L^2(B),$ and since ${}^{\scriptscriptstyle \ell}\!R$ is minimizing for $\mathcal F$, the sequences ${}^{\scriptscriptstyle \ell}T_i$ are also bounded in $L^2(B).$ Thus, ${}^{\scriptscriptstyle \ell}\!R_{u^i}$ are bounded sequences in $L^2(B)$ in accordance with the last inequality. By Hilbert's selection theorem\index{Hilbert's selection theorem} and Rellich's embedding theorem\index{Rellich's embedding theorem} we find a subsequence, again denoted by ${}^{\scriptscriptstyle \ell}\!R$, which converges as follows:
  $${}^{\scriptscriptstyle \ell}\!R_{u^i}
    \rightharpoonup R_{u^i}
    \quad\mbox{weakly in}\ L^2(B,\mathbb R^{n\times n}),\quad
    {}^{\scriptscriptstyle \ell}\!R\rightarrow R
    \quad\mbox{strongly in}\ L^2(B,SO(n))$$
with some $R\in H^{1,2}(B,SO(n))$. In particular, going if necessary to a subsequence, we have ${}^{\scriptscriptstyle \ell}\!R\to R$ a.e. on $B$ as well as
  $$\lim_{\ell\to\infty}
	\int\hspace{-1.3ex}\int\limits_{\hspace{-1.8ex}B}
    |{}^{\scriptscriptstyle \ell}\!R\circ \widetilde T_i-R\circ\widetilde T_i|^2\,dudv=0$$
according to the dominated convergence theorem.\index{dominated convergence theorem} Hence we can compute in the limit
  $$\begin{array}{rcl}
      \displaystyle\lim_{\ell\to\infty}
      \int\hspace{-1.3ex}\int\limits_{\hspace{-1.8ex}B}
      \langle
        {}^{\scriptscriptstyle \ell}\!R\circ\widetilde T_i,{}^{\scriptscriptstyle \ell}\!R_{u^i}
      \rangle\,dudv
      & = & \displaystyle\lim_{\ell\to\infty}
            \Bigg(\,\int\hspace{-1.3ex}\int\limits_{\hspace{-1.8ex}B}
            \langle
              {}^{\scriptscriptstyle \ell}\!R\circ\widetilde T_i-R\circ\widetilde T_i,{}^{\scriptscriptstyle \ell}\!R_{u^i}
            \rangle\,dudv
            +\int\hspace{-1.3ex}\int\limits_{\hspace{-1.8ex}B}
             \langle R\circ\widetilde T_i,{}^{\scriptscriptstyle \ell} R_{u^i}
             \rangle\,dudv\!\Bigg)\\[4ex]
      & = & \displaystyle  
            \int\hspace{-1.3ex}\int\limits_{\hspace{-1.8ex}B}
            \langle R\circ\widetilde T_i,R_{u^i}
            \rangle\,dudv.
    \end{array}$$
In addition, we obtain
\begin{equation*}
  \lim_{\ell\to\infty}
  \int\hspace{-1.3ex}\int\limits_{\hspace{-1.8ex}B}
  |{}^{\scriptscriptstyle \ell}\!R_{u^i}|^2\,dudv
  \ge\int\hspace{-1.3ex}\int\limits_{\hspace{-1.8ex}B}
     |R_{u^i}|^2\,dudv
\end{equation*}
due to the semicontinuity of the $L^2$-norm w.r.t. weak convergence. Putting the last two relations into the identity $|{}^{\scriptscriptstyle \ell}T_i|^2=|{}^{\scriptscriptstyle \ell}\!R_{u^i}|^2+2\langle{}^{\scriptscriptstyle \ell}\!R\circ\widetilde T_i,{}^{\scriptscriptstyle \ell}\!R_{u^i}\rangle+|\widetilde T_i|^2$ we finally infer
  $$\begin{array}{lll}
      \displaystyle
      \lim_{\ell\to\infty}\mathcal F({}^{\scriptscriptstyle \ell}\!R)\negthickspace
      &  =  & \negthickspace\displaystyle
              \lim_{\ell\to\infty}
              \int\hspace{-1.3ex}\int\limits_{\hspace{-1.8ex}B}
              \big(
                |{}^{\scriptscriptstyle \ell}T_1|^2+|{}^{\scriptscriptstyle \ell}T_2|^2
              \big)\,dudv \\[5ex]
      & \ge & \negthickspace\displaystyle
              \int\hspace{-1.3ex}\int\limits_{\hspace{-1.8ex}B}
              \big(
                |R_u|^2+|R_v|^2
              \big)\,dudv
              +2\int\hspace{-1.3ex}\int\limits_{\hspace{-1.8ex}B}
                \big(
                  \langle R\circ\widetilde T_1,R_u\rangle
                  +\langle R\circ\widetilde T_2,R_v\rangle
                \big)\,dudv \\[5ex]
      &     & \negthickspace\displaystyle
              +\,\int\hspace{-1.3ex}\int\limits_{\hspace{-1.8ex}B}
                 \big(
                   |\widetilde T_1|^2+|\widetilde T_2|^2
                 \big)\,dudv \\[5ex]
      &  =  & \negthickspace\displaystyle
              \int\hspace{-1.3ex}\int\limits_{\hspace{-1.8ex}B}
              \big(
                |T_1|^2+|T_2|^2
              \big)\,dudv
              \,=\,\mathcal F(R)
    \end{array}$$
where $T_i=(T_{\sigma,i}^\vartheta)_{\sigma,\vartheta=1,\ldots,n}$ denote the torsion coefficients of the frame $N$ with entries $N_\sigma:=\sum_\vartheta R_\sigma^\vartheta\widetilde N_\vartheta.$ Consequently, $N\in H^{1,2}(B)\cap L^\infty(B)$ minimizes $\mathcal T_X$ and, in particular, it is a weak normal Coulomb frame.
\end{proof}
\subsection{$H_{loc}^{2,1}$-regularity of weak normal Coulomb frames}
\label{par_Hframes}
To prove higher regularity of normal Coulomb frames we make essential use of techniques from harmonic analysis. Consult, if necessary, paragraph \ref{par_poisson}. We always use conformal parameters $(u,v)\in\overline B.$
\begin{proposition}
Any weak normal Coulomb frame $N\in H^{1,2}(B)\cap L^\infty(B)$ belongs to the class $H^{2,1}_{loc}(B)$.\index{normal Coulomb frame}
\end{proposition}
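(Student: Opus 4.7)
The strategy is classical in the Hélein/Rivière school: exploit the Coulomb gauge to rewrite $\Delta N_\sigma$ so that the ``bad'' contributions acquire a Jacobian (i.e.\ null-Lagrangian) structure, and then invoke the Hardy-space regularity of such Jacobians.

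\medskip\noindent
\textbf{Step 1: A PDE for $N_\sigma$ from Weingarten.}
Using conformal parameters, the Weingarten equations from paragraph~\ref{par_weingarten} give
\[
  N_{\sigma,u^i}
  \;=\; -\sum_{j,k=1}^{2} L_{\sigma,ij}g^{jk}X_{u^k}
        \;+\;\sum_{\vartheta=1}^{n} T_{\sigma,i}^{\vartheta}N_\vartheta .
\]
Taking $\partial_{u^i}$ and summing over $i$ yields
\[
  \Delta N_\sigma
  \;=\; -\sum_{i,j,k}\partial_{u^i}\!\bigl(L_{\sigma,ij}g^{jk}X_{u^k}\bigr)
        \;+\;\sum_{\vartheta=1}^{n}\!\Bigl\{\mbox{div}(T_{\sigma,1}^{\vartheta},T_{\sigma,2}^{\vartheta})\,N_\vartheta
        \;+\;T_{\sigma,1}^{\vartheta}N_{\vartheta,u}+T_{\sigma,2}^{\vartheta}N_{\vartheta,v}\Bigr\}.
\]
Since $N$ is a weak Coulomb frame, the Euler-Lagrange equations of paragraph~\ref{par_firstvariation} give $\mbox{div}(T_{\sigma,1}^{\vartheta},T_{\sigma,2}^{\vartheta})=0$ in the sense of distributions, so the second-to-last sum disappears.

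\medskip\noindent
\textbf{Step 2: Jacobian structure via the integral functions.}
The Coulomb condition yields integral functions $\tau^{(\sigma\vartheta)}\in H^{1,2}(B)$ with $\nabla\tau^{(\sigma\vartheta)}=(-T_{\sigma,2}^{\vartheta},T_{\sigma,1}^{\vartheta})$, cf.\ paragraph~\ref{par_integralfunctions}. This transforms the surviving normal-type term into a sum of Jacobian determinants:
\[
  \sum_{\vartheta=1}^{n}\bigl(T_{\sigma,1}^{\vartheta}N_{\vartheta,u}+T_{\sigma,2}^{\vartheta}N_{\vartheta,v}\bigr)
  \;=\;\sum_{\vartheta=1}^{n}\bigl(\tau^{(\sigma\vartheta)}_{v}N_{\vartheta,u}-\tau^{(\sigma\vartheta)}_{u}N_{\vartheta,v}\bigr)
  \;=\;-\sum_{\vartheta=1}^{n}\bigl\{\tau^{(\sigma\vartheta)},N_\vartheta\bigr\},
\]
where $\{f,g\}:=f_ug_v-f_vg_u$. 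The tangential contribution,
\[
  -\sum_{i,j,k}\partial_{u^i}\!\bigl(L_{\sigma,ij}g^{jk}X_{u^k}\bigr),
\]
involves only $C^{k-1,\alpha}$-data coming from $X$ multiplied by $N_\sigma\in L^\infty(B)$ and $N_{\sigma,u^i}\in L^2(B)$ (since $L_{\sigma,ij}=X_{u^iu^j}\cdot N_\sigma$). Hence this part lies in $L^2_{loc}(B)$.

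\medskip\noindent
\textbf{Step 3: Hardy-space regularity of Jacobians.}
Because $\tau^{(\sigma\vartheta)}\in H^{1,2}(B)$ and $N_\vartheta\in H^{1,2}(B)$, the theorem of Coifman-Lions-Meyer-Semmes (see paragraph~\ref{par_poisson}, Hardy space subsection) places each Jacobian $\{\tau^{(\sigma\vartheta)},N_\vartheta\}$ componentwise in the local Hardy space $\mathcal{H}^1_{loc}(B)$. Combining with Step~2,
\[
  \Delta N_\sigma \;\in\; \mathcal{H}^1_{loc}(B)\;+\;L^2_{loc}(B).
\]

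\medskip\noindent
\textbf{Step 4: From right-hand-side regularity to $H^{2,1}_{loc}$.}
Cut off with $\eta\in C_c^\infty(B)$ and split $N_\sigma=\eta N_\sigma+(1-\eta)N_\sigma$; applying Calder\'on-Zygmund theory on Hardy spaces to the Poisson equation with right-hand side in $\mathcal{H}^1(\mathbb R^2)$ gives $\nabla^2(\eta N_\sigma)\in L^1(\mathbb R^2)$ (since singular integrals of Calder\'on-Zygmund type map $\mathcal{H}^1\to L^1$ continuously), while the $L^2_{loc}$ part trivially contributes $H^{2,2}_{loc}\subset H^{2,1}_{loc}$. Together, $N\in H^{2,1}_{loc}(B)$.

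\medskip\noindent
\textbf{Main obstacle.}
The delicate point is that the Jacobian structure is guaranteed only locally: $\tau^{(\sigma\vartheta)}$ satisfies the Coulomb condition only in the distributional sense, so one must verify that the rewriting of $\sum_\vartheta T_{\sigma,i}^\vartheta N_{\vartheta,u^i}$ as a sum of Jacobians is valid on test functions. Equally, the CLMS theorem is stated on $\mathbb R^2$, and transferring it to $B$ requires a careful localization (cut-off times the expression is still a Jacobian of $H^{1,2}$-functions), after which the $\mathcal{H}^1$-based Calder\'on-Zygmund bound yields the claimed $H^{2,1}_{loc}$-regularity.
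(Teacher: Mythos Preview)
Your approach is correct and closely related to the paper's, but the two proofs organize the Hardy-space argument differently. The paper does \emph{not} work with $\Delta N_\sigma$ directly. Instead it first passes to the scalar integral functions $\tau_\sigma^\vartheta$ and observes that
\[
  \Delta\tau_\sigma^\vartheta
  \;=\;-T_{\sigma,2,u}^\vartheta+T_{\sigma,1,v}^\vartheta
  \;=\;-\langle N_{\sigma,v},N_{\vartheta,u}\rangle+\langle N_{\sigma,u},N_{\vartheta,v}\rangle,
\]
i.e.\ the right-hand side is a pure Jacobian in the components of $N$. CLMS and Fefferman--Stein then give $\tau\in H^{2,1}_{loc}$, hence $T_{\sigma,i}^\vartheta\in H^{1,1}_{loc}\cap L^2$, and only afterwards is the Weingarten equation invoked (together with a product-rule lemma showing $T_{\sigma,i}^\vartheta N_\vartheta\in H^{1,1}_{loc}$) to conclude $N\in H^{2,1}_{loc}$.

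Your route computes $\Delta N_\sigma$ from Weingarten, uses the Coulomb condition to kill the $\mbox{div}(T)\,N_\vartheta$ term, and recognizes the remaining normal contribution as the Jacobian $\{\tau^{(\sigma\vartheta)},N_\vartheta\}$; the tangential part is a harmless $L^2_{loc}$ remainder. This is slightly more direct---you reach $N\in H^{2,1}_{loc}$ in one step and avoid the separate product-rule lemma---at the price of a less clean PDE (Jacobian plus remainder rather than a pure Jacobian). The ``main obstacle'' you flag is exactly right: the Leibniz split $\partial_{u^i}(T_{\sigma,i}^\vartheta N_\vartheta)=(\mbox{div}\,T)N_\vartheta+T\cdot\nabla N_\vartheta$ is a priori formal at this regularity, and its rigorous justification amounts to testing the weak Coulomb condition against $N_\vartheta\varphi\in H^{1,2}_0$, which is legitimate by density. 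The paper sidesteps this issue by never writing $\Delta N_\sigma$ until after $T\in H^{1,1}_{loc}$ has been established.
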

\begin{proof}
\begin{enumerate}
\item
The torsion coefficients $T_{\sigma,i}^\vartheta$ of the normal Coulomb frame $N$ are weak solutions of the Euler-Lagrange equations
  $$\mbox{div}\,(T_{\sigma,1}^\vartheta,T_{\sigma,2}^\vartheta)=0\quad\mbox{in}\ B$$
for all $\sigma,\vartheta=1,\ldots,n.$ Hence, by a weak version of Poincare's lemma\index{Poincar\'e's lemma} (see e.g. Bourgain, Brezis and Mironescu \cite{bourgain_brezis_mironescu_2000} Lemma 3), there exist integral functions $\tau_\sigma^\vartheta\in H^{1,2}(B)$ satisfying
  $$\tau_{\sigma,u}^\vartheta=-T_{\sigma,2}^\vartheta\,,\quad
    \tau_{\sigma,v}^\vartheta=T_{\sigma,1}^\vartheta
    \quad\mbox{in}\ B$$
in weak sense. Thus the weak form of the Euler-Lagrange equations can be written as
  $$0=\int\hspace{-1.3ex}\int\limits_{\hspace{-1.8ex}B}
      \big\{\varphi_{u}\tau_{\sigma,v}^\vartheta-\varphi_{v}\tau_{\sigma,u}^\vartheta\big\}\,dudv
     =\int\limits_{\partial B}
      \tau_\sigma^\vartheta\frac{\partial\varphi}{\partial t}\,ds
    \quad\mbox{for all}\ \varphi\in C^\infty(\overline B),$$
where $\frac{\partial\varphi}{\partial t}$ denotes the tangential derivative of $\varphi$ along $\partial B.$ Note that $\tau_\sigma^\vartheta|_{\partial B}$ means the $L^2$-trace of $\tau_\sigma^\vartheta$ on the boundary curve $\partial B$ (see e.g. Alt \cite{alt_2006}, chapter 6, appendix A6.6)\footnote{Let $1\le p\le\infty.$ Then there is a uniquely determined map $S\colon H^{1,p}(B)\to L^p(B)$ such that $\|S(\phi)\|_{L^{p}(\partial B)}\le C\|\phi\|_{H^{1,2}(B)}.$ Additionally, it holds $S(\phi)=\phi\big|_{\partial B}$ if $\phi\in H^{1,2}(B)\cap C^0(\overline B)$). The map $S$ is called the trace mapping.}. Consequently, the lemma of DuBois-Reymond\footnote{Its one-dimensional version is the following: Let $f\in L^1([a,b]),$ and assume that $\int_a^bf(x)\varphi(x)\,dx=0$ for all $\varphi\in C^\infty([a,b]).$ Then $f\equiv\mbox{const}$ almost everywhere.} yields $\tau_\sigma^\vartheta\equiv\mbox{const}$ on $\partial B$, and by translation we arrive at the boundary conditions
  $$\tau_\sigma^\vartheta=0\quad\mbox{on}\ \partial B\,.$$
\item
Thus the integral functions $\tau_\sigma^\vartheta$ are weak solutions of the second-order system\index{integral function}
  $$\Delta\tau_\sigma^\vartheta
    =-T_{\sigma,2,u}^\vartheta+T_{\sigma,1,v}^\vartheta
    =-\langle N_{\sigma,v},N_{\vartheta,u}\rangle
     +\langle N_{\sigma,u},N_{\vartheta,v}\rangle\quad\mbox{in}\ B$$
where the second identity follows by direct differentiation. By a result Coifman, Lions, Meyer and Semmes \cite{coifman_lions_meyer_semmes_1993},\footnote{Let $\phi\colon H^{1,2}(\mathbb R^2).$ Then $f:=\mbox{det}\,(\nabla\phi)\in{\mathcal H}^1(\mathbb R^2),$ and it holds $\|f\|_{{\mathcal H}^1(\mathbb R^2)}\le C\|\phi\|_{H^{1,2}(\mathbb R^2)}\,;$ see section 22.1.} the right-hand side of div-curl type belongs to the Hardy space\index{Hardy spaces} $\mathcal H_{loc}^1(B)$ and, hence, the $\tau_\sigma^\vartheta$ belong to $H_{loc}^{2,1}(B)$ by Fefferman and Stein \cite{fefferman_stein_1972}.\footnote{Let $\phi\in L^1(\mathbb R^2)$ be a solution of $-\Delta\phi=f\in{\mathcal H}^1(\mathbb R^2).$ Then all second derivatives of $\phi$ belong to $L^1(\mathbb R^2),$ and it holds $\|\phi_{x^\alpha x^\beta}\|_{L^1(\mathbb R^m)}\le C\|f\|_{L^1(\mathbb R^m)}$ for all $\alpha,\beta=1,2.$} Consequently we find $T_{\sigma,i}^\vartheta\in H^{1,1}_{loc}(B)\cap L^2(B).$ Next, we employ the Weingarten equations
  $$N_{\sigma,u^i}
    =-\sum_{j,k=1}^2L_{\sigma,ij}g^{jk}\,X_{u^k}
     +\sum_{\vartheta=1}^nT_{\sigma,i}^\vartheta N_\vartheta$$
in a weak form. For the coefficients of the second fundamental form we have $L_{\sigma,ij}=N_\sigma\cdot X_{u^iu^j}$ leading to $L_{\sigma,ij}\in H^{1,2}(B)$ taking account of $N\in H^{1,2}(B)\cap L^\infty(B)$ and $X_{u^iu^j}\in L^\infty(B).$ Hence we arrive at $N_{\sigma,u^i}\in H_{loc}^{1,1}(B)$ and $N\in H_{loc}^{2,1}(B)$ for our weak normal Coulomb frame. Note that $T_{\sigma,i}^\vartheta\in H^{1,1}_{loc}(B)\cap L^2(B)$ and $N_\vartheta\in H^{1,2}(B)\cap L^\infty(B)$ imply $T_{\sigma,i}^\vartheta N_\vartheta\in H^{1,1}_{loc}(B)$ by a careful adaption of the classical product rule in Sobolev spaces which is explained in the lemma below. We have proved the statement.
\end{enumerate}
\end{proof}
\noindent
So let us catch up on the following lemma to complete the proof.
\begin{lemma}
It holds
  $$\sum_{\vartheta=1}^n
    T_{\sigma,i}^\vartheta N_\vartheta
    \in H^{1,1}(B).$$
\end{lemma}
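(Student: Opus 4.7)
The plan is to verify the classical Leibniz rule
\begin{equation*}
\partial_{x^k}\!\bigl(T_{\sigma,i}^\vartheta N_\vartheta\bigr)
 = \bigl(\partial_{x^k} T_{\sigma,i}^\vartheta\bigr) N_\vartheta
 + T_{\sigma,i}^\vartheta \,\partial_{x^k} N_\vartheta
\end{equation*}
in the distributional sense on every subdomain $B'\Subset B$, and then check that each summand on the right lies in $L^1(B')$. Since $T_{\sigma,i}^\vartheta$ is only in $H^{1,1}_{loc}(B)\cap L^2(B)$ while $N_\vartheta$ is in $H^{1,2}(B)\cap L^\infty(B)$, the product lives in $H^{1,1}_{loc}(B)$, which is what is actually needed in the proof of the preceding proposition (the global $H^{1,1}(B)$ can be established on any subdomain by the same argument).

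For the distributional identity I would mollify: set $T_\varepsilon := \rho_\varepsilon * T_{\sigma,i}^\vartheta$ and $N_\varepsilon := \rho_\varepsilon * N_\vartheta$ with a standard mollifier, so that on $B'$ both functions are smooth for $\varepsilon$ small. The classical integration by parts against any $\varphi\in C_c^\infty(B')$ gives
\begin{equation*}
-\!\!\iint_{B'} T_\varepsilon N_\varepsilon\,\partial_{x^k}\varphi\,du\,dv
 = \!\!\iint_{B'}\! (\partial_{x^k}T_\varepsilon)\, N_\varepsilon\,\varphi\,du\,dv
 + \!\!\iint_{B'}\! T_\varepsilon\, (\partial_{x^k}N_\varepsilon)\,\varphi\,du\,dv.
\end{equation*}
I would then pass to the limit $\varepsilon\to 0$ using the following facts: $T_\varepsilon\to T_{\sigma,i}^\vartheta$ in $L^2(B')$, $\partial_{x^k}T_\varepsilon\to\partial_{x^k}T_{\sigma,i}^\vartheta$ in $L^1(B')$, $\partial_{x^k}N_\varepsilon\to\partial_{x^k}N_\vartheta$ in $L^2(B')$, $\|N_\varepsilon\|_{L^\infty}\le\|N_\vartheta\|_{L^\infty}$, and $N_\varepsilon\to N_\vartheta$ pointwise a.e. (Lebesgue differentiation). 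The second term on the right converges by Hölder applied to the two $L^2$-factors. For the first term I would split
\begin{equation*}
(\partial_{x^k}T_\varepsilon)N_\varepsilon - (\partial_{x^k}T_{\sigma,i}^\vartheta)N_\vartheta
 = (\partial_{x^k}T_\varepsilon - \partial_{x^k}T_{\sigma,i}^\vartheta)\,N_\varepsilon
 + \partial_{x^k}T_{\sigma,i}^\vartheta\,(N_\varepsilon - N_\vartheta),
\end{equation*}
bound the first summand in $L^1(B')$ by $\|N_\vartheta\|_{L^\infty}\|\partial_{x^k}T_\varepsilon-\partial_{x^k}T_{\sigma,i}^\vartheta\|_{L^1(B')}$, and handle the second by dominated convergence with majorant $2\|N_\vartheta\|_{L^\infty}|\partial_{x^k}T_{\sigma,i}^\vartheta|$.

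Once the Leibniz formula is established, both right-hand summands lie in $L^1(B')$: indeed $(\partial_{x^k}T_{\sigma,i}^\vartheta)N_\vartheta\in L^1_{loc}(B)$ as a product of an $L^1_{loc}$-function with an $L^\infty$-function, and $T_{\sigma,i}^\vartheta\,\partial_{x^k}N_\vartheta\in L^1(B)$ as a product of two $L^2$-functions by Hölder. Together with $T_{\sigma,i}^\vartheta N_\vartheta\in L^2(B)\subset L^1(B)$, this gives $T_{\sigma,i}^\vartheta N_\vartheta\in H^{1,1}(B')$ for every $B'\Subset B$, and summation over $\vartheta$ preserves this class. The main obstacle is the limit passage in the first right-hand term: mollifications of an $L^\infty$ function do \emph{not} converge in $L^\infty$ but only pointwise a.e.\ with a uniform bound, so one cannot simply combine uniform convergence with $L^1$-convergence of $\partial_{x^k}T_\varepsilon$; the dominated convergence step above is precisely the "careful adaption" of the Sobolev product rule that the authors refer to.
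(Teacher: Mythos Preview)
Your proposal is correct and follows the same overall strategy as the paper: justify the Leibniz rule for the product $T_{\sigma,i}^\vartheta N_\vartheta$ via mollification and then check that both summands of the derivative lie in $L^1_{loc}(B)$. The technical implementation differs slightly. You mollify both factors $T_\varepsilon=\rho_\varepsilon*T_{\sigma,i}^\vartheta$ and $N_\varepsilon=\rho_\varepsilon*N_\vartheta$ and must therefore cope with the fact that $N_\varepsilon\to N_\vartheta$ only pointwise a.e.\ under a uniform $L^\infty$-bound, which you handle correctly by dominated convergence. The paper instead fixes $\varphi\in C_0^\infty(B)$, sets $\psi:=T_{\sigma,i}^\vartheta\varphi\in W_0^{1,1}(B)\cap L^2(B)$, and mollifies only $\psi$; the identity $\int N_{\vartheta,u^j}\psi=-\int N_\vartheta\psi_{u^j}$ then follows from $\|N_{\vartheta,u^j}\|_{L^2}\|\psi-\psi^\varepsilon\|_{L^2}+\|N_\vartheta\|_{L^\infty}\|\psi_{u^j}-\psi^\varepsilon_{u^j}\|_{L^1}\to 0$, which sidesteps the pointwise-a.e.\ issue entirely. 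Your observation that the conclusion is really $H^{1,1}_{loc}(B)$ (since $\partial_{u^j}T_{\sigma,i}^\vartheta$ is only known to be in $L^1_{loc}$) is accurate and is indeed all that the surrounding argument requires.
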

\begin{proof}
Note that $T_{\sigma,i}^\vartheta N_\vartheta\in L^2(B)$ because $T_{\sigma,i}^\vartheta\in L^2(B)$ and $N_\vartheta\in L^\infty(B).$ We show that $T_{\sigma,i}^\vartheta N_\vartheta$ has a weak derivative, i.e. we prove that
  $$T_{\sigma,i}^\vartheta N_{\vartheta,u^j}+N_\vartheta T_{\sigma,i,u^j}^\vartheta\in L^1(B)$$
is the weak derivative of $T_{\sigma,i}^\vartheta N_\vartheta.$ In other words
  $$\int\hspace{-1.3ex}\int\limits_{\hspace{-1.8ex}B}
    (T_{\sigma,i}^\vartheta N_{\vartheta,u^j}+N_\vartheta T_{\sigma,i,u^j}^\vartheta)\varphi\,dudv
    =-\int\hspace{-1.3ex}\int\limits_{\hspace{-1.8ex}B}
      (T_{\sigma,i}^\vartheta N_\vartheta)\varphi_{u^j}\,dudv$$
for all $\varphi\in C_0^\infty(B).$ For such a test function $\varphi$ define $\psi=T_{\sigma,i}^\vartheta\varphi\in W_0^{1,1}\cap L^2(B),$ and we claim
  $$\int\hspace{-1.3ex}\int\limits_{\hspace{-1.8ex}B}
    N_{u^j}\psi\,dudv
    =-\int\hspace{-1.3ex}\int\limits_{\hspace{-1.8ex}B}
      N\psi_{u^j}\,dudv.$$
For the proof of this relation we approximate $\psi$ with smooth functions $\psi^\varepsilon\in C_0^\infty(B)$ in the sense of Friedrichs.\index{Friedrich approximation} Then $\psi^\varepsilon\to\psi$ in $H^{1,1}(B)\cap L^2(B),$ and $\psi=0$ outside some compact set $K\subset\subset B.$ We estimate as follows
  $$\begin{array}{lll}
      \displaystyle
      \left|\,
        \int\hspace{-1.3ex}\int\limits_{\hspace{-1.8ex}B}
        (N_{\vartheta,u^j}\psi+N_\vartheta\psi_{u^j})\,dudv
      \right|\negthickspace
      &  =  & \negthickspace\displaystyle
              \left|\,
                \int\hspace*{-1.3ex}\int\limits_{\hspace{-1.8ex}B}
                (N_{\vartheta,u^j}\psi^\varepsilon+N_\vartheta\psi_{u^j}^\varepsilon)\,dudv
              \right|
              +\left|\,
                 \int\hspace*{-1.3ex}\int\limits_{\hspace{-1.8ex}B}
                 N_{\vartheta,u^j}(\psi-\psi^\varepsilon)\,dudv
               \right| \\[5ex]
      &  =  & \displaystyle\negthickspace
              +\left|\,
                 \int\hspace*{-1.3ex}\int\limits_{\hspace{-1.8ex}B}
                 N_\vartheta(\psi_{u^j}^\varepsilon-\psi_{u^j})\,dudv
               \right| \\[5ex]
      & \le & \negthickspace\displaystyle
              \|N_{\vartheta,u^j}\|_{L^2(B)}\|\psi-\psi^\varepsilon\|_{L^2(B)}
              +\|N_\vartheta\|_{L^\infty(B)}\|\psi_{u^j}^\varepsilon-\psi_{u^j}\|_{L^1(B)}
    \end{array}$$
taking
  $$\int\hspace{-1.3ex}\int\limits_{\hspace{-1.8ex}B}
    N_{u^j}\psi^\varepsilon\,dudv
    =-\int\hspace{-1.3ex}\int\limits_{\hspace{-1.8ex}B}
      N\psi_{u^j}^\varepsilon\,dudv$$
into account. Because $\|\psi-\psi^\varepsilon\|_{L^2(B)}\to 0$ and $\|\psi_{u^j}^\varepsilon-\psi_{u^j}\|_{L^1(B)}\to 0$ for $\varepsilon\to 0$ we arrive at the stated identity. Now we use the product rule and calculate
  $$\begin{array}{lll}
      \displaystyle
      \int\hspace{-1.3ex}\int\limits_{\hspace{-1.8ex}B}
      (T_{\sigma,i}^\vartheta N_{\vartheta,u^j}+N_\vartheta T_{\sigma,i,u^j}^\vartheta)\varphi\,dudv\negthickspace
      & = & \negthickspace\displaystyle
            \int\hspace{-1.3ex}\int\limits_{\hspace{-1.8ex}B}
            N_{\vartheta,u^j}\psi\,dudv
            +\int\hspace{-1.3ex}\int\limits_{\hspace{-1.8ex}B}
             N_\vartheta T_{\sigma,i,u^j}^\vartheta\varphi\,dudv \\[5ex]
      & = & \negthickspace\displaystyle
            -\int\hspace{-1.3ex}\int\limits_{\hspace{-1.8ex}B}
             (T_{\sigma,i}^\vartheta\varphi)_{u^j}N_\vartheta\,dudv
            +\int\hspace{-1.3ex}\int\limits_{\hspace{-1.8ex}B}
             N_\vartheta T_{\sigma,i,u^j}^\vartheta\varphi\,dudv \\[5ex]
      & = & \negthickspace\displaystyle
            -\int\hspace{-1.3ex}\int\limits_{\hspace{-1.8ex}B}
             T_{\sigma,i,u^j}^\vartheta N_\vartheta\varphi\,dudv
            -\int\hspace{-1.3ex}\int\limits_{\hspace{-1.8ex}B}
             T_{\sigma,i}^\vartheta\varphi_{u^j}N\,dudv
            +\int\hspace{-1.3ex}\int\limits_{\hspace{-1.8ex}B}
             N_\vartheta T_{\sigma,i,u^j}^\vartheta\varphi\,dudv \\[5ex]
      & = & \negthickspace\displaystyle
            -\int\hspace{-1.3ex}\int\limits_{\hspace{-1.8ex}B}
             (T_{\sigma,i}^\vartheta N_\vartheta)\varphi_{u^j}\,dudv.
    \end{array}$$
This proves the statement.
\end{proof}
\vspace*{6ex}
\begin{center}
\rule{35ex}{0.1ex}
\end{center}
\vspace*{6ex}
\section{Classical regularity of normal Coulomb frames}
\label{section_classical}
Our main result of this chapter is the proof of classical regularity of normal Coulomb frames. Up to now we only know regularity in the sense of Sobolev spaces. An essential tool on our road to regularity are again the Weingarten equations from Lecture I.
\begin{theorem}
For any conformally parametrized immersion $X\in C^{k,\alpha}(\overline B,\mathbb R^{n+2})$ with $k\ge3$ and $\alpha\in(0,1)$ there exists a normal Coulomb frame $N\in C^{k-1,\alpha}(\overline B,\mathbb R^{(n+2)\times n})$ minimizing $\mathcal T(N).$\index{normal Coulomb frame}
\end{theorem}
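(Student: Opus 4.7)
The plan is to take the weak minimizer $N \in H^{1,2}(B)\cap L^\infty(B)$ from Proposition \ref{prop_exist}, upgrade it to $H^{2,1}_{loc}(B)$ via the result of paragraph \ref{par_Hframes}, and then bootstrap to classical regularity by combining the Dirichlet problem for the integral functions $\tau^{(\sigma\vartheta)}$ (Proposition \ref{par_tausystem}) with the Weingarten equations of paragraph \ref{par_weingarten}. The key observation is that the two systems feed each other: the $\tau^{(\sigma\vartheta)}$ gain regularity from elliptic theory once the right-hand side is controlled, while $N$ itself gains regularity directly from Weingarten once the torsion coefficients do.

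First I would pass from $N \in H^{2,1}_{loc}(B)$ to continuity of $N$ on compact subsets, using that $H^{2,1}_{loc}(B)$ embeds into $C^0_{loc}(B)$ in two dimensions (through the Hardy-space embedding $\nabla N \in L^{(2,1)}_{loc}$ discussed in paragraph \ref{par_poisson}). With $N$ continuous inside $B$, the coefficients $L_{\sigma,ij} = N_\sigma \cdot X_{u^iu^j}$ lie at least in $C^0_{loc}(B)$ since $X \in C^{k,\alpha}$ with $k \ge 3$. Next, I would invoke the system established in Proposition \ref{par_tausystem}, namely
\[
    \Delta \tau^{(\sigma\vartheta)} = -\,\delta\tau^{(\sigma\vartheta)} + S_{\sigma,12}^\vartheta \quad \text{in } B, \qquad \tau^{(\sigma\vartheta)} = 0 \quad \text{on } \partial B,
\]
whose right-hand side satisfies the quadratic-growth estimate $|\Delta \mathcal{T}| \le \tfrac{\sqrt{n-2}}{2}|\nabla \mathcal{T}|^2 + |\mathcal{S}|$ from paragraph \ref{par_quadraticgrowth}. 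Using the minimizing property of $N$ together with the upper bounds on $\|\mathcal{T}\|_{L^\infty(B)}$ in paragraphs \ref{par_wentebound} and \ref{par_poincarebound}, the smallness hypothesis of Heinz's global gradient theorem (see paragraph \ref{par_torsionestimate} and the footnote there) is in reach once $\|\mathcal{S}\|_{L^\infty}$ is controlled in terms of the given data of $X$; this would yield $\mathcal{T} \in C^{1,\alpha}(\overline B)$ and thereby $T_{\sigma,i}^\vartheta \in C^{0,\alpha}(\overline B)$.

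With Hölder-regular torsion coefficients in hand, the Weingarten equations
\[
    N_{\sigma,u^i} = -\sum_{j,k=1}^{2} L_{\sigma,ij} g^{jk} X_{u^k} + \sum_{\vartheta=1}^{n} T_{\sigma,i}^\vartheta N_\vartheta
\]
become a first-order system for $N$ whose coefficients lie in $C^{0,\alpha}(\overline B)$, giving $N \in C^{1,\alpha}(\overline B)$ directly by integration. This immediately improves the regularity of $L_{\sigma,ij}$ to $C^{k-2,\alpha}(\overline B)$ (since $X_{u^iu^j} \in C^{k-2,\alpha}$), hence via the Ricci representation of $S_{\sigma,12}^\vartheta$ the right-hand side of the $\tau$-system is now in $C^{k-2,\alpha}$, which by Schauder theory for Dirichlet problems upgrades $\tau^{(\sigma\vartheta)}$ to $C^{k,\alpha}(\overline B)$ and therefore $T_{\sigma,i}^\vartheta$ to $C^{k-1,\alpha}(\overline B)$. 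Feeding this back into Weingarten yields $N \in C^{k-1,\alpha}(\overline B, \mathbb{R}^{(n+2)\times n})$, completing the bootstrap chain.

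The main obstacle will be the first step of the bootstrap, specifically verifying the smallness hypothesis needed for Heinz's theorem \emph{up to the boundary}. In the interior, the $H^{2,1}_{loc}$-result of paragraph \ref{par_Hframes} combined with $\epsilon$-regularity type arguments for systems with quadratic gradient growth should suffice, but at $\partial B$ the Dirichlet datum $\tau^{(\sigma\vartheta)} = 0$ must be combined with a boundary version of Heinz's estimate. The likely remedy is to straighten the boundary by a conformal reparametrization (as prepared in paragraph \ref{par_straight} of the section on $W$-estimates), then localize and apply Heinz's theorem on half-discs, exploiting that the trace condition forces the integral functions to vanish there. Once this boundary-regularity hurdle is cleared, the remainder of the proof is a routine iteration along the chain $\tau \to T \to N \to L \to S \to \tau$, and terminates after finitely many rounds at the prescribed $C^{k-1,\alpha}$ level.
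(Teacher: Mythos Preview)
Your overall bootstrap architecture $\tau \to T \to N \to L \to S \to \tau$ matches the paper, and the use of the Weingarten equations to lift regularity from the $T_{\sigma,i}^\vartheta$ to $N$ is exactly right. The genuine gap lies in the step where you first obtain $\mathcal T\in C^{1,\alpha}(\overline B)$.

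You propose to invoke Heinz's global gradient theorem from paragraph \ref{par_torsionestimate}, whose hypothesis is the smallness condition
\[
\frac{\sqrt{n-2}}{2}\left(\frac{n-2}{4\pi}\,\mathcal T(N)+C(n)\|\mathcal S\|_{L^\infty(B)}\right)<1.
\]
This is \emph{not} available for an arbitrary immersion $X$: neither $\mathcal T(N)$ nor $\|\mathcal S\|_{L^\infty(B)}$ can be made small in terms of the data, and the bounds of paragraphs \ref{par_wentebound}--\ref{par_poincarebound} do not help because they themselves contain $\|\nabla\mathcal T\|_{L^2(B)}^2$. The remark after the theorem in paragraph \ref{par_torsionestimate} even flags this as open. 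Your suggested remedy (boundary straightening from paragraph \ref{par_straight}) is designed for $|\nabla X|$-estimates and does not produce the required smallness for the $\tau$-system.

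The paper avoids this obstruction entirely. It first shows $\tau\in C^0(\overline B)$ using Wente-type $L^\infty$-estimates (Brezis--Coron, Rivi\`ere, M\"uller--Schikorra for the boundary), which needs only $S_{12}\in L^\infty(B)$---and this follows from the transformation rule $S_{12}=R\circ\widetilde S_{12}\circ R^t$ with $R\in L^\infty$ and $\widetilde S_{12}\in C^\alpha$. Continuity of $\tau$ on $\overline B$ then allows a reflection across $\partial B$ to a slightly larger disc, after which Tomi's regularity theorem applies: Tomi requires only \emph{local small oscillation} of the solution (automatic from continuity), not a global smallness bound. This yields $\tau\in C^{1,\nu}(\overline B)$ unconditionally, and from there your bootstrap runs exactly as you describe. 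The missing ingredients in your proposal are thus (i) the boundary continuity of $\tau$ via the cited Wente-type results, and (ii) the replacement of Heinz's global theorem by Tomi's local one after reflection.
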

\begin{proof}
\begin{itemize}
\item[1.]
We fix some normal frame $\widetilde N\in C^{k-1,\alpha}(\overline B)$ and construct a weak normal Coulomb frame $N\in W^{1,2}(B)\cap L^\infty(B).$ Furthermore, we then know $N\in W^{2,1}_{loc}(B)$. Defining the orthogonal mapping $R=(R_\sigma^\vartheta)_{\sigma,\vartheta=1,\ldots,n}$ by $R_\sigma^\vartheta:=\langle N_\sigma,\widetilde N_\vartheta\rangle,$
we thus find 
	$$N_\sigma=\sum\limits_{\vartheta=1}^nR_\sigma^\vartheta\widetilde N_\vartheta\quad\mbox{and}\quad R\in W^{2,1}_{loc}(B,SO(n))
      \cap W^{1,2}(B,SO(n))\,.$$
In particular, we can assign a curvature tensor $S_{12}=(S_{\sigma,12}^\vartheta)_{\sigma,\vartheta=1,\ldots,n}\in L^1_{loc}(B)$ to $N$ by formula
  $$S_{\sigma,12}^\vartheta
    =T_{\sigma,1,v}^2-T_{\sigma,2,u}^\vartheta
     +\sum_{\omega=1}^n
      (T_{\sigma,1}^\omega T_{\omega,2}^\vartheta-T_{\sigma,2}^\omega T_{\omega,1}^\vartheta),$$
and from the previous section we infer $S_{12}\in L^\infty(B).$
\item[2.]
Introduce $\tau=(\tau_\sigma^\vartheta)_{\sigma,\vartheta=1,\ldots,n}\in W^{1,2}(B)$ by
  $$\begin{array}{l}
      \tau_{\sigma,u}^\vartheta=-T_{\sigma,2}^\vartheta\,,\quad
      \tau_{\sigma,v}^\vartheta=T_{\sigma,1}^\vartheta
      \quad\mbox{in}\ B, \\[2ex]
      \tau_\sigma^\vartheta=0\quad\mbox{on}\ \partial B.
    \end{array}$$
The definition of the normal curvature tensor gives us the nonlinear elliptic system
  $$\Delta\tau_\sigma^\vartheta
    =-\,\sum_{\omega=1}^n
      (\tau_{\sigma,u}^\omega\tau_{\omega,v}^\vartheta-\tau_{\sigma,v}^\omega\tau_{\omega,u}^\vartheta)
      +S_{\sigma,12}^\vartheta\quad\mbox{in}\ B,\quad
     \tau_\sigma^\vartheta=0\quad\mbox{on}\ \partial B.$$
On account of $S_{12}=(S_{\sigma,12}^\vartheta)_{\sigma,\vartheta=1,\ldots,n}\in L^\infty(B),$ a part of Wente's inequality yields $\tau\in C^0(\overline B),$ see e.g. Brezis and Coron \cite{brezis_coron_1984}; compare also Rivi\`ere \cite{riviere_2007} and the corresponding boundary regularity theorem in M\"uller and Schikorra \cite{mueller_schikorra_2008} for more general results. By appropriate reflection of $\tau$ and $S_{12}$ (the reflected quantities are again denoted by $\tau$ and $S_{12}$) we obtain a weak solution $\tau\in W^{1,2}(B_{1+d})\cap C^0(B_{1+d})$ of
  $$\Delta\tau
    =f(w,\nabla\tau)
    \quad\mbox{in}\ B_{1+d}:=\{w\in\mathbb R^2\,:\,|w|<1+d\}$$
with some $d>0$ and a right-hand side $f$ satisfying
  $$|f(w,p)|\le a|p|^2+b\quad\mbox{for all}\ p\in\mathbb R^{2n^2}\,,\quad w\in B_{1+d}$$
with some reals $a,b>0.$ Now, applying Tomi's regularity result\index{Tomi's regularity result} from \cite{tomi} for weak solutions of this non-linear system possessing small variation locally in $B_{1+d}$, we find $\tau\in C^{1,\nu}(\overline B)$ for any $\nu\in(0,1)$ (note that Tomi's result applies for such systems with $b=0,$ but his proof can easily be adapted to our inhomogeneous case $b>0$).
\item[3.]
From the first-order system for $\tau_\sigma^\vartheta$ we infer $T_i\in C^\alpha(\overline B).$ Thus, the Weingarten equations\index{Weingarten equations} for $N_{\sigma,u^i}$ yield $N\in W^{1,\infty}(B)$ on account of $N\in L^\infty(B),$ and we obtain $N\in C^\alpha(\overline B)$ by Sobolev's embedding theorem.\index{Sobolev embedding theorem} Inserting this again into the Weingarten equations, we find $N\in C^{1,\alpha}(\overline B)$. Hence, we can conclude $R=(\langle N_\sigma,\widetilde N_\vartheta\rangle)_{\sigma,\vartheta=1,\ldots,n}\in C^{1,\alpha}(\overline B),$ and transformation rule $S_{12}=R\circ\widetilde S_{12}\circ R^t$ from Lecture I implies $S_{12}=(S_{\sigma,12}^\vartheta)_{\sigma,\vartheta=1,\ldots,n}\in C^{\alpha}(\overline B)$ (note $\widetilde S_{12}\in C^\alpha(\overline B)$ for $k=3$; in case $k\ge 4$ we even get $S_{12}\in C^{1,\alpha}(\overline B)$). Now the right-hand side of the equation for $\Delta\tau_\sigma^\vartheta$ belongs to $C^\alpha(\overline B),$ and potential theoretic estimates ensure $\tau\in C^{2,\alpha}(\overline B)$. Involving again our first-order system for the $\tau_\sigma^\vartheta$ gives $T_i\in C^{1,\alpha}(\overline B),$ which proves $N\in C^{2,\alpha}(\overline B)$ using the Weingarten equations once more. Finally, for $k\ge4$, we can bootstrap by concluding $R\in C^{2,\alpha}(\overline B)$ and $S_{12}\in C^{1,\alpha}(\overline B)$ from the transformation rule for $S_{121}$ and repeating the arguments above.
\end{itemize}
\vspace*{1ex}
The proof is complete.
\end{proof}
\vspace*{6ex}
\begin{center}
\rule{35ex}{0.1ex}
\end{center}
\vspace*{6ex}
\section{Estimates for the area element $W$}
\label{section_Westimates}
Various of our estimates so far contain norms of the quantity ${\mathcal S}={\mathfrak S}_NW.$ In particular, for given ${\mathfrak S}_N$ we are left to establish upper bounds on the area element $W.$ For this purpose we want to sketch briefly a classical method which goes back essentially to Erhard Heinz.\\[1ex]
Throughout our considerations we are concerned with solutions of nonlinear elliptic problems of the form
  $$\begin{array}{l}
      X(u,v)=\big(x^1(u,v),\ldots,x^{n+2}(u,v)\big)\in C^2(B,\mathbb R^{n+2})\cap C^1(\overline B,\mathbb R^{n+2}), \\[2ex]
      |\triangle X(u,v)|\le a|\nabla X(u,v)|^2\quad\mbox{in}\ B, \\[2ex]
      |X(u,v)|\le M\quad\mbox{in}\ \overline B.
    \end{array}$$
Let us abbreviate this system with (ES). Furthermore, we suppose that $X$ is conformally parametrized
  $$X_u\cdot X_u=W=X_v\cdot X_v\,,\quad
    X_u\cdot X_v=0
    \quad\mbox{in}\ \overline B$$
with the area element $W$ as conformal factor. Therefore, we infer
  $$W=\frac{1}{2}\,(X_u^2+X_v^2)=\frac{1}{2}\,|\nabla X|^2$$
meaning that an upper bound for $|\nabla X|^2$ is immediately an upper bound for the area element $W.$ The first results in this direction go back to Heinz \cite{heinz_1957} and \cite{heinz_1970}. We also want to refer to Schulz \cite{schulz_1991}, Dierkes et al. \cite{dhkw_1992}, volume II, and Sauvigny \cite{sauvigny_2005}, volume II for further developments.
\subsection{Inner estimates}
\label{par_inner}
We start with inner gradient bounds without referring to any boundary data.
\begin{proposition}
(Sauvigny \cite{sauvigny_2005}, volume II, \S 2, Satz 1)\\
Let $X$ be a solution of (ES) with $aM<1.$ Define the distance function
  $$\delta(w)=\mbox{\rm dist}\,(w,\partial B),\quad w\in\overline B.$$
Then there is a real constant $C(a,M)\in[0,\infty)$ satisfying
  $$\delta(w)|\nabla X(w)|\le C(a,M)\quad\mbox{for all}\ w\in B.$$
\end{proposition}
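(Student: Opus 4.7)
I would prove this by the classical Heinz maximum-principle technique. After a translation we may assume $w_0 = 0$, set $R := \delta(0)$, and work on the disc $B_R \subset B$; the claim reduces to $R\,|\nabla X(0)| \le C(a,M)$. The smallness condition $aM < 1$ is exploited through the (strict) subharmonicity of $|X|^2$: the identity $\Delta |X|^2 = 2|\nabla X|^2 + 2\,X\cdot\Delta X$, combined with $|X|\le M$ and $|\Delta X|\le a|\nabla X|^2$, yields the two-sided bound
$$
 2(1-aM)\,|\nabla X|^2 \;\le\; \Delta |X|^2 \;\le\; 2(1+aM)\,|\nabla X|^2 \quad\text{in}\ B.
$$
This quantitative lower bound on $\Delta|X|^2$ is the engine of the argument.

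With a parameter $\lambda > 0$ to be chosen later in terms of $a$ and $M$, I would introduce on $\overline B_R$ the auxiliary function
$$
  \Phi(w) \;:=\; (R^2 - |w|^2)\,|\nabla X(w)|^2\, e^{\lambda\,|X(w)|^2}.
$$
It is nonnegative, continuous, vanishes on $\partial B_R$, and satisfies $\Phi(0) = R^2|\nabla X(0)|^2 e^{\lambda|X(0)|^2}$, so a uniform upper bound for $\Phi$ gives the desired pointwise estimate at the origin. Let $w^* \in B_R$ be an interior maximizer. The plan is to show $\Phi(w^*) \le C(a,M)$ via the necessary conditions $\nabla\Phi(w^*) = 0$ and $\Delta\Phi(w^*) \le 0$. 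Working with $\log\Phi$ on $\{|\nabla X|>0\}$ (the vanishing case being trivial) and using the first-order condition to eliminate $\nabla|\nabla X|^2$ from the cross-term in $\Delta\log\Phi$, one rearranges $\Delta\log\Phi(w^*) \le 0$ into a schematic inequality
$$
  0 \;\ge\; \bigl[\,2\lambda(1-aM) - c_1\,\bigr]\,|\nabla X(w^*)|^4 \;-\; c_2(a,M)\,R^{-2}\,|\nabla X(w^*)|^2 \;-\; c_3\,R^{-2},
$$
with constants $c_i$ depending only on $a$ and $M$. Choosing $\lambda = \lambda(a,M)$ large enough that $2\lambda(1-aM) > c_1$ -- which is possible precisely because $1-aM > 0$ -- the resulting quadratic inequality forces $(R^2 - |w^*|^2)\,|\nabla X(w^*)|^2 \le C(a,M)$, hence $\Phi(w^*) \le C(a,M)\,e^{\lambda M^2}$, and evaluating at $w_0 = 0$ yields the claim.

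The main obstacle is that $\Delta|\nabla X|^2 = 2|D^2 X|^2 + 2\,\nabla X\cdot\nabla\Delta X$ contains the third-order quantity $\nabla\Delta X$, on which the pointwise hypothesis $|\Delta X| \le a|\nabla X|^2$ gives no direct control. The remedy built into the choice of $\Phi$ is that, at the interior maximum, the first-order relation $\nabla\Phi(w^*)=0$ lets one substitute $\nabla|\nabla X|^2$ algebraically in terms of $\nabla(R^2-|w|^2)$ and $\lambda\nabla|X|^2$; the apparently uncontrolled third-order remainders then reorganize into purely quadratic combinations of $|\nabla X|^2$, and these are absorbed by the coercive contribution $\lambda\,\Delta|X|^2\cdot\Phi \gtrsim 2\lambda(1-aM)\,|\nabla X|^2\,\Phi$ coming from differentiating the exponential weight. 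This is exactly where $aM < 1$ is indispensable; the borderline $aM = 1$ would require a separate compactness argument. A minor technical point is that forming $\Delta|\nabla X|^2$ classically needs $X \in C^3$ whereas (ES) only assumes $X \in C^2$; this is handled by mollifying $X$, carrying out the argument for the smooth approximants, and passing to the limit with the bounds being uniform in the mollification parameter.
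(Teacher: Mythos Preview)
The paper does not prove this proposition; it merely quotes the result from Sauvigny's textbook. So there is no in-paper argument to compare against, but your proposal can still be assessed on its own merits.

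Your approach has a genuine gap at exactly the place you flag as the ``main obstacle.'' The auxiliary function $\Phi=(R^2-|w|^2)\,|\nabla X|^2 e^{\lambda|X|^2}$ forces you to evaluate $\Delta|\nabla X|^2 = 2|D^2X|^2 + 2\,\nabla X\cdot\nabla(\Delta X)$ at the interior maximum. The term $\nabla X\cdot\nabla(\Delta X)$ is genuinely third order in $X$, and the hypothesis of (ES) is only the pointwise \emph{inequality} $|\Delta X|\le a|\nabla X|^2$; it carries no information whatsoever about $\nabla(\Delta X)$. Your claim that the first-order condition $\nabla\Phi(w^*)=0$ makes these remainders ``reorganize into purely quadratic combinations of $|\nabla X|^2$'' is not correct: that condition expresses $\nabla|\nabla X|^2$ --- a second-order object --- in terms of $w^*$ and $\nabla|X|^2$, but $\nabla(\Delta X)$ is a different, strictly higher-order quantity that never appears in $\nabla\Phi$. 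The Bernstein technique you are invoking works for \emph{equations} $\Delta X=F(X,\nabla X)$, because differentiating the equation converts $\nabla(\Delta X)$ into combinations of $D^2X$ and $\nabla X$; for a differential \emph{inequality} this conversion step is simply unavailable. Mollification does not rescue the argument: even for $X\in C^\infty$ the hypothesis (ES) gives no bound on $\nabla(\Delta X)$.

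The Heinz--Sauvigny proof avoids third derivatives entirely. It exploits only $\Delta|X|^2\ge 2(1-aM)|\nabla X|^2$ together with potential-theoretic representation: one decomposes $X=H+P$ on $B_R$ with $H$ harmonic (so $|\nabla H(0)|\le cM/R$ by the interior gradient estimate for harmonic functions) and $P$ vanishing on $\partial B_R$; then $\nabla P(0)$ is written via the Green kernel and controlled by an integral of $|\nabla X|^2$ against a singular weight, which in turn is bounded using the superharmonic barrier $(M^2-|X|^2)/(1-aM)$. No derivative of $\Delta X$ is ever taken.
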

\noindent
In particular, we infer
  $$|\nabla X(0,0)|\le C(a,M).$$
Inequalities of this kind were needed for the curvature estimate for minimal graphs in Lecture I.
\subsection{Global estimates}
\label{par_global}
Next, we want to quote the following global result.
\begin{proposition}
(Sauvigny \cite{sauvigny_2005}, volume II, chapter 12, \S 3, Satz 1)\\
Let $X$ be a solution of (ES) with $aM<1$ satisfying
  $$X(u,v)=0\quad\mbox{on}\ \partial B.$$
Furthermore, let $\alpha\in(0,1).$ Then there is a constant $C(a,M,\alpha)\in[0,\infty)$ so that
  $$\|X\|_{C^{1+\alpha}(\overline B)}\le C(a,M,\alpha).$$
\end{proposition}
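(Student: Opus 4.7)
The plan is to follow E.\ Heinz's classical strategy, which proceeds from an $L^2$-gradient bound to Hölder continuity of $X$ via a Dirichlet-growth argument and then bootstraps to $C^{1+\alpha}$ through potential-theoretic estimates. The conformality relations are implicit in the hypotheses (in applications $X$ is conformally parametrized so that $W=\tfrac12|\nabla X|^2$), but the statement itself is purely analytical, so I would treat (ES) directly. The starting observation is that $|X|\le M$ together with $|\triangle X|\le a|\nabla X|^2$ yields
$$\triangle|X|^2 = 2|\nabla X|^2 + 2X\cdot\triangle X \ge 2(1-aM)|\nabla X|^2 \quad\mbox{in}\ B,$$
and the factor $1-aM>0$ supplied by the smallness condition is the single most important quantitative ingredient entering every subsequent estimate.

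First I would establish an $L^2$-bound for $\nabla X$ over $B$ by multiplying the inequality above by a suitable test function and integrating by parts, invoking the homogeneous boundary condition $X=0$ on $\partial B$. With this $L^2$-gradient bound in hand I turn to the key step: a localized Dirichlet-growth estimate
$$\int\hspace{-1.3ex}\int\limits_{\hspace{-1.8ex}B_r(w_0)\cap B} |\nabla X|^2\,du\,dv \le C(a,M)\,r^{2\alpha}$$
for all $w_0\in\overline B$ and some $\alpha\in(0,1)$. I would obtain this via Widman's hole-filling trick: testing the equation with $\eta^2 X$ for a radial cutoff $\eta$ concentrated on the annulus $B_r\setminus B_{r/2}$ and absorbing the quadratic gradient term using $aM<1$ produces an annular inequality
$$\int_{B_{r/2}(w_0)\cap B}|\nabla X|^2 \le \theta \int_{B_r(w_0)\cap B}|\nabla X|^2$$
with a constant $\theta=\theta(a,M)\in(0,1)$ strictly smaller than one. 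Iterating over dyadic radii yields the claimed Dirichlet growth, and near the boundary the condition $X=0$ on $\partial B$ allows us to extend $X$ by zero across $\partial B$ and apply the same cutoff construction to half-balls. Morrey's Dirichlet-growth lemma then translates this into $X\in C^\alpha(\overline B)$ with norm controlled by $C(a,M,\alpha)$.

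Having secured $X\in C^\alpha(\overline B)$, the right-hand side of the Poisson equation $\triangle X = f$ with $|f|\le a|\nabla X|^2$ can be represented using the Green function of the unit disc (the same kernel $\Phi$ used already in paragraph~\ref{par_maxprinciple}) together with the homogeneous boundary data. Differentiating the representation and invoking Calderón--Zygmund type estimates in Hölder spaces gives $\nabla X\in C^\alpha(\overline B)$, hence the desired $\|X\|_{C^{1+\alpha}(\overline B)}\le C(a,M,\alpha)$. The main obstacle I anticipate is the hole-filling step: the required strict inequality $\theta(a,M)<1$ has to be derived with an explicit dependence on $a$ and $M$ respecting the full force of $aM<1$, and the estimate must be made uniform up to the boundary, which demands careful adaptation of the cutoff construction on half-discs centred at boundary points and an appeal to the boundary condition to absorb the lower-order terms produced by integration by parts.
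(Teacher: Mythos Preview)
The paper does not supply a proof of this proposition at all: it is quoted verbatim as a result from Sauvigny's textbook (volume II, chapter XII, \S 3, Satz 1), ultimately going back to E.~Heinz, and is used as a black box in the subsequent boundary-straightening argument of \S\ref{par_straight}. There is therefore nothing in the paper to compare your attempt against.

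That said, your outline is in the right spirit---the inequality $\Delta|X|^2\ge 2(1-aM)|\nabla X|^2$ is indeed the quantitative heart of the matter, and the overall route (Dirichlet growth $\Rightarrow$ Morrey $\Rightarrow$ potential-theoretic bootstrap) is the classical Heinz strategy. One point deserves caution: the hole-filling step as you describe it, testing with $\eta^2 X$, yields
\[
(1-aM-\varepsilon)\int\eta^2|\nabla X|^2\le C\varepsilon^{-1}M^2\int|\nabla\eta|^2,
\]
and in two dimensions the right-hand side is merely bounded by a constant independent of $r$, not by the annular Dirichlet integral. This gives an $L^2$-gradient bound but not decay. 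To obtain a genuine annular inequality of the form $\int_{B_{r/2}}|\nabla X|^2\le\theta\int_{B_r}|\nabla X|^2$ with $\theta<1$ one typically has to test with $\eta^2(X-c)$ for a suitable constant $c$ and feed in oscillation control, or else exploit the subharmonicity of $|X|^2$ more directly via a monotonicity-type argument for the Dirichlet integral. The Heinz--Sauvigny proof handles this with some care, and your sketch would need to be sharpened at precisely this point before it goes through.
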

\noindent
The point is that we must require vanishing boundary data, at least on small portions of $\partial B$ where we would like to apply Sauvigny's gradient estimate.
\subsection{Straightening the boundary}
\label{par_straight}
Finally we follow an idea of J.C.C. Nitsche, see also Heinz \cite{heinz_1970}, to {\it straighten the boundary locally.} Here are some technical assumptions: First, let
  $$X(w_0)=0\in\mathbb R^{n+2}\quad\mbox{for}\ w_0=(1,0).$$
Moreover, we may represent the arc $X(\{w\in\partial B\,:\,|w-w_0|<\varepsilon\})$ in the form
  $$x^k=g_k(x^{n+2}),\quad k=1,\ldots,n+1,$$
with $\varepsilon>0$ being sufficiently small. Next, w.l.o.g. we assume
  $$g_k(0)=0,\quad
    g_k'(0)=0
    \quad\mbox{for all}\ k=1,\ldots,n+1$$
as well as
  $$|x^\ell(w)|\le\delta\quad\mbox{for all}\ \ell=1,\ldots,n+2,\quad
    \sum_{\ell=1}^{n+1}|g_k'(t)|\le c_1\,,\quad
    \sum_{\ell=1}^{n+1}|g_k''(t)|\le c_2$$
with $c_1\in(0,1).$ To realize these assumptions it is sufficient to require bounds on the $C^2$-Schauder norm of $X$ on $\overline B$ which, in particular, implies a modulus of continuity of $X.$ Consider now the functions
  $$y^k(w)=x^k(w)-g_k(x^{n+2})
    \quad\mbox{on}\ D_\varepsilon:=\{w\in\overline B\,:\,|w-w_0|<\varepsilon\},\quad
    k=1\ldots,n+1.$$
Note that there hold
  $$y^k(w)=0\quad\mbox{on}\ D_\varepsilon\quad\mbox{for all}\ k=1,\ldots,n+1,$$
and these are already the homogenous boundary conditions in Sauvigny's global gradient estimate! We must check that $Y=(y^1,\ldots,y^{n+1})$ solves an elliptic system of the form (ES) with quadratic growth in the gradient. We compute
  $$\begin{array}{lll}
      |\triangle y^k|\negthickspace
      &  =  & \negthickspace\displaystyle
              \big|\triangle x^k-\triangle g_k(x^{n+2})\big|
              \,=\,\big|\triangle x^k-g_k''(x^{n+2})|\nabla x^{n+2}|^2-g_k'(x^{n+2})\triangle x^{n+2}\big| \\[2ex]
      & \le & \negthickspace\displaystyle
              |\triangle x^k|
              +|g_k'(x^{n+2})||\triangle x^{n+2}|
              +|g_k''(x^{n+2})|\nabla x^{n+2}|^2 \\[2ex]
      & \le & \negthickspace\displaystyle
              \big(1+|g_k'(x^{n+2})|\big)|\triangle X|
              +|g_k''(x^{n+2})||\nabla X|^2
              \,\le\,\big\{(1+c_1)a+c_2\big\}\,|\nabla X|^2\,.
    \end{array}$$
Now from the conformality relations we infer
  $$|x_w^{n+2}|^2
    \le\sum_{k=1}^{n+1}|x_w^k|^2$$
for the complex-valued derivatives $x_w^k=\frac{1}{2}\,(x_u-ix_v),$ and considering
  $$\sum_{k=1}^{n+1}|x_w^k|
    \le\sum_{k=1}^{n+1}|y_w^k|+c_1\sum_{k=1}^{n+1}|x_w^k|
    \quad\mbox{implies}\quad
    \sum_{k=1}^{n+1}|x_w^k|\le\frac{1}{1-c_1}\,\sum_{k=1}^{n+1}|y_w^k|.$$
Thus we infer
  $$\sum_{k=1}^{n+2}|x_w^k|^2
    \le c_3(c_1,n)\sum_{k=1}^{n+1}|y_w^k|^2$$
which gives us finally
  $$|\triangle Y|\le c_4(a,c_1,c_2,n)|\nabla Y|^2\,.$$
{\it In conclusion, under the knowledge of $X$ and its first and second derivatives on the boundary curve $\partial B$ we may apply Sauvigny's global gradient estimate which yields in turn estimates fpr the area element $W$ from above on the whole disc $\overline B.$}
\vspace*{10ex}
\begin{center}
\rule{35ex}{0.1ex}
\end{center}
\cleardoublepage
\printindex
{\small

}

\end{document}